\newtheoremstyle{theoremstyle}
  {10pt}      %  Space above
  {5pt}       %  Space below 
  {\itshape}  %  Body font
  {}          %  Indent amount (empty = no indent, \parindent = para indent)
  {\bfseries} %  Thm head font
  {:}         %  Punctuation after thm head
  {.5em}      %  Space after thm head: " " = normal interword space;
\newtheoremstyle{examplestyle}
  {10pt}      %  Space above
  {5pt}       %  Space below
  {}          %  Body font
  {}          %  Indent amount (empty = no indent, \parindent = para indent)
  {\bfseries} %  Thm head font
  {:}         %  Punctuation after thm head
  {.5em}      %  Space after thm head: " " = normal interword space;
\theoremstyle{theoremstyle}
\newtheorem{theorem}{Theorem}[section]
\newtheorem*{theorem*}{Theorem}
\newtheorem{lemma}[theorem]{Lemma}
\newtheorem{proposition}[theorem]{Proposition}
\newtheorem*{proposition*}{Proposition}
\newtheorem{corollary}[theorem]{Corollary}
\newtheorem*{corollary*}{Corollary}
\theoremstyle{definition}
\newtheorem{example}[theorem]{Example}
\newtheorem*{example*}{Example}
\newtheorem{definition}[theorem]{Definition}
\newtheorem*{definition*}{Definition}
\newtheorem{remark}[theorem]{Remark}
\newtheorem{remark*}{Remark}
\newcommand{\too}{{\longrightarrow}}
\newcommand{\R}{{\mathbb{R}}}
\newcommand{\cat}{\mathscr}
\newcommand{\inj}{\hookrightarrow}
\newcommand{\surj}{\twoheadrightarrow}
\newcommand{\iso}{\stackrel{\cong}{\longrightarrow}}
\newcommand{\res}{{\mathrm{res}}}
\newcommand{\Sp}{{\mathrm{Sp}}}
\newcommand{\ev}{{\mathrm{ev}}}
\newcommand{\id}{{\mathrm{id}}}
\newcommand{\map}{{\mathrm{map}}}
\DeclareMathOperator*{\colim}{colim}
\DeclareMathOperator*{\hocolim}{hocolim}
\DeclareMathOperator*{\holim}{holim}
\DeclareMathOperator*{\ho}{ho}
\title{Homotopy theory of \texorpdfstring{$G$}{G}-diagrams and equivariant excision}
\author{Emanuele Dotto and Kristian Moi} { \footnotetext{The first author was partially supported by the ERC Adv.Grant No.228082. The second author was supported by the Danish National Research Foundation through the Centre for Symmetry and Deformation (DNRF92)}}
\begin{document}
\maketitle
\thispagestyle{empty} 

\abstract{Let $G$ be a finite group acting on a small category $I$. We study functors $X \colon I \to \mathscr{C}$ equipped with families of compatible natural transformations that give a kind of generalized $G$-action on $X$. Such objects are called $G$-diagrams. When $\mathscr{C}$ is a sufficiently nice model category we define a model structure on the category of $G$-diagrams in $\mathscr{C}$. There are natural $G$-actions on Bousfield-Kan style homotopy limits and colimits of $G$-diagrams. We prove that weak equivalences between point-wise (co)fibrant $G$-diagrams induce weak $G$-equivalences on homotopy (co)limits. A case of particular interest is when the indexing category is a cube. We use homotopy limits and colimits over such diagrams to produce loop and suspension spaces with respect to permutation representations of $G$. We go on to develop a theory of enriched equivariant homotopy functors and give an equivariant ``linearity'' condition in terms of cubical $G$-diagrams. In the case of $G$-topological spaces we prove that this condition is equivalent to Blumberg's notion of $G$-linearity. In particular we show that the Wirthm\"{u}ller isomorphism theorem is a direct consequence of the equivariant linearity of the identity functor on $G$-spectra.}

\tableofcontents
\newpage

\section*{Introduction}
  
The concept of $G$-diagram was introduced, under different names, in Villarroel-Flores's thesis \cite{vilthesis} and independently in the paper \cite{js} of Jackowski and S{\l}omi{\'n}ska, and they were further studied in \cite{vilf}. In the current literature the theory of $G$-diagrams has been only partially developed. It is limited, due to the fact that it is used for very specific applications, to properties of homotopy colimits of $G$-diagrams in the category of spaces or of simplicial sets (see e.g. \cite{js} or \cite{thevenaz-webb}). 
The contribution of the present paper is a systematic treatment of $G$-diagrams in a nice (simplicial, cofibrantly generated, etc.) model category. An immediate advantage of this general theory is that it allows us to work in the category of genuine $G$-spectra. Additionally, it is the first treatment of homotopy limits of $G$-diagrams. As an application of this abstract framework, we set up a theory of equivariant enriched homotopy functors and formulate an ``equivariant excision'' condition in terms of cubical $G$-diagrams. This condition agrees with Goodwillie's notion of excision \cite{calcII} when $G$ is the trivial group, and with Blumberg's definition from \cite{Blumberg} for the category of $G$-spaces.
%treats for the first time also homotopy limits, as well as homotopy colimits of $G$-diagrams.
% A structured diagram as in (\ref{eq:11diag}) is an instance of a $G$-diagram. These objects were first introduced independently in \cite{js} and \cite{vilthesis}, and were further studied in \cite{vilf} and \cite{thevenaz-webb} in connection with homotopy colimits of topological spaces and simplicial sets. In the present paper we develop a model categorical framework for $G$-diagrams, allowing us to deal also with homotopy limit functors, and to work in other categories, for example in the category $G$-spectra.

Given a finite group $G$ acting on a category $I$ by functors $a(g) \colon I \to I$, a $G$-diagram in a category $\cat{C}$ is a functor $X \colon I \to \cat{C}$ together with natural transformations $g_X \colon X \to X \circ a(g)$ for every $g$ in $G$, which are compatible with the group structure. A map of $G$-diagrams is a natural transformation between the underlying diagrams that commutes with the structure maps (see Definitions \ref{def:gd} and \ref{def:gmaps}).
We write $\cat{C}^I_a$ for the resulting category of $G$-diagrams. The category $\cat{C}^I_a$ is isomorphic to the category of diagrams in $\cat{C}$ indexed on the Grothendieck construction of the action functor $a\colon G\rightarrow Cat$ (see Lemma \ref{lemma:eqdef} and \cite[2]{js}).
If the category of $G$-objects $\cat{C}^G$ is a sufficiently nice model category, such as $G$-spaces with the fixed points model structure, or orthogonal $G$-spectra with the genuine $G$-stable model structure, we prove the following \ref{modelstruct}.
\begin{theorem*}
   Let $\cat{C}$ be a $G$-model category (see \ref{defGmodelstr}). There is a cofibrantly generated $sSet^G$-enriched model structure on the category of $G$-diagrams $\cat{C}^{I}_a$ with weak equivalences (resp. fibrations) the maps of $G$-diagrams $f\colon X\rightarrow Y$ such that the value $f_i$ at the object $i \in ob I$ is a weak equivalence (resp. fibration) in the model category $\cat{C}^{G_i}$ of objects with an action of the stabilizer group $G_i$.
\end{theorem*}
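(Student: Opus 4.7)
My plan is to apply Kan's recognition theorem for cofibrantly generated model structures. Let $[I/G]$ be a set of representatives for the orbits of the $G$-action on the object set of $I$. For each $i \in [I/G]$, since $a(g)(i) = i$ for $g \in G_i$, the structure natural transformation $g_X$ of a $G$-diagram $X$ restricts at $i$ to an endomorphism of $X_i$, so evaluation lifts to a functor $\ev_i \colon \cat{C}^I_a \to \cat{C}^{G_i}$. Combining these gives
$$\ev \colon \cat{C}^I_a \longrightarrow \prod_{i \in [I/G]} \cat{C}^{G_i}.$$
Using the identification of Lemma \ref{lemma:eqdef} between $\cat{C}^I_a$ and the category of functors out of the Grothendieck construction of $a$, the functor $\ev_i$ is restriction along the inclusion of the full subcategory on $i$ (a copy of the group $G_i$) and admits a left adjoint $F_i$ given by left Kan extension. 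I take the generating (trivial) cofibrations of $\cat{C}^I_a$ to be the union, over $i \in [I/G]$, of the images under $F_i$ of chosen sets of generating (trivial) cofibrations of each $\cat{C}^{G_i}$; by the $G$-model category hypothesis, each $\cat{C}^{G_i}$ is itself cofibrantly generated, which makes this set-theoretically well-behaved.

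To invoke the recognition theorem I must check (a) the images of the generators are small and (b) relative cell complexes built from the generating trivial cofibrations are sent by each $\ev_j$ to trivial cofibrations in $\cat{C}^{G_j}$. Condition (a) is immediate, since colimits in $\cat{C}^I_a$ are computed pointwise and so smallness transports from each $\cat{C}^{G_i}$. Condition (b) is the main obstacle. Unpacking the pointwise coend formula for the Kan extension, $F_i(A \to B)_j$ is a coproduct, indexed by the $G_j$-orbits of morphisms $i \to j$ in the Grothendieck construction, of maps of the form $(G_j \times_H A) \to (G_j \times_H B)$, with $H$ the stabilizer of a chosen orbit representative. Each such summand is a trivial cofibration in $\cat{C}^{G_j}$ because $G_j \times_H (-)$ is a left Quillen functor from $\cat{C}^{H}$ to $\cat{C}^{G_j}$, a part of the $G$-model category data. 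Since trivial cofibrations in $\cat{C}^{G_j}$ are stable under coproducts, pushouts and transfinite compositions, the check is complete, and Kan's theorem produces the model structure with the prescribed fibrations and weak equivalences (with the condition at a single orbit representative equivalent, via the $G$-action, to the condition at every object).

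Finally, the $sSet^G$-enrichment on $\cat{C}^I_a$ is inherited from that of $\cat{C}$: tensors and cotensors by simplicial $G$-sets, as well as mapping spaces, are formed pointwise using the simplicial structure of $\cat{C}$ together with the diagonal $G$-action coming from both the $G$-action on the simplicial $G$-set and the $G$-diagram structure. The pushout-product axiom against $sSet^G$ reduces, after applying each $\ev_j$, to the corresponding pushout-product axiom for $\cat{C}^{G_j}$ enriched over $sSet^{G_j}$ via the restriction $sSet^G \to sSet^{G_j}$, which is again part of the $G$-model category data. Everything past the acyclicity check is formal; the central content of the theorem lies in recognizing that $\ev_i$ takes values in $\cat{C}^{G_i}$ and verifying that the induced cellular structure interacts well with the stabilizer-level equivariant model structures.
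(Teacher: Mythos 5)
Your proposal is correct and follows essentially the same route as the paper: both transfer the cofibrantly generated structure along the evaluation functors using their left adjoints $F_i$ (left Kan extension along $G_i \hookrightarrow G\rtimes_a I$), with the acyclicity check reducing to axiom (2) of Definition \ref{defGmodelstr}. The one simplification you miss is that $\ev_jF_i = K_{ji}\otimes_{G_i}(-)$ is already a left Quillen functor as a single biset operation with free commuting $G_j$- and $G_i$-actions, so your decomposition into $G_j$-orbits --- which silently requires restricting $A\to B$ along an induced embedding $H\hookrightarrow G_i$ and knowing that this restriction preserves trivial cofibrations --- can be bypassed entirely.
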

% \begin{theorem*}
%  Let $\cat{C}$ be a $G$-model category (see \ref{defGmodelstr}).
% There is a cofibrantly generated $sSet^G$-enriched model structure on the category of $G$-diagrams $\cat{C}^{I}_a$ with
% \begin{enumerate}
% \item weak equivalences the maps of $G$-diagrams $f\colon X\rightarrow Y$ whose restrictions at every vertex $f_i$ are equivalences in the model category $\cat{C}^{G_i}$ of objects with an action of the stabilizer group $G_i$ of the object $i$,
% \item fibrations the maps of $G$-diagrams $f\colon X\rightarrow Y$ whose restrictions at every vertex $f_i$ are fibrations in $\cat{C}^{G_i}$ for every object $i$ of $I$,
% \item generating cofibrations and acyclic cofibrations the collections
% \[F\mathcal{I}=\bigcup_{i\in I}F_i\mathcal{I}_i \ \ \ \ \ \mbox{and}\ \ \ \ \ F\mathcal{J}=\bigcup_{i\in I}F_i\mathcal{J}_i\]
% where $\mathcal{I}_i$ and $\mathcal{J}_i$ are respectively generating cofibrations and acyclic cofibrations of $\cat{C}^{G_i}$, and $F_i\colon \cat{C}^{G_i}\rightarrow \cat{C}^{I}_a$ is the left adjoint to the evaluation functor at the vertex $i$.
% \end{enumerate} 
% \end{theorem*}

The authors first became interested in $G$-diagrams while working on equivariant delooping results for so-called Real algebraic $K$-theory and Real topological Hochschild homology. A recurring example of a $G$-diagram in this work is the following:
\begin{example*}
Let $X$ be a pointed space with an action of $C_2$, the cyclic group of order two, with $\sigma \colon X \to X$ representing the action of the non-trivial group element. A diagram of pointed spaces 
\begin{equation}\label{eq:11diag}\xymatrix{Y \ar[r]^-p & X & Z \ar[l]_-q}\end{equation}
together with mutually inverse homeomorphisms $r \colon Y \to Z$ and $l \colon Z \to Y$ which cover $\sigma$, in the sense that $p \circ l = \sigma \circ q$ and $q \circ r = \sigma \circ p$, defines a $C_2$-diagram of pointed spaces. The pullback $Y \times_X Z$ inherits a natural $C_2$-action given by $(y,z) \mapsto (l(z),r(y))$, and similarly the homotopy pullback 
\[Y \times^h_X Z = \{(y,\gamma,z) \in Y \times X^I \times Z \, | \, p(y) = \gamma(0) {\text{ and }} \gamma(1) = q(z) \}\]
inherits the action $(y,\gamma,z) \mapsto (l(z), \sigma \circ \bar{\gamma}, r(y))$, where $\bar{\gamma}(t) = \gamma(1-t)$. The usual inclusion $Y \times_X Z \inj Y \times^h_X Z$ is equivariant with respect to these actions. Let $\R^{1,1}$ denote the sign representation of $C_2$ on $\R$ and let $\Omega^{1,1}X$ be the space of pointed maps from the one point compactification $S^{\R^{1,1}}$ to $X$ with $C_2$ acting by conjugation. If $Y$ (and hence $Z$) is contractible, then a contracting homotopy induces a $C_2$-homotopy equivalence
\[Y \times^h_X Z \simeq \Omega^{1,1}X.\]
On underlying spaces this just an instance of the well-known homotopy equivalence
\[\Omega X \simeq \holim (\ast \to X \leftarrow \ast).\]
% It is natural to ask when a map between diagrams, such as (\ref{eq:11diag}), induces a $C_2$-homotopy equivalence on homotopy limits and whether one can generalize this to construct loop spaces from other representation spheres.
\end{example*}

This example illustrates how limits and homotopy limits of punctured $C_2$-squares of spaces carry a $C_2$-action, and how these can be used to construct the loop space by the sign representation of $C_2$. More generally, when it makes sense to talk about the limit, colimit, homotopy limit or homotopy colimit of a $G$-diagram $X$ in any ambient category $\cat{C}$, these constructions have natural $G$-actions induced by the structure maps $g_X$ (see Corollary \ref{cor:lim-action} and \S\ref{sec:enr}). Moreover, the usual comparison maps $\lim X \to \holim X$ and $\hocolim X \to \colim X$ are equivariant as we already observed for the $C_2$-diagram (\ref{eq:11diag}). In general most constructions involving (co)limits and (co)ends enrichments applied to $G$-diagrams produce $G$-objects and equivariant maps between them. The homotopy limits and colimits of $G$-diagrams are homotopy invariant in the following sense (see also Proposition \ref{htpyinvlimcolim}):
\begin{proposition*} The functors $\holim\colon \cat{C}^{I}_a\rightarrow \cat{C}^{G}$ and $\hocolim\colon \cat{C}^{I}_a\rightarrow \cat{C}^{G}$ preserve equivalences between fibrant diagrams and point-wise cofibrant diagrams respectively. 
\end{proposition*}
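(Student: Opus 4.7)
My plan is to reduce both statements to the classical non-equivariant Bousfield--Kan homotopy invariance theorem by detecting $G$-equivalences on fixed points. First, I would write the homotopy (co)limit functors in Bousfield--Kan form as enriched (co)ends
\[ \holim_I X \;=\; \int_{i \in I} X_i^{N(i\backslash I)}, \qquad \hocolim_I X \;=\; \int^{i \in I} N(I/i)_+ \otimes X_i, \]
and note that the natural $G$-action on each side is the diagonal action coming from the $a$-action of $G$ on $I$ (hence on the nerves) combined with the structure maps $g_X$ of the $G$-diagram. This is the same action produced by the general formalism of \S\ref{sec:enr}.

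Second, since $\cat{C}^G$ is a $G$-model category, a morphism in $\cat{C}^G$ is a weak equivalence if and only if its $H$-fixed points form a weak equivalence in $\cat{C}$ for every subgroup $H \leq G$. This reduces the proposition to showing that, for each $H \leq G$, taking $H$-fixed points of $\holim f$ (resp. $\hocolim f$) gives a weak equivalence in $\cat{C}$.

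Third, and this is the technical core, I would identify these fixed-point objects with ordinary Bousfield--Kan homotopy (co)limits of ordinary non-equivariant diagrams. For $\holim$, ends are equalizers of products, both of which commute with $(-)^H$; unwinding the diagonal $H$-action on the (co)end yields an identification of $(\holim_I X)^H$ with a classical homotopy limit indexed by a fixed-point subcategory of the Grothendieck construction of the action, with diagram given pointwise by appropriate $H_i$-fixed points. Since $X$ is pointwise fibrant in the $\cat{C}^{G_i}$'s, and fibrant objects in a $G$-model category have fibrant fixed points for all subgroups, the resulting non-equivariant diagram is pointwise fibrant in $\cat{C}$ and the classical theorem applies. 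The dual argument for $\hocolim$ follows the same template, but uses that coproducts split into orbits under $(-)^H$ with stabilizers producing fixed-point terms, each cofibrant in $\cat{C}$ by pointwise cofibrancy of $X$.

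The main obstacle I anticipate is the precise identification of $(\hocolim X)^H$ with a classical homotopy colimit: whereas $\holim$ is built from products and equalizers, which interact cleanly with fixed points, $\hocolim$ involves coproducts indexed by simplices of the nerve and these decompose into $H$-orbits weighted by stabilizer data, so the resulting indexing category is less transparent and requires careful bookkeeping before the classical Bousfield--Kan theorem can be brought to bear.
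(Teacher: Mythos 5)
There is a genuine gap, and it sits in your second step. You assume that a map in $\cat{C}^{G}$ is a weak equivalence if and only if its $H$-fixed points are weak equivalences in $\cat{C}$ for every $H\leq G$. That is not part of Definition \ref{defGmodelstr}: a $G$-model category only comes equipped with chosen cofibrantly generated $sSet^{H}$-enriched model structures on the categories $\cat{C}^{H}$, subject to the induction/coinduction adjunctions being Quillen. The fixed-point structures of Example \ref{exfixedptsmodel} are one instance, but the projective (na\"{i}ve) structures are another, and the central motivating example --- orthogonal $G$-spectra with the genuine stable model structure --- has weak equivalences that are emphatically not detected by the categorical fixed-point functor applied to the underlying object. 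So the reduction to the classical Bousfield--Kan theorem on fixed points fails in exactly the generality in which the proposition is stated. A second, related problem is your third step: even when equivalences are fixed-point-detected, identifying $(\holim_I X)^{H}$ and $(\hocolim_I X)^{H}$ with classical homotopy (co)limits of fixed-point diagrams requires the fixed-point functors to commute with the (co)ends involved, i.e.\ cellularity hypotheses beyond the axioms; the paper only carries out such an identification in the special case of $G$-spaces (\S\ref{Gtopsec}), precisely because there it is available.

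The paper's proof stays entirely inside the equivariant enriched setting and never passes to fixed points. Homotopy limits and colimits are the cotensor with $N(I/-)$ and the tensor with $N(-/I)^{op}$; these $G$-diagrams of simplicial sets are shown to be cofibrant in the model structure of Theorem \ref{modelstruct} (Example \ref{ex:ncof}, using the bar resolution $qX$ and Reedy cofibrancy of $B(r_\ast r)X$); and Proposition \ref{htpyinvtarget} --- proved by Ken Brown's lemma, an equivariant adjunction, and axiom SM7 in each $\cat{C}^{G_i}$, exactly as in Proposition \ref{htpyinvhom} --- then yields the homotopy invariance. If you want to rescue your strategy, you must restrict to $G$-model categories arising from cellular fixed points as in Example \ref{exfixedptsmodel}, which excludes genuine $G$-spectra and so proves a strictly weaker statement.
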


We prove other fundamental properties of these equivariant homotopy limits and colimits functors, analogous to classical theorems from homotopy theory of diagrams:
\textit{\begin{itemize}
\item \ref{Gcof} Homotopy cofinality theorem for homotopy limits and colimits of $G$-diagrams, generalizing the results \cite[1]{thevenaz-webb} and \cite[§6]{vilf},
\item \ref{Fubini} A twisted Fubini theorem, showing that homotopy colimits of $G$-diagrams over a Grothendieck construction can be calculated ``point-wise'' (an equivariant analogue of \cite[26.5]{CS}). As an immediate corollary we obtain  an equivariant analogue of Thomason's homotopy colimit theorem from \cite{thomason},
\item \ref{elmendorf} An Elmendorf theorem, showing that for suitable ambient categories one can equivalently define the homotopy theory of $G$-diagrams by replacing $G$ with the opposite of its orbit category (an equivariant analogue of the classical result of \cite{Elmendorf}).
\end{itemize}}

\vspace{.5cm}

As an application of this model categorical theory of $G$-diagrams, we define and study equivariant excision. Classically, a homotopy invariant functor between model categories is excisive if it sends homotopy cocartesian squares to homotopy cartesian squares (see \cite{calcII}). Blumberg shows in \cite{Blumberg} that this notion is not well behaved when the categories involved are categories of $G$-objects; enriched homotopy functors on the category of pointed $G$-spaces $Top_{\ast}^G\rightarrow Top_{\ast}^G$ that are classically linear (excisive and sending the point to a $G$-contractible space) are a model only for the category of na\"{i}ve $G$-spectra. In order to model genuine $G$-spectra,  one needs a property stronger than classical  linearity. Blumberg achieves this by adding an extra condition to linearity; a compatibility condition with equivariant Spanier-Whitehead duality.

In the present paper we take a different approach to equivariant excision, following the idea that the relation between equivariant excision and excision should resemble the relation between genuine $G$-spectra and na\"{i}ve $G$-spectra.
Instead of adding an extra condition to classical excision, we replace squares by ``equivariant cubes'', similarly to the way one replaces integers with $G$-representations in defining $G$-spectra.
For a finite $G$-set $J$ we consider the poset category $\mathcal{P}(J)$ of subsets of $J$ ordered by inclusion. This category inherits a $G$-action from the $G$-action on $J$.

\begin{definition*}[$G$-excision] A $J$-cube $X$ in $\cat{C}$ is a $G$-diagram in $\cat{C}$ shaped over $\mathcal{P}(J)$, i.e. it is an object of $\cat{C}^{\mathcal{P}(J)}_a$. We say that $X$ is homotopy cartesian if the canonical map
\[X_{\emptyset}\longrightarrow \holim_{\mathcal{P}(J)\backslash\emptyset}X\]
is a weak equivalence in the model category of $G$-objects $\cat{C}^G$. Dually, it is homotopy cocartesian if the canonical map $\displaystyle\hocolim_{\mathcal{P}(J)\backslash J}X\rightarrow X_J$ is an equivalence in $\cat{C}^G$.
A suitably homotopy invariant functor $\Phi\colon \cat{C}^G\rightarrow \cat{D}^G$ is called $G$-excisive if it sends homotopy cocartesian $G_+$-cubes to homotopy cartesian $G_+$-cubes.
\end{definition*}

 Here $G_+$ is the set $G$ with an added disjoint base point, and $G$ acts on it by left multiplication. It plays the role of a ``regular'' $G$-set, analogous to the regular representation of $G$ in stable equivariant homotopy theory. The added basepoint has an important role, discussed in details in \ref{nobasepoint}.
 We prove in \ref{Glintop} that this notion of $G$-excision is equivalent to Blumberg's definition from \cite{Blumberg} when $\cat{C}$ is the category of pointed spaces.
The paper contains a series of fundamental properties of $G$-excision, that appropriately reflect the fundamental properties of excision to a genuine equivariant context. They can be summarized as follows: 
\textit{\begin{itemize}
\item \ref{squaresinG} A $G$-excisive functor $\cat{C}^G\rightarrow \cat{D}^G$ is classically excisive, that is, it sends homotopy cocartesian squares in  $\cat{C}^G$ to  homotopy cartesian squares in $\cat{D}^G$,
\item\ref{GlinHlin} A $G$-linear functor is also $H$-linear for every subgroup $H$ of $G$,
\item\ref{classlintospec} Every enriched $G$-linear homotopy functor $\Phi$ from finite $G$-CW-complexes to $G$-spectra is equivalent to one of the form $E_{\Phi}\wedge{(-)}$ for some $G$-spectrum $E_\Phi$, 
\item\ref{Gcartcocart} The identity functor on $G$-spectra is $G$-excisive: For any finite $G$-set $J$, a $J$-cube of spectra is homotopy cartesian if and only if it is homotopy cocartesian,
\item\ref{linandwedges} Any $G$-excisive reduced homotopy functor $\Phi \colon \cat{C}^G\rightarrow \cat{D}^G$ satisfies the Wirthm\"{u}ller isomorphism theorem, that is, the canonical map $\Phi(G\otimes_Hc)\rightarrow \hom_H(G,\Phi(c))$ is an equivalence in $\cat{D}^G$ for every subgroup $H$ of $G$ and $H$-object $c$ of $\cat{C}^H$.
\item\ref{corlinandloops},\ref{Gderivative} If $\cat{D}^G$ is suitably presentable, a construction similar to Goodwillie's derivative of \cite{calcII} defines a universal $G$-excisive approximation to any homotopy functor $\cat{C}^G\rightarrow \cat{D}^G$.
\end{itemize}}

These properties have interesting consequences for the identity functor on $G$-spectra. The fact that it is $G$-excisive shows that the theory of equivariant cubes provides a good context in which the category of $G$-spectra is ``$G$-stable''. Moreover, Theorem \ref{linandwedges} applied to the identity functor on $G$-spectra gives a new proof of the classical Wirthm\"{u}ller isomorphism theorem. An analysis of the structure of the proofs of \ref{linandwedges} and \ref{Gcartcocart} gives the following argument: The identity on $G$-spectra is $G$-excisive as a direct consequence of the  equivariant Freudenthal suspension theorem, by formally manipulating homotopy limits and colimits. Given an $H$-equivariant spectrum $E$, there is an explicit homotopy cocartesian $(G/H)_+$-cube of spectra $WE$ with initial vertex $(WE)_\emptyset=G_+\wedge_HE$, and with $\displaystyle\holim_{\mathcal{P}(G/H_+)\backslash\emptyset}WE=F_H(G_+,E)$. By $G$-excision for the identity functor $WE$ is homotopy cartesian, that is, the canonical map $G_+\wedge_HE\rightarrow F_H(G_+,E)$ is a stable equivalence of $G$-spectra.

\subsection*{Acknowledgments}
We wish to thank Irakli Patchkoria for help with proofreading and for pointing out a mistake in an earlier version of this article. We would also like to thank Ib Madsen for his enduring support and for encouraging us to write this paper.

\section{Definitions and setup}\label{sec:def}

\subsection{Categories of \texorpdfstring{$G$}{G}-diagrams}
We first introduce some notation and conventions. If $\cat{C}$ is a (possibly large) category and $I$ is a small category we write $\cat{C}^I$ for the usual category of functors from $I$ to $\cat{C}$. By topological space we will mean compactly generated weak Hausdorff space and $Top$ is the category of such spaces with continuous maps between them. We write $Map(X,Y)$ for the space of maps from $X$ to $Y$ endowed with the compact-open topology. The based variants of the above are $Top_\ast$ and $Map_\ast(X,Y)$.

In the following $\cat{C}$ will be a category, $G$ a finite group, and $I$ a small category. By a slight abuse of notations we will also write $G$ for the category with one object $\ast$ and one morphism $g \colon \ast \to \ast$ for each element $g \in G$, and with composition given by $g \circ h = gh$. The group $G$ will act on $I$ from the left and we will encode the action as a functor $a \colon G \to Cat$ sending $\ast$ to $I$. Most of the content of this section can be found in the work of Jackowski-S{\l}omi{\'n}ska\cite{js} or in Villarroel-Flores's paper \cite{vilf}.  

\begin{definition}(cf. \cite[2.2]{js}, \cite[3.1]{vilf})\label{def:gd}Let $X \colon I \to \cat{C}$ be an $I$-shaped diagram in $\cat{C}$. A $G$-structure on $X$ with respect to the action $a$ is a collection of natural transformations $\{g_X \colon X \to X \circ a(g) \}$ such that
  \begin{enumerate}
  \item \label{def:gd-1}$e_X = id_X$
  \item \label{def:gd-2}$(g_X)_{a(h)} \circ h_X = (gh)_X$ for all $g,h \in G$,
  \end{enumerate}
where $(g_X)_{a(h)}$ is the natural transformation obtained by restricting $g_X$ along the functor $a(h) \colon I \to I$. An $I$-shaped diagram $X$ with a $G$-structure will be called an $I$-shaped $G$-diagram in $\cat{C}$ with respect to the action $a$, or simply a $G$-diagram in $\cat{C}$ if $I$ and $a$ are understood.
\end{definition}

In order to simplify the notation we will mostly write $g$ in stead of $a(g)$ when this does not cause confusion. Accordingly, when $X$ and $Y$ are $I$-indexed $G$-diagrams we will write $f_g$ for the restriction of a map $f \colon X \to Y$ along the functor $g = a(g) \colon I \to I$. In the later sections we will sometimes write $g$ instead of $g_X$.

\begin{definition}\label{def:gmaps}
  A map of $G$-diagrams $f \colon X \to Y$ is a natural transformation $f \colon X \to Y$ of underlying diagrams such that for each $g \in G$ the diagram
\[\xymatrix{X \ar[r]^f \ar[d]_-{g_X} &  Y \ar[d]^-{g_Y}\\
X \circ g \ar[r]_{f_g} & Y \circ g}
\]
commutes in $\cat{C}^I$. 
\end{definition}

The composite of two maps of $G$-diagrams is again a map of $G$-diagrams. For a fixed action $a$ of the group $G$ on $I$ we write $\cat{C}^I_a$ for the category whose objects are the $G$-diagrams in $\cat{C}$ with respect to $a$ and with morphisms the maps of $G$-diagrams. 

\begin{example}\label{Nover}
Let $[n]$ be the usual category with objects $0,1, \ldots, n$ and a morphism $i \to j$ if and only if $i \leq j$. For a small category $I$ the nerve $NI$ is the usual simplicial set with $NI_n = Fun([n],I)$. Taking over-categories gives a functor $N(I/-) \colon I \to sSet$. The $G$-action on $I$ gives maps $N_{/i,g} \colon N(I/i) \to N(I/gi)$ for $g \in G$ and $i$ an object of $I$, by mapping
\[(i_0\rightarrow\dots\rightarrow i_n\rightarrow i)\stackrel{g}{\longmapsto}(gi_0\rightarrow\dots\rightarrow gi_n\rightarrow gi)\]
 These maps combine to give a $G$-diagram structure on $N(I/-)$. Similarly the functor $N(-/I)^{op} \colon I^{op} \to sSet$ with the maps $N_{i,g/} \colon N(i/I)^{op} \to N(gi/I)^{op}$ defines a $G$-diagram in $sSet$.
\end{example}

Let $I$ and $J$ be small categories with $G$-actions $a$ and $b$ respectively and let $F \colon I \to J$ be a functor. We say that $F$ is $G$-equivariant if it commutes strictly with the $G$-actions, that is, if $F(gi) = gF(i)$ and $F(g\alpha) = gF(\alpha)$ for all objects $i$ in $I$ and morphisms $\alpha$ in $I$. If $Y$ is a $J$-shaped $G$-diagram then the restriction $F^\ast Y = Y \circ F$ has a naturally induced $G$-structure with maps $g_{(F^\ast Y)} =  F^\ast(g_Y)$.

Now assume that $\cat{C}$ is complete and cocomplete. Then the functor $F^\ast \colon \cat{C}^J \to \cat{C}^I$ has a left adjoint $F_\ast$ and a right adjoint $F_!$ given by left and right Kan extension, respectively. We will now see that if $X$ is an $I$-shaped $G$-diagram, then there are natural $G$-structures on $F_\ast X$ and $F_! X$. We treat the left Kan extension first.

The value of the functor $ F_\ast X$ on an object $j$ of $J$ is given by the coequalizer
\begin{equation*} \xymatrix{\displaystyle \coprod_{(i_0 \stackrel{\alpha}{\to} i_1, f \colon F(i_1) \to j)} X_{i_0} \ar @< 4pt> [r]^-s
\ar @<-4pt> [r]_-t & \displaystyle \coprod_{(i_0 , f \colon F(i_0) \to j)} X_{i_0}   \ar@{>>}[r] & F_\ast X _j,}
\end{equation*}
where $s$ projects onto the source of the indexing map $\alpha$ and $t$ maps into the target of $\alpha$ by the map $X(\alpha)$. For an element $g \in G$ the natural transformation $g_X$ induces a map of diagrams

\begin{equation*} \xymatrix{\displaystyle \coprod_{(i_0 \stackrel{\alpha}{\to} i_1, f \colon F(i_1) \to j)} X_{i_0} \ar @< 4pt> [r]^-s
\ar @<-4pt> [r]_-t \ar[d]^-{\coprod g_{X_{i_0}}} & \displaystyle \coprod_{(i_0 , f \colon F(i_0) \to j)} X_{i_0}   \ar@{>>}[r]\ \ar[d]^-{\coprod g_{X_{i_0}}}  & F_\ast X _j \ar @{-->} [d]^{g_{F_\ast X_j}} \\
\displaystyle \coprod_{(i_0' \stackrel{\alpha'}{\to} i_1', f' \colon F(i_1') \to gj)} X_{i_0'} \ar @< 4pt> [r]^-s
\ar @<-4pt> [r]_-t & \displaystyle \coprod_{(i_0' , f' \colon F(i_0') \to gj)} X_{i_0'}   \ar@{>>}[r] & F_\ast X _{gj}}
\end{equation*}
and the dotted arrow is the $j$-component of the natural transformation $g_{F_\ast X} \colon F_\ast X \to (F_\ast X)\circ g$. It is not hard to see that the set $\{ g_{F_\ast X} \}_{g \in G}$ constitutes a $G$-structure on $F_\ast X$ and that the underlying functor $F_\ast$ takes maps of $I$-indexed $G$-diagrams to maps of $J$-indexed $G$-diagrams. Similarly, for the right Kan extension $F_!$ a dual construction with equalizers gives a $G$-structure $\{ g_{F_! X} \}_{g \in G}$ on $F_! X$. We write simply $F_\ast X$ and $F_!X$ for the $G$-diagrams obtained in this way.

\begin{proposition}The constructions $F_\ast X$ and $F_!X$ define functors $F_\ast \colon \cat{C}^I_a \to \cat{C}^J_b$ and $ F_! \colon \cat{C}^I_a \to \cat{C}^J_b$.
\end{proposition}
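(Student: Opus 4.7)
The plan is to verify the axioms of Definition \ref{def:gd} and the morphism condition of Definition \ref{def:gmaps} by appealing to the universal property of the coequalizer (resp.\ equalizer) that defines the left (resp.\ right) Kan extension pointwise. Throughout I focus on $F_\ast$; the argument for $F_!$ is strictly dual.

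First, I would check that $\{g_{F_\ast X}\}_{g \in G}$ is a $G$-structure on $F_\ast X$. For axiom \ref{def:gd-1}, taking $g = e$, the vertical map $\coprod e_{X_{i_0}}$ is the identity on both coproducts in the construction given just above the statement, so by the universal property of the coequalizer the induced map $e_{F_\ast X}$ must also be the identity. For axiom \ref{def:gd-2}, fix $g,h \in G$ and evaluate at an object $j$ of $J$. Both $(g_{F_\ast X})_{b(h)} \circ h_{F_\ast X}$ and $(gh)_{F_\ast X}$ at $j$ arise from compatible coequalizer diagrams (one three-storey diagram stacking the one for $h$ on top of the one for $g$, the other the single-storey diagram for $gh$), and both are induced on the top coproducts by the natural transformation $(g_X)_{a(h)} \circ h_X = (gh)_X$; uniqueness of the factorization through the coequalizer forces the two to coincide.

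Second, I would check that for a map $f \colon X \to Y$ of $G$-diagrams, the natural transformation $F_\ast f \colon F_\ast X \to F_\ast Y$ is itself a map of $G$-diagrams. The square of Definition \ref{def:gmaps} is induced, via the coequalizer, from the corresponding square of coproducts; that square commutes because $f$ is a map of $G$-diagrams, i.e., $f_g \circ g_X = g_Y \circ f$ for each $g \in G$. Hence the induced square on coequalizers commutes, so $F_\ast f$ is a map of $G$-diagrams. Functoriality of $F_\ast \colon \cat{C}^I_a \to \cat{C}^J_b$ (preservation of identities and composition) is then inherited directly from the functoriality of the underlying left Kan extension $F_\ast \colon \cat{C}^I \to \cat{C}^J$, since the forgetful functor $\cat{C}^I_a \to \cat{C}^I$ is faithful and $F_\ast$ acts on underlying diagrams in the usual way.

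The whole argument is essentially bookkeeping: there is no substantive obstacle, since the $G$-structure maps $g_{F_\ast X}$ were defined precisely as the unique factorizations through the defining coequalizers, and every identity one must verify reduces to an identity already known at the level of the coproducts indexed by morphisms $F(i_0) \to j$. For $F_!$, the same reasoning applies with the defining equalizer diagram in place of the coequalizer, and with all arrows reversed where appropriate.
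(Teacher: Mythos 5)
Your proposal is correct and follows exactly the route the paper intends: the paper defines $g_{F_\ast X}$ as the unique map induced on the coequalizer and then simply asserts that "it is not hard to see" that these maps form a $G$-structure and that $F_\ast$ preserves maps of $G$-diagrams, which is precisely the bookkeeping you carry out via uniqueness of factorizations through the coequalizer (and dually the equalizer for $F_!$). No discrepancy with the paper's (omitted) argument.
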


A particularly interesting case of the above is when $J = \ast$ the category with one object and one morphism and trivial $G$-action. In this case the functors $F_\ast$ and $F_!$ are more commonly known as $\colim_I$ and $\lim_I$, respectively.

\begin{corollary}\label{cor:lim-action}
  Let $X$ be an $I$-indexed $G$-diagram. Then the above constructions induce natural left $G$-actions on $\colim_I X$ and $\lim_I X$.
\end{corollary}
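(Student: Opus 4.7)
The plan is to obtain the corollary as the special case of the preceding proposition in which $J = \ast$ is the terminal category (a single object with only its identity morphism), equipped with the trivial $G$-action $b$. The unique functor $F \colon I \to \ast$ is then tautologically $G$-equivariant, so the proposition supplies functors $F_\ast, F_! \colon \cat{C}^I_a \to \cat{C}^\ast_b$.

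The first step is to unwind what the target category $\cat{C}^\ast_b$ is. An object is a functor $X \colon \ast \to \cat{C}$, i.e.\ an object of $\cat{C}$, together with natural transformations $g_X \colon X \to X \circ b(g) = X$ for each $g \in G$. Since $\ast$ has only one object, each such natural transformation is simply a morphism $g_X \colon X \to X$ in $\cat{C}$, and the axioms \ref{def:gd-1} and \ref{def:gd-2} of Definition \ref{def:gd} read $e_X = \id_X$ and $g_X \circ h_X = (gh)_X$. These are precisely the axioms of a left $G$-action on $X$. Morphisms of $G$-diagrams over $\ast$ unwind, through Definition \ref{def:gmaps}, to $G$-equivariant morphisms in $\cat{C}$. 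Hence $\cat{C}^\ast_b$ is canonically isomorphic to $\cat{C}^G$.

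The second step is to identify the Kan extensions along $F \colon I \to \ast$ with limit and colimit. Since $F^\ast \colon \cat{C}^\ast \to \cat{C}^I$ is the constant-diagram functor, its left adjoint $F_\ast$ is $\colim_I$ and its right adjoint $F_!$ is $\lim_I$. Combining this with the identification $\cat{C}^\ast_b \cong \cat{C}^G$ from the previous step, the proposition yields functors $\colim_I, \lim_I \colon \cat{C}^I_a \to \cat{C}^G$, which is the content of the corollary. Concretely, the $G$-action on $\colim_I X$ (resp.\ $\lim_I X$) is the one induced on the coequalizer (resp.\ equalizer) described in the paragraph before the proposition, with $j = \ast = gj$ for every $g$.

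There is no real obstacle here: the construction of the $G$-action, together with its functoriality in $X$, has already been carried out in the discussion preceding the proposition, and the only remaining work is the bookkeeping identification $\cat{C}^\ast_{\text{triv}} \cong \cat{C}^G$. The mild point to keep in mind is simply that the paper's convention has $F_\ast$ denoting the left Kan extension (hence colimit) and $F_!$ the right Kan extension (hence limit).
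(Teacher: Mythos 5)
Your proposal is correct and follows exactly the paper's route: the corollary is obtained by specializing the preceding proposition to $J=\ast$ with trivial action, identifying $\cat{C}^\ast_b$ with $\cat{C}^G$ and the Kan extensions $F_\ast$, $F_!$ with $\colim_I$, $\lim_I$. The paper leaves this specialization implicit; you have merely spelled out the bookkeeping.
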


\begin{example}\label{ex:prod}(Products and coproducts)
  Let $I$ be a discrete category with $G$-action, i.e., a $G$-set and consider a $G$-diagram $X$ in the category $Set$ of sets. The coproduct $\coprod_I X$ is the set of pairs $(i,x)$ where $x \in X_i$ and the action of $g \in G$ is given by 
\[g(x,i)=(g_{X_i}(x),gi).\]

The product $\prod_I X$ is the set of functions $\mathbf{x} \colon I \to \bigcup_{i \in I} X_i$ such that $\mathbf{x}(i) \in X_i$ for all $i \in I$. The action of $g \in G$ on $\mathbf{x} \in \prod_I X$ is determined by the equation
\[(g \mathbf{x})(gi) = g_{X_i}(\mathbf{x}(i)).\]
This example generalizes to arbitrary categories with products and coproducts but the notation becomes more cumbersome when one can no longer speak about elements of objects.
\end{example}

We now give an alternative description of $G$-diagrams which is sometimes easier to work with.

\begin{definition}
  Let $G \rtimes_a I$ be the following category:
  \begin{itemize}
  \item $ob G \rtimes_a I = ob I$
  \item A morphism $i \to j$ in $G \rtimes_a I$ is a pair $(g, \alpha \colon gi \to j)$ where $g \in G$.
  \item Composition is given by $(h, \beta \colon hj \to k) \circ (g, \alpha \colon gi \to j) = (gh, \beta \circ h\alpha \colon ghi \to k)$.

  \end{itemize}
\end{definition}

\begin{remark}
The category $G \rtimes_a I$ is the Grothendieck construction of the functor $a\colon G\rightarrow Cat$, sometimes denoted $G \int a$ (see e.g. \cite{thomason}). 
\end{remark}

A $G$-diagram $X$ gives rise to a functor $X^{\rtimes_a} \colon G \rtimes_a I \to \cat{C}$ by setting 
\[X^{\rtimes_a}_i = X_i\]
on objects, and defining
\[X^{\rtimes_a} (g, \alpha \colon gi \to j) = X(\alpha) \circ g_{X_i}\]
on morphisms. We leave it to the reader to check that this respects composition of maps.

\begin{lemma}\label{lemma:eqdef}
  The assignment $X \mapsto X^{\rtimes_a}$ is functorial and defines an isomorphism of categories 
\[\Phi \colon \cat{C}^I_a \iso \cat{C}^{G \rtimes_a I}.\]
\end{lemma}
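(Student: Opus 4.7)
The plan is to exhibit an explicit inverse functor $\Psi \colon \cat{C}^{G \rtimes_a I} \to \cat{C}^I_a$ and verify that $\Phi$ and $\Psi$ are mutually inverse. The construction of $\Psi$ exploits two distinguished classes of morphisms in $G \rtimes_a I$: first, a canonical subcategory inclusion $\iota \colon I \inj G \rtimes_a I$ sending $\alpha \colon i \to j$ to the pair $(e, \alpha)$; and second, for every $g \in G$ and object $i$, a ``translation'' morphism $(g, \id_{gi}) \colon i \to gi$. Every morphism of $G \rtimes_a I$ factors canonically as a translation followed by a morphism from $\iota(I)$, namely $(g, \alpha \colon gi \to j) = (e, \alpha) \circ (g, \id_{gi})$, and this factorization is the engine of the whole argument.

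Given a functor $Y \colon G \rtimes_a I \to \cat{C}$, I define $\Psi(Y)$ to have underlying diagram $Y \circ \iota \colon I \to \cat{C}$ and $G$-structure given on objects by $(g_{\Psi(Y)})_i := Y(g, \id_{gi})$. To check that this is a natural transformation $\Psi(Y) \to \Psi(Y) \circ a(g)$, one uses that in $G \rtimes_a I$ the two composites $(g, \id_{gj}) \circ (e, \alpha)$ and $(e, g\alpha) \circ (g, \id_{gi})$ both equal $(g, g\alpha \colon gi \to gj)$, so applying $Y$ gives the required naturality square. The cocycle condition \ref{def:gd}(\ref{def:gd-2}) reduces, by the same type of calculation, to the composition law for translation morphisms in $G \rtimes_a I$, and unitality follows from $(e, \id_i) = \id_i$. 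Verifying that $\Psi$ sends natural transformations to maps of $G$-diagrams amounts to applying naturality at the translation morphisms.

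Finally, one checks mutual inverseness. For $\Psi \circ \Phi = \id$, starting from a $G$-diagram $X$ the underlying functor is recovered since $X^{\rtimes_a}(e, \alpha) = X(\alpha) \circ e_{X_i} = X(\alpha)$, and the $G$-structure is recovered since $X^{\rtimes_a}(g, \id_{gi}) = X(\id_{gi}) \circ g_{X_i} = g_{X_i}$. For $\Phi \circ \Psi = \id$, the canonical factorization gives $\Phi(\Psi(Y))(g, \alpha) = \Psi(Y)(\alpha) \circ (g_{\Psi Y})_i = Y(e, \alpha) \circ Y(g, \id_{gi}) = Y(g, \alpha)$. The argument is essentially mechanical, and the main point requiring care is tracking the left--right conventions for the $G$-action on $I$ and for multiplication in $G$, so that the axioms of Definition \ref{def:gd} translate faithfully into the composition law of $G \rtimes_a I$; once this is fixed, all the verifications become short direct calculations.
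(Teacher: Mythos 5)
Your proposal is correct and follows essentially the same route as the paper: the paper also defines the inverse functor by restricting $Y$ along the canonical inclusion $\iota\colon I\hookrightarrow G\rtimes_a I$ and taking the $G$-structure maps to be $Y(g,\id\colon gi\to gi)$, leaving the verifications (which you spell out via the factorization $(g,\alpha)=(e,\alpha)\circ(g,\id_{gi})$) to the reader. The only difference is that you make the routine checks explicit, which is a harmless elaboration.
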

\begin{proof}
  The functoriality is clear. We define a functor $\Phi' \colon \cat{C}^{G \rtimes_a I} \to \cat{C}^I_a$ which is inverse to $\Phi$.
For a diagram $Y \colon G \rtimes_a I \to \cat{C}$ define the underlying diagram of $\Phi'(Y)$ to be $(Y|_I)$, i.e., the restriction of $Y$ along the canonical inclusion $\iota \colon I \inj G \rtimes_a I$ given by $\iota(i) =i$ and $\iota(\alpha \colon i \to j) = (e,\alpha \colon i \to j)$. For an element $g \in G$ the natural transformation $g_{\Phi'(Y)}$ is defined at an object $i$ by $Y(g,id \colon {gi} \to gi)$. Both naturality of the $g_{\Phi'(Y)}$'s and conditions \ref{def:gd-1}) and \ref{def:gd-2}) of Definition \ref{def:gd} follow from the functoriality of $Y$ with respect to morphisms in $G \rtimes_a I$. For a natural transformation $f\colon Y \to Z$ in $\cat{C}^{G \rtimes_a I}$ we define $\Phi'(f) = f|_I$. It is now easy to check that the functors $\Phi$ and $\Phi'$ are mutually inverse. 
\end{proof}

\begin{corollary}
  Let $\cat{C}$ be a bicomplete category. Then $\cat{C}_a^I$ is also bicomplete.
\end{corollary}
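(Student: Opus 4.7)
The plan is to reduce to a standard fact about functor categories via the preceding Lemma \ref{lemma:eqdef}. That lemma gives an isomorphism of categories $\Phi \colon \cat{C}^I_a \iso \cat{C}^{G\rtimes_a I}$, so it suffices to show that $\cat{C}^{G\rtimes_a I}$ is bicomplete. Since $G$ is a finite group and $I$ is a small category, the Grothendieck construction $G \rtimes_a I$ is a small category (its set of objects is $\operatorname{ob}I$ and a hom-set is a finite disjoint union of hom-sets of $I$, indexed by $g \in G$).

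Since $\cat{C}$ is bicomplete and $G\rtimes_a I$ is small, it is a classical fact that the diagram category $\cat{C}^{G\rtimes_a I}$ is bicomplete, with limits and colimits computed objectwise. Transporting along the isomorphism $\Phi$, we conclude that $\cat{C}^I_a$ admits all small limits and colimits.

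The only mildly nontrivial point to flag is that limits and colimits in $\cat{C}^I_a$ computed via the isomorphism agree with what one would compute directly: the underlying $I$-diagram of $\lim$ or $\colim$ in $\cat{C}^I_a$ is the pointwise (co)limit of underlying $I$-diagrams, and the $G$-structure is the one produced by the left/right Kan extension construction described immediately before Corollary \ref{cor:lim-action} (applied to the projection functor to the terminal category, or more generally to constant-shape diagrams of $G$-diagrams). No single step is an obstacle; the entire argument is a direct invocation of Lemma \ref{lemma:eqdef} together with the standard bicompleteness of functor categories with small source and bicomplete target.
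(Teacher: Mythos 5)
Your proof is correct and is essentially identical to the paper's: both transport bicompleteness of the functor category $\cat{C}^{G\rtimes_a I}$ across the isomorphism of Lemma \ref{lemma:eqdef}. The extra remark about how the resulting (co)limits relate to the pointwise construction is accurate but not needed for the statement.
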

\begin{proof}
  The diagram category $\cat{C}^{G \rtimes_a I}$ is bicomplete since $\cat{C}$ is. It follows from \ref{lemma:eqdef} that $\cat{C}_a^I$ is bicomplete.
\end{proof}

\subsection{Enrichments and homotopy (co)limits}\label{sec:enr}

If $\cat{C}$ is any category, then the category $\cat{C}^G$ is naturally enriched in left $G$-sets in the following way. For objects $c,d$ of $\cat{C}^G$ let $\cat{C}(c,d)$ be the set of maps between the underlying objects in $\cat{C}$. Then $G$ acts on $\cat{C}(c,d)$ by conjugation 
\[g \cdot f = g_d \circ f \circ (g^{-1})_c\]
where $(g^{-1})_c$ and $g_d$ represent the actions of $g^{-1}$ and $g$ on $c$ and $d$ respectively. The fixed points set $\cat{C}(c,d)^G$ is precisely the set of $G$-equivariant maps from $c$ to $d$. 

If $I$ is small a category with an action $a$ of $G$, then the category $\cat{C}^I_a$ of $G$-diagrams becomes enriched in left $G$-sets by taking $\underline{\cat{C}}_a^I(X, Y)$ to be the set $\cat{C}^I(X,Y)$ of maps of underlying diagrams $f \colon X \to Y$ with action given by
\[ g \cdot f = (g_Y)_{g^{-1}} \circ f_{g^{-1}} \circ (g^{-1})_X.\]
If $f$ is fixed under the action of $G$, then
\[f = g^{-1} f=  ((g^{-1})_Y)_g \circ f_{g} \circ g_X  = (g_Y)^{-1} \circ f_{g} \circ g_X.\]
In other words, $f$ is fixed if and only if the square \[\xymatrix{X \ar[r]^f \ar[d]_-{g_X} &  Y \ar[d]^-{g_Y}\\
X \circ g \ar[r]_{f_g} & Y \circ g}
\]
commutes for all $g \in G$. It follows that the fixed points $\underline{\cat{C}}^I_a(X, Y)^G$ are precisely the maps of $G$-diagrams $\cat{C}^I_a(X, Y)$. If $I = \ast$ then this statement reduces to the one above about maps in $\cat{C}^G$.

\begin{proposition}\label{prop:enrichment}
  Let $I$ and $J$ be small categories with $G$-actions $a$ and $b$, respectively and let $F \colon I \to J$ be an equivariant functor. Then, for $X$ an $I$-indexed $G$-diagram and $Y$ a $J$-indexed $G$-diagram the bijections
\[\phi_{X,Y} \colon \underline{\cat{C}}_a^I(X,F^\ast Y) \iso \underline{\cat{C}}^J_b(F_\ast X, Y )\]
and 
\[\psi_{X,Y} \colon \underline{\cat{C}}^I_a(F^\ast Y, X  ) \iso \underline{\cat{C}}_b^J(Y, F_!X) \]
induced by the adjunctions on underlying diagrams are $G$-equivariant.
\end{proposition}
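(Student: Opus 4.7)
The plan is to realize the adjunctions $F_* \dashv F^*$ and $F^* \dashv F_!$ on underlying diagrams as adjunctions enriched in left $G$-sets; the equivariance of $\phi$ and $\psi$ then follows formally. I would establish three building blocks: (i) the conjugation action on Hom-sets is compatible with composition, (ii) the functors $F^*$, $F_*$, $F_!$ act $G$-equivariantly on Hom-sets of underlying diagrams, and (iii) the relevant units and counits are maps of $G$-diagrams.

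For (i), expanding $g \cdot (h \circ f)$ and $(g \cdot h) \circ (g \cdot f)$ for composable $f, h$ between $G$-diagrams, one sees that the two differ only by an inserted factor $(g^{-1})_Y \circ (g_Y)_{g^{-1}}$, which equals $\id_{Y \circ g^{-1}}$ by the $G$-structure axiom $(g_X)_{a(h)} \circ h_X = (gh)_X$ applied to the pair $(g^{-1}, g)$. For (ii), the case of $F^*$ is immediate since $g_{F^*Y, i} = g_{Y, F(i)}$ by definition and $F$ is equivariant, so both $g \cdot F^*h$ and $F^*(g \cdot h)$ have $i$-component equal to $g_{Z, g^{-1}F(i)} \circ h_{g^{-1}F(i)} \circ (g^{-1})_{Y, F(i)}$. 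For $F_*$, I would use the coequalizer presentation together with the canonical reindexing isomorphism $F_*(X \circ g) \cong (F_*X) \circ g$ that exists because $F \circ g = g \circ F$, to check $F_*(g \cdot \tau) = g \cdot F_*\tau$ for any natural transformation $\tau$ between underlying diagrams of $G$-diagrams. The case of $F_!$ is dual. For (iii), the condition that $\eta_X \colon X \to F^*F_*X$ be a map of $G$-diagrams reads, at an object $i$, as $g_{F_*X, F(i)} \circ \eta_{X, i} = \eta_{X, gi} \circ g_{X, i}$, and this is precisely the commutativity of the dotted square that \emph{defines} the structure map $g_{F_*X}$ on the summand indexed by $(i, \id_{F(i)})$. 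Equivariance of the unit of $F^* \dashv F_!$ is dual.

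With (i)--(iii) in hand, the equivariance of $\phi$ follows from the formula $\phi^{-1}(h) = F^*h \circ \eta_X$ by the chain
\[\phi^{-1}(g \cdot h) = F^*(g \cdot h) \circ \eta_X = (g \cdot F^*h) \circ \eta_X = (g \cdot F^*h) \circ (g \cdot \eta_X) = g \cdot \phi^{-1}(h),\]
using (ii) for the second equality, (iii) for the third, and (i) for the fourth. The argument for $\psi$ is identical, using $\psi(f) = F_! f \circ \eta_Y$ with $\eta$ now the unit of $F^* \dashv F_!$. The main obstacle is step (ii) for the Kan extensions $F_*$ and $F_!$, which requires some careful bookkeeping with the (co)equalizer diagrams; once the reindexing isomorphism $F_*(-\circ g) \cong (F_*-)\circ g$ has been identified with the one built into the definition of the $G$-structure on $F_*X$, however, everything reduces to the functoriality of Kan extensions.
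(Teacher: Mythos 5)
Your proposal is correct and is essentially the paper's argument in slightly different packaging: the paper establishes $\phi(g\cdot f)=g\cdot\phi(f)$ by showing that $F^\ast(g\cdot\phi(f))\circ\eta_X = g\cdot f$ and invoking uniqueness of the adjoint lift, whereas you verify the same identity by computing $\phi^{-1}(g\cdot h)=F^\ast(g\cdot h)\circ\eta_X$ directly; the two computations rest on exactly the same facts, namely that $\eta_X$ is a map of $G$-diagrams (your step (iii), which is the paper's ``left hand square commutes by the definition of $g_{F_\ast X}$'') and that $F^\ast$ is equivariant on hom-sets (the paper's middle square plus the identification of the top and bottom composites). Your observation that only $F^\ast$-equivariance, and not $F_\ast$-equivariance, is needed once one works with $\phi^{-1}$ is a small but genuine simplification of the bookkeeping.
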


\begin{proof}
We show that $\phi = \phi_{X,Y}$ is equivariant, the argument for $\psi_{X,Y}$ is similar.

Let $f \colon X \to F^\ast Y$ be a map of diagrams and $g \in G$. Then $\phi(g \cdot f)$ is the unique map $F_\ast X  \to Y$ such that the diagram
\begin{equation}\label{eq:alpha}\xymatrix{ & X \ar[d]_-{\eta_X} \ar[r]^-{g \cdot f} & F^\ast Y\\
&F^\ast F_\ast X \ar@{-->}[ur]_-{F^\ast(\phi(g \cdot f))} & }
\end{equation}
commutes, where $\eta_X$ is the unit of the $(F_\ast,F^\ast)$-adjunction at the object $X$. Consider the following diagram:
\[\xymatrix{X \ar[rr]^{(g^{-1})_X} \ar[d]_-{\eta_X} && X \circ g^{-1} \ar[rr]^{f_{g^{-1}}} \ar[d]_-{\eta_{X,g^{-1}}} && (F^\ast Y) \circ g^{-1} \ar[rr]^-{(F^\ast g_Y)_{g^{ -1}}} \ar[d]^= && F^\ast Y \\
F^\ast F_\ast X \ar[rr]_-{F^\ast((g^{-1})_{F_\ast X})} && (F^\ast F_\ast X) \circ g^{-1} \ar[rr]_-{F^\ast \phi(f)_{g^{-1}}} &&  (F^\ast Y) \circ g^{-1} \ar[urr]_-{(F^\ast g_Y)_{g^{ -1}}}. && }
\]
The commutativity of the left hand square follows immediately from the definition of $g_{F_\ast X} $ and middle square commutes by the definition of $\phi(f)$. Composing the maps in the top row gives $(F^\ast g_Y)_{g^{-1}} \circ f_{g^{-1}} \circ (g^{-1})_X = g \cdot f$ and composing along the bottom row from $F^\ast F_\ast X$ to $F^\ast Y$ gives
\[F^\ast ((g_Y)_{g^{-1}} \circ \phi(f)_{g^{-1}} \circ (g^{-1})_{F_\ast X}) = F^\ast(g \cdot \phi(f)). \]
It follows that $F^\ast(g \cdot \phi(f))$ defines a lift in the diagram (\ref{eq:alpha}) so, by uniqueness of the lift, we conclude that $\phi(g \cdot f) = g \cdot \phi(f)$.
\end{proof}

Taking fixed points in Proposition \ref{prop:enrichment} we immediately get the following:

\begin{corollary}\label{cor:adj}
  The functors $F_\ast$ and $F_!$ are left and right adjoint, respectively, to the restriction functor $F^\ast \colon \cat{C}_b^J \to \cat{C}^I_a$. In particular the diagonal $\Delta_I=p^\ast \colon \cat{C}^G \to \cat{C}^I_a$ induced by the projection $p\colon I\rightarrow \ast$ has left and right adjoints $p_\ast=\colim_I$ and $p_!=\lim_I$, respectively. 
\end{corollary}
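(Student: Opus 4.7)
The plan is to deduce both statements of the corollary directly from Proposition \ref{prop:enrichment} by applying the $G$-fixed points functor to the enriched adjunction bijections. The key preliminary observation, already made in the discussion preceding Proposition \ref{prop:enrichment}, is that for any two $G$-diagrams $X, Y$ in $\cat{C}^I_a$ the fixed-point set $\underline{\cat{C}}_a^I(X,Y)^G$ coincides with $\cat{C}^I_a(X,Y)$: a natural transformation is fixed by the conjugation action exactly when it commutes with all the structure maps $g_X$ and $g_Y$, and this is precisely the condition defining a map of $G$-diagrams in \ref{def:gmaps}.

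Given this, I would take $G$-fixed points of the natural $G$-equivariant bijections
\[\phi_{X,Y} \colon \underline{\cat{C}}_a^I(X,F^\ast Y) \iso \underline{\cat{C}}^J_b(F_\ast X, Y), \qquad \psi_{X,Y} \colon \underline{\cat{C}}^I_a(F^\ast Y, X) \iso \underline{\cat{C}}_b^J(Y, F_!X)\]
furnished by Proposition \ref{prop:enrichment}. Since fixed points preserve bijections and naturality is preserved, this produces natural bijections
\[\cat{C}^I_a(X,F^\ast Y) \cong \cat{C}^J_b(F_\ast X, Y), \qquad \cat{C}^I_a(F^\ast Y, X) \cong \cat{C}^J_b(Y, F_!X),\]
exhibiting $(F_\ast,F^\ast)$ and $(F^\ast,F_!)$ as adjoint pairs of functors between $\cat{C}^I_a$ and $\cat{C}^J_b$.

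For the second statement I would specialize to the projection $p \colon I \to \ast$, where $\ast$ denotes the terminal category equipped with the trivial $G$-action; this $p$ is tautologically $G$-equivariant. The restriction $p^\ast \colon \cat{C}^G \to \cat{C}^I_a$ sends an object $c \in \cat{C}^G$ to the constant diagram $\Delta_I c$ with structure maps $g_{\Delta_I c} = \Delta_I(g_c)$. The left and right Kan extensions along $p$ compute the colimit and limit of the underlying diagram, and the induced $G$-actions coming from the coequalizer/equalizer construction of the $G$-structures on $p_\ast X$ and $p_! X$ are exactly the ones described in Corollary \ref{cor:lim-action}. The general adjunction above then specializes to the claimed adjunctions $(\colim_I \dashv \Delta_I \dashv \lim_I)$.

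There is no real obstacle: the argument is essentially the observation that the enriched adjunctions of Proposition \ref{prop:enrichment} are strong enough to encode the ordinary adjunctions on $G$-diagram categories, and the passage to fixed points is formal. The only point worth checking explicitly is that $F^\ast$ on $G$-diagrams agrees on underlying diagrams with the ordinary restriction functor so that the underlying adjunction bijections of Proposition \ref{prop:enrichment} are the usual Kan extension bijections, but this is immediate from the definition of $F^\ast$ given before Proposition \ref{prop:enrichment}.
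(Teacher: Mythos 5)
Your proposal is correct and is essentially the paper's own argument: the paper deduces Corollary \ref{cor:adj} precisely by taking $G$-fixed points in Proposition \ref{prop:enrichment}, using the earlier observation that $\underline{\cat{C}}^I_a(X,Y)^G = \cat{C}^I_a(X,Y)$. Your additional checks (naturality surviving passage to fixed points, and the specialization to $p\colon I\to\ast$) are exactly the routine verifications the paper leaves implicit.
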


Let $I$ be a category with $G$-action $a$ and let $G$ act diagonally on the product $I^{op}\times I$. Given a $G$-diagram $Z \colon I^{op}\times I \to \cat{C}$ recall that the \emph{end} $\int_i Z_{i,i}$ of $Z$ is the equalizer
\[\xymatrix{\displaystyle{\int_i} Z_{i,i} \ar@{>->}[r] & \displaystyle{\prod_i} Z_{i,i}   \ar @< 4pt> [r]^-s \ar @<-4pt> [r]_-t  & \displaystyle{\prod_{ \alpha \colon i \to j}} Z_{j,i}} \]
where $s$ and $t$ act on the left and right, respectively by the map $\alpha$. The end $\int_i Z_{i,i}$ inherits a left $G$-action by the maps
\[\label{eq:endg}\tag{$\ast$}\xymatrix{ \displaystyle{\int_i} Z_{i,i} \ar[r] \ar@{-->}[d]^{g_{(\int Z)}} & \displaystyle{\prod_i} Z_{i,i}   \ar @< 4pt> [r]^-s \ar @<-4pt> [r]_-t  \ar[d]^{\prod_i g_{Z_{(i,i)}}} & \displaystyle{\prod_{ \alpha \colon i \to j}} Z_{j,i} \ar[d]^{\prod_\alpha g_{Z_{(j,i)}}}\\
\displaystyle{\int_i} Z_{i,i} \ar[r]  &\displaystyle{\prod_i} Z_{i,i}   \ar @< 4pt> [r]^-s \ar @<-4pt> [r]_-t   & \displaystyle{\prod_{ \alpha \colon i \to j}} Z_{j,i}}
\]
The \emph{coend} $\int^i Z_{i,i}$ is the coequalizer
\[\xymatrix{\displaystyle{\coprod_{ \alpha \colon i \to j}} Z_{j,i} \ar @< 4pt> [r]^-s \ar @<-4pt> [r]_-t & \displaystyle{\coprod_i} Z_{i,i} \ar@{->>}[r]& \displaystyle{\int_i} Z_{i,i}.}\]
which inherits a $G$-action in a similar way. 

\begin{example}
  If $X,Y \colon I \to \cat{C}$ are diagrams in $\cat{C}$ then we can describe the set of maps (natural transformations) between them as the end
\[\cat{C}^I(X,Y) = \int_i \cat{C}(X_i,Y_i).\]
Similarly, for $G$-diagrams $X,Y$ in $\cat{C}^I_a$ there is a natural isomorphism of $G$-sets
\[\underline{\cat{C}}^I_a(X,Y) \cong \int_i\cat{C}(X_i,Y_i)\]
with the $G$-action on the left hand as described above.
\end{example}

By a \emph{simplicial category} we will mean a category $\cat{C}$ that is enriched, tensored and cotensored in simplicial sets, in the sense of e.g. \cite[2.2]{enmodcat} or \cite[II,2.1]{GJ}. This means that for any two objects $c$ and $d$ in $\cat{C}$ there is a simplicial set $Map_\cat{C}(c,d)$, and a natural bijection $\cat{C}(c,d) \cong Map_\cat{C}(c,d)_0$. Moreover, given a simplicial set $K$ there are objects $K \otimes c$ and $map_\cat{C}(K,c)$ of $\cat{C}$. These satisfy some associativity constraints and naturality conditions making $Map_\cat{C}(-,-)$ and $map_\cat{C}(-,-)$ contravariant functors in the first variable and covariant in the second variable and $-\otimes-$ covariant in both variables. Finally for all $c,d$ in $\cat{C}$ and $K$ in $sSet$ there are natural isomorphisms in $sSet$
\[Map_{\cat{C}}(K \otimes c,d) \cong Map(K,Map_{\cat{C}}(c,d)) \cong Map_{\cat{C}}(c,map_{\cat{C}}(K,d)),\]
where $Map$ with no subscript denotes the usual internal hom-object in $sSet$.

Using this structure we will now describe additional structure on the category $\cat{C}^I_a$ of $I$-indexed $G$-diagrams in a simplicial category $\cat{C}$. We begin with enrichment. We noted above that for a pair $X,Y$ of $G$-diagrams in $\cat{C}$ the set $\cat{C}^I(X,Y)$ has a $G$-action induced by the $G$-structures on $X$ and $Y$. This gives $\cat{C}^I_a$ the structure of a category enriched in left $G$-sets. The functor $i,j \mapsto Map_\cat{C}(X_i,Y_j)$ going from $I^{op}\times I$ to $sSet$ becomes a $G$-diagram by letting $g \in G$ act at $i,j$ by 
\[Map_\cat{C}(g_{X_i}^{-1}, g_{Y_j}) \colon   Map_\cat{C}(X_i,Y_j) \to  Map_\cat{C}(X_{gi},Y_{gj}).\]
\begin{definition}With $X,Y$ as above, set
\[Map_{\cat{C}^I_a}(X,Y) = \int_i Map_\cat{C}(X_i,Y_i)\]
with the $G$-action as described in the diagram (\ref{eq:endg}).
\end{definition}

In other words the mapping space $Map_{\cat{C}_a^I}(X,Y)$ is the equalizer
\[\xymatrix{Map_{\cat{C}_a^I}(X,Y) \ar@{>->}[r] & \displaystyle{\prod_i} Map_\cat{C}(X_i,Y_i)   \ar @< 4pt> [r]^-s \ar @<-4pt> [r]_-t  & \displaystyle{\prod_{ \alpha \colon i \to j}} Map_\cat{C}(X_j,Y_i) } \]

 It is not hard to see that this defines an enrichment of $\cat{C}^I_a$ in $sSet^G$ and that for each $n \geq 0$ there is an isomorphism of $G$-sets
\[Map_{\cat{C}^I_a}(X,Y)_n \cong \underline{\cat{C}}^I_a(\Delta^n \otimes X,Y).\]

\begin{definition}Let $K \colon I \to sSet$, $L \colon I^{op} \to sSet$, and $X \colon I \to \cat{C}$ be $G$-diagrams. We set
  \begin{align}
    map_I^a(K,X) &= \int_i map_\cat{C}(K_i ,X_i)\\
L \otimes^a_I X  &= \int^i L_i \otimes X_i
  \end{align}
and give both the $G$-actions from (\ref{eq:endg}).
\end{definition}

When $K$ and $L$ are respectively the $G$-diagrams of simplicial sets $N(I/-)$ and $N(-/I)^{op}$ from \ref{Nover}, these constructions specify to the following.

\begin{definition}For a $G$-diagram $X$ in $\cat{C}$ the homotopy limit and homotopy colimit of $X$ are respectively 
\[\holim_I X = map^a_I(N(I/-),X)\ \ \ \ \ \ \ \ \ \ \ \ \hocolim_I X = N(-/I)^{op} \otimes_I^a X\]
This constructions define functors $\holim,\hocolim\colon \cat{C}^{I}_a\rightarrow\cat{C}^G$.
\end{definition}
In the presence of a model structure the words homotopy limit and colimit will always refer to these particular construction and not, a priori, the derived functors of the limit and colimit respectively. 

Note that there are maps of diagrams $N(-/I)^{op} \to \ast$ and $N(I/-) \to \ast$, where $\ast$ denotes a chosen one-point simplicial set in both cases. From the formulas above it is easy to see that there are natural isomorphisms $map^a_I(\ast,X) \cong \lim X$ and $X \otimes^a_I \ast \cong \colim X$. The maps to the terminal diagrams induce equivariant maps
\[\lim X \to \holim X \ \ \ \ \ \ \ \ \ \hocolim X \to \colim X\]
This paper is in part motivated by the question ``when are these maps weak equivalences in $\cat{C}^G$?''
 
\subsection{Examples of \texorpdfstring{$G$}{G}-diagrams}
In this section we will provide many of the motivating examples for the theory of $G$-diagrams. The diagrams will usually have values in the category $Top_\ast$ of pointed spaces.

For the first two examples we need to fix some notation. Let $Z$ be a pointed space with an action by the finite group $G$. If $T$ is a finite left $G$-set, we write $\R[T]$ for the permutation representation with basis $\{e_t\}_{t \in T}$. The subspace of $\R[T]$ generated by the element $N_T = \sum_{t \in T} e_t$ is a one-dimensional trivial subrepresentation of $\R[T]$. We define $S^{\tilde{T}}$ to be the one-point compactification of the orthogonal complement of $\R \cdot N_T$ under the usual inner product. We write $\Omega^{\tilde{T}}Z$ for the $G$-space of continuous pointed maps $Map_\ast(S^{\tilde{T}}, Z)$ with the conjugation action of $G$ and $\Sigma^{\tilde{T}}Z$ for the smash product $S^{\tilde{T}} \wedge Z$ with the diagonal $G$-action.

\begin{example}The power set $\mathcal{P}(T)$ inherits a left $G$-action from the action on $T$. We think of the poset $\mathcal{P}(T) \setminus \emptyset$ as a category with $G$-action. Let $\omega^{\tilde{T}}Z$ be the $\mathcal{P}(T) \setminus \emptyset$-indexed $G$-diagram whose value on a subset $U \subseteq T$ is $\ast$ if $U \neq T$ and $Z$ if $U=T$. The $G$-structure on $\omega^{\tilde{T}}Z$ is given by the action of $G$ on $Z$ at the fixed object $T$ and by the unique maps $\ast \to \ast$ elsewhere in the diagram. We claim that there is a $G$-homeomorphism
\[\holim_{\mathcal{P}(T)\backslash\emptyset} \omega^{\tilde{T}}Z \cong \Omega^{\tilde{T}} Z\]
which is natural in $Z$. To see this we begin by noticing that the realization of the category $|N(\mathcal{P}(T) \setminus \emptyset)$ is $G$-homeomorphic to the (barycentric subdivision of the) standard simplex $\Delta^{|T|-1}$ in the complement of  $\R \cdot N_T$ in $\R[T]$. Since $\omega^{\tilde{T}}Z$ has all entries trivial except at the last vertex $T$ we see that $\holim \omega^{\tilde{T}}Z$ is homeomorphic to the subspace in $Map(\Delta^{|T|-1}, Z)$ of maps whose restriction to the boundary is the constant map to the base-point of $Z$, that is $\Omega^{\tilde{T}} Z$. The naturality is clear, so this proves the claim.

\end{example}

\begin{example}Similarly, we think of the poset $\mathcal{P}(T) \setminus T$ as a category with $G$-action and define the $G$-diagram $\sigma^{\tilde{T}}Z$ to have the value $Z$ at the vertex $\emptyset$ and $\ast$ elsewhere. The $G$-diagram structure is induced by the $G$-action on $Z$ and the unique maps $\ast \to \ast$. A similar argument to the one for $\omega^{\tilde{T}}Z$ shows that there is a natural $G$-homeomorphism
\[\hocolim_{\mathcal{P}(T)\backslash T} \sigma^{\tilde{T}}Z \cong \Sigma^{\tilde{T}}Z.\]
\end{example}

 \begin{example}\label{defloopsusp}
More generally, for any pointed category $\cat{C}$ and $G$-object $c\in\cat{C}^G$ define the $\tilde{T}$-loop space and $\tilde{T}$-suspension of $c$ respectively as the pullback and pushout in $\cat{C}^G$
\[\xymatrix{\Omega^{\tilde{T}}c\ar[r]\ar[d]&map_\cat{C}(
N\mathcal{P}(T)\backslash \emptyset,c)\ar[d]
\\
\ast\ar[r]&map_\cat{C}(\partial N\mathcal{P}(T)\backslash \emptyset,c),}
\ \ \ \ \ \ \
\xymatrix{(\partial N\mathcal{P}(T)^{op}\backslash T)\otimes c\ar[r]\ar[d]&\ast\ar[d]\\
(N\mathcal{P}(T)^{op}\backslash T)\otimes c\ar[r]&\Sigma^{\tilde{T}}c.
}\]
In the case of a pointed $G$-space or $G$-spectra we recover the usual equivariant loop and suspension spaces. These constructions define an adjoint pair of functors $(\Sigma^{\tilde{T}},\Omega^{\tilde{T}})$ on $\cat{C}^G$, by the sequence of natural bijections
\[\begin{array}{l}\cat{C}^G(\Sigma^{\tilde{T}}c,d)\cong
\cat{C}^{\mathcal{P}(2)\backslash 2}\big((N\mathcal{P}(T)^{op}\backslash T)\otimes c\leftarrow (\partial N\mathcal{P}(T)^{op}\backslash T)\otimes c\rightarrow\ast\otimes c,\Delta d\big)\cong\\
\vspace{-.3cm}
\\
\cat{C}^{\mathcal{P}(2)\backslash \emptyset}\big(\Delta c,
map_\cat{C}(
N\mathcal{P}(T)\backslash \emptyset,d)\rightarrow map_\cat{C}(\partial N\mathcal{P}(T)\backslash \emptyset,d)\leftarrow map_\cat{C}(\ast,d)\big)\cong\cat{C}^G(c,\Omega^{\tilde{T}}d).
\end{array}\]
Here we used that $\ast\otimes c=\ast$ and $map_\cat{C}(\ast,d)=\ast$, as $\cat{C}$ is pointed.
Similarly to the previous examples there are natural isomorphisms in $\cat{C}^G$
\[\holim_{\mathcal{P}(T)\backslash\emptyset} \omega^{\tilde{T}}c\cong \Omega^{\tilde{T}}c \ \ \ \textrm{ and } \ \ \ \hocolim_{\mathcal{P}(T)\backslash T} \sigma^{\tilde{T}}c\cong \Sigma^{\tilde{T}}c .\]
\end{example}

\begin{example}\label{ex:nf}
  We already saw that for a category $I$ with $G$-action the functor $N(I/-)\colon I\rightarrow sSet$ has an obvious $G$-structure. For a functor $F \colon I \to J$ and an object $j$ of $J$ one can form the over-category $F/j$ and the assignment $j \mapsto N(F/j)$ defines a functor $N(F/-) \colon J \to sSet$. If $F$ is an equivariant functor between categories with $G$-action there are functors $F/j \to F/(gj)$ induced by the $G$-actions, and after applying the nerve these give a $G$-structure on the diagram $N(F/-)$. In fact, $N(F/-)$ with this $G$-structure is the left Kan extension $F_\ast N(I/-)$ of $N(I/-)$ along $F$. This will be important later when we discuss homotopy cofinality and cofibrancy of $G$-diagrams. 
\end{example}

\begin{example}\label{ex:qx}
  Let $X \colon I \to \cat{C}$ be a diagram in a simplicial category $\cat{C}$. Define the diagram $qX$ by $qX_i = \hocolim_{I/i} u_i^\ast X$ where $u_i \colon I/i \to I$ is the functor that forgets the map to $i$. A map $\alpha \colon i \to j$ in $I$ induces a functor $I/i \to I/j$ and hence a map $qX_i \to qX_j$. The natural map from the homotopy colimit to the colimit induces maps 
\[qX_i = \hocolim_{I/i} u_i^\ast X \to \colim_{I/i} u_i^\ast X \iso X_i,\]
which combine to a map of diagrams $\rho_X \colon qX \to X$. If $X$ is a $G$-diagram then the functor $I/i \to I/gi$ induced by multiplication by $g \in G$ induces a map $qX_i \to qX_{gi}$ and together these maps constitute a $G$-structure on $qX$. It is a classical fact that the objects $\colim_I qX$ and $\hocolim_I X$ are isomorphic and in \ref{prop:hocolim2} of Proposition \ref{prop:hocolim} we prove that this isomorphism is $G$-equivariant when $X$ is a $G$-diagram.
\end{example}

\section{\texorpdfstring{$G$}{G}-diagrams and model structures}\label{sec:modstr}

This section provides a framework in which the equivariant constructions of homotopy limits and colimits defined earlier in the paper have homotopical sense, and are well behaved. The first step in developing this framework is to give the ambient category $\cat{C}$ enough structure to be able to define a model structure on the category of $G$-diagrams in $\cat{C}$. It turns out that having a model structure on the category $\cat{C}^G$ of $G$-objects in $\cat{C}$ is not enough, but one needs to have homotopical information for all the subgroups of $G$. The good context for a genuine equivariant homotopy theory seems to be that of an  ``equivariant model category''.

\subsection{Equivariant model categories}\label{secGmod}

Let $\cat{C}$ be a complete and cocomplete category, $G$ a finite group and $H,H'\leq G$ a pair of subgroups. A finite set $K$ with commuting left $H'$-action and right $H$-action induces a pair of adjoint functors
\[K\otimes_H(-)\colon\cat{C}^H\rightleftarrows \cat{C}^{H'}\colon \hom_{H'}(K,-)\]
The left adjoint is defined as
\[K\otimes_Hc=\colim\left(H\stackrel{\coprod_{K}c}{\longrightarrow}\cat{C}\right)\]
where $\coprod_{K}c$ is the $H$-equivariant colimit of the constant $H$-diagram $\Delta c$ on the discrete $H$-category $K^\delta$, (see Example \ref{ex:prod}), and the $H'$-action is induced by the $H'$-action on 
$K$. Dually, define
\[\hom_{H'}(K,d)=\lim\left(H' \stackrel{\prod_{K}c}{\longrightarrow}\cat{C}\right)\]
with left $H$-action defined by right action on $K$. These functors are adjoint via the sequence of natural isomorphisms
\[\begin{array}{lll}\cat{C}^{H'}(K\otimes_H c,d) \cong \cat{C}(K\otimes_H c,d)^{H'} \cong \cat{C}^H(\coprod_K c,d)^{H'}\cong\\
\cat{C}^{K}_a(\Delta_K c,\Delta_Kd)^{H'} \cong \cat{C}^{H}(c,\prod_Kd)^{H'} \cong \cat{C}^{H}(c,\lim_{H'}(\prod_Kd))=\\ \cat{C}^{H}(c,\hom_{H'}(K,d))
\end{array}\]

In the following we will always use the fixed point model structure on $sSet^G$
(see e.g. \cite[1.2]{Shipley03aconvenient}) unless otherwise is stated.

\begin{definition}\label{defGmodelstr}
A $G$-model category is a cofibrantly generated simplicial model category $\cat{C}$, together with the data of a cofibrantly generated model structure on $\cat{C}^H$ for every subgroup $H\leq G$, satisfying
\begin{enumerate}
\item The model structure on $\cat{C}^H$ together with the $sSet^H$-enrichment, tensored and cotensored structures induced from $\cat{C}$ forms a cofibrantly generated $sSet^H$-enriched model structure on $\cat{C}^H$,

\item For every pair of subgroups $H,H'\leq G$, and finite set $K$ with commuting free left $H'$-action and free right $H$-action the adjunction
\[K\otimes_H(-)\colon\cat{C}^H\rightleftarrows \cat{C}^{H'}\colon \hom_{H'}(K,-)\]
is a Quillen adjunction.

\end{enumerate}
\end{definition}

\begin{remark}\label{adjforget}
For $H'\leq H$ and $K=H$ with actions given by left $H'$ and right $H$ multiplications, the functor
\[H\otimes_H(-)\colon \cat{C}^{H}\longrightarrow \cat{C}^{H'}\]
is isomorphic to the functor $\res^{H}_{H'}$ that restricts the action. Similarly for $K=H$ with left $H$ multiplication and right $H'$ multiplication the functor  
\[\hom_H(H,-)\colon \cat{C}^{H}\longrightarrow \cat{C}^{H'}\]
is also isomorphic to the functor $\res^{H}_{H'}$. It follows from the second condition that $\res^{H}_{H'}$ is both a left and a right Quillen functor, and therefore it preserves cofibrations, acyclic cofibrations, fibrations, acyclic fibrations and equivalences between cofibrant or fibrant objects. 
\end{remark}

\begin{example}
Let $\cat{C}$ be a cofibrantly generated $sSet$-enriched model category. The collection of projective model structures (na\"{i}ve) on $\cat{C}^H$ for $H\leq G$ defines a $G$-model structure on $\cat{C}^G$. To see this, just notice that if $H'$-acts freely on $K$, a choice of section for the quotient map $K\rightarrow H'\backslash K$ induces a natural isomorphism
\[\res^{H}_{e}\hom_{H'}(K,c)\cong\prod_{H'\backslash K}c\]
where $\res^{H}_{e}\colon \cat{C}^H\rightarrow \cat{C}$ is the forgetful functor. Therefore $\hom_{H'}(K,-)$ preserves fibrations and acyclic fibrations.
\end{example}

\begin{example}\label{exfixedptsmodel}
Let $\cat{C}$ be a cofibrantly generated $sSet$-enriched model category, and fix a pair of finite groups $H\leq G$. For all subgroup $L\leq H$, the $L$-fixed points functor  $(-)^L\colon \cat{C}^H\rightarrow \cat{C}$ is defined as the composite
\[\cat{C}^H\stackrel{\res^{H}_L}{\longrightarrow}\cat{C}^L\stackrel{\lim}{\longrightarrow}\cat{C}\]
If these functors are cellular in the sense of \cite{GuiMay}, the category $\cat{C}^H$ inherits a $sSet^H$-enriched model structure where weak equivalences and fibrations are the maps that are sent by $(-)^L$ respectively to weak equivalences and fibrations in $\cat{C}$, for every subgroup $L\leq H$ (cf. \cite[2.8]{ManMay},\cite{GuiMay},\cite{Marc}). This construction specifies to the standard fixed points model structure on (pointed) spaces with $H$-action.

The collection of model categories $\cat{C}^H$, for $H$ running over the subgroups of $G$, assemble into a  $G$-model category. Let us see that the left adjoint $K\otimes_H(-)$ is a left Quillen functor. The generating cofibrations of $\cat{C}^H$ are by definition the images of the generating cofibrations of $\cat{C}$ by the functors
\[J\otimes(-)\colon \cat{C}\longrightarrow \cat{C}^H\]
where $J$ ranges over finite sets with left $H$-action. Similarly for generating acyclic cofibrations. There is a natural isomorphism
\[K\otimes_H(J\otimes(-))\cong (K\times_HJ)\otimes(-)\]
and the right hand functor preserves cofibrations and acyclic cofibrations by assumption.
Thus $K\otimes_H(-)$ preserves generating (acyclic) cofibrations. Since it is a left adjoint it preserves colimits, and therefore all (acyclic) cofibrations (see e.g. \cite[11.2]{hirsch}).
\end{example}

\begin{example}
Let $\cat{C}=\Sp^{O}$ be the category of orthogonal spectra and $G$ a finite group. The category $(\Sp^{O})^G$ of $G$-objects in $\Sp^O$ is naturally equivalent to the category of orthogonal $G$-spectra  $\mathscr{J}^{\mathscr{V}}_G\mathscr{S}$ of \cite{ManMay} indexed on a universe $\mathscr{V}$  for finite dimensional $G$-representations (cf.\cite[V.1]{ManMay}, \cite[2.7]{Schwede}). Given any subgroup $H\leq G$, we endow $(\Sp^{O})^H$ with the model structure induced by the stable model structure on $\mathscr{J}^{i^\ast\mathscr{V}}_H\mathscr{S}$ of \cite{ManMay} under the equivalence of categories $(\Sp^{O})^H\simeq \mathscr{J}^{i^\ast\mathscr{V}}_H\mathscr{S}$. Here $i\colon H\rightarrow G$ denotes the inclusion, and $i^\ast\mathscr{V}$ is the universe of representations of $H$ that are restrictions of representations of $G$ in $\mathscr{V}$. The adjunctions 
\[K\otimes_H(-)\colon(\Sp^O)^H\rightleftarrows (\Sp^O)^{H'}\colon \hom_{H'}(K,-)\]
are the standard induction-coinduction adjunctions, and they are Quillen adjunctions by \cite[V-2.3]{ManMay}. The collection of model categories $\{(\Sp^O)^H\}_{H\leq G}$ then forms a $G$-model category.
\end{example}

%--------------------MODEL ON G-DIAGS----------------

\subsection{The ``\texorpdfstring{$G$}{G}-projective'' model structure on \texorpdfstring{$G$}{G}-diagrams}

Let $G$ be a finite group, $\cat{C}$ a category, and $I$ a small category with $G$-action $a$. Given a $G$-diagram $X$ in $\cat{C}^{I}_a$ and an object $i\in I$, the vertex $X_i\in \cat{C}$ inherits from the $G$-structure on $X$ an action by the stabilizer group $G_i\leq G$ of the object $i$. This gives an evaluation functor $\ev_i\colon \cat{C}^{I}_a \rightarrow \cat{C}^{G_i}$ for every object $i$.

\begin{theorem}\label{modelstruct} Let $\cat{C}$ be a $G$-model category (see \ref{defGmodelstr}).
There is a cofibrantly generated $sSet^G$-enriched model structure on the category of $G$-diagrams $\cat{C}^{I}_a$ with
\begin{enumerate}
\item weak equivalences the maps of $G$-diagrams $f\colon X\rightarrow Y$ whose evaluations $\ev_i f$ are weak equivalences in $\cat{C}^{G_i}$ for every $i\in I$,
\item fibrations the maps of $G$-diagrams $f\colon X\rightarrow Y$ whose evaluations $\ev_i f$ are fibrations in $\cat{C}^{G_i}$ for every $i\in I$,
\item generating cofibrations and acyclic cofibrations
\[F\mathcal{I}=\bigcup_{i\in I}F_i\mathcal{I}_i \ \ \ \ \ \mbox{and}\ \ \ \ \ F\mathcal{J}=\bigcup_{i\in I}F_i\mathcal{J}_i\]
where $\mathcal{I}_i$ and $\mathcal{J}_i$ are respectively generating cofibrations and generating acyclic cofibrations of $\cat{C}^{G_i}$, and $F_i\colon \cat{C}^{G_i}\rightarrow \cat{C}^{I}_a$ is the left adjoint to the evaluation functor $\ev_i$.
\end{enumerate} 
 
\end{theorem}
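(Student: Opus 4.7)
The plan is to apply Kan's transfer principle for cofibrantly generated model structures (e.g.\ \cite[11.3.2]{hirsch}) along the family of adjunctions $F_i \dashv \ev_i$. Via Lemma~\ref{lemma:eqdef}, I would work with $\cat{C}^{G\rtimes_a I}$ and observe that $\ev_i$ is restriction along the inclusion $BG_i \hookrightarrow G\rtimes_a I$ sending the unique object to $i$ and $h \in G_i$ to $(h,\id_i)$. As a restriction functor it has both left and right Kan-extension adjoints; in particular the left adjoint $F_i$ exists, and $\ev_i$ preserves all colimits and limits.

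The crucial structural step is to identify the composite $\ev_k \circ F_i$. The pointwise formula for left Kan extension yields
\[\ev_k F_i(c) \cong (G\rtimes_a I)(i,k) \otimes_{G_i} c\]
in $\cat{C}^{G_k}$, with $G_k$ acting by post-composition. A direct computation from the composition rule in the Grothendieck construction shows that the commuting right $G_i$- and left $G_k$-actions on $(G\rtimes_a I)(i,k)$ are both free: for $h \in G_i$ one has $(g,\alpha)\circ(h,\id_i) = (hg,\alpha)$, which equals $(g,\alpha)$ only if $h=e$, and symmetrically on the $G_k$-side. Axiom~(2) of Definition~\ref{defGmodelstr} then shows that $\ev_k \circ F_i$ is a left Quillen functor $\cat{C}^{G_i} \to \cat{C}^{G_k}$ for every pair $i,k$. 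This is the main structural observation of the proof.

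With this in place, I would finish by verifying the two hypotheses of the transfer theorem. Smallness of the domains of $F\mathcal{I}$ and $F\mathcal{J}$ follows from smallness in each $\cat{C}^{G_i}$ because $\ev_k$ preserves filtered colimits. For the second hypothesis, applying $\ev_k$ to a pushout of some $F_i(j)$ with $j \in \mathcal{J}_i$ produces, by colimit preservation, a pushout of the acyclic cofibration $\ev_k F_i(j)$ in $\cat{C}^{G_k}$; stability of acyclic cofibrations under pushouts and transfinite composition in $\cat{C}^{G_k}$ then shows that every relative $F\mathcal{J}$-cell complex is a pointwise weak equivalence. The transfer theorem thereby produces the cofibrantly generated model structure with the stated weak equivalences, fibrations, and generators.

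Finally, the $sSet^G$-enrichment amounts to the pushout-product axiom, which I would reduce to generators $F_i(j)$ and $K \hookrightarrow L$ in $sSet^G$. Using compatibility of $F_i$ with the tensor/cotensor structure together with the fact that the restriction $sSet^G \to sSet^{G_i}$ is left Quillen, this reduces to the $sSet^{G_i}$-enriched pushout-product on $\cat{C}^{G_i}$, which is axiom~(1) of Definition~\ref{defGmodelstr}. The hardest part of the entire argument is recognizing the freeness of the commuting actions on $(G\rtimes_a I)(i,k)$ so that axiom~(2) of the $G$-model category hypothesis precisely supplies what the transfer theorem requires; everything else is routine.
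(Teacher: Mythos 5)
Your proposal is correct and follows essentially the same route as the paper: the paper also identifies $\ev_k F_i(c)\cong K_{ki}\otimes_{G_i}c$ with $K_{ki}=\hom_{G\rtimes_a I}(i,k)$ carrying free commuting $G_k$- and $G_i$-actions (so that axiom (2) of Definition \ref{defGmodelstr} makes $\ev_k F_i$ left Quillen), and then lifts the model structure along the adjunction $F=\coprod_i F_i \dashv \prod_i \ev_i$ using \cite[11.3.1]{hirsch} together with its enriched variant for the $sSet^G$-structure. The only cosmetic difference is that the paper packages the family of adjunctions into a single adjunction with $\prod_{i\in I}\cat{C}^{G_i}$ before transferring.
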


\begin{remark}
Under the isomorphism $\cat{C}^{I}_a\cong \cat{C}^{G\rtimes_a I}$ of Lemma \ref{lemma:eqdef} the evaluation functor $\ev_i$ corresponds to restriction along the functor $\iota_i\colon G_i\rightarrow G\rtimes_a I$ that sends the unique object to $i$ and a morphism $g$ to $(g,\id_{i}\colon gi=i\rightarrow i)$. Since $\cat{C}$ has all colimits a left adjoint for $\ev_i$ exists. Also notice that the model structure on $\cat{C}^{I}_a$ above does not correspond to the projective model structure on $\cat{C}^{G\rtimes_a I}$.
\end{remark}

Before proving the theorem we need to identify the left adjoints of the evaluation functors.
For fixed objects $i,j\in I$ let $K_{ji}$ be the morphisms set
\[K_{ji}=\hom_{G\rtimes_a I}(i,j)=\{(g\in G,\alpha\colon gi\rightarrow j)
\}\]
The stabilizer group $G_j$ acts freely on the left on $K_{ji}$ by left multiplication on $G$ and by the category action on the morphism component. The group $G_i$ acts freely on the right on $K_{ji}$ by right multiplication on the $G$-component.
For every $c\in \cat{C}^{G_i}$ define a diagram $F_ic\colon I\rightarrow \cat{C}$ by sending an object $j\in I$ to
\[(F_ic)_j=K_{ji}\otimes_{G_i} c\]
A morphism $\beta\colon j\rightarrow j'$ in $I$ induces a map $(F_ic)_j\rightarrow (F_ic)_{j'}$ via the $G_i$-equivariant map $\beta_\ast\colon K_{ji}\rightarrow K_{j'i}$
\[\beta_{\ast}(g,\alpha\colon gi\rightarrow j)=(g,\beta\circ\alpha)\]
The $G_i$-equivariant maps $g\colon K_{ji}\rightarrow K_{(gj)i}$ 
\[g(g',\alpha\colon g'i\rightarrow j)=(gg',g\alpha\colon gg'i\rightarrow gj)\]
define a $G$-structure on $F_ic$. The construction is clearly functorial in $c$, defining a functor $F_i\colon \cat{C}^{G_i}\rightarrow \cat{C}^{I}_a$.

\begin{lemma}\label{adjevi}
The functor $F_i\colon \cat{C}^{G_i}\rightarrow \cat{C}^{I}_a$ is left adjoint to the evaluation functor $\ev_i\colon \cat{C}^{I}_a\rightarrow \cat{C}^{G_i}$.
\end{lemma}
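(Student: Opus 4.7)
The plan is to identify $F_i$ as the left Kan extension along a canonical inclusion, and then obtain the adjunction from the standard Kan-extension/restriction adjunction. Let $\iota_i\colon G_i\hookrightarrow G\rtimes_a I$ be the functor sending the unique object of $G_i$ (viewed as a one-object category) to $i$ and a morphism $h\in G_i$ to $(h,\id_i\colon hi=i\rightarrow i)$ — this is the functor mentioned in the Remark following the theorem statement.

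First, under the isomorphism $\cat{C}^{I}_a\cong \cat{C}^{G\rtimes_a I}$ of Lemma \ref{lemma:eqdef}, the evaluation functor $\ev_i$ corresponds to $\iota_i^{\ast}$. Indeed, both functors send a diagram $X$ to its value $X_i$, and the $G_i$-action inherited on $X_i$ from the $G$-structure of $X$ is by inspection the same as the action on $\iota_i^{\ast} X$ given by the functoriality of $X\colon G\rtimes_a I\rightarrow \cat{C}$ on the endomorphisms $(h,\id_i)$ of $i$. Hence it suffices to show that $F_i$ corresponds to the left Kan extension $(\iota_i)_{!}$ under the same equivalence.

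Second, since $\cat{C}$ is cocomplete, $(\iota_i)_{!}$ exists and is computed by the standard coend formula
\[
((\iota_i)_!c)(j) \;=\; \int^{k\in G_i} \hom_{G\rtimes_a I}(\iota_i(k),j)\otimes c \;\cong\; \hom_{G\rtimes_a I}(i,j)\otimes_{G_i} c \;=\; K_{ji}\otimes_{G_i}c,
\]
where the middle identification uses that $G_i$ has a single object and that the right $G_i$-action on $\hom_{G\rtimes_a I}(i,j)=K_{ji}$ coming from precomposition with $\iota_i(h)=(h,\id_i)$ agrees with the free action described in the paper. Thus $(\iota_i)_{!}c$ and $F_ic$ agree object-wise.

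Third, one checks that the two functorial structures coincide. A morphism $\beta\colon j\to j'$ in $I$ corresponds to $(e,\beta)$ in $G\rtimes_a I$, and the induced map on $K_{ji}$ is postcomposition $(g,\alpha)\mapsto (g,\beta\circ\alpha)$, matching $\beta_{\ast}$ in the definition of $F_i$. Under Lemma \ref{lemma:eqdef}, the $G$-structure map $g_{F_ic}\colon (F_ic)_j\to (F_ic)_{gj}$ corresponds to the action of $(g,\id_{gj})\colon j\to gj$, which on $K_{ji}$ is $(g',\alpha)\mapsto (gg',g\alpha)$, again matching the formula in the paper. With these identifications in place, the adjunction
\[
\cat{C}^{I}_a(F_ic,X)\;\cong\;\cat{C}^{G\rtimes_a I}((\iota_i)_{!}c,X)\;\cong\;\cat{C}^{G_i}(c,\iota_i^{\ast}X)\;=\;\cat{C}^{G_i}(c,\ev_iX)
\]
is just the standard Kan-extension adjunction. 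The only real bookkeeping is the verification in the third step that the ad hoc $G$-structure defined on $F_ic$ coincides with the one coming from functoriality on $G\rtimes_a I$; this is a routine but unavoidable check, and constitutes the main obstacle of the argument.
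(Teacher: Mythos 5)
Your proof is correct, and its overall strategy is the one the paper uses: identify $F_i$ with the left Kan extension along $\iota_i\colon G_i\to G\rtimes_a I$ under the isomorphism of Lemma \ref{lemma:eqdef}, and then quote the restriction/Kan-extension adjunction. The difference lies entirely in how the identification $F_ic\cong (\iota_i)_!c$ is verified. You compute the Kan extension by the coend (tensor) formula over the one-object category $G_i$, which outputs $K_{ji}\otimes_{G_i}c$ on the nose, so the object-wise comparison is immediate and choice-free; the remaining work is, as you say, checking that the induced functoriality in $j$ and the $G$-structure agree with the ad hoc ones, which you do correctly (postcomposition with $(e,\beta)$ and with $(g,\id_{gj})$ recovering $\beta_\ast$ and the maps $g\colon K_{ji}\to K_{(gj)i}$). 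The paper instead uses the comma-category formula: it decomposes $\iota_i/j$ as a disjoint union of translation categories $Ez$ indexed by $K_{ji}/G_i$, writes $(L_ic)_j$ as a coproduct of colimits over these, and then constructs an explicit isomorphism to $K_{ji}\otimes_{G_i}c$ by choosing a section $s\colon G/G_i\to G$. Your route buys a cleaner, choice-free computation; the paper's route buys an explicit description of the comma categories $\iota_i/j$ and the translation-category decomposition, which it reuses later (e.g.\ in the proof of Theorem \ref{higherwirth} and, dually, in Proposition \ref{ptwisecof}). Both are complete proofs.
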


\begin{proof} We prove that under the isomorphism  $\cat{C}^{I}_a\cong \cat{C}^{G\rtimes_a I}$ of Lemma \ref{lemma:eqdef} the functor $F_i$ corresponds to the left Kan extension along the inclusion $\iota_i\colon G_i\rightarrow G\rtimes_a I$. For an object $j\in I$, the category $\iota_i/j$ is the disjoint union of categories
\[\iota_i/j=\coprod\limits_{\substack{z\in G/G_i\\ zi\rightarrow j}}Ez\]
where $Ez$ is the translation category of the right $G_i$-set $z$, with one object for every element of the orbit $z$, and a unique morphism $h\colon g\rightarrow g'$ whenever $g'=gh^{-1}$ for some $h\in G_i$.  An object $c\in \cat{C}^{G_i}$ induces a diagram $Ec\colon Ez\rightarrow G_i\stackrel{c}{\rightarrow} \cat{C}$, where the first functor collapses all the objects to the unique object of $G_i$, and sends the unique morphism $g\rightarrow gh^{-1}$ to $h$. The left Kan extension along $\iota_i$ at $c$ is by definition the diagram $L_ic$ with $j$-vertex
\[(L_ic)_j=\coprod\limits_{\substack{z\in G/G_i\\ zi\rightarrow j}}\colim_{Ez}Ec\]
Notice that the indexing set of the coproduct is precisely the orbit set $K_{ji}/G_i$.
There is a canonical map of diagrams $F_ic\rightarrow L_jc$, which at a vertex $j$ is induced by
\[\coprod_{K_{ji}}c\longrightarrow\coprod\limits_{K_{ji}/G_i}
\colim_{Ez}Ec=(L_ic)_j\]
which on the $(g,\alpha)$-component is the canonical map $c=(Ec)_g\rightarrow \colim_{E[g]}Ec$ to the $[g,\alpha]$-coproduct component. This map respects the $G_j$-structure, which on $L_ic$ acts on the indexing sets $K_{ji}/G_i$. To show that it is an isomorphism, choose a section $s\colon G/G_i\rightarrow G$ for the projection map. This gives a map 
\[(L_ic)_j=\coprod\limits_{K_{ji}/G_i}
\colim_{Ez}Ec\longrightarrow \coprod_{K_{ji}}c\longrightarrow K_{ji}\otimes_{G_i}c=(F_ic)_j\]
that on the $(z,\alpha)$-component is the map induced by $s(z)^{-1}g\colon(Ez)_g=c\rightarrow c$ to the $(s(z),\alpha)$-component.

\end{proof}

\begin{proof}[Proof of \ref{modelstruct}]
Weak equivalences and fibrations in $\cat{C}^{I}_a$ are by definition the morphisms that are sent to weak equivalences and fibrations, respectively, by the functor
\[\prod\limits_{i\in I}\ev_i\colon \cat{C}^{I}_a\longrightarrow \prod_{i\in I}\cat{C}^{G_i}\]
It follows from Lemma \ref{adjevi} that the coproduct of the functors $F_i$ defines a left adjoint
\[F\colon \prod_{i\in I}\cat{C}^{G_i}\stackrel{\prod F_i}{\longrightarrow} \prod_{i\in I}\cat{C}^{I}_a\stackrel{\coprod}{\longrightarrow}\cat{C}^{I}_a\]
for the product of the evaluation functors. The collections
\[\mathcal{I}=\bigcup_{i\in I}(\mathcal{I}_i\times\prod_{j\neq i}\id_{\emptyset_j})\ \ \ \ \mbox{and}\ \ \ \ \mathcal{J}=\bigcup_{i\in I}(\mathcal{J}_i\times\prod_{j\neq i}\id_{\emptyset_j})\]
generate respectively the cofibrations and the acyclic cofibrations of $\prod \cat{C}^{G_i}$ (see e.g. \cite[11.1.10]{hirsch}), where $\emptyset_j$ is the initial object of $\cat{C}^{G_i}$. Moreover their images by $F$ are precisely the families $F\mathcal{I}$ and $F\mathcal{I}$ from the statement.
Following \cite[11.3.1]{hirsch} and \cite[D.21]{Marc}, we prove that
\begin{enumerate}[i)]
\item $\prod \ev_j$ takes relative $F\mathcal{I}$-cell complexes to cofibrations: Let $\lambda$ be a non-zero ordinal and $X\colon \lambda\rightarrow \cat{C}^{I}_a$ a functor such that for all morphism $\beta\rightarrow\beta'$ in $\lambda$ the map $X_\beta\rightarrow X_{\beta'}$ is a pushout of a map in $F\mathcal{I}$. We need to show that for every $j\in I$ the map
\[\ev_j X_0\longrightarrow \ev_j\colim_{\lambda} X=\colim_\lambda\ev_j\circ X\]
is a cofibration in $\cat{C}^{G_i}$. Since $\ev_j$ commutes with colimits, each map $\ev_j X_\beta\rightarrow \ev_j X_{\beta'}$ is the pushout of a map in $\ev_jF\mathcal{I}$. Thus we need to show that every map in  $\ev_jF\mathcal{I}$ is a cofibration of $\cat{C}^{G_j}$. By definition of $\mathcal{I}$, this is the same as showing that for all $i,j\in I$ every generating cofibration of $\mathcal{I}_i$ is sent by $\ev_jF_i$ to a cofibration of $\cat{C}^{G_j}$. The composite functor $\ev_jF_i$ is by definition
\[\ev_jF_i=K_{ji}\otimes_{G_{i}}(-)\colon \cat{C}^{G_i}\longrightarrow \cat{C}^{G_j}\]
which sends generating cofibrations to cofibrations as part of the axioms of a $G$-model category (see \ref{defGmodelstr}).
\item $\prod \ev_j$ takes relative $F\mathcal{J}$-cell complexes to acyclic cofibrations: the argument is similar to the one above.
\end{enumerate}
Moreover $\prod \ev_j$ preserves colimits. By \cite[11.3.1]{hirsch} and \cite[D.21]{Marc}, the families $F\mathcal{I}$ and $F\mathcal{J}$ are respectively a class of generating cofibrations and acyclic cofibrations for the $sSet^G$-enriched model structure on $\cat{C}^{I}_a$ with the fibrations and weak equivalences of the statement.
\end{proof}

\begin{remark}\label{cofibgdiags}
Recall the isomorphism $\cat{C}^{I}_a\cong \cat{C}^{G\rtimes_a I}$ of Lemma \ref{lemma:eqdef}. The model structure on $\cat{C}^{I}_a$ does not correspond to the projective model structure on $\cat{C}^{G\rtimes_a I}$. However, every fibration (resp. weak equivalence) in $\cat{C}^{I}_a$ is in particular a fibration (resp. weak equivalence) in $\cat{C}^{G\rtimes_a I}$. This means that the cofibrations of $\cat{C}^{G\rtimes_a I}$ are also cofibrations in $\cat{C}^{I}_a$. In particular, a sufficient condition for an object of $\cat{C}^{I}_a$ to be cofibrant is to be cofibrant in the projective model structure of $\cat{C}^{G\rtimes_a I}$.
\end{remark}

\begin{proposition}\label{ptwisecof}
If $X\in \cat{C}^{I}_a$ is cofibrant, each vertex $X_i$ is cofibrant in $\cat{C}^{G_i}$.
\end{proposition}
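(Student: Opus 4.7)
The plan is to show that the property ``all vertices $X_i$ are cofibrant in $\cat{C}^{G_i}$'' defines a class of $G$-diagrams which is closed under retracts, transfinite compositions, and cobase changes of generating cofibrations, and which contains the initial object. Since every cofibrant object in a cofibrantly generated model category is a retract of a transfinite cell complex built from pushouts of generating cofibrations, the result will follow.

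The initial object in $\cat{C}^I_a$ has all vertices equal to the initial object $\emptyset$ of $\cat{C}$, which is cofibrant in every $\cat{C}^{G_i}$. For the closure properties, the key point is that each evaluation functor $\ev_j \colon \cat{C}^I_a \to \cat{C}^{G_j}$ preserves colimits: via the isomorphism $\cat{C}^I_a \cong \cat{C}^{G \rtimes_a I}$ of Lemma \ref{lemma:eqdef}, $\ev_j$ is restriction along $\iota_j \colon G_j \to G \rtimes_a I$, and since $\cat{C}$ is bicomplete this restriction admits both a left adjoint (namely $F_j$, by Lemma \ref{adjevi}) and a right adjoint (right Kan extension). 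Consequently $\ev_j$ commutes with pushouts and transfinite compositions, and it manifestly preserves retracts.

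Thus the argument reduces to showing that for every generating cofibration $F_i \alpha \in F\mathcal{I}$ (with $\alpha \colon A \to B$ in $\mathcal{I}_i$) the map $\ev_j F_i \alpha$ is a cofibration in $\cat{C}^{G_j}$. By construction $\ev_j F_i = K_{ji} \otimes_{G_i} (-)$, where $K_{ji}$ is the set of morphisms $i \to j$ in $G \rtimes_a I$, carrying a free left $G_j$-action and a free right $G_i$-action. The second axiom of a $G$-model category (Definition \ref{defGmodelstr}) applied to $K = K_{ji}$ tells us precisely that $K_{ji} \otimes_{G_i} (-) \colon \cat{C}^{G_i} \to \cat{C}^{G_j}$ is a left Quillen functor, hence takes the cofibration $\alpha$ to a cofibration in $\cat{C}^{G_j}$. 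Applying $\ev_j$ to a pushout diagram along $F_i \alpha$ yields a pushout along $\ev_j F_i \alpha$ in $\cat{C}^{G_j}$, which is therefore a cofibration between objects that are cofibrant by the inductive hypothesis.

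The main subtlety, and the reason the statement is not entirely formal, is the verification that $\ev_j F_i$ equals the left Quillen functor $K_{ji} \otimes_{G_i} (-)$ as $G_j$-equivariant functors — but this identification is precisely what Lemma \ref{adjevi} and its proof establish, so no additional work is needed. Everything else is bookkeeping with the small object argument, reducing an arbitrary cofibration to the generating ones.
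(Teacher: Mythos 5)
Your proof is correct, but it takes a different route from the paper's. The paper argues dually: it computes the \emph{right} adjoint $R_i$ of $\ev_i$ (the analogue of Lemma \ref{adjevi} with equalizers), identifies $\ev_j R_i = \hom_{G_i}(K_{ji}^\ast,-)$, invokes the second axiom of Definition \ref{defGmodelstr} to see that each $\ev_j R_i$ is right Quillen, and then uses that fibrations and weak equivalences in $\cat{C}^I_a$ are pointwise to conclude that $R_i$ itself is right Quillen; hence $\ev_i$ is left Quillen and preserves cofibrant objects. You instead run the small object argument through $\ev_j$, using that $\ev_j$ preserves colimits and that $\ev_j F_i = K_{ji}\otimes_{G_i}(-)$ sends generating cofibrations to cofibrations --- which is legitimate since both actions on $K_{ji}$ are free, so the same axiom applies on the left-adjoint side. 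In fact your argument essentially reproduces step (i) of the paper's proof of Theorem \ref{modelstruct}, where it is shown that $\ev_j$ takes relative $F\mathcal{I}$-cell complexes to cofibrations, so nothing new needs to be verified. The trade-off: your route avoids computing the right adjoint $R_i$ at all, at the cost of an explicit cell induction; the paper's route is shorter once $R_i$ is identified, and that identification is reused later (e.g.\ in the corollary following Theorem \ref{linandwedges} comparing $F_i$ and $R_i$). One cosmetic remark: the phrase ``a cofibration between objects that are cofibrant by the inductive hypothesis'' is not quite what you need --- the relevant point is that the cobase change of a cofibration is a cofibration, so composing with the cofibration $\emptyset\rightarrow(\text{previous stage})_j$ keeps each stage cofibrant; cofibrancy of the intermediate objects plays no role in that step.
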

\begin{proof}
An argument dual to the proof of Lemma \ref{adjevi} shows that the right adjoint $R_i$ to the evaluation functor $\ev_i\colon \cat{C}^{I}_a\rightarrow \cat{C}^{G_i}$ has $j$-vertex
\[\ev_jR_i=\hom_{G_i}(K_{ji}^\ast,-)\]
where $K_{ji}^\ast$ is the set $K_{ji}$ with left $G_i$-action $g\cdot  k:=k\cdot g^{-1}$ and right $G_j$-action $k\cdot g:=g^{-1}\cdot k$. Hence $\ev_jR_i$ is a right Quillen functor by the axioms of a $G$-model category.  Since the fibrations and the equivalences on $\cat{C}^{I}_a$ are point-wise, $R_i\colon \cat{C}^{G_i}\rightarrow \cat{C}^{I}_a$ is also a right Quillen functor. It follows that $\ev_i$ is a left Quillen functor, and in particular it preserves cofibrant objects.
\end{proof}

\begin{definition}\label{GQuillen}
Let $\cat{C}$ and $\cat{D}$ be $G$-model categories. A $G$-Quillen adjunction (resp. equivalence) is an enriched adjunction $\cat{C}\rightleftarrows \cat{D}$ such that the induced adjunction $\cat{C}^H\rightleftarrows \cat{D}^{H}$ is a Quillen adjunction (resp. equivalence) for every subgroup $H\leq G$.
\end{definition}

\begin{example}
The Quillen equivalence $|-|\colon sSet\rightleftarrows Top \colon Sing$ (see \cite[I]{GJ}) is a $G$-Quillen equivalence for any finite group $G$. 
\end{example}

\begin{corollary}
A $G$-Quillen equivalence $L\colon \cat{C}\rightleftarrows \cat{D}\colon R$ induces a Quillen equivalence 
\[L\colon \cat{C}^{I}_a\rightleftarrows \cat{D}^{I}_a\colon R.\] 
\end{corollary}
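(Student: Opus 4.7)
The plan is to reduce both the Quillen adjunction and the Quillen equivalence conditions to their pointwise analogues at each vertex $i\in I$, where the $G$-Quillen hypothesis furnishes the corresponding property on the stabilizer model categories $\cat{C}^{G_i}$ and $\cat{D}^{G_i}$. First I would observe that $L$ and $R$, being enriched functors between the underlying categories, extend to functors between categories of $G$-diagrams simply by acting pointwise: the $G$-structure natural transformations $g_X\colon X\to X\circ a(g)$ are sent to natural transformations $L(g_X)\colon LX\to LX\circ a(g)$ (and analogously for $R$) that manifestly satisfy conditions \ref{def:gd-1} and \ref{def:gd-2} of Definition \ref{def:gd}. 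The pointwise unit and counit of $(L,R)\colon\cat{C}\rightleftarrows\cat{D}$ then assemble into natural transformations of $G$-diagrams yielding an adjunction $L\colon\cat{C}^I_a\rightleftarrows\cat{D}^I_a\colon R$, and $\ev_i$ commutes with both $L$ and $R$.

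To see that this is a Quillen adjunction, note that by Theorem \ref{modelstruct} a map $f$ in $\cat{D}^I_a$ is a fibration (resp. acyclic fibration) precisely when $\ev_i f$ is a fibration (resp. acyclic fibration) in $\cat{D}^{G_i}$ for every $i$. Since $R\circ\ev_i=\ev_i\circ R$ and $R\colon \cat{D}^{G_i}\to\cat{C}^{G_i}$ is right Quillen by the $G$-Quillen adjunction hypothesis applied to the subgroup $G_i\leq G$, the functor $R$ preserves fibrations and acyclic fibrations in the $G$-diagram model structures.

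For the Quillen equivalence I would use the standard criterion: for every cofibrant $X\in\cat{C}^I_a$ and fibrant $Y\in\cat{D}^I_a$, a map $f\colon LX\to Y$ in $\cat{D}^I_a$ is a weak equivalence if and only if its adjoint $\bar f\colon X\to RY$ in $\cat{C}^I_a$ is. Proposition \ref{ptwisecof} gives that each $X_i$ is cofibrant in $\cat{C}^{G_i}$, and dually each $Y_i$ is fibrant in $\cat{D}^{G_i}$ (this latter fact is immediate from the definition: fibrations in $\cat{D}^I_a$ are pointwise fibrations, so $Y\to\ast$ being a fibration forces $Y_i\to\ast$ to be a fibration in $\cat{D}^{G_i}$). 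Weak equivalences being detected pointwise as well, the equivalence ``$\ev_i f$ is a weak equivalence in $\cat{D}^{G_i}$ iff $\ev_i\bar f$ is a weak equivalence in $\cat{C}^{G_i}$'' follows vertex-by-vertex from the $G_i$-Quillen equivalence applied to the adjoint pair $L(\ev_i X)\to \ev_i Y$ and $\ev_i X\to R(\ev_i Y)$. Taking the conjunction over all $i$ gives the desired statement.

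The main obstacle is essentially bookkeeping: one must be careful that cofibrancy and fibrancy in the $G$-diagram model structure transfer to the \emph{correct} stabilizer model categories $\cat{C}^{G_i}$, not merely to the underlying model category $\cat{C}$. The cofibrant half is precisely the content of Proposition \ref{ptwisecof}, while the fibrant half is automatic from the definition of fibrations in Theorem \ref{modelstruct}; once these two inputs are in place, the proof is a direct application of the pointwise $G_i$-Quillen equivalence.
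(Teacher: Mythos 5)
Your proof is correct and follows essentially the same route as the paper: verify the Quillen adjunction via the pointwise definition of (acyclic) fibrations, then check the equivalence criterion vertex-by-vertex using Proposition \ref{ptwisecof} for pointwise cofibrancy and the $G_i$-Quillen equivalences from Definition \ref{GQuillen}. The paper's own proof is just a terser version of the same argument.
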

\begin{proof}
The adjunction $L\colon \cat{C}^{I}_a\rightleftarrows \cat{D}^{I}_a\colon R$ is a Quillen adjunction since the right adjoint preserves fibrations and acyclic fibrations, as they are defined point-wise. Let $X\in \cat{C}^{I}_a$ be cofibrant and $Y\in \cat{D}^{I}_a$ fibrant. A map $X\rightarrow R(Y)$ is an equivalence if and only if its adjoint $L(X)\rightarrow Y$ is, since by Proposition \ref{ptwisecof} $X$ is point-wise cofibrant.
\end{proof}

\subsection{Cofibrant replacement of \texorpdfstring{$G$}{G}-diagrams}\label{sec:cofrepl}

When $\cat{C}$ is a cofibrantly generated simplicial model category and $I$ is a small category a standard way to replace a diagram $X \colon I \to \cat{C}$ by a cofibrant diagram is by the construction of Example \ref{ex:qx}. Namely, one defines $qX$ by $qX_i = \hocolim_{I/i} (u_i^\ast X)$ where $u_i \colon I/i \to I$ is the functor that forgets the map to $i$. Then $qX$ is cofibrant in the projective model structure on $\cat{C}^I$ and the natural map $\rho_X \colon qX \to X$ is a weak equivalence if $X$ has cofibrant values in $\cat{C}$. In this section we will generalize this to $G$-diagrams as follows:

\begin{theorem}\label{thm:qxcofrepl}
  If $X$ is a $G$-diagram such that for all $i$ in $I$ the value $X_i$ is cofibrant in $\cat{C}^{G_i}$, then the map $\rho_X \colon qX \to X$ is a cofibrant replacement of $G$-diagrams in the sense that $qX$ is cofibrant and $\rho_X$ is a weak equivalence.
\end{theorem}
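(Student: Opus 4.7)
My plan is to describe $qX$ explicitly via a simplicial decomposition and then prove the weak equivalence and cofibrancy claims separately. Unfolding $qX_i = \hocolim_{I/i} u_i^\ast X$ via the two-sided bar construction realizes $qX$ as the geometric realization of a simplicial object $A_\bullet X$ in $\cat{C}^I_a$ with
\[
(A_n X)_i = \bigsqcup_{j_0 \to \cdots \to j_n \to i} X_{j_0},
\]
whose $G$-structure sends the summand indexed by $\sigma = (j_0 \to \cdots \to j_n \to i)$ to the summand indexed by $g\sigma = (gj_0 \to \cdots \to gj_n \to gi)$ via $g_{X_{j_0}}$. The map $\rho_X$ collapses each chain to its endpoint and applies the composed structure maps of $X$.

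For the weak equivalence, I would evaluate at a fixed object $i$. The map $\ev_i(\rho_X)$ is the $G_i$-equivariant natural map $\hocolim_{I/i} u_i^\ast X \to X_i$. The over-category $I/i$ has the $G_i$-fixed terminal object $\id_i$ at which $u_i^\ast X$ takes the value $X_i$, cofibrant in $\cat{C}^{G_i}$ by hypothesis; the remaining values $X_j$ are cofibrant in $\cat{C}^{G_j}$ and hence cofibrant after restriction along subgroup inclusions by Remark \ref{adjforget}. The equivariant cofinality theorem \ref{Gcof} applied to the $G_i$-equivariant inclusion $\{\id_i\} \inj I/i$ then identifies this homotopy colimit $G_i$-equivariantly with $X_i$, so $\ev_i(\rho_X)$ is a weak equivalence in $\cat{C}^{G_i}$ for every $i$.

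For cofibrancy, I would filter $qX$ by the skeletal filtration $F_n qX$ of $|A_\bullet X|$ and show each inclusion $F_n qX \inj F_{n+1} qX$ is a pushout of a generating cofibration of $\cat{C}^I_a$. The attaching cells at stage $n+1$ are indexed by the non-degenerate $(n+1)$-simplices of $A_\bullet X$, namely chains $\sigma = (j_0 \to \cdots \to j_{n+1} \to i)$ with each map $j_k \to j_{k+1}$ non-identity. The group $G$ acts on the set of such chains across all $i$, and I decompose it into $G$-orbits. For a representative $\sigma$ with stabilizer $G_\sigma \leq G_{j_0}$, the corresponding summand of the attaching cell is $G$-equivariantly isomorphic to $F_{j_0}(G_{j_0} \otimes_{G_\sigma} \res^{G_{j_0}}_{G_\sigma} X_{j_0})$, and the attaching inclusion has the form $F_{j_0}$ applied to $\partial\Delta^{n+1} \otimes M \inj \Delta^{n+1} \otimes M$ in $\cat{C}^{G_{j_0}}$ where $M = G_{j_0} \otimes_{G_\sigma} \res^{G_{j_0}}_{G_\sigma} X_{j_0}$. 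By Remark \ref{adjforget} and the induction axiom of Definition \ref{defGmodelstr}, $M$ is cofibrant in $\cat{C}^{G_{j_0}}$; the simplicial enrichment makes the tensor with $\partial\Delta^{n+1} \inj \Delta^{n+1}$ a cofibration; and $F_{j_0}$ is left Quillen by Theorem \ref{modelstruct}. Thus each filtration step is a cofibration in $\cat{C}^I_a$, and the transfinite composite $qX$ is cofibrant.

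The main obstacle is the cofibrancy analysis: carrying out the $G$-orbit decomposition of the latching objects at each simplicial degree and verifying that they assemble precisely to $F_{j_0}$-induced cells on cofibrant $G_{j_0}$-objects requires careful combinatorial bookkeeping of the interaction between the $G$-action on chains and the degenerate simplices.
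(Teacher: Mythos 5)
Your treatment of the weak equivalence is the paper's argument: evaluate at $i$, note that $u_i^\ast X$ is point-wise cofibrant (using Remark \ref{adjforget} for the restrictions to stabilizers inside $G_i$), and apply equivariant cofinality \ref{Gcof} to the inclusion of the $G_i$-fixed terminal object of $I/i$. One small elision: cofinality produces an equivalence $X_i \to \hocolim_{I/i}u_i^\ast X$ \emph{into} the homotopy colimit; since that map is a section of $\ev_i(\rho_X)$ you still need two-out-of-three to conclude that $\ev_i(\rho_X)$ itself is an equivalence, as the paper notes.

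The cofibrancy half is the right strategy but the one concrete identification you make is wrong, and it is exactly the piece you admit you have not checked. The sub-$G$-diagram of the non-degenerate part of $A_{n+1}X$ spanned by the $G$-orbit of a chain $\sigma=(j_0\to\cdots\to j_{n+1}\to i)$ is not $F_{j_0}(G_{j_0}\otimes_{G_\sigma}\res^{G_{j_0}}_{G_\sigma}X_{j_0})$: the $I$-functoriality of $A_{n+1}X$ acts by post-composing the augmentation $j_{n+1}\to i$, so the orbit decomposition must be taken over truncated chains $\tau=(j_0\to\cdots\to j_{n+1})$, and the resulting piece at an object $j$ is $K_{j\,j_{n+1}}\otimes_{G_\tau}\res^{G_{j_0}}_{G_\tau}X_{j_0}$, i.e.\ $F_{j_{n+1}}\bigl(G_{j_{n+1}}\otimes_{G_\tau}\res^{G_{j_0}}_{G_\tau}X_{j_0}\bigr)$ --- the free functor based at the \emph{end} $j_{n+1}$ of the truncated chain, not at its source $j_0$ (the two agree only in simplicial degree $0$). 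With that correction your cells are free on cofibrant objects (by Remark \ref{adjforget} and axiom (2) of \ref{defGmodelstr}, since $G_\tau\leq G_{j_0}\cap G_{j_{n+1}}$ acts freely where needed) and the skeletal filtration argument does go through. Note, though, that the paper packages this bookkeeping more cheaply: Lemma \ref{lemma:eqcof} shows once and for all, by choosing orbit representatives in a single lifting problem, that any point-wise cofibrant $I^\delta$-indexed $G$-diagram is cofibrant; the non-degenerate summand of $B_n(r_\ast r)X$ is visibly $r_\ast$ of such a diagram and hence cofibrant because $r_\ast$ is left Quillen; therefore the latching map $L_n\inj L_n\amalg N_n$ is a cofibration, $B(r_\ast r)X$ is Reedy cofibrant (Lemma \ref{lemma:brreedy}), and realization preserves cofibrancy (Corollary \ref{cor:qxcof}). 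Your skeletal filtration is that realization lemma unwound by hand; following the paper's route spares you the orbit combinatorics of the chains entirely.
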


The proof is technical and will occupy the rest of this section. We begin by fixing some notation. Let $I$ be a small category with an action $a$ of $G$. Write $I^\delta$ for the discrete category with the same objects as $I$ but no non-identity morphisms. The inclusion $I^\delta \inj I$ is equivariant and  induces a restriction functor $r \colon \cat{C}^I_a \to\cat{C}^{I^\delta}_a$ with left adjoint $r_\ast$. We abbreviate $r(X)$ as $X^\delta$. Note that the functor $r$ preserves fibrations and weak equivalences and hence is a right Quillen functor. It follows that the left adjoint $r_\ast$ is a left Quillen functor. We say that an $I$-indexed $G$-diagram $X$ is \emph{point-wise cofibrant} if for each object $i$ in $I$ the value $X_i$ is cofibrant in $\cat{C}^{G_i}$. 
\begin{lemma}\label{lemma:eqcof}
  \begin{enumerate}[i)]
  \item \label{lemma:eqcof1} If $Y$ is an $I^\delta$-indexed $G$-diagram which is point-wise cofibrant, then $Y$ is cofibrant in $\cat{C}^{I^\delta}_a$.
 \item \label{lemma:eqcof2} In particular, if $X$  is a point-wise cofibrant $I$-indexed $G$-diagram then $r_\ast X^\delta$ is cofibrant in $\cat{C}^I_a$.
  \end{enumerate}
\end{lemma}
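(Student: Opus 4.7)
The plan is to reduce (ii) to (i) by a Quillen adjunction argument, and to prove (i) via an orbit decomposition of the indexing category.

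For (i), I would use the isomorphism $\cat{C}^{I^\delta}_a \cong \cat{C}^{G \rtimes_a I^\delta}$ of Lemma \ref{lemma:eqdef}. Because $I^\delta$ has only identity morphisms, the hom-sets of $G \rtimes_a I^\delta$ are $\hom(i,j) = \{g \in G \mid gi = j\}$, so the category decomposes as a disjoint union, indexed by the $G$-orbits on $\operatorname{ob} I$, of the translation groupoids of the orbits; each such translation groupoid is equivalent (as a category) to its stabilizer $G_{i_0}$ viewed as a one-object category. This yields
\[\cat{C}^{I^\delta}_a \;\cong\; \prod_{[i_0]\in G\backslash \operatorname{ob} I} \cat{C}^{G_{i_0}}.\]
By Theorem \ref{modelstruct} the fibrations and equivalences on the left are point-wise, so under this equivalence they correspond exactly to the component-wise fibrations and equivalences on the right, i.e.\ to the product model structure. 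The cofibrations therefore match as well, and $Y$ is cofibrant precisely when each component $Y_{i_0}$ is cofibrant in $\cat{C}^{G_{i_0}}$, which is the point-wise cofibrancy hypothesis. An equivalent argument avoiding the product decomposition is that the left adjoint $F_{i_0} \colon \cat{C}^{G_{i_0}} \to \cat{C}^{I^\delta}_a$ to $\ev_{i_0}$ is left Quillen (since $\ev_{i_0}$ preserves fibrations and equivalences by construction), and the orbit decomposition provides a natural isomorphism $Y \cong \coprod_{[i_0]} F_{i_0}(Y_{i_0})$, so $Y$ is cofibrant as a coproduct of cofibrant objects.

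For (ii), the restriction $r \colon \cat{C}^I_a \to \cat{C}^{I^\delta}_a$ is the identity on underlying vertices, so it preserves fibrations and weak equivalences, since on both sides these are defined point-wise in the same categories $\cat{C}^{G_i}$. Hence $r$ is right Quillen and its left adjoint $r_\ast$ preserves cofibrant objects. Since $X$ point-wise cofibrant implies $X^\delta = r(X)$ point-wise cofibrant, part (i) shows $X^\delta$ is cofibrant in $\cat{C}^{I^\delta}_a$, and therefore $r_\ast X^\delta$ is cofibrant in $\cat{C}^I_a$.

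The main obstacle is the bookkeeping in the orbit decomposition in (i), in particular matching the $G_{i_0}$-action on $Y_{i_0}$ used on the product side with the stabilizer action inherited from the $G$-structure, and handling the change-of-groups identifications between $Y_{i_0} \in \cat{C}^{G_{i_0}}$ and $Y_{gi_0} \in \cat{C}^{G_{gi_0}}$. These conjugation identifications are provided by the free $K \otimes_H (-)$ adjunctions built into the axioms of a $G$-model category in Definition \ref{defGmodelstr}, so the argument is essentially formal once the decomposition is set up.
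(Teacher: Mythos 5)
Your proof is correct, but part (i) is argued by a different route than the paper's. The paper proves (i) by a direct lifting argument: given a trivial fibration $f\colon Z\to W$, it chooses, for one representative $i$ of each $G$-orbit of $ob\,I$, a lift $\lambda_i\colon Y_i\to Z_i$ in $\cat{C}^{G_i}$ (using that $Y_i$ is cofibrant and $f_i$ is a trivial fibration there), and then extends equivariantly by the formula $\lambda_{gi}=g_{Z}\circ\lambda_i\circ g_{Y}^{-1}$, checking well-definedness via the $G_i$-equivariance of $\lambda_i$. You instead package the same orbit decomposition structurally, either as an equivalence $\cat{C}^{I^\delta}_a\simeq\prod_{[i_0]}\cat{C}^{G_{i_0}}$ (note this is an equivalence rather than an isomorphism of categories, since the translation groupoid of an orbit is only equivalent to $BG_{i_0}$; this is harmless, as lifting properties transfer along equivalences) or, more cleanly, as the isomorphism $Y\cong\coprod_{[i_0]}F_{i_0}(Y_{i_0})$ with $F_{i_0}$ left Quillen. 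Your coproduct argument is essentially the categorical abstraction of the paper's explicit lift: the formula $\lambda_{gi}=g_Z\lambda_i g_Y^{-1}$ is exactly what the adjunction $(F_{i_0},\ev_{i_0})$ produces. What your version buys is brevity and reuse of the machinery already set up for Theorem \ref{modelstruct} (the functors $F_i$ and Lemma \ref{adjevi}); what the paper's version buys is self-containedness and the explicit verification that the conjugated lifts glue, which you correctly identify as the only real bookkeeping issue and correctly resolve via the free $K\otimes_H(-)$ adjunctions of Definition \ref{defGmodelstr}. Your proof of (ii) coincides with the paper's.
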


\begin{proof}To see that part \ref{lemma:eqcof1}) holds, consider a square 
\begin{equation}\label{eq:cofsq} \xymatrix{&\emptyset \ar[d] \ar[r] & Z \ar@{>>}[d]^f_\sim \\
&Y \ar[r] & W}
\end{equation}
in $\cat{C}^{I^\delta}_a$, where the right hand vertical map is a trivial fibration and $\emptyset$ denotes the initial object. The map $f$ being a trivial fibration means exactly that each component $f_i \colon Z_i \to W_i$ is a trivial fibration in $\cat{C}^{G_i}$. Choose a representative $i$ of each $G$-orbit in $obI$. Each resulting square
\[ \xymatrix{&\emptyset \ar@{>->}[d] \ar[r] & Z_i \ar@{>>}[d]_\sim^{f_i} \\
&Y_i \ar[r] \ar@{-->}[ur]^{\lambda_i} & W_i}
\]
has a lift $\lambda_i$ since $Y_i$ is cofibrant and $f_i$ is a trivial fibration in $\cat{C}^{G_i}$. For $g \in G$ define $\lambda_{gi} = g_{Z_i} \circ  \lambda_i \circ g_{Y_i}^{-1}$. Then, if $gi = i$ the $G_i$-equivariance of the map $\lambda_i$ says precisely that $\lambda_{i} = g_{Z_i} \circ \lambda_i \circ g_{Y_i}^{-1} =  \lambda_{gi}$, so for all $i$ and all $g \in G$ the map $\lambda_{gi}$ is well-defined. It is now easy to see that the $\lambda_{gi}$'s assemble to a map of $G$-diagrams giving a lift in the square (\ref{eq:cofsq}).

Part \ref{lemma:eqcof2}) follows immediately from part \ref{lemma:eqcof1}) and the fact that $r_\ast$ is a left Quillen functor and hence preserves cofibrancy of objects.
\end{proof}

The adjunction $(r_\ast,r)$ induces a comonad $r_\ast r$ on $\cat{C}^I_a$ in the usual way. For a $G$-diagram $X$ the value $(r_\ast r) X$ on $i$ is
\[(r_\ast r) X_i = \coprod_{\alpha \colon j \to i} X_j.\]
The counit $\varepsilon \colon (r_\ast r) X \to X$ maps the $X_j$-component in the coproduct indexed by $\alpha \colon j \to i$ to $X_i$ by the map $X(\alpha)$. The comultiplication $c \colon (r_\ast r) X \to (r_\ast r r_\ast r) X$ has as $i$-component the map
\[\coprod_{\alpha \colon j \to i} X_j \to \coprod_{\alpha \colon j \to i} \left(\coprod_{\alpha' \colon k \to j} X_k \right) \]
that maps the $X_j$-summand indexed by $\alpha \colon j \to i$ by the identity to the $X_j$-summand indexed by $id_j$ in the $\alpha$-summand of the target.

Let $X$ be a $G$-diagram indexed on $I$. The bar construction on the comonad $r_\ast r$ gives a simplicial $G$-diagram $B(r_\ast r)X$ with $B_n(r_\ast r)X = (r_\ast r)^{n+1}X$ so that 
\[B_n(r_\ast r)X_i  = \coprod_{\alpha_0 \colon i_0 \to i}\coprod_{\alpha_1 \colon i_1 \to i_0} \cdots \coprod_{\alpha_n \colon i_n \to i_{n-1}} X_{i_n} \cong \coprod_{i_n \to \cdots \to i_0 \to i} X_{i_n}. \]  
Note that for varying $n$ the indexing $G_i$-simplicial set can be identified with $N(I/i)^{op}$. For 
\[ \sigma = i_n \stackrel{\alpha_n}{\too} \cdots \stackrel{\alpha_1}{\too} i_0 \stackrel{\alpha_0}{\too} i\]
in  $N_n(I/i)^{op}$ the face map $d_{n-k}$ for $k < 0$ composes the maps $\alpha_k$ and $\alpha_{k-1}$ and $d_0$ maps $X_{i_n}$ to the $X_{i_{n-1}}$ indexed by $ d_0(\sigma) \in N_{n-1}(I/i)^{op}$ by the map $X(\alpha_n)$. The degeneracy map $s_n$ inserts an identity in the $(n-l)$-spot. Note that 
\[ \colim_I r_\ast r X = \colim_{I^\delta}rX = \coprod_i X_i ,\]
so that $\colim_I B_n(r_\ast r)X \cong \coprod_{\sigma \in N_n(I^{op})} X_{\sigma(n)}$ and $\colim_I B(r_\ast r)X$ is isomorphic to the usual simplicial replacement $\coprod_\ast X$ of Bousfield and Kan \cite{BK} with $G$-action induced by the $G$-structure on $X$.
\begin{proposition}\label{prop:hocolim}
  Let $X$ be an $I$-indexed $G$-diagram. Then there are natural isomorphisms in $\cat{C}^G$
\begin{enumerate}[i)]
\item \label{prop:hocolim1} $N(-/I)^{op} \otimes_I^a X \cong \textstyle{|\coprod_ \ast}  X|$
\item \label{prop:hocolim2} $\textstyle{|\coprod_ \ast}  X| \cong \colim_I qX.$
\end{enumerate}

\end{proposition}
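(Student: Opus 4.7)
The plan is to treat both isomorphisms level-wise and then invoke the standard fact that geometric realization commutes with coends and colimits. For part i), since $K \otimes c \cong |\,[n] \mapsto K_n \cdot c\,|$ for any simplicial set $K$ and any object $c$, and since coends commute with geometric realization (both being colimits), I would rewrite
\[
N(-/I)^{op} \otimes_I^a X \;\cong\; \Bigl|\,[n] \mapsto \int^{i} N(i/I)^{op}_n \cdot X_i\,\Bigr|.
\]
The key step is computing the level-$n$ coend. Using the coend relation $\alpha^\ast(\xi) \otimes x \sim \xi \otimes \alpha_\ast(x)$ for $\alpha \colon i \to j_0$, every class has a unique representative of the form $(j_0 \stackrel{\id}{\to} j_0 \to j_1 \to \cdots \to j_n) \otimes y$ with $y \in X_{j_0}$, so the coend identifies with $\coprod_{j_0 \to \cdots \to j_n \in N_n(I)} X_{j_0} = (\coprod_\ast X)_n$. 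Checking compatibility of face and degeneracy maps with the Bousfield--Kan structure is then routine.

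For part ii), I would apply part i) with $I$ replaced by $I/i$ equipped with its induced $G_i$-action, and $X$ replaced by the $G_i$-diagram $u_i^\ast X$. This gives a natural $G_i$-equivariant isomorphism
\[
qX_i = \hocolim_{I/i} u_i^\ast X \;\cong\; \bigl|\,\coprod\nolimits_\ast u_i^\ast X\,\bigr| \;\cong\; |B(r_\ast r) X_i|,
\]
where the last step is immediate since at level $n$ both sides are $\coprod_{j_0 \to \cdots \to j_n \to i} X_{j_0}$. These isomorphisms are natural in $i$ and equivariant for the $G$-action on $I$ (which permutes the over-categories $I/i$), hence assemble into an isomorphism of $G$-diagrams $qX \cong |B(r_\ast r)X|$ in $\cat{C}^I_a$. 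Taking $\colim_I$, using commutation of $|-|$ with colimits, and invoking the identification $\colim_I B(r_\ast r) X \cong \coprod_\ast X$ stated just before the proposition yields the desired $G$-equivariant isomorphism $\colim_I qX \cong |\coprod_\ast X|$.

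The main obstacle will be verifying that every step respects the $G$-action. In part i) one has to check that the diagonal action of $g \in G$ on $\int^{i} N(i/I)^{op}_n \cdot X_i$, induced by the $G$-structures on both $N(-/I)^{op}$ and $X$, sends the canonical representative $(j_0 = j_0 \to j_1 \to \cdots \to j_n) \otimes y$ to $(gj_0 = gj_0 \to gj_1 \to \cdots \to gj_n) \otimes g_{X_{j_0}}(y)$, matching the standard $G$-action on the Bousfield--Kan simplicial replacement described in the paragraph preceding the proposition. Once this is in place, the assembly in part ii) reduces to checking that the $G$-action on $qX$ coming from the functors $I/i \to I/gi$ corresponds, under the level-wise identification with $B(r_\ast r)X$, to the $G$-action on the bar construction coming directly from the $G$-structure on $X$.
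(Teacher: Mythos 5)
Your proposal is correct and follows essentially the same route as the paper: both arguments reduce part i) to a level-wise co-Yoneda computation identifying the coend with the Bousfield--Kan replacement (your canonical-representative argument is the concrete form of the paper's $\int^i I(i,i_n)\otimes X_i\cong X_{i_n}$ step), and both prove part ii) by identifying $qX_i$ with $|B(r_\ast r)X|_i$ via part i) applied to $u_i^\ast X$ and then commuting realization with $\colim_I$. The only cosmetic difference is that your explicit representatives sidestep the paper's remark that the individual representables $I(-,i_n)$ are not $G$-diagrams while their coproduct over $N_n(I^{op})$ is.
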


\begin{proof} To see \ref{prop:hocolim1} we first decompose the tensor product as an iterated coend (cf. \cite[\S 6.6]{riehl})
\[N(-/I)^{op} \otimes_I^a X = \int^i N(i/I)^{op} \otimes X_i \cong \int^i \left(\int^{[n]} \Delta^n \times N_n\left(i/I \right)^{op}\right) \otimes X_i.\]
Here and in the rest of the proof we leave it to the reader to check that this is compatible with the $G$-structures on the diagrams. Rearranging the parentheses and switching the order of the coends gives the isomorphic object
\[\int^{[n]} \int^i \Delta^n \otimes \left( N_n \left( i/I \right) ^{op} \otimes X_i \right) \cong \int^{[n]} \Delta^n  \otimes \left(\int^i \coprod_{i \to i_n \to \cdots \to i_0} X_i\right).\]
Now we analyze the latter $\int^i$-factor. It is a coend of the $G$-diagram $I^{op} \times I \to \cat{C}$ given by 
\[(i,j) \mapsto \coprod_{i \to i_n \to \cdots \to i_0} X_j.\]
This is isomorphic to the diagram
\[ (i,j) \mapsto \coprod_{i_n \to \cdots \to i_0} I(i,i_n) \otimes X_j \]
and we note that since coends commute with colimits there is an isomorphism
\[ \int^i  \coprod_{i_n \to \cdots \to i_0} I(i,i_n) \otimes X_i \cong \coprod_{i_n \to \cdots \to i_0}  \int^i I(i,i_n) \otimes X_i.\]
Here we must be careful since the representable functor $I(-,i_n)$ is not itself a $G$-diagram, but the coproduct  $\coprod_{\sigma \in N_n (I^{op})} I(-,\sigma(n))$ of representable functors is. Finally, we observe that $\int^i I(i,i_n) \otimes X _i \cong X_{i_n}$ so that 
\[ \int^{[n]} \Delta^n  \otimes \left(\int^i \coprod_{i \to i_n \to \cdots \to i_0} X_i \right) \cong \int^{[n]} \Delta^n \otimes \left(\coprod_{i_n \to \cdots \to i_0} X_{i_n} \right) = | \textstyle{\coprod_\ast} X |\]

To get the isomorphism in \ref{prop:hocolim2}) we recall the isomorphism $ \colim_I B(r_\ast r)X \cong \textstyle{\coprod_\ast} X$. Since realization commutes with colimits, there are natural isomorphisms
\[|{\textstyle{\coprod_\ast}} X | \cong |\colim_I B(r_\ast r)X| \cong  \colim_I | B(r_\ast r)X|.\]
Evaluating at $i$ gives
\[| B(r_\ast r)X|_i = \left| [n] \mapsto \coprod_{ i_n \to \cdots \to i_0 \to i} X_{i_n} \right|  \cong \hocolim_{I/i} (u_i^\ast X)\]
where the last isomorphism is an instance of \ref{prop:hocolim1}) for the $G_i$-diagram $u_i^\ast X \colon I/i \to \cat{C}$. This gives an isomorphism 
\[\colim_I | B(r_\ast r)X| \cong \colim_I qX. \] 

\end{proof}

\begin{lemma}\label{lemma:brreedy}
  If $X$ is a point-wise cofibrant $G$-diagram, then the simplicial object $B(r_\ast r)X$ is Reedy cofibrant in $(\cat{C}^I_a)^{\Delta^{op}}$.
\end{lemma}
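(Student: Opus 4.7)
The proof proceeds by reducing Reedy cofibrancy of $B(r_\ast r)X$ to a point-wise cofibrancy statement for a simpler simplicial object in $\cat{C}^{I^\delta}_a$, and then handling this by an orbit-decomposition argument controlled by the $G$-model category axioms. First observe that $B_n(r_\ast r)X=(r_\ast r)^{n+1}X = r_\ast\bigl((rr_\ast)^nrX\bigr)$, so that setting $A_n=(rr_\ast)^n rX$ and equipping $A_\bullet$ with the simplicial structure coming from the bar resolution of $rX$ by free algebras over the monad $rr_\ast$ produces a simplicial object in $\cat{C}^{I^\delta}_a$ with $B_\bullet(r_\ast r)X = r_\ast A_\bullet$. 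One checks that the degeneracies of $B_\bullet$ are $r_\ast$ applied to the degeneracies of $A_\bullet$, so since $r_\ast$ is a left adjoint the latching colimits agree: $L_n B_\bullet = r_\ast L_n A_\bullet$ and the latching maps correspond under $r_\ast$. The right adjoint $r$ preserves point-wise-defined fibrations and weak equivalences, so $r_\ast$ is a left Quillen functor and in particular preserves cofibrations; it therefore preserves Reedy cofibrancy. It suffices to show $A_\bullet$ is Reedy cofibrant in $(\cat{C}^{I^\delta}_a)^{\Delta^{op}}$.

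Unwinding the definition of $A_n$ gives $(A_n)_j = \coprod_\sigma X_{i_n(\sigma)}$, where $\sigma$ ranges over chains $i_n\to i_{n-1}\to\cdots\to i_1\to j$ of $n$ arrows in $I$. Degeneracies insert identity arrows into the chain, so $\sigma$ is non-degenerate iff all $n$ arrows are non-identity. The latching map $L_n A_\bullet\to A_n$ is the inclusion of the sub-coproduct indexed by degenerate chains into the total coproduct, and its complement is the $G$-diagram $C^A_n$ in $\cat{C}^{I^\delta}_a$ with $(C^A_n)_j = \coprod_{\sigma\text{ non-degen}} X_{i_n(\sigma)}$. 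Being the inclusion of a summand, the latching map is a cofibration if and only if $C^A_n$ is cofibrant, which by Lemma \ref{lemma:eqcof}(i) reduces further to showing that $(C^A_n)_j$ is cofibrant in $\cat{C}^{G_j}$ for every $j\in I$.

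Fix $j$ and decompose the $G_j$-set of non-degenerate chains ending at $j$ into orbits; for each orbit choose a representative $\sigma$ and let $G_\sigma\leq G_j$ denote its stabilizer. Since stabilizing a chain forces stabilizing each of its objects, $G_\sigma\leq G_{i_n(\sigma)}$, and the orbit summand of $(C^A_n)_j$ is isomorphic as a $G_j$-object to $G_j\otimes_{G_\sigma}\bigl(X_{i_n(\sigma)}|_{G_\sigma}\bigr)$, the restriction being taken along $G_\sigma\leq G_{i_n(\sigma)}$. The hypothesis gives that $X_{i_n(\sigma)}$ is cofibrant in $\cat{C}^{G_{i_n(\sigma)}}$; restriction preserves cofibrant objects by Remark \ref{adjforget}; and induction $G_j\otimes_{G_\sigma}(-)\colon \cat{C}^{G_\sigma}\to\cat{C}^{G_j}$ is a left Quillen functor by axiom (2) of a $G$-model category, because the commuting left $G_j$- and right $G_\sigma$-actions on $G_j$ are both free. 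Each orbit summand is therefore cofibrant, and the coproduct $(C^A_n)_j$ of cofibrant objects is cofibrant, completing the argument. The main delicacy is the orbit-stabilizer identification of each summand as an induction from the chain stabilizer, since this is precisely what allows the $G$-model category axioms to control cofibrancy at each vertex.
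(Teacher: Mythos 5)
Your proof is correct and follows essentially the same route as the paper's: both identify the $n$-th latching map as the inclusion of the summands indexed by degenerate simplices and reduce cofibrancy of the non-degenerate complement, via Lemma \ref{lemma:eqcof} and the left Quillen functor $r_\ast$, to point-wise cofibrancy of a discrete $G$-diagram of coproducts. The only real difference is that you spell out, via the orbit--stabilizer decomposition and the induction functors $G_j\otimes_{G_\sigma}(-)$ from axiom (2) of Definition \ref{defGmodelstr}, why each such coproduct is cofibrant in $\cat{C}^{G_j}$ --- a step the paper's proof leaves implicit in the phrase ``point-wise cofibrant''.
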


\begin{proof}
Let $L=L_nB(r_\ast r X)$ be the $n$-th latching object of $B(r_\ast r X)$. The natural map 
\[L_nB(r_\ast r X) \to B_n(r_\ast r X) = B\]
 is at each $i$ in $I$ the inclusion of the summands indexed by the degenerate $n$-simplices in $N_n(I/i)^{op}$ into the coproduct over all $n$-simplices. Thus $B$ decomposes as a coproduct $B= L \amalg N$ where the value of $N$ at $i$ is the coproduct indexed over all the \emph{non}-degenerate simplices of the nerve. The decomposition is clearly compatible with the $G$-diagram structure on each factor. The diagram $N$ is obtained by applying $r_\ast$ to a point-wise cofibrant $I^\delta$-indexed $G$-diagram and is therefore cofibrant. It follows that the map $L \to B$ is a cofibration.
\end{proof}

\begin{corollary}\label{cor:qxcof}
  If $X$ is a point-wise cofibrant $G$-diagram, then $qX$ is cofibrant.
\end{corollary}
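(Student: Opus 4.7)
The plan is to identify $qX$ with the geometric realization $|B(r_\ast r)X|$ computed inside $\cat{C}^I_a$, and then to deduce cofibrancy from the Reedy cofibrancy already established in Lemma \ref{lemma:brreedy} together with the standard principle that realization preserves cofibrancy.

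First I would revisit the computation appearing in the proof of Proposition \ref{prop:hocolim}. That argument shows, at each object $i \in I$, a natural isomorphism
\[|B(r_\ast r)X|_i \cong \hocolim_{I/i}(u_i^\ast X) = qX_i,\]
where realization means the coend $\int^{[n]} \Delta^n \otimes B_n(r_\ast r)X$ taken in $\cat{C}^I_a$, with $\Delta^n$ regarded as a simplicial $G$-set with trivial action. A routine naturality check -- both sides are functorial in $i$ through the overcategories $I/i$, and both inherit their $G$-structure from the $G$-structure on $X$ transported through the comonad $r_\ast r$ -- upgrades this to an isomorphism $qX \cong |B(r_\ast r)X|$ in $\cat{C}^I_a$.

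Next, Lemma \ref{lemma:brreedy} tells us that $B(r_\ast r)X$ is Reedy cofibrant. I would then invoke the standard fact that in a simplicial model category the realization functor from the Reedy model structure is left Quillen, and hence takes Reedy cofibrant objects to cofibrant ones. Concretely this is witnessed by the skeletal filtration: $|B(r_\ast r)X|$ is the sequential colimit of its $n$-skeleta, and each transition $\mathrm{sk}_{n-1} \hookrightarrow \mathrm{sk}_n$ arises as the pushout of the Reedy latching cofibration $L_n B(r_\ast r)X \hookrightarrow B_n(r_\ast r)X$ along the boundary inclusion $\partial\Delta^n \hookrightarrow \Delta^n$. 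Hence $qX$ is built from the initial object by a sequence of pushouts of cofibrations and is itself cofibrant.

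The one subtle point I expect to have to address is that, by Theorem \ref{modelstruct}, the enrichment on $\cat{C}^I_a$ is $sSet^G$-valued rather than $sSet$-valued. I therefore need to verify that pushout-products against the non-equivariant cofibrations $\partial\Delta^n \hookrightarrow \Delta^n$, viewed as maps of trivial $G$-simplicial sets, yield cofibrations in $\cat{C}^I_a$. This is essentially automatic: the functor $sSet \to sSet^G$ equipping a simplicial set with trivial $G$-action is left Quillen, so the $sSet^G$-pushout-product axiom on $\cat{C}^I_a$ specializes to the ordinary $sSet$-pushout-product axiom whenever one input has trivial action. Once this compatibility is in hand, the classical argument that realization preserves cofibrancy carries over verbatim to $\cat{C}^I_a$, and the corollary follows.
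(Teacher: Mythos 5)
Your proposal is correct and follows essentially the same route as the paper: identify $qX$ with the realization of $B(r_\ast r)X$ via the computation in Proposition \ref{prop:hocolim}, invoke the Reedy cofibrancy of Lemma \ref{lemma:brreedy}, and conclude via the fact that realization preserves cofibrant objects (the paper simply cites \cite[VII,3.6]{GJ} where you spell out the skeletal filtration and the $sSet^G$-enrichment compatibility). The extra verification you flag is a reasonable elaboration of what that citation covers, not a divergence in method.
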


\begin{proof}
  We know from the proof of Proposition \ref{prop:hocolim} that $qX$ is the realization of the simplicial object $B(r_\ast r)X$ which is Reedy cofibrant by Lemma \ref{lemma:brreedy}. Since realization takes Reedy cofibrant objects to cofibrant objects \cite[VII,3.6]{GJ} it follows that $qX$ is cofibrant.
\end{proof}

\begin{example}\label{ex:ncof}
  Let $\ast_{I}$ be the $I$-indexed $G$-diagram with value the terminal object $\ast$ of $sSet$. Then $q(\ast_{I})_i = \hocolim_{I/i} (\ast_{I/i}) \cong N(I/i)^{op}$, so that $q(\ast_{I}) \cong N(I/-)^{op}$ and similarly $q(\ast_{I^{op}}) \cong N(-/I)$. By Corollary \ref{cor:qxcof} it follows that the diagrams $N(I/-)$ and $N(-/I)^{op}$ are cofibrant as $G$-diagrams since $\ast$ is cofibrant in $sSet^{G_i}$ for all $i$ in $I$ and taking opposite simplicial sets preserves cofibrations. 
Further, let $I$ and $J$ be categories with respective $G$-actions $a$ and $b$, and $F \colon I \to J$ an equivariant functor. Since the left Kan extension $F_\ast$ preserves cofibrancy the diagrams $N(F/-) \cong F_\ast N(I/-)$ and $N(-/F)^{op} \cong F_\ast N(-/I)^{op}$ are also cofibrant in $sSet^J_b$. 
\end{example}

\begin{proof}[Proof of Theorem \ref{thm:qxcofrepl}]
  It only remains to see that the map $\rho_X$ is a weak equivalence. For this we must show that for each $i$ the map $\rho_{X_i} \colon \hocolim_{I/i} u_i^\ast X \to X_i$ is a weak equivalence in $\cat{C}^{G_i}$. The functor $\iota \colon \ast \to I/i$ sending the unique object to the terminal object is homotopy cofinal in the sense of Definition \ref{def:gcof}, so by Theorem \ref{Gcof} the map $X_i = \hocolim_\ast \iota^\ast u_i^\ast X \to \hocolim_{I/i} u_i^\ast X$ is a weak equivalence. Since it is also section to the map $\rho_{X_i}$ it follows by the two out of three property that $\rho_{X_i}$ is a weak equivalence as well.
\end{proof}

%-----------------------------------------------HOMOTOPY INVARIANCE-----------------------------------------------------------------------------------

\subsection{Homotopy invariance of map, tensor and of homotopy (co)limits}\label{htpyinvsec}
In this section $\cat{C}$ is a $G$-model category in the sense of definition \ref{defGmodelstr}, and $a$ is a $G$-action on a small category $I$.

\begin{proposition}\label{htpyinvhom}
Let $X\in \cat{C}^{I}_a$ be a $G$-diagram in $\cat{C}$.
If $X$ is fibrant, the functor 
\[map^a_I(-,X)\colon (sSet^{I}_a)^{op}\longrightarrow \cat{C}^G\]
preserves equivalences of cofibrant objects (in $sSet^{I}_a$).
Dually, if $X$ is point-wise cofibrant, the functor
\[(-)\otimes_I^a X\colon sSet^{I^{op}}_a\longrightarrow \cat{C}^G\]
preserves equivalences of cofibrant objects.
\end{proposition}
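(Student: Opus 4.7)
The two assertions are formally dual, so I focus on the claim that $\map^a_I(-, X)$ preserves equivalences between cofibrant $G$-diagrams when $X$ is fibrant. By Ken Brown's lemma applied to this contravariant functor, it suffices to show that $\map^a_I(-, X)$ sends acyclic cofibrations between cofibrant objects in $sSet^I_a$ to weak equivalences in $\cat{C}^G$. I will in fact prove the stronger statement that for any acyclic cofibration $i \colon K \to L$ in $sSet^I_a$ the induced map $\map^a_I(L, X) \to \map^a_I(K, X)$ is an acyclic fibration in $\cat{C}^G$. Since $\map^a_I(-, X)$ converts colimits in the first variable to limits, and acyclic fibrations in $\cat{C}^G$ are closed under pullbacks, retracts and transfinite limits, the small object argument reduces the claim to checking it on the generating acyclic cofibrations from \ref{modelstruct}, which have the form $F_j(\iota)$ for $j \in I$ and $\iota$ a generating acyclic cofibration in $sSet^{G_j}$.

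The technical core of the argument is the identification, for $K \in sSet^{G_j}$,
\[\map^a_I(F_j(K), X) \;\cong\; \hom_{G_j}(G,\, \map_{\cat{C}^{G_j}}(K, X_j))\]
as objects of $\cat{C}^G$, where $X_j = \ev_j X$ and $\hom_{G_j}(G,-)$ is the coinduction functor from Definition \ref{defGmodelstr} associated with $G$ carrying its left $G_j$- and right $G$-multiplications. This is obtained by unpacking the end defining $\map^a_I$ together with the explicit formula $F_j(K)_k = K_{kj} \otimes_{G_j} K$ from \ref{adjevi}, observing that $F_j(K)$ is supported on the $G$-orbit of $j$, using the $(F_j, \ev_j)$-adjunction to collapse the end to a cotensor over $sSet^{G_j}$, and finally tracking how the $G$-actions coming from the structures on $F_j(K)$ and on $X$ recombine as the coinduction structure. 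Verifying this natural isomorphism, in particular its $G$-equivariance, is the main technical obstacle.

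Granting the identification, the conclusion follows cleanly. The $sSet^{G_j}$-enriched SM7 axiom for the cofibrantly generated model category $\cat{C}^{G_j}$, applied to the fibrant object $X_j$ and the acyclic cofibration $\iota$, makes $\map_{\cat{C}^{G_j}}(\iota, X_j)$ an acyclic fibration in $\cat{C}^{G_j}$; and $\hom_{G_j}(G,-)$ is a right Quillen functor by Definition \ref{defGmodelstr} applied to $K = G$ (which has free commuting left $G_j$- and right $G$-actions), hence preserves acyclic fibrations. Composing proves the claim for $F_j(\iota)$, completing the reduction.

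The dual assertion for $(-) \otimes^a_I X$ is proved by the mirror argument. Assuming $X$ is point-wise cofibrant, one shows $F_j(L) \otimes^a_I X \to F_j(K) \otimes^a_I X$ \emph{fails to be run backwards}: precisely, for any acyclic cofibration $i \colon K \to L$ in $sSet^{I^{op}}_a$ the map $K \otimes^a_I X \to L \otimes^a_I X$ is an acyclic cofibration in $\cat{C}^G$, which reduces by the same small-object argument to the generating case $F_j(\iota)$. Here the parallel identification $F_j(K) \otimes^a_I X \cong G \otimes_{G_j}(K \otimes X_j)$ holds, and the argument uses SM7 for $\cat{C}^{G_j}$ applied to the cofibrant object $X_j$ to produce an acyclic cofibration in $\cat{C}^{G_j}$, followed by the left Quillen induction $G \otimes_{G_j}(-)$, which preserves acyclic cofibrations.
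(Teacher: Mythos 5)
Your proof is correct, but it is organized quite differently from the paper's. The paper does not reduce to generating acyclic cofibrations at all: after the same Ken Brown reduction it tests the map $map^a_I(L,X)\rightarrow map^a_I(K,X)$ against an arbitrary cofibration $A\rightarrowtail B$ of $\cat{C}^G$ and transposes that lifting problem through the equivariant adjunction $\underline{\cat{C}}^G(B,map^a_I(L,X))\cong \underline{sSet}^{I}_a(L,Map_\cat{C}(B,X))$ into a lifting problem for $K\rightarrowtail L$ against $Map_\cat{C}(B,X)\rightarrow Map_\cat{C}(A,X)$ in $sSet^{I}_a$; the latter map is a fibration because at each $i$ it is $Map_\cat{C}(B,X_i)\rightarrow Map_\cat{C}(A,X_i)$ with $X_i$ fibrant and $A\rightarrow B$ a cofibration in $\cat{C}^{G_i}$, by SM7 for $\cat{C}^{G_i}$ (restriction preserves cofibrations by Remark \ref{adjforget}). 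This is a one-step argument needing neither the formula for $F_j$ nor the closure-property bookkeeping for towers and retracts, though both arguments end up proving the same strengthened statement, namely that the induced map is an acyclic (co)fibration in $\cat{C}^G$. Your reduction to the cells $F_j(\iota)$ is legitimate, and the identification you flag as the main obstacle does hold; the painless way to get it, equivariance included, is not to unpack the end but to run Yoneda in $\cat{C}^G$: writing $Y$ for the $G$-diagram $Map_\cat{C}(c,X)$ appearing in the paper's proof, one has natural bijections
\[\cat{C}^G\big(c,\,map^a_I(F_jK,X)\big)\cong sSet^{I}_a(F_jK,Y)\cong sSet^{G_j}(K,Y_j)\cong \cat{C}^G\big(c,\,\hom_{G_j}(G,map_{\cat{C}^{G_j}}(K,X_j))\big),\]
the middle one being the $(F_j,\ev_j)$-adjunction and the last one combining $Y_j\cong Map_{\cat{C}^{G_j}}(\res^G_{G_j}c,X_j)$ with the cotensor adjunction in $\cat{C}^{G_j}$ and the adjunction $(\res^G_{G_j},\hom_{G_j}(G,-))$. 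What your route buys is an explicit use of axiom (2) of Definition \ref{defGmodelstr} (coinduction along the bifree $G_j$--$G$-set $G$ is right Quillen); what the paper's buys is brevity. Your dual half is fine apart from the garbled sentence about the map ``failing to be run backwards'': you mean simply that the covariant functor $(-)\otimes^a_I X$ carries the acyclic cofibration $K\rightarrow L$ forward to an acyclic cofibration.
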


\begin{proof} We prove the statement for $map^a_I$, the proof for $\otimes_I^a$ is similar. Let $K\to L$ be an equivalence of cofibrant diagrams in $sSet^{I}_a$. By Ken Brown's Lemma we can assume that $K\rightarrow L$  is  a cofibration (cf. \cite[7.7.1]{hirsch}). To show that the induced map is an equivalence, we need to solve the lifting problem
\[\xymatrix{A\ar@{>->}[d]\ar[r]&map^a_I(L,X)\ar[d]\\
B\ar@{-->}[ur]\ar[r]&map^a_I(K,X)
}\]
for every cofibration $A\rightarrow B$ in $\cat{C}^{G}$. Let $Map_\cat{C}(B,X)$ be the $G$-diagram in $sSet$ given by $i \mapsto Map_\cat{C}(B,X_i)$ and where the $G$-structure is given by the maps $Map_\cat{C}(g^{-1},g_{X_i}) \colon Map_\cat{C}(B,X_i) \to Map_\cat{C}(B,X_{gi})$. The adjunction isomorphism
\[\underline{\cat{C}}^G(B,map^a_I(L,X))\cong \underline{sSet}_{a}^{I}(L,Map_\cat{C}(B,X)).\] 
is equivariant, and therefore the lifting problem above is equivalent to the lifting problem in $sSet^{I}_a$
\[\xymatrix{K\ar@{>->}[d]_{\simeq}\ar[r]&Map_\cat{C}(B,X)\ar[d]\\
L\ar@{-->}[ur]\ar[r]&Map_\cat{C}(A,X)
}\]
This can be solved if $Map_\cat{C}(B,X) \rightarrow Map_\cat{C}(A,X)$ is a fibration in $sSet^{I}_a$, i.e., if for every object $i\in I$ the map $Map_\cat{C}(B,X_i)\rightarrow Map_\cat{C}(A,X_i)$ is a fibration of simplicial $G_i$-sets. By assumption $X_i$ is fibrant in $\cat{C}^{G_i}$ and $A \to B$ restricts to a cofibration in $\cat{C}^{G_i}$, so by axiom $SM7$ for the $sSet^{G_i}$-enriched model category  $\cat{C}^{G_i}$ the map is a fibration.
\end{proof}

\begin{proposition}\label{htpyinvtarget}
If $K$ is a cofibrant diagram in $sSet^{I}_a$, the functor
\[map^a_I(K,-)\colon \cat{C}^{I}_a\longrightarrow \cat{C}^G\]
preserves equivalences of fibrant objects. Dually if $K$ is cofibrant in $sSet^{I^{op}}_a$, the functor
\[K\otimes_I^a(-)\colon \cat{C}^{I}_a\longrightarrow \cat{C}^G\]
preserves equivalences of point-wise cofibrant objects.
\end{proposition}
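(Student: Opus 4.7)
The first claim follows by a dualization of the argument of Proposition \ref{htpyinvhom}. By Ken Brown's lemma, it suffices to show that a trivial fibration $X\to Y$ between fibrant $G$-diagrams induces a trivial fibration $map^a_I(K,X)\to map^a_I(K,Y)$ in $\cat{C}^G$, which I verify by the lifting property against a cofibration $A\hookrightarrow B$ in $\cat{C}^G$. Using the same adjunction as in Proposition \ref{htpyinvhom}, the lifting problem transforms into lifting the cofibrant $K$ against the pullback-corner map
\[Map_\cat{C}(B,X)\longrightarrow Map_\cat{C}(A,X)\times_{Map_\cat{C}(A,Y)}Map_\cat{C}(B,Y)\]
in $sSet^I_a$. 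At each object $i\in I$ this is a trivial fibration of simplicial $G_i$-sets by axiom $SM7$ for the $sSet^{G_i}$-enriched $\cat{C}^{G_i}$ (restriction $\res^G_{G_i}$ preserves cofibrations and trivial fibrations by Remark \ref{adjforget}); hence the map is a trivial fibration in $sSet^I_a$ and the desired lift exists.

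For the dual claim, my strategy is a mapping-space argument predicated on first showing that $K\otimes_I^a X$ is cofibrant in $\cat{C}^G$ whenever $K$ is cofibrant in $sSet^{I^{op}}_a$ and $X$ is pointwise cofibrant. A Yoneda argument chaining the $(F_i,\ev_i)$-adjunction of Lemma \ref{adjevi}, the $(\otimes,Map_\cat{C})$-adjunction of the $sSet^{G_i}$-enrichment, and the induction-restriction adjunction $G\otimes_{G_i}(-)\dashv\res^G_{G_i}$ yields a natural identification
\[F_i A\otimes_I^a X\cong G\otimes_{G_i}(A\otimes X_i).\]
For any generating cofibration $A\hookrightarrow B$ of $sSet^{G_i}$, the map $A\otimes X_i\to B\otimes X_i$ is a cofibration in $\cat{C}^{G_i}$ by $SM7$ using the cofibrancy of $X_i$, and the induction functor $G\otimes_{G_i}(-)$ is left Quillen by the second axiom of a $G$-model category (Definition \ref{defGmodelstr}). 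Since $K\otimes_I^a(-)$ preserves colimits and every cofibrant object of $sSet^{I^{op}}_a$ is a retract of a cell complex on the generating cofibrations $F_i A\to F_i B$, it follows that $K\otimes_I^a X$ is cofibrant in $\cat{C}^G$.

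Given this cofibrancy, $K\otimes_I^a f$ is a map between cofibrant objects of $\cat{C}^G$, so by the standard characterization it is a weak equivalence if and only if $Map_{\cat{C}^G}(K\otimes_I^a f,Z)$ is a weak equivalence of simplicial $G$-sets for every fibrant $Z\in\cat{C}^G$. The adjunction
\[Map_{\cat{C}^G}(K\otimes_I^a X,Z)\cong Map_{sSet^{I^{op}}_a}(K,\underline{Map}_\cat{C}(X,Z)),\]
where $\underline{Map}_\cat{C}(X,Z)$ is the $I^{op}$-diagram $i\mapsto Map_\cat{C}(X_i,Z)$, reduces the problem to showing that $\underline{Map}_\cat{C}(Y,Z)\to\underline{Map}_\cat{C}(X,Z)$ is a weak equivalence of fibrant objects in $sSet^{I^{op}}_a$. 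Both the fibrancy and the weak equivalence hold pointwise by $SM7$ and Ken Brown's lemma applied to $\cat{C}^{G_i}$ (using $X_i,Y_i$ cofibrant, $X_i\to Y_i$ a weak equivalence, and $Z$ fibrant). Applying the first claim of the proposition to $\cat{C}=sSet$ with $I^{op}$ in place of $I$ then gives the desired weak equivalence after applying $Map_{sSet^{I^{op}}_a}(K,-)$.

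The main obstacle is precisely the asymmetry between \emph{pointwise cofibrant} in the hypothesis and \emph{cofibrant} in the ambient model structure on $\cat{C}^I_a$: one cannot apply Ken Brown directly to the left Quillen functor $K\otimes_I^a(-)$, and it is the preliminary cofibrancy result for $K\otimes_I^a X$ together with the mapping-space detour that bridge this gap.
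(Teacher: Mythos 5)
Your proposal is correct and takes essentially the same route as the paper, whose proof of this proposition is a one-line citation of the non-equivariant argument of Hirschhorn [18.4] combined with the equivariant adjunctions from the proof of Proposition \ref{htpyinvhom}. Your write-up is exactly the equivariantization of that argument: the cotensor half is the lifting/adjunction argument of \ref{htpyinvhom}, and the tensor half is Hirschhorn's reduction via cofibrancy of $K\otimes_I^a X$ and detection of weak equivalences by mapping spaces into fibrant objects, with the identification $F_iA\otimes_I^aX\cong G\otimes_{G_i}(A\otimes X_i)$ and axiom (2) of Definition \ref{defGmodelstr} supplying the genuinely equivariant input.
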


\begin{proof}
The proof is the same as for the non-equivariant case of \cite[18.4]{hirsch}, using the equivariant adjunctions as in the proof of \ref{htpyinvhom}.
\end{proof}
The following result generalizes Villarroel's result \cite[6.1]{vilf}:

\begin{corollary}\label{htpyinvlimcolim} The functors
$\holim\colon \cat{C}^{I}_a\rightarrow \cat{C}^{G}$ and $\hocolim\colon \cat{C}^{I}_a\rightarrow \cat{C}^{G}$ preserve equivalences between fibrant $G$-diagrams and point-wise cofibrant $G$-diagrams respectively. 
\end{corollary}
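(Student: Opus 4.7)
The plan is to reduce the corollary immediately to Proposition \ref{htpyinvtarget} applied to two specific cofibrant diagrams of simplicial sets, namely $N(I/-)$ and $N(-/I)^{op}$. By definition of the homotopy (co)limit of a $G$-diagram from \S\ref{sec:enr}, we have
\[\holim_I X = \map^a_I(N(I/-),X), \qquad \hocolim_I X = N(-/I)^{op} \otimes^a_I X,\]
so the functors in question are nothing but $\map^a_I(K,-)$ and $K \otimes^a_I (-)$ for $K = N(I/-)$ in $sSet^I_a$ and $K = N(-/I)^{op}$ in $sSet^{I^{op}}_a$ respectively.

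First I would invoke Example \ref{ex:ncof}, which shows that $q(\ast_I) \cong N(I/-)^{op}$ and $q(\ast_{I^{op}}) \cong N(-/I)$; since $\ast$ is cofibrant in $sSet^{G_i}$ for every $i$ (the terminal simplicial set being cofibrant under any fixed-point model structure on $sSet^H$), Corollary \ref{cor:qxcof} then implies that both $N(I/-)^{op}$ and $N(-/I)$ are cofibrant as $G$-diagrams. Taking opposite simplicial sets on each vertex preserves cofibrancy (the cofibrations are detected by fixed points of stabilizers and $(-)^{op}$ commutes with all the relevant constructions), so $N(I/-)$ is cofibrant in $sSet^I_a$ and $N(-/I)^{op}$ is cofibrant in $sSet^{I^{op}}_a$.

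With cofibrancy of the indexing diagrams in hand, the two halves of the corollary are direct applications of Proposition \ref{htpyinvtarget}: the first part says that $\map^a_I(K,-)$ preserves weak equivalences between fibrant objects when $K$ is cofibrant in $sSet^I_a$, so taking $K = N(I/-)$ gives the statement for $\holim$; dually, the second part says that $K \otimes^a_I (-)$ preserves weak equivalences between point-wise cofibrant objects when $K$ is cofibrant in $sSet^{I^{op}}_a$, so taking $K = N(-/I)^{op}$ gives the statement for $\hocolim$.

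There is essentially no obstacle here — all the work has been done in Propositions \ref{htpyinvhom}, \ref{htpyinvtarget}, Corollary \ref{cor:qxcof}, and Example \ref{ex:ncof}. The only subtle point worth checking carefully is the cofibrancy of $N(I/-)$ (as opposed to $N(I/-)^{op}$) in $sSet^I_a$, but this is handled precisely by the remark in Example \ref{ex:ncof} that taking opposite simplicial sets preserves cofibrations, combined with the fact that $qX$ is produced by a bar construction whose Reedy cofibrancy (Lemma \ref{lemma:brreedy}) is symmetric under reversal in the simplicial direction.
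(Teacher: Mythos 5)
Your proposal is correct and follows the paper's own proof essentially verbatim: reduce to Proposition \ref{htpyinvtarget} using the defining formulas $\holim_I X = \map^a_I(N(I/-),X)$ and $\hocolim_I X = N(-/I)^{op}\otimes^a_I X$, then cite the cofibrancy of $N(I/-)$ and $N(-/I)^{op}$ established in Example \ref{ex:ncof} via Corollary \ref{cor:qxcof}. The extra detail you supply about the opposite-simplicial-set step is exactly the content of Example \ref{ex:ncof}, so nothing is missing.
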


\begin{proof}
Recall that homotopy limits and homotopy colimits are defined by cotensoring with $N(I/-)$ and tensoring with $N(-/I)^{op}$, respectively. By Proposition \ref{htpyinvtarget} it is enough to show that $N(I/-)$ is cofibrant in $sSet^{I}_a$ and $N(-/I)^{op}$ is cofibrant in $sSet^{I^{op}}_a$. This was shown in Example \ref{ex:ncof}.
\end{proof}

For an equivariant functor $F \colon I \to J$ between categories with $G$-actions $a$ and $b$ respectively define the \emph{homotopy left Kan extension} of a $G$-diagram $X$ in $\cat{C}^I_a$ by 
\[(\ho F_\ast X)_j = \hocolim (F/j \to I \stackrel{X}{\to} \cat{C})\]
with the induced $G$-structure. The usual homotopy colimit $\hocolim_I$ is the homotopy left Kan extension along the functor $I \to \ast$. Using the simplicial resolution $B(r^\ast r)X$ of Section \ref{sec:cofrepl} it is not hard to see that there is a natural isomorphism $\ho F_\ast X   \cong F_\ast(qX)$. 

\begin{lemma}(Transitivity of homotopy left Kan extensions)\label{lemma:trans}
  Let $F \colon I \to J$ and $F' \colon J \to K$ be equivariant functors between small categories with $G$-actions $a$, $b$ and $c$, respectively. If $X$ is a pointwise cofibrant object in $\cat{C}^I_a$ then the natural map
\[ \ho F'_\ast (\ho F_\ast X) \to \ho(F' \circ F)_\ast X\]
is a weak equivalence in $\cat{C}^K_c$. In particular, if $K=\ast$ then there is a weak equivalence
\[ \hocolim_J (\ho F_\ast X) \stackrel{\sim}{\to} \hocolim_I X.\]
\end{lemma}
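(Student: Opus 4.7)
The plan is to use the identification $\ho F_\ast X \cong F_\ast(qX)$ noted just before the statement, together with strict transitivity of ordinary left Kan extensions $(F'\circ F)_\ast \cong F'_\ast\circ F_\ast$, in order to rewrite both sides in terms of $q$ and the strict Kan extensions. Under this identification one has
\[\ho F'_\ast(\ho F_\ast X) \;\cong\; F'_\ast\bigl(q\,F_\ast(qX)\bigr) \qquad\text{and}\qquad \ho(F'\circ F)_\ast X \;\cong\; F'_\ast\bigl(F_\ast(qX)\bigr),\]
and the natural comparison is $F'_\ast$ applied to the cofibrant replacement map $\rho_{F_\ast qX}\colon q\,F_\ast(qX)\to F_\ast(qX)$ of Theorem \ref{thm:qxcofrepl}. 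So it suffices to check that $F'_\ast$ carries this particular $\rho$ to a weak equivalence.

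The next step is to verify the hypotheses of Theorem \ref{thm:qxcofrepl} for $F_\ast(qX)$, namely that it is point-wise cofibrant. Since $X$ is point-wise cofibrant, Corollary \ref{cor:qxcof} gives that $qX$ is cofibrant in $\cat{C}^I_a$. The $G$-equivariance of $F$ implies $G_i\leq G_{F(i)}$ for every $i$, so by Remark \ref{adjforget} the restriction $\res^{G_{F(i)}}_{G_i}$ is both a left and a right Quillen functor; consequently $F^\ast$ preserves point-wise fibrations and point-wise weak equivalences, and $F_\ast$ is a left Quillen functor. Therefore $F_\ast(qX)$ is cofibrant in $\cat{C}^J_b$, and in particular point-wise cofibrant by Proposition \ref{ptwisecof}. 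Theorem \ref{thm:qxcofrepl} then tells us that $\rho_{F_\ast qX}$ is a weak equivalence. Applying Corollary \ref{cor:qxcof} again, $q\,F_\ast(qX)$ is cofibrant as well, so $\rho_{F_\ast qX}$ is a weak equivalence between cofibrant objects of $\cat{C}^J_b$.

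To finish, the same reasoning with $F$ replaced by $F'$ shows that $F'_\ast\colon \cat{C}^J_b\to \cat{C}^K_c$ is a left Quillen functor, so by Ken Brown's lemma it sends $\rho_{F_\ast qX}$ to a weak equivalence in $\cat{C}^K_c$. This is precisely the transitivity map. The particular case $K=\ast$ is immediate: the homotopy left Kan extension along $J\to \ast$ is $\hocolim_J$ and along $I\to\ast$ is $\hocolim_I$, so the general statement specializes to $\hocolim_J(\ho F_\ast X)\simeq \hocolim_I X$.

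The main obstacle is really bookkeeping: making sure the natural map is correctly identified with $F'_\ast(\rho_{F_\ast qX})$, and verifying that the $G$-model category axioms, via the subgroup relation $G_i\leq G_{F(i)}$, force $F^\ast$ and $(F')^\ast$ to be right Quillen for the $G$-projective model structures of Theorem \ref{modelstruct}. Once that bookkeeping is in place, the statement reduces to a one-line application of Theorem \ref{thm:qxcofrepl}, Corollary \ref{cor:qxcof} and Ken Brown's lemma.
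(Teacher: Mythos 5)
Your proposal is correct and follows essentially the same route as the paper: both identify the transitivity map with $F'_\ast$ applied to the cofibrant replacement $q(F_\ast qX)\to F_\ast qX$, note that $F_\ast qX$ is cofibrant because $qX$ is and $F_\ast$ is left Quillen, and conclude via Theorem \ref{thm:qxcofrepl} and Ken Brown's lemma. The only difference is that you spell out why $F^\ast$ is right Quillen (via $G_i\leq G_{F(i)}$ and Remark \ref{adjforget}), a point the paper leaves implicit.
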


\begin{proof}Since $X$ is pointwise cofibrant the diagram  $qX$ is cofibrant and so $\ho F_\ast X \cong F_\ast qX$ is cofibrant as well, since $F_\ast$ preserves cofibrancy. The functor $F'_\ast$ preserves weak equivalences between cofibrant objects, so the natural map $F'_\ast (q\ho F_\ast X) \to F'_\ast(\ho F_\ast X) $ is a weak equivalence.
The map in the lemma is the composite of the natural maps 
\[\ho F'_\ast(\ho F_\ast X) \iso F'_\ast(q\ho F_\ast X) \stackrel{\sim}{\too} F'_\ast(\ho F_\ast X) \iso  F'_\ast(F_\ast qX) \iso \ho (F' \circ F)_\ast X,\] 
where the second map is a weak equivalence by the discussion above.
% The map in the lemma is the composite of a natural string of maps beginning with 
% \[\ho F_2^\ast(\ho F_1^\ast X) \iso F_2^\ast(q\ho F_1^\ast X) \stackrel{\sim}{\too} F_2^\ast(\ho F_1^\ast X),\] 
% where the last map is a weak equivalence by the discussion above, followed by the isomorphisms
% \[F_2^\ast(\ho F_1^\ast X) \iso F_2^\ast(F_1^\ast qX) \iso (F_2 \circ F_1)^\ast qX \iso \ho (F_2 \circ F_1)^\ast X.\]
\end{proof}

%---------------------------------------------------------------COFINALITY-----------------------------------------------------------------------------------

\subsection{Equivariant cofinality}\label{cofinalitysec}

Let $I$ and $J$ be categories with respective $G$-actions $a$ and $b$, $F\colon I\rightarrow J$ an equivariant functor, and $X\colon J\rightarrow \cat{C}$ a $G$-diagram.

We want to know when the canonical maps
\[\hocolim_IF^{\ast}X\longrightarrow \hocolim_JX \ \ \ \ \ \mbox{             and               }  \ \ \ \ \ \holim_{J}X\longrightarrow \holim_IF^{\ast}X\]
are equivalences in $\cat{C}^G$.
As in the non-equivariant setting, the categories $F/j$ and $j/F$ play a role in answering this question. For every object $j\in J$ these categories inherit a canonical action by the stabilizers group $G_j\leq G$ of $j$.
\begin{definition}\label{def:gcof}
The functor $F\colon I\rightarrow J$ is left (resp. right) cofinal if for every $j\in J$ the nerve of the category $F/j$ (resp. $j/F$) is weakly $G_j$-contractible.
\end{definition}

Notice that for $H\leq G_i$, the $H$-fixed points of the nerve of $F/j$ are isomorphic to the nerve of $(F/j)^H$. Therefore $F$ is left cofinal if and only if the fixed categories $(F/j)^{H}$ are contractible for all $H\leq G_i$, and similarly for right cofinality.

The following cofinality theorem is a generalization of \cite[1]{thevenaz-webb} and \cite[6.3]{vilf}.
\begin{theorem}\label{Gcof}
Let $\cat{C}$ be a $G$-model category, $F\colon I\rightarrow J$ be an equivariant functor, and  $X\in\cat{C}^{J}_b$ a $G$-diagram in $\cat{C}$. If $F$ is left cofinal and $X$ is fibrant, the canonical map
\[\holim_{J}X\longrightarrow \holim_IF^{\ast}X\]
is an equivalence in $\cat{C}^G$. Dually, if $F$ is right cofinal and $X$ is point-wise cofibrant, the map
\[\hocolim_IF^{\ast}X\longrightarrow \hocolim_JX\]
is an equivalence in $\cat{C}^G$.
\end{theorem}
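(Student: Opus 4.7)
The plan is to rewrite both sides as simplicial cotensors over equivariant indexing diagrams of simplicial sets, and then reduce the question to showing that an explicit comparison map of such indexing diagrams is a weak equivalence in $sSet^J_b$, whereupon the homotopy invariance results of \S\ref{htpyinvsec} finish the job.

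\textbf{Step 1 (adjunction rewriting).} By definition $\holim_I F^\ast X = \map^a_I(N(I/-),F^\ast X)$. The enriched adjunction $(F_\ast,F^\ast)$ of Proposition \ref{prop:enrichment}, combined with the identification $F_\ast N(I/-)\cong N(F/-)$ of $G$-diagrams from Example \ref{ex:nf}, gives a natural isomorphism in $\cat{C}^G$
\[\holim_I F^\ast X \;\cong\; \map^b_J\bigl(N(F/-),X\bigr).\]
Under this identification the canonical comparison $\holim_J X \to \holim_I F^\ast X$ is obtained by applying $\map^b_J(-,X)$ to the evident equivariant map of $J$-indexed $G$-diagrams $\pi\colon N(F/-)\to N(J/-)$, sending a chain $(F(i_n)\to\cdots\to F(i_0)\to j)$ in $F/j$ to itself regarded as a chain in $J/j$.

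\textbf{Step 2 (equivariant equivalence of indexing diagrams).} We show that $\pi$ is a weak equivalence in $sSet^J_b$, that is, that for each $j\in J$ the map $\pi_j\colon N(F/j)\to N(J/j)$ is a $G_j$-equivalence. By hypothesis $N(F/j)$ is weakly $G_j$-contractible. For $N(J/j)$, observe that for every subgroup $H\leq G_j$ the identity $\id_j$ is an $H$-fixed terminal object of the fixed-point category $(J/j)^H$, since any $\alpha\colon k\to j$ in $(J/j)^H$ admits the unique morphism $\alpha$ to $\id_j$ and this morphism is automatically $H$-equivariant. Hence $N(J/j)^H \cong N((J/j)^H)$ is contractible for all $H\leq G_j$, so $N(J/j)$ is weakly $G_j$-contractible and $\pi_j$ is a $G_j$-equivalence.

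\textbf{Step 3 (homotopy invariance).} By Example \ref{ex:ncof}, both $N(F/-)$ and $N(J/-)$ are cofibrant in $sSet^J_b$. Since $X$ is fibrant in $\cat{C}^J_b$, Proposition \ref{htpyinvhom} implies that $\map^b_J(-,X)$ carries the equivalence $\pi$ between cofibrant objects to a weak equivalence in $\cat{C}^G$. Composing with the isomorphism of Step 1 yields that $\holim_J X\to \holim_I F^\ast X$ is a weak equivalence.

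\textbf{Step 4 (dual statement).} The colimit case is entirely dual. Rewrite $\hocolim_I F^\ast X\cong F_\ast N(-/I)^{op}\otimes^b_J X\cong N(-/F)^{op}\otimes^b_J X$, using the tensor side of Proposition \ref{prop:enrichment} and the identification $F_\ast N(-/I)^{op}\cong N(-/F)^{op}$ from Example \ref{ex:ncof}. Right cofinality of $F$ together with the dual argument at $\id_j$ as an $H$-fixed initial object of $(j/J)^H$ shows that $N(-/F)^{op}\to N(-/J)^{op}$ is a weak equivalence between cofibrant objects in $sSet^{J^{op}}_b$. Since $X$ is point-wise cofibrant, Proposition \ref{htpyinvtarget} then gives that the canonical map $\hocolim_I F^\ast X \to \hocolim_J X$ is a weak equivalence in $\cat{C}^G$. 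The main subtlety is purely bookkeeping: one must check that the adjunction isomorphism really identifies the comparison map with $\map^b_J(\pi,X)$ (respectively $\pi\otimes^b_J X$), which amounts to unwinding the unit/counit of the $(F_\ast,F^\ast)$-adjunction equivariantly. Everything else is formal given the framework developed in the preceding sections.
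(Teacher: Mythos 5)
Your proof is correct and follows essentially the same route as the paper: identify $\holim_I F^\ast X$ with $\map^b_J(N(F/-),X)$ via the $(F_\ast,F^\ast)$-adjunction and Example \ref{ex:nf}, note that the projection $N(F/-)\to N(J/-)$ is an equivalence of cofibrant diagrams in $sSet^J_b$ (using left cofinality and the terminal object of $(J/j)^H$), and conclude by the homotopy invariance of $\map^b_J(-,X)$ from Proposition \ref{htpyinvhom}, with the dual argument for homotopy colimits. No gaps.
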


\begin{proof}
We prove the part of the statement about left cofinality. The map $\holim_{J}X\rightarrow \holim_IF^{\ast}X$ factors as
\[map^b_J(NJ/(-),X) \iso map^b_J(NF/(-), X) \to map^a_I(NI/(-),F^\ast X).\]
The first map is a cotensor version of the $(F_\ast,F^\ast)$-adjunction isomorphism. It is equivariant and it is showed to be an isomorphism in \cite[19.6.6]{hirsch}. The second map is induced by the projection map $NF/(-)\rightarrow NJ/(-)$ which is an equivalence in  $sSet^{J}_b$, since for all $H\leq G$ and all object $j\in J^H$ both categories $F/j^H$ and $J/j^H$ are contractible ($J/j^H$ has a final object). Moreover, the $G$-diagrams $NJ/(-)$ and $NF/(-)$ are cofibrant in  $sSet^{J}_a$, by Example \ref{ex:ncof}. Therefore the induced map on mapping objects is an equivalence by the homotopy invariance of $map^b_J$ of Proposition \ref{htpyinvhom}.

% We prove the part of the statement about left cofinality. The map $\holim_{J}X\rightarrow \holim_IF^{\ast}X$ factors as
% \[\xymatrix{map^b_J(NJ/(-),X)\ar[dr]\ar[r]&map^a_I(NI/(-),F^\ast X)\\
% &map^b_J(NF/(-), X)\ar[u]_{\cong}
% }\]
% The vertical map is a cotensor version of the $(F_\ast,F^\ast)$-adjunction isomorphism. It is equivariant and it is showed to be an isomorphism in \cite[19.6.6]{hirsch}. The diagonal map is induced by the projection map $NF/(-)\rightarrow NJ/(-)$ which is an equivalence in  $sSet^{J}_b$, since for all $H\leq G$ and all object $j\in J^H$ both categories $F/j^H$ and $J/j^H$ are contractible ($J/j^H$ has a final object). Moreover the $G$-diagrams $NJ/(-)$ and $NF/(-)$ are cofibrant in  $sSet^{J}_a$, by Example \ref{ex:ncof}. Therefore the induced map on mapping objects is an equivalence by the homotopy invariance of $map^b_J$ of Proposition \ref{htpyinvhom}.
\end{proof}

As an application of cofinality we prove a ``twisted Fubini theorem'' for homotopy colimits, describing the homotopy colimit of a $G$-diagram indexed over a Grothendieck construction. The classical version can be found in \cite[26.5]{CS}. Let $I$ be a category with $G$-action and $\Psi\in Cat^{I}_a$ a $G$-diagram of small categories. The Grothendieck construction $I\wr \Psi$ of the underlying diagram of categories inherits a $G$-action, defined on objects by
\[g\cdot \big(i,c\in Ob \Psi(i)\big)=\big(gi,g_\ast c\in \Psi(gi)\big)\]
and sending a morphism $(\alpha\colon i\rightarrow j,\gamma\colon \Psi(\alpha)(c)\rightarrow d)$ from $(i,c)$ to $(j,d)$ to the morphism
\[g\cdot\big(\alpha,\gamma\big)=\big(g\alpha\colon gi\rightarrow gj,\Psi(g\alpha)(gc)=g\Psi(\alpha)\stackrel{g\gamma}{\rightarrow} gd\big)\]
Now let $X\in \cat{C}^{I\wr\Psi}_a$ be a $G$-diagram in a $G$-model category $\cat{C}$. This induces a $G$-diagram $I\rightarrow \cat{C}$ defined at an object $i$ of $I$ by $\hocolim_{\Psi(i)}X|_{\Psi(i)}$, where $X$ is restricted along the canonical inclusion $\iota_i \colon \Psi(i)\rightarrow I\wr\Psi$. The $G$-structure is given by the maps
\[\hocolim_{\Psi(i)}X|_{\Psi(i)}\stackrel{g}{\rightarrow} \hocolim_{\Psi(i)}X|_{\Psi(gi)}\circ g\stackrel{g_\ast}{\rightarrow}\hocolim_{\Psi(gi)}X|_{\Psi(gi)}\]
where the first map is induced by the natural transformation of $\Psi(i)$-diagrams $X|_{\Psi(i)}\rightarrow X|_{\Psi(gi)}\circ g$ provided by the $G$-structure on $X$, and the second map is the canonical map induced by the functor on indexing categories $g\colon \Psi(i)\rightarrow \Psi(gi)$.

\begin{corollary}\label{Fubini}
For every point-wise cofibrant $G$-diagram $X\in \cat{C}^{I\wr\Psi}_a$ there is a natural equivariant weak equivalence 
\[\eta \colon \hocolim_{I}\hocolim_{\Psi(-)}  X|_{\Psi(-)} \stackrel{\simeq}{\to}  \hocolim_{I\wr\Psi}X.\]
%where $\iota_i\colon \Psi(i)\to I\wr\Psi$ is the inclusion $c\mapsto (i,c)$.
\end{corollary}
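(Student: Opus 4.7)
The strategy is to decompose the homotopy colimit over $I\wr\Psi$ through the equivariant projection $\pi\colon I\wr\Psi\to I$, $(i,c)\mapsto i$, and then identify the resulting homotopy left Kan extension fiberwise with the inner homotopy colimits. First I would apply transitivity of homotopy left Kan extensions (Lemma \ref{lemma:trans}) to $\pi$ to obtain a natural equivariant weak equivalence
\[\hocolim_I(\ho\pi_\ast X)\stackrel{\simeq}{\longrightarrow}\hocolim_{I\wr\Psi}X,\]
using that $X$ is point-wise cofibrant. By definition, at each $i\in I$ the vertex of $\ho\pi_\ast X$ is
\[(\ho\pi_\ast X)_i=\hocolim_{\pi/i}\bigl(\pi/i\to I\wr\Psi\stackrel{X}{\to}\cat{C}\bigr).\]

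The next step is to compare this with $\hocolim_{\Psi(i)}X|_{\Psi(i)}$ via the canonical $G_i$-equivariant inclusion $\iota_i\colon\Psi(i)\hookrightarrow\pi/i$, $c\mapsto((i,c),\id_i)$. I would prove that $\iota_i$ is right $G_i$-cofinal in the sense of Definition \ref{def:gcof}. Given an object $((j,c),\alpha\colon j\to i)$ of $\pi/i$, an object of the under-category $((j,c),\alpha)/\iota_i$ is a pair $(c'\in\Psi(i),\gamma\colon\Psi(\alpha)(c)\to c')$, so the under-category is isomorphic to the slice $\Psi(\alpha)(c)/\Psi(i)$ and has the initial object $(\Psi(\alpha)(c),\id)$. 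For any $g$ in the stabilizer of $((j,c),\alpha)$ in $G_i$ we have $g\cdot\Psi(\alpha)(c)=\Psi(g\alpha)(g\cdot c)=\Psi(\alpha)(c)$, so $g$ fixes the initial object; thus the fixed under-category retains an initial object and is contractible. Since point-wise cofibrancy of $X$ restricts to point-wise cofibrancy of $X|_{\pi/i}$ (Remark \ref{adjforget}), Theorem \ref{Gcof} yields a natural weak equivalence in $\cat{C}^{G_i}$
\[\hocolim_{\Psi(i)}X|_{\Psi(i)}\stackrel{\simeq}{\longrightarrow}(\ho\pi_\ast X)_i.\]

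These component equivalences need to be assembled into a map of $G$-diagrams on $I$ from $\hocolim_{\Psi(-)}X|_{\Psi(-)}$ to $\ho\pi_\ast X$. For a morphism $\alpha\colon i\to i'$ in $I$ the square relating $\iota_i$ and $\iota_{i'}$ does not commute on the nose, but there is a canonical natural transformation $\alpha_\ast\circ\iota_i\Rightarrow\iota_{i'}\circ\Psi(\alpha)$ given by the morphism $(\alpha,\id)\colon((i,c),\alpha)\to((i',\Psi(\alpha)(c)),\id_{i'})$ in $\pi/i'$. After passing to homotopy colimits this natural transformation becomes an identification, and the resulting comparison commutes with the $G$-structure on both sides because every ingredient of the construction (the inclusions $\iota_i$, the natural transformations above, and the structure maps of $X$) is compatible with $G$ in the appropriate sense.

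Finally, since homotopy colimits of point-wise cofibrant diagrams are cofibrant (using the isomorphism $\hocolim_J(-)\cong\colim_J q(-)$ with $q(-)$ cofibrant by Corollary \ref{cor:qxcof}, and that $\colim_J$ is left Quillen by Corollary \ref{cor:adj}), both $\hocolim_{\Psi(-)}X|_{\Psi(-)}$ and $\ho\pi_\ast X$ are point-wise cofibrant $G$-diagrams on $I$. Hence Corollary \ref{htpyinvlimcolim} shows that $\hocolim_I$ sends the above point-wise equivalence to an equivalence in $\cat{C}^G$, and composing with the transitivity equivalence produces the desired natural map $\eta$. The main technical obstacle is verifying that the fiberwise identification of $\ho\pi_\ast X$ with $\hocolim_{\Psi(-)}X|_{\Psi(-)}$ is a genuine map of $G$-diagrams on $I$ and not merely a point-wise equivalence; everything else follows by combining the cofinality and transitivity results already available in the paper.
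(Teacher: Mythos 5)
Your overall strategy (factor through the projection $p\colon I\wr\Psi\to I$, use transitivity of homotopy left Kan extensions for one leg, and identify $(\ho p_\ast X)_i$ with $\hocolim_{\Psi(i)}X|_{\Psi(i)}$ by cofinality for the other) is exactly the paper's, and your cofinality computation for $\iota_i\colon\Psi(i)\to p/i$ is correct: the under-categories $((j,c),\alpha)/\iota_i\cong\Psi(\alpha)(c)/\Psi(i)$ do have invariant initial objects, so the point-wise equivalences exist. The genuine gap is precisely the step you flag as "the main technical obstacle" and then wave away. The squares relating $\iota_i$ and $\iota_{i'}$ commute only up to the natural transformation $(\alpha,\id)\colon((i,c),\alpha)\to((i',\Psi(\alpha)(c)),\id_{i'})$, and after passing to homotopy colimits a natural transformation does \emph{not} become an identification --- it becomes a simplicial homotopy between the two induced maps. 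Concretely, on the Bousfield--Kan model the two composites around the square send the summand indexed by a simplex of $N\Psi(i)$ to summands indexed by \emph{different} simplices of $N(p/i')$, so they are genuinely distinct maps. Hence your point-wise equivalences do not assemble into a map of $I$-indexed $G$-diagrams, and you cannot apply $\hocolim_I$ to them.

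The paper resolves exactly this by running the fiberwise comparison in the opposite direction: it defines a retraction $F_i\colon p/i\to\Psi(i)$, $(j,c,f)\mapsto\Psi(f)(c)$, which \emph{strictly} commutes with the transition functors ($F_{i'}\circ\alpha_\ast=\Psi(\alpha)\circ F_i$ on the nose), checks that the right fibers $c/F_i$ have $(G_i)_c$-invariant initial objects, and thereby obtains an honest map of $G$-diagrams $\gamma\colon\ho p_\ast X\to\hocolim_{\Psi(-)}X|_{\Psi(-)}$ which is a point-wise equivalence. The price is that one leg now points the wrong way, so the paper gets a zig-zag $\hocolim_I\hocolim_{\Psi(-)}X|_{\Psi(-)}\stackrel{\lambda_1}{\leftarrow}\hocolim_I\ho p_\ast X\stackrel{\lambda_2}{\to}\hocolim_{I\wr\Psi}X$ rather than a direct map, and must then exhibit the explicit map $\eta$ on simplicial replacements and produce a simplicial homotopy $\eta\circ\lambda_2\simeq\lambda_1$ (following Thomason) to conclude that $\eta$ itself is an equivalence. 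To repair your argument you should either switch to the retraction $F_i$ as above, or rigidify the $\iota_i$'s (e.g.\ by replacing $\hocolim_{\Psi(-)}X|_{\Psi(-)}$ with a strictly functorial model); as written, the assembly step fails.
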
 

\begin{remark}
When $\cat{C}$ is the $G$-model category of spaces with the fixed point model structures and $X\colon I\wr\Psi\rightarrow Top$ is the constant one point diagram the corollary gives a $G$-equivalence
\[|N(I\wr\Psi)|\stackrel{\simeq}{\longrightarrow} \hocolim_{i\in I} |N\Psi(i)|\]
analogous to Thomason's theorem \cite{thomason}. Our proof is modeled on Thomason's proof.
\end{remark}

\begin{proof}[Proof of \ref{Fubini}]
Let $p \colon I \wr \Psi \to I$ be the canonical projection. We start by defining a zig-zag of equivalences
\[\hocolim_I\hocolim_{\Psi(-)} X|_{\Psi(-)} \stackrel{\lambda_1}{\leftarrow} \hocolim_I \ho p_\ast X \stackrel{\lambda_2}{\to} \hocolim_{I \wr \Psi} X,\]
where $\ho p_\ast$ denotes homotopy left Kan extension, and $\lambda_2$ is the equivalence of transitivity of homotopy left Kan extensions \ref{lemma:trans}.

For an object $i$ of $I$ define the functor $F_i \colon p/i \to \Psi(i)$ by $F_i(j,c, f \colon j \to i ) = \Psi(f)(c)$ on objects and on morphisms from $(j,c,f_0 \colon j \to i )$ to $(k,d,f_1 \colon k \to i )$ by 
\[F_i\big(h \colon j \to k, \alpha \colon \Psi(h)(c) \to d\big) = \Psi(f_1)(\alpha) \colon \Psi(f_0)(c) \to \Psi(f_1)(d)\]
The canonical functor $p/i \to I\wr\Psi$ used to define the homotopy left Kan extension $(\ho p_\ast X)_i$ factors as $p/i \stackrel{F_i}{\too} \Psi(i) \stackrel{\iota_i}{\too} I \wr \Psi$. This factorization induces a map $\gamma_i \colon (\ho p_\ast X)_i \to \hocolim_{\Psi(i)} X|_{\Psi(i)}$ which is natural in $i$ and is compatible with the $G$-structures and hence defines a map of $I$-indexed $G$-diagrams $\gamma \colon \ho p_\ast X \to \hocolim_{\Psi(-)} X|_{\Psi(-)}$. This induces the map
\[\lambda_1\colon \hocolim_I \ho p_\ast X\longrightarrow\hocolim_I\hocolim_{\Psi(-)} X|_{\Psi(-)}\]
in the zig-zag. Let us see that this is an equivalence.
For an object $c$ of $\Psi(i)$ the right fiber $c/F_i$ has a $(G_i)_c$-invariant initial object and is therefore contractible. It follows by cofinality \ref{Gcof} that the maps $\gamma_i$ are weak $G_i$-equivalences. By homotopy invariance of homotopy colimits the induced map $\lambda_1$ is a $G$-equivalence.

It remains to introduce the map $\eta \colon \hocolim_{I\wr\Psi}X \to \hocolim_{I}\hocolim_{\Psi(-)} X|_{\Psi(-)}$ from the statement, and compare it with the zig-zag. It is defined using the simplicial replacements from \S\ref{sec:cofrepl}. The iterated homotopy colimit $\hocolim_I\hocolim_{\Psi(-)} X_{\Psi(-)}$ is isomorphic to the realization of the simplicial $\cat{C}^G$-object
\[[p] \mapsto \coprod_{k_p \to \cdots \to k_0, i_p \stackrel{f_p}{\to} \cdots \stackrel{f_1}{\to} i_0} X_{(i_p,k_p)},\]
where the indexing strings of maps are in $N_p\Psi(i_p)^{op}$ and $N_pI^{op}$, respectively. The map $\eta$ in level $p$ maps a summand $X_{(i_p,k_p)}$ by the identity map to the summand of 
\[ \coprod_{\sigma \in N_p (I\wr \Psi)^{op}} X_{\sigma(p)},\]
indexed by the $p$-simplex $(i_p,k_p) \to (i_{p-1}, \Psi(f_p)(k_{p-1})) \to \cdots \to (i_0, \Psi(f_p \cdots f_1)(k_{0}))$ of $N (I\wr \Psi)^{op}$. Just as in Thomason's original proof there is a simplicial homotopy from $\eta \circ \lambda_2$ to $\lambda_1$ and it follows that $\eta$ is a weak equivalence (see in particular \cite[Lemma 1.2.5]{thomason}).

\end{proof}

\subsection{The Elmendorf theorem for \texorpdfstring{$G$}{G}-diagrams}

Let $\cat{C}$ be a cofibrantly generated model category with cellular fixed points, in the sense of \cite{GuiMay}. Then the category $\cat{C}^G$ of $G$-object admits the fixed point model structure, where weak equivalences and fibrations are the equivariant maps whose $H$-fixed points are weak equivalences and fibrations in $\cat{C}$, respectively, for every subgroup $H\leq G$. Let $\mathcal{O}_G$ be the orbit category of $G$, with quotient sets $G/H$ as objects and equivariant maps as morphisms. Elmendorf's theorem (see \cite{Marc}, \cite{Elmendorf}) describes a Quillen equivalence
\[L\colon\cat{C}^{\mathcal{O}^{op}_G}\rightleftarrows\cat{C}^G\colon R\]
where the diagram category $\cat{C}^{\mathcal{O}^{op}_G}$ has the projective model structure. In this section we prove an analogous result, giving a Quillen equivalence between the category of $G$-diagrams in $\cat{C}$ and a category of diagrams with the projective model structure.

Let $I$ be a small category with an action $a$ of $G$. For convenience we will consider the category of $G$-diagrams in $\cat{C}$ as the category $\cat{C}^{G\rtimes_a I}$ of diagrams indexed over the Grothendieck construction of the action (see \ref{lemma:eqdef}). The functor $a\colon G\rightarrow Cat$ induces a functor $\overline{a}\colon \mathcal{O}^{op}_G\rightarrow Cat$ that sends $G/H$ to the category $I^H$ of objects and morphisms of $I$ fixed by the $H$-action. We denote its Grothendieck construction by $\mathcal{O}^{op}_G\rtimes_{\overline{a}}I$. The inclusion functor $G\rightarrow \mathcal{O}^{op}_G$ that sends the unique object to $G/1$ induces a functor $G\rtimes_a I\rightarrow\mathcal{O}^{op}_G\rtimes_{\overline{a}}I$, which itself induces a restriction functor
\[L\colon \cat{C}^{\mathcal{O}^{op}_G\rtimes_{\overline{a}}I}\longrightarrow \cat{C}^{G\rtimes_{a}I}\]
Recall from \ref{exfixedptsmodel} that if the fixed point functors of $\cat{C}$ are cellular, the fixed point model structures on $\cat{C}^H$, for $H\leq G$, assemble into a $G$-model category.

\begin{theorem}\label{elmendorf}
Let $\cat{C}$ be a category such that the fixed points functors for the subgroups of $G$ are cellular. The functor $L\colon \cat{C}^{\mathcal{O}^{op}_G\rtimes_{\overline{a}}I}\rightarrow \cat{C}^{G\rtimes_{a}I}$ is the left adjoint of a Quillen equivalence
\[L\colon \cat{C}^{\mathcal{O}^{op}_G\rtimes_{\overline{a}}I}\rightleftarrows \cat{C}^{G\rtimes_{a}I}\colon R\]
where $\cat{C}^{G\rtimes_{a}I}$ has the model structure of \ref{modelstruct} and $\cat{C}^{\mathcal{O}^{op}_G\rtimes_{\overline{a}}I}$ has the projective model structure.
\end{theorem}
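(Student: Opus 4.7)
The plan is to generalize the classical Elmendorf theorem of \cite{Elmendorf, Marc} to this ``relative over $I$'' setting, exploiting the pointwise character of both model structures to reduce the Quillen equivalence to a fixed-point statement at each vertex of $I$.

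I would first identify the right adjoint $R$ of $L$ as right Kan extension along the inclusion $\iota\colon G\rtimes_a I\rightarrow \mathcal{O}^{op}_G\rtimes_{\overline{a}}I$, which exists since $\cat{C}$ is complete. Analyzing the slice category $(G/H,i)/\iota$ and noting that morphisms $(G/H,i)\rightarrow (G/1,j)$ in $\mathcal{O}^{op}_G\rtimes_{\overline{a}}I$ unwind to pairs $(gH\in G/H,\alpha\colon gi\rightarrow j)$---which is well-defined precisely because $H\leq G_i$---should yield a natural isomorphism
\[R(Y)_{(G/H,i)}\cong (Y_i)^H,\]
the $H$-fixed points of the $G_i$-action on $Y_i$ in the sense of \ref{exfixedptsmodel}.

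Next, the Quillen adjunction property follows by checking that $R$ preserves fibrations and trivial fibrations. Combining Theorem \ref{modelstruct} with the fixed point model structure of \ref{exfixedptsmodel}, a map $f$ in $\cat{C}^{G\rtimes_a I}$ is a fibration (resp.\ weak equivalence) if and only if $(\ev_i f)^H$ is a fibration (resp.\ weak equivalence) in $\cat{C}$ for every object $i$ of $I$ and every subgroup $H\leq G_i$. Under the identification of $R$ above, this is precisely the pointwise condition defining fibrations and weak equivalences in the projective model structure on $\cat{C}^{\mathcal{O}^{op}_G\rtimes_{\overline{a}}I}$, so $R$ is right Quillen.

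For the Quillen equivalence, I would first observe that $\iota$ is fully faithful: the computation above shows that a morphism $(G/1,i)\rightarrow (G/1,j)$ in $\mathcal{O}^{op}_G\rtimes_{\overline{a}}I$ is the same data as a pair $(g\in G, \alpha\colon gi\rightarrow j)$, which is a morphism in $G\rtimes_a I$. Consequently the counit $LR\rightarrow \id$ is a natural isomorphism, and it remains to show that the unit $\eta_X\colon X\rightarrow RL(X)$ is a weak equivalence for every cofibrant $X$ in the projective model structure. A cofibrant $X$ is a retract of a cell complex assembled from the generating cofibrations $F_{(G/H,i)}(f)$, where $f$ is a generating cofibration of $\cat{C}$ and $F_{(G/H,i)}$ is left adjoint to evaluation at $(G/H,i)$. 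On such free diagrams I would verify that the unit is an isomorphism by a direct Yoneda-type computation, using the description of morphisms worked out above.

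The main obstacle is propagating this isomorphism through cell attachments and transfinite compositions, which requires that the pointwise fixed-point functors $(-)^H$ commute appropriately with the relevant colimits. This is precisely where the cellularity of fixed points hypothesis on $\cat{C}$ is essential, just as in the proof of the classical Elmendorf theorem in \cite{Marc}. I expect the bulk of the technical work to lie in making this cellular bookkeeping precise in the present Grothendieck-construction setting, in particular in checking that the ``mixed'' cofibrancy condition of Theorem \ref{modelstruct} on $L(X)$ is compatible with the projective cellular structure on $X$ in a way that lets the fixed-point analysis proceed objectwise.
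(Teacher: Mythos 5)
Your proposal is correct and follows essentially the same route as the paper's proof: identify $R$ pointwise as $R(Y)_{(G/H,i)}\cong (Y_i)^H$, note the counit is an isomorphism and that $R$ preserves and detects fibrations and equivalences, then use cellularity of the fixed-point functors to reduce the unit to the generating cofibrant objects $\hom_{\mathcal{O}^{op}_G\rtimes_{\overline{a}}I}((G/H,i),-)\otimes c$, where it is checked to be an isomorphism by direct computation. The paper carries out the final step you leave as a ``Yoneda-type computation'' by explicitly identifying both sides of the unit with the sets $\Lambda_{ij}^K\otimes c$ and $(\Lambda_{ij}\otimes c)^K$ of pairs $(z\in G/H,\alpha\colon zi\rightarrow j)$, invoking cellularity once more there.
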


\begin{proof} The right adjoint sends a $G$-diagram $X$ in $\cat{C}^{G\rtimes_{a}I}\cong \cat{C}_{a}^I$ to the diagram $R(X)\colon \mathcal{O}^{op}_G\rtimes_{\overline{a}}I\rightarrow\cat{C}$ that sends an object $(G/H, i\in I^H)$ to
\[R(X)(G/H, i\in I^H)=X^{H}_i\]
In order to define $R(X)$ on morphisms, recall that the set of equivariant maps $G/K\rightarrow G/H$ is in natural bijection with $(G/H)^K$. A morphism in $\mathcal{O}^{op}_G$ from $(G/H, i)$ to $(G/K,j)$ is a pair $(z\in (G/H)^K,(\alpha\colon zi\rightarrow j)\in I^K)$, which is sent to the composite
\[X^{H}_i\stackrel{z}{\longrightarrow}X^{K}_{zi} \stackrel{\alpha^{K}_\ast}{\longrightarrow} X^{K}_j\]
A morphism $f\colon X\rightarrow Y$ in $\cat{C}_{a}^I$ is sent to the natural transformation with value $X_{i}^H\stackrel{f^{H}_i}{\longrightarrow}Y_{i}^H$ at the object $(G/H, i \in I^H)$. It is straightforward to see that $R$ is a right adjoint for $L$. The counit $LRX\rightarrow X$ is an isomorphism, and the unit at a diagram $Z$ of $\cat{C}^{\mathcal{O}^{op}_G\rtimes_{\overline{a}}I}$ is the natural transformation
\[\eta_Z\colon Z(G/H,i)\longrightarrow RL(Z)(G/H,i)=Z(G/1,i)^H\]
induced by the morphism $(H\in (G/H)^{1},\id_i)\colon(G/H,i)\rightarrow (G/1,i)$ of $\mathcal{O}^{op}_G\rtimes_{\overline{a}}I$. By definition of the fixed point model structure and of the model structure on $\cat{C}^{G\rtimes_{a}I}$, the right adjoint $R$ preserves and detects equivalences and fibrations. Thus the adjunction $(L,R)$ is a Quillen pair.

Since $R$ preserves and detects equivalences, $(L,R)$ is a Quillen equivalence precisely if the unit $\eta_Z\colon Z\rightarrow RL(Z)$ is an equivalence for all cofibrant objects $Z$ in $\cat{C}^{\mathcal{O}^{op}_G\rtimes_{\overline{a}}I}$. We prove this following the argument of \cite{Marc}. By cellularity of the fixed point functors $RL$ preserves pushouts along generating cofibrations and directed colimits along point-wise cofibrations. Thus it is enough to show that $\eta_Z$ is an isomorphism when $Z$ is a generating cofibrant object, that is, an object of the form
\[Z=\hom_{\mathcal{O}^{op}_G\rtimes_{\overline{a}}I}((G/H,i),-)\otimes c\]
for fixed objects $(G/H,i)$ of $\mathcal{O}^{op}_G\rtimes_{\overline{a}}I$ and $c$ of $\cat{C}$ cofibrant. For such a $Z$, the unit at an object $(G/K,j)$ is the top horizontal map of the commutative diagram
\[\xymatrix{\hom_{\mathcal{O}^{op}_G\rtimes_{\overline{a}}I}((G/H,i),(G/K,j))\otimes c\ar[r]^-{\eta}\ar[d]_{\cong}&
(\hom_{\mathcal{O}^{op}_G\rtimes_{\overline{a}}I}((G/H,i),(G/e,j))\otimes c)^K\ar[d]^{\cong}\\
\{(z\in (G/H)^K,\alpha\colon (zi\rightarrow j)\in I^K)\}\otimes c\ar[r]\ar@{=}[d]&\big(\left\{(z\in G/H,\alpha\colon (zi\rightarrow j)\in I)\right\}\otimes c\big)^K\ar@{=}[d]\\
\Lambda_{ij}^K\otimes c\ar[r]&(\Lambda_{ij}\otimes c)^K
}\]
where $\Lambda_{ij}$ is the set of pairs $(z\in G/H,\alpha\in zi\rightarrow j)$ with $K$ acting by left multiplication on $G/H$ and by the category action on the map to $j$ (notice that $j$ belongs to $I^K$). The bottom horizontal map is an isomorphism by the cellularity conditions on the $K$-fixed points functor.
\end{proof}

For the $G$-model category of spaces, the Elmendorf theorem gives a description of the fixed points of the homotopy limit of a $G$-diagram as a space of natural transformations of diagrams.

\begin{corollary}
For every $G$-diagram of spaces $X$ in $Top_{a}^I$, there is a natural homeomorphism of spaces
\[(\holim_IX)^G\cong Map_{Top^{\mathcal{O}^{op}_G\rtimes_{\overline{a}}I}}\big(
R(BI/(-)),R(X)\big)\]
where $R(X)\colon \mathcal{O}^{op}_G\rtimes_{\overline{a}}I\rightarrow Top$ has vertices $R(X)_{(G/H,i)}=X_{i}^H$.
\end{corollary}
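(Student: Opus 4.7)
The plan is to recognize the stated homeomorphism as the $Top$-enriched fully faithfulness of the right adjoint $R$ from Theorem \ref{elmendorf}, combined with the explicit end formula for the homotopy limit.

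First I would unpack the left-hand side. By definition $\holim_I X = \int_i Map(B(I/i), X_i)$ sits inside $\prod_i Map(B(I/i), X_i)$ with the product compact-open topology, and a point of $(\holim_I X)^G$ is precisely a family $f = (f_i\colon B(I/i)\to X_i)_{i \in \mathrm{ob}\,I}$ which is natural in morphisms of $I$ and $G$-equivariant in the sense that $f_{gi}\circ (BI/g) = g_{X_i}\circ f_i$ for all $g \in G$.

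Next I would prove that, since the counit $LR\Rightarrow \mathrm{id}$ from Theorem \ref{elmendorf} is a natural isomorphism, the functor $R$ is $Top$-enriched fully faithful: for any $Y, Z \in Top^{G\rtimes_a I}$, restriction to the $H$-fixed points at each $(G/H, i)$ defines a continuous bijection
\[\Xi_{Y,Z}\colon Map_{Top^I_a}(Y, Z)^G\stackrel{\cong}{\longrightarrow} Map_{Top^{\mathcal{O}^{op}_G\rtimes_{\overline{a}}I}}\big(R(Y), R(Z)\big),\]
whose inverse is projection of a natural transformation $\phi$ onto its $(G/e, -)$-components. I would check that this inverse is well-defined by invoking naturality of $\phi$ with respect to the morphisms $(e, \alpha)\colon (G/e, i)\to (G/e, j)$ (which forces the resulting family to be natural in $i$), $(g, \mathrm{id}_{gi})\colon (G/e, i)\to (G/e, gi)$ (which forces $G$-equivariance of the family), and $(H, \mathrm{id}_i)\colon (G/H, i)\to (G/e, i)$ (which recovers $\phi_{(G/H,i)}$ as the restriction of $\phi_{(G/e,i)}$ to $H$-fixed points, so that the projection is a two-sided inverse to $\Xi_{Y,Z}$). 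Continuity of both $\Xi_{Y,Z}$ and its inverse is routine, since all mapping spaces involved are subspaces of products of compact-open function spaces and both maps are built from restriction to subspaces and coordinate projections.

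Applying $\Xi_{Y,Z}$ with $Y = BI/(-)$ and $Z = X$ then finishes the argument, once one notes that $R(BI/(-))(G/H, i) = (BI/i)^H = B(I^H/i)$: this uses the standard identification $N(\cat{D})^H = N(\cat{D}^H)$ applied to $\cat{D} = I/i$, the observation that $(I/i)^H = I^H/i$ whenever $i \in I^H$, and the commutation of geometric realization with fixed points for finite group actions. Naturality in $X$ is immediate from that of $\Xi$ and of $R$. The main obstacle I anticipate is upgrading the set-level Quillen equivalence of Theorem \ref{elmendorf} to the $Top$-enriched homeomorphism $\Xi_{Y,Z}$ between mapping spaces; this is essentially the compact-open bookkeeping sketched above, but it must be carried out carefully to ensure that both directions of the bijection are continuous.
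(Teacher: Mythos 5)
Your proposal is correct and follows essentially the same route as the paper: both identify $(\holim_I X)^G$ with the $G$-fixed mapping space $Map_{Top^I_a}(BI/(-),X)$ and then transport it across the enriched Elmendorf adjunction using the fact that the counit $LR\Rightarrow \mathrm{id}$ is an isomorphism. The paper phrases this as the two-step chain $Map(BI/(-),X)\cong Map(LR(BI/(-)),X)\cong Map(R(BI/(-)),R(X))$, whereas you unpack the formally equivalent statement that $R$ is enriched fully faithful and construct the inverse by hand; the content is the same.
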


\begin{proof}
The space $(\holim_IX)^G$ is by definition the mapping space from $BI/(-)$ to $X$ in $Top_{a}^I$. As the counit of the adjunction of the Elmendorf theorem is an isomorphism, there is a sequence of natural homeomorphisms
\[Map_{Top^{I}_a}\big(BI/(-),X\big)\cong Map_{Top^I_a}
\big(LR(BI/(-)),X\big)\cong Map_{Top^{\mathcal{O}^{op}_G\rtimes_{\overline{a}}I}}
\big(R(BI/(-)),R(X)\big)\]
\end{proof}

%----------------- G-linearity-------------------------------------------------

\section{Equivariant excision}\label{sec:exc}

We use the homotopy theory of $G$-diagrams developed earlier in the paper to set up a theory of $G$-excisive homotopy functors.

Classical excision is formulated using cartesian and cocartesian squares, and captures the behavior of  homology theories. Blumberg points out in \cite{Blumberg} that in the equivariant setting, squares of $G$-objects are not enough to capture the behavior of equivariant homology theories. In the rest of the paper we explain how to replace squares by cubical $G$-diagrams to fund a good theory of equivariant excision. We point out that this has already been achieved in \cite{Blumberg} in the category of based $G$-spaces. We prove in \ref{Glintop} that our approach and Blumberg's are equivalent in this category.

\subsection{Equivariant cubes and \texorpdfstring{$G$}{G}-excision}

If $J$ is a finite $G$-set, the poset category of subsets of $J$ ordered by inclusion $\mathcal{P}(J)$ has a canonical $G$-action, where a group element $g\in G$ sends a  subset $U\subset J$ to the set 
\[g\cdot U=\{g\cdot u \ | \  u\in U\}\]
Let $\cat{C}$ be a $G$-model category (cf. \ref{defGmodelstr}).

\begin{definition}
The category of $J$-cubes in $\cat{C}$ is the category of $G$-diagrams $\cat{C}^{\mathcal{P}(J)}_a$ for the action $a$ on $\mathcal{P}(J)$ described above.
\end{definition}
In order to define a homotopy invariant notion of (co)cartesian cubes, we need to make our homotopy (co)limits homotopy invariant. Given a cube $X\in \cat{C}^{\mathcal{P}(J)}_a$ let $FX$ denote a fibrant $J$-cube together with an equivalence $X\stackrel{\simeq}{\rightarrow} FX$. Similarly let $QX\stackrel{\simeq}{\rightarrow} X$ denote an equivalence with $QX$ point-wise cofibrant, that is, with $QX_U$ cofibrant in $\cat{C}^{G_U}$ for every $U\in \mathcal{P}(J)$.
\begin{remark}
To find a replacement $FX$ one can simply use the fibrant replacement in the model category $\cat{C}^{\mathcal{P}(J)}_a$. Similarly, a cofibrant replacement $QX$ in $\cat{C}^{\mathcal{P}(J)}_a$ is in particular point-wise cofibrant by \ref{ptwisecof}. However, for a given cube one can often find a more explicit point-wise cofibrant replacement that is not necessarily cofibrant in $\cat{C}^{\mathcal{P}(J)}_a$ (see e.g. \ref{suspcube} and \ref{wedgecube} below). For example, if a functorial cofibrant replacement $Q$ in $\cat{C}$ lifts to a cofibrant replacement in $\cat{C}^H$ for every $H\leq G$, the diagram $QX$ is point-wise cofibrant.
\end{remark}

For an object $i$ of $I$ fixed by the $G$-action, let $I\backslash i$ be the full subcategory of $I$ with objects different from $i$. The action on $I$ restricts to $I\backslash i$, and the inclusion functor $\iota_i\colon I\backslash i\to I$ is equivariant.

\begin{definition}
Let $\cat{C}$ be a $G$-model category and $J$ a finite $G$-set. A $J$-cube $X\in \cat{C}^{\mathcal{P}(J)}_a$ is homotopy cocartesian if the canonical map 
\[\hocolim_{\mathcal{P}(J)\backslash J} \iota_{J}^\ast QX\longrightarrow QX_J\stackrel{\simeq}{\rightarrow}X_J\]
is an equivalence in $\cat{C}^G$.
Dually, $X\in \cat{C}^{\mathcal{P}(J)}_a$ is homotopy cartesian if the canonical map 
\[X_{\emptyset}\stackrel{\simeq}{\rightarrow}FX_\emptyset\longrightarrow\holim_{\mathcal{P}(J)\backslash \emptyset}\iota_{\emptyset}^{\ast} FX\]
is an equivalence in $\cat{C}^G$.
\end{definition}

\begin{example}\label{suspcube} Let $J$ be a finite $G$-set, and $J_+$ be the $G$-set $J$ with a disjoint fixed base point. For a cofibrant object $c\in\cat{C}^G$ define a $J_+$-cube $S^Jc$ with verticies
\[(S^Jc)_U=\left\{\begin{array}{lll}
c & ,U=\emptyset\\
C^Uc & ,U\lneq J_+\\
\Sigma^Jc & ,U= J_+
\end{array}\right.\]
Here $\Sigma^Jc=\Sigma^{\tilde{J_+}}c$ is the suspension by the permutation representation of $J$ defined in \ref{defloopsusp}, and $C^Uc$ denotes the $U$-iterated cone
\[C^Uc=\hocolim_{\mathcal{P}(U)}\left(S\longmapsto\left\{\begin{array}{ll}c&\mbox{if }S=\emptyset\\
\ast&\mbox{otherwise}
\end{array}\right.\right)\simeq\ast\]
Since $c$ is cofibrant, $S^Jc$ is point-wise cofibrant.
Let us prove that it is homotopy cocartesian. Its restriction to $\mathcal{P}(J_+)\backslash J_+$ is the cofibrant replacement $q$ of Theorem \ref{thm:qxcofrepl} for the diagram $\sigma^J c\colon \mathcal{P}(J_+)\backslash J_+\rightarrow\cat{C}$ with $(\sigma^J c)_\emptyset=c$ and the terminal object at the other vertices. Since homotopy colimits and colimits agree on cofibrant objects (by the homotopy invariance of $\otimes^a_I$), the canonical map from  the homotopy colimit factors as the equivalence
\[\hocolim_{\mathcal{P}(J_+)\backslash J_+}S^Jc=\hocolim_{\mathcal{P}(J_+)\backslash J_+}q(\sigma^J c)\stackrel{\simeq}{\rightarrow}\colim_{\mathcal{P}(J_+)\backslash J_+}q(\sigma^J c)\cong \hocolim_{\mathcal{P}(J_+)\backslash J_+}\sigma^Jc=\Sigma^J c
\]
\end{example}

\begin{example}\label{wedgecube} Suppose that $\cat{C}$ has a zero object $\ast$ and denote the coproduct by $\bigvee$. Let $c$ be a cofibrant object of $\cat{C}^G$ and $J$ a finite $G$-set. Define a $J$-cube $W^Jc$ with vertices
\[(W^Jc)_U=\left\{\begin{array}{lll}
\bigvee_Jc & ,U=\emptyset\\
c & ,|U|=1\\
\ast& ,|U|\geq 2
\end{array}\right.\]
with initial map $(W^Jc)_\emptyset=\bigvee_Jc\rightarrow c=
(W^Jc)_{\{j\}}$ the pinch map that collapses every wedge component different from $j$. This has a $G$-structure defined by the action on $\bigvee_Jc$ on the initial vertex, and by the action maps $g\colon (W^Jc)_{\{j\}}=c\rightarrow c=(W^Jc)_{\{gj\}}$. The cube $W^Jc$ is homotopy cocartesian, that is, its homotopy colimit over $\mathcal{P}(J)\backslash J$ is equivalent in $\cat{C}^G$ to the zero object. To see this, we replace $W^Jc$ by the equivalent cube
\[(\overline{W}c)_U=\left\{\begin{array}{lll}
\bigvee\limits_Jc& ,U=\emptyset\\
 c\bigvee\limits_{J\backslash j}Cc & , U=\{j\}\\
\bigvee\limits_{J}Cc& , |U|\geq 2
\end{array}\right.\]
where $Cc$ is the one-fold cone $Cc=\hocolim(c\rightarrow \ast)$ and the non-identity maps of the diagram are all induced by cone inclusions $c\rightarrow Cc$. The $G$-structure is defined similarly as before, by permuting the wedge components. The cube $\overline{W}c$ is cofibrant, since the latching maps are all cofibrations (see \ref{reedy}). As homotopy colimits preserve equivalences of point-wise cofibrant diagrams we get
\[\hocolim_{\mathcal{P}(J)\backslash J}W^Jc\stackrel{\simeq}{\leftarrow}\hocolim_{\mathcal{P}(J)\backslash J}\overline{W}c\stackrel{\simeq}{\rightarrow}\colim_{\mathcal{P}(J)\backslash J}\overline{W}c\cong \bigvee_JCc\]
This is contractible since $\bigvee_J$ is a left Quillen functor and therefore preserves equivalences of cofibrant objects.
\end{example}

We use homotopy cartesian and cocartesian $G_+$-cubes to express equivariant excision for functors between $G$-model categories $\cat{C}$ and $\cat{D}$. We shall consider functors for which we can express compatibility conditions with the model structures on $\cat{C}^H$ and $\cat{D}^H$ for every subgroup $H\leq G$. These are functors $\Phi\colon \cat{C}\rightarrow \cat{D}^G$. Such a functor $\Phi$ induces a functor $\Phi_\ast\colon \cat{C}^{I}_a\rightarrow \cat{D}^{I}_a$ for any category with $G$-action $I$. The $G$-structure on $\Phi_\ast(X)=\Phi\circ X$ is defined by the maps
\[\Phi(X_i)\stackrel{g}{\longrightarrow}\Phi(X_i)\stackrel{\Phi(g)}{\longrightarrow} \Phi(X_{gi})\]
Since each map $\Phi(g)$ is $G$-equivariant $\Phi(g)g=g\Phi(g)$. For $I=\ast$ the trivial category this functor is the classical extension $\Phi_\ast\colon \cat{C}^G\rightarrow \cat{D}^G$. Similarly, the functor $\Phi\colon \cat{C}\rightarrow \cat{D}^H$ obtained by restricting the $G$-action to $H\leq G$, extends to a functor $\Phi_\ast\colon \cat{C}^H\rightarrow \cat{D}^H$.

\begin{definition}
We call $\Phi\colon \cat{C}\rightarrow \cat{D}^G$ a homotopy functor if for every subgroup $H\leq G$ the extended functor $\Phi_\ast\colon \cat{C}^H\rightarrow \cat{D}^H$ preserves equivalences of cofibrant objects. In particular the induced functor $\Phi_\ast\colon \cat{C}^{I}_a\rightarrow \cat{D}^{I}_a$ preserves equivalences of point-wise cofibrant $G$-diagrams.
\end{definition}

\begin{remark} The following are all examples of functors $\cat{C}^G\rightarrow \cat{D}^G$ that are extensions of homotopy functors $\cat{C}\rightarrow \cat{D}^G$.
\begin{itemize}
\item The identity functor $\cat{C}^G\rightarrow \cat{C}^G$,
\item For a fixed pointed $G$-space $K$, the functors $K\wedge (-),Map_\ast(K,-)\colon Top_{\ast}^G\rightarrow Top_{\ast}^G$,
\item For a fixed orthogonal $G$-spectrum $E$ the functor $E\wedge (-)\colon Top_{\ast}^G\rightarrow (\Sp^O)^G$.
\end{itemize}
An example of a functor $\cat{C}^G\rightarrow \cat{D}^G$ that is not the extension of a functor $\cat{C}\rightarrow \cat{D}^G$ is the functor $(-)/G\colon Top^G\rightarrow Top^G$ that sends a $G$-space to its orbit space with trivial $G$-action.
\end{remark}

\begin{definition}\label{defGexc} Let $\cat{C}$ and $\cat{D}$ be $G$-model categories.
A homotopy functor $\Phi\colon \cat{C}\rightarrow \cat{D}^G$ is called $G$-excisive if the induced functor $\Phi_\ast\colon \cat{C}^{\mathcal{P}(G_+)}_a\rightarrow \cat{D}^{\mathcal{P}(G_+)}_a$ sends homotopy cocartesian $G_+$-cubes to homotopy cartesian $G_+$-cubes. If $\cat{C}$ and $\cat{D}$ are pointed, $\Phi$ is called $G$-linear if it is $G$-excisive and $\Phi(\ast)$ is equivalent to the zero object in $\cat{D}^G$.
\end{definition}

The choice of indexing the cubes on the $G$-set $G_+$ seems arbitrary at first sight. We justify and explain this choice, including the extra basepoint added to $G$, in \ref{fromGtoany} and \ref{nobasepoint} below.

\begin{example}\label{exlin}The following are examples of $G$-linear homotopy functors, as we will see later in the paper.
\begin{itemize}
\item
Let $M$ be an abelian group with additive $G$-action. Consider the homotopy functor $M(-)\colon sSet_\ast\rightarrow sSet^{G}_\ast$ that sends a simplicial set $Z$ to
\[M(Z)_n=\bigoplus_{z\in Z_n}M z/M\ast\]
where $G$ acts diagonally on the direct summands.
We show in \ref{Gconf} that this functor is $G$-linear, and explain how this is related to the equivariant Eilenberg-MacLane spectrum $HM$ being a fibrant orthogonal $G$-spectrum. The homotopy groups of the extension of $M(-)$ to $sSet^{G}_\ast$ are Bredon cohomology of the Mackey functor $H\mapsto M^H$.
\item For a fixed orthogonal $G$-spectrum $E$ in $(\Sp^O)^G$, the homotopy functor $E\wedge(-)\colon Top_{\ast}\rightarrow (\Sp^{O})^G$ is $G$-linear (see \ref{classlintospec}). The stable homotopy groups of the extension of $E\wedge(-)$ to pointed $G$-spaces is the equivariant cohomology theory associated to $E$.
\item The inclusion of spectra with trivial $G$-action $\Sp^O\rightarrow (\Sp^O)^G$ (which extends to the identity on $G$-spectra) is $G$-linear (see \ref{Gcartcocart}).
\end{itemize}
\end{example}

The next result shows that our choice of indexing the cubes on the $G$-set $G_+$ in the definition of $G$-excision plays a minor role, and we could equivalently have indexed the cubes on transitive $G$-sets with disjoint basepoints.

\begin{proposition}\label{fromGtoany} A homotopy functor $\Phi\colon \cat{C}\rightarrow \cat{D}^G$  is $G$-excisive if and only if the induced functor $\Phi_\ast\colon \cat{C}^{\mathcal{P}(G/H_+)}_a\rightarrow \cat{D}^{\mathcal{P}(G/H_+)}_a$ sends homotopy cocartesian $G/H_+$-cubes to homotopy cartesian $G/H_+$-cubes, for every subgroup $H\leq G$.
\end{proposition}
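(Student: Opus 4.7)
The ``if'' direction is trivial: taking $H=1$ specializes the condition to $G_+$-cubes, which is exactly the definition of $G$-excision. For the converse, the plan is to reduce homotopy (co)limits of $G/H_+$-cubes to homotopy (co)limits of $G_+$-cubes using the equivariant cofinality Theorem \ref{Gcof}. Let $\pi\colon G_+\rightarrow G/H_+$ be the $G$-equivariant projection and let $f\colon \mathcal{P}(G_+)\rightarrow \mathcal{P}(G/H_+)$ be the $G$-equivariant poset functor $f(U)=\pi(U)$. Given any $G/H_+$-cube $X$, restriction along $f$ produces a $G_+$-cube $f^\ast X$ with $(f^\ast X)_U = X_{\pi(U)}$; in particular the initial and terminal vertices agree with those of $X$. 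Since $\Phi_\ast$ commutes with restriction along equivariant functors, $\Phi_\ast(f^\ast X) = f^\ast \Phi_\ast(X)$.

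The heart of the argument is to show that $f$ is left cofinal as a functor $\mathcal{P}(G_+)\setminus\emptyset \rightarrow \mathcal{P}(G/H_+)\setminus\emptyset$ and right cofinal as $\mathcal{P}(G_+)\setminus G_+ \rightarrow \mathcal{P}(G/H_+)\setminus G/H_+$ in the sense of Definition \ref{def:gcof}. Left cofinality is immediate: for $V\neq\emptyset$ the under-category $f/V$ is exactly $\mathcal{P}(\pi^{-1}(V))\setminus\emptyset$, with $G_V$-fixed maximum element $\pi^{-1}(V)$, so $(f/V)^K$ is contractible for every $K\leq G_V$. Right cofinality is the main technical obstacle, because for $V\subsetneq G/H_+$ the over-category $V/f=\{U\neq G_+ : \pi(U)\supseteq V\}$ has neither an obvious initial nor terminal object. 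My approach is to consider the $G_V$-equivariant endofunctor $r\colon V/f\rightarrow V/f$ given by $r(U)=U\cap\pi^{-1}(V)$; this is well-defined since $\pi(U\cap\pi^{-1}(V))=\pi(U)\cap V\supseteq V$, and the inclusion $r(U)\subseteq U$ is a $G_V$-equivariant natural transformation $r\Rightarrow\mathrm{id}$. This exhibits $V/f$ as a $G_V$-equivariant deformation retract of the subposet $\{U\subseteq\pi^{-1}(V) : \pi(U)=V\}$, whose $G_V$-fixed maximum is $\pi^{-1}(V)$, itself a valid object of $\mathcal{P}(G_+)\setminus G_+$ precisely because $V\subsetneq G/H_+$. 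The argument passes to $K$-fixed points for every $K\leq G_V$.

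With these cofinalities established, Theorem \ref{Gcof} yields natural $G$-equivalences
\[\holim_{\mathcal{P}(G/H_+)\setminus\emptyset} Y \stackrel{\simeq}{\longrightarrow} \holim_{\mathcal{P}(G_+)\setminus\emptyset} f^\ast Y \ \ \mbox{and}\ \ \hocolim_{\mathcal{P}(G_+)\setminus G_+} f^\ast Y \stackrel{\simeq}{\longrightarrow} \hocolim_{\mathcal{P}(G/H_+)\setminus G/H_+} Y\]
for fibrant respectively point-wise cofibrant $Y$; note that $f^\ast$ preserves both fibrancy and point-wise cofibrancy, because the relevant subgroup restrictions are both left and right Quillen by Remark \ref{adjforget}. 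It follows that $X$ is homotopy cocartesian if and only if $f^\ast X$ is, and similarly for homotopy cartesianness. The conclusion then runs: given a homotopy cocartesian $G/H_+$-cube $X$ (point-wise cofibrant after replacement), $f^\ast X$ is a homotopy cocartesian $G_+$-cube, so by $G$-excision $f^\ast \Phi_\ast(X) = \Phi_\ast(f^\ast X)$ is homotopy cartesian, and applying the equivalent characterization to a fibrant replacement of $\Phi_\ast(X)$ shows that $\Phi_\ast(X)$ is itself homotopy cartesian.
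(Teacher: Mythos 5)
Your overall strategy is the paper's own: pull a $G/H_+$-cube back to a $G_+$-cube along the image functor induced by the projection $\pi\colon G_+\to G/H_+$, apply $G$-excision there, and descend. The ``if'' direction, the cartesian half of the comparison, and the final deduction are all correct; in particular your left cofinality argument for $U\mapsto\pi(U)$ on cubes punctured at $\emptyset$, with terminal object $\pi^{-1}(V)$ in the fixed categories of $f/V$, is exactly the paper's Proposition \ref{surjmapcubes}.

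The gap is in the cocartesian half. The functor $f(U)=\pi(U)$ does \emph{not} restrict to a functor $\mathcal{P}(G_+)\setminus G_+\to\mathcal{P}(G/H_+)\setminus G/H_+$: whenever $|H|\geq 2$ there are proper subsets $U\subsetneq G_+$ with $\pi(U)=G/H_+$ (for instance $G_+$ minus a single point of $G$). Consequently there is no comparison map $\hocolim_{\mathcal{P}(G_+)\setminus G_+}f^\ast X\to\hocolim_{\mathcal{P}(G/H_+)\setminus G/H_+}X$ induced by $f$, and Theorem \ref{Gcof} cannot be invoked. Your contraction of $V/f$ silently assumes $V\subsetneq G/H_+$, which is precisely the case that the failure above excludes; at $V=G/H_+$ the relevant under-category (for the full target $\mathcal{P}(G/H_+)$) is genuinely not contractible --- for $G=H=C_2$ it is the discrete poset $\{\{e,+\},\{\sigma,+\}\}$ --- and it had better not be, since otherwise every restricted cube $p^\ast X$ would be homotopy cocartesian regardless of $X$. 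The statement you need, that $p^\ast$ preserves homotopy cocartesian cubes, is true but requires a different argument: the paper (Proposition \ref{surjmapcubesco}) introduces the right cofinal functor $\lambda$ from $\bigl(\prod_{j}\mathcal{P}(p^{-1}(j))\setminus p^{-1}(j)\bigr)\times\bigl(\mathcal{P}(J)\setminus J\bigr)$ to $\mathcal{P}(K)\setminus K$, applies the twisted Fubini theorem \ref{Fubini}, and identifies all but one of the resulting slices as degenerate cubes via Lemma \ref{cartcocartid}. Note also that your claim that $X$ is homotopy cocartesian if and only if $f^\ast X$ is overreaches even once preservation is established: the paper remarks after \ref{surjmapcubesco} that $p^\ast$ does not detect cocartesian cubes in general (this direction is, fortunately, not needed for the proposition).
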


\begin{remark}\label{squaresinG} Setting $H=G$ in \ref{fromGtoany} we see that $\Phi_\ast\colon \cat{C}^{\mathcal{P}(1_+)}_a\rightarrow \cat{D}^{\mathcal{P}(1_+)}_a$ sends cocartesian squares in $\cat{C}^G$ to cartesian squares in $\cat{D}^G$. That is, if $\Phi$ is $G$-excisive then the induced functor $\Phi_\ast\colon \cat{C}^G\rightarrow \cat{D}^G$ is excisive in the classical sense.
\end{remark}

\begin{proof}[Proof of \ref{fromGtoany}] The ``if''-part of the statement is trivial. For the ``only if''-part, let $H$ be a subgroup of $G$ and consider the projection map $p\colon G_+\rightarrow G/H_+$. As part of a broader discussion on how to calculate homotopy limits and colimits of punctured cubes, we show in \ref{surjmapcubes} and \ref{surjmapcubesco} that the induced restriction functor $p^\ast\colon \cat{C}^{\mathcal{P}(G/H_+)}_a\rightarrow \cat{C}^{\mathcal{P}(G_+)}_a$ preserves homotopy cocartesian cubes and detects homotopy cartesian cubes. Therefore, given a homotopy cocartesian cube $X$ in $\cat{C}^{\mathcal{P}(G/H_+)}_a$, the cube $p^{\ast}X$ in $\cat{C}^{\mathcal{P}(G_+)}_a$ is homotopy cocartesian, and by $G$-excision of $\Phi$ the cube $\Phi_\ast(p^{\ast}X)=p^{\ast}\Phi_\ast(X)$ is homotopy cartesian in $\cat{D}^{\mathcal{P}(G_+)}_a$. As $p^\ast$ detects homotopy cocartesian cubes, $\Phi_\ast(X)$ is homotopy cartesian in $\cat{D}^{\mathcal{P}(G/H_+)}_a$.
\end{proof}

\begin{remark}\label{nobasepoint} The basepoint added to $G$ in the definition of $G$-excision \ref{defGexc} has the role of combining in a single condition the behavior of $\Phi\colon\cat{C}\rightarrow\cat{D}^G$ on squares and on $G$-cubes. We already saw (\ref{squaresinG}) that if $\Phi$ is $G$-excisive it sends homotopy cocartesian squares to homotopy cartesian squares. It turns out that $\Phi_\ast\colon\cat{C}^{\mathcal{P}(G/H)}_a\rightarrow \cat{D}^{\mathcal{P}(G/H)}_a$ also turns homotopy cocartesian $G/H$-cubes into homotopy cartesian ones. This can be proved by extending a $G/H$-cube to a $G/H_+$-cube by means of the functor $p\colon \mathcal{P}(G/H_+)\rightarrow \mathcal{P}(G/H)$ that intersects a subset with $G/H$, with a proof analogous to \ref{fromGtoany}. Conversely, similar techniques show that if $\Phi\colon\cat{C}\rightarrow\cat{D}^G$ turns homotopy cocartesian squares and $G$-cubes into homotopy cartesian ones, it is $G$-excisive.
\end{remark}

\begin{remark} $G$-linearity is  hereditary with respect to taking subgroups, under a mild assumption on the $G$-model category $\cat{D}$. That is to say, if $\Phi$ is $G$-linear it is also $H$-linear for every subgroup $H$ of $G$. The proof we suggest requires a surprizing amount of machinery and it is given in \ref{GlinHlin} as a corollary of a higher Wirthm\"{u}ller isomorphism theorem. It is still unknow to the authors if in the unpointed case $G$-excision satisfies a similar property.
\end{remark}

\begin{proposition}\label{wedgesintoprod} Let $\cat{C}$ and $\cat{D}$ be pointed $G$-model categories, and $\Phi\colon \cat{C}\rightarrow \cat{D}^G$  be a $G$-linear homotopy functor. For any finite $G$-set $J$ and any cofibrant $G$-object $c\in\cat{C}^G$ the canonical map
\[\Phi(\bigvee_Jc)\longrightarrow \prod_JF\Phi(c)\]
is an equivalence in $\cat{D}^G$.
\end{proposition}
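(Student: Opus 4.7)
The strategy is to apply $\Phi$ to the wedge cube $W^J c$ of Example \ref{wedgecube} -- which is homotopy cocartesian with initial vertex $\bigvee_J c$, singleton vertices $c$, and $\ast$ at every $U$ with $|U|\geq 2$ -- and then compute the homotopy limit of the resulting punctured cube.

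The first step is to show that $\Phi_\ast W^J c$ is homotopy cartesian in $\cat{D}^{\mathcal{P}(J)}_a$. For $J=G_+$ this is precisely $G$-linearity; Proposition \ref{fromGtoany} extends the property to $J=G/H_+$, and Remark \ref{nobasepoint} further to $J=G/H$ for every subgroup $H\leq G$. For a general finite $G$-set $J$, I would induct on the number of $G$-orbits. Writing $J=J_1\sqcup J_2$ with $J_1$ a single orbit, the isomorphism of $G$-posets $\mathcal{P}(J)\cong \mathcal{P}(J_1)\times\mathcal{P}(J_2)$ exhibits a $J$-cube as a $J_1$-cube of $J_2$-cubes. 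Using the twisted Fubini theorem \ref{Fubini} and its limit dual, being homotopy (co)cartesian as a $J$-cube reduces to an iterated (co)cartesian condition for the $J_1$- and $J_2$-directions, from which the inductive step follows by two applications of the inductive hypothesis.

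The second step is the homotopy limit computation. Since $\Phi$ is $G$-linear, $\Phi(\ast)$ is equivalent to the zero object, so in a fibrant replacement $F\Phi_\ast W^J c$ every vertex above cardinality one is equivalent to $\ast$, while the singleton vertices are $F\Phi(c)$. Inspection of the end formula for $\holim_{\mathcal{P}(J)\setminus\emptyset}$, together with the observation that the over-category $(\mathcal{P}(J)\setminus\emptyset)/\{j\}$ at each singleton is a single point, equivariantly identifies
\[\holim_{\mathcal{P}(J)\setminus\emptyset} F\Phi_\ast W^J c \,\simeq\, \prod_J F\Phi(c)\]
with the natural permutation $G$-action inherited from $J$. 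Combined with homotopy cartesian-ness of $\Phi_\ast W^J c$, this produces the desired equivariant equivalence $\Phi(\bigvee_J c)\stackrel{\simeq}{\rightarrow} \prod_J F\Phi(c)$, and one checks that the composite agrees with the canonical map in the statement by tracing through the pinch maps $\bigvee_J c\rightarrow c$ that define $W^J c$.

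The main obstacle is the inductive extension of $G$-linearity from $G/H_+$-cubes to arbitrary $J$-cubes. The Fubini-style decomposition is the natural tool, but the equivariant bookkeeping -- ensuring that the $J_1/J_2$ splitting respects the $G$-action, that cartesianness of a cube of cubes really does correspond to cartesianness of the total cube (and dually for cocartesianness), and that the fibrancy and point-wise cofibrancy assumptions are preserved under iteration -- is delicate and is where the bulk of the technical work will lie.
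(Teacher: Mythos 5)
Your step 2 (the homotopy limit computation for the punctured wedge cube) and your choice of test cube match the paper's argument for the transitive case, but step 1 contains a genuine gap. You need $\Phi_\ast W^Jc$ to be homotopy cartesian for an \emph{arbitrary} finite $G$-set $J$, and you propose to get this by extending $G$-excision from $G/H_+$-cubes to general $J$-cubes by induction on orbits via the splitting $\mathcal{P}(J)\cong\mathcal{P}(J_1)\times\mathcal{P}(J_2)$. This induction does not go through formally: while a $(J_1\sqcup J_2)$-cube all of whose $J_2$-slices are cartesian is cartesian as soon as its $\emptyset$-slice is (this direction of the ``Fubini'' decomposition is fine), the hypothesis of the inductive step is a \emph{cocartesian} total cube, and cocartesianness of the total cube does not decompose into cocartesianness of the slices -- indeed the slices of $W^Jc$ itself are not wedge cubes of the smaller shapes. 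This is the same phenomenon as in the nonequivariant theory, where a $1$-excisive functor sending cocartesian squares to cartesian squares does not formally send cocartesian $n$-cubes to cartesian $n$-cubes; one needs either stability of the target or the full $T_1$-style machinery. In this paper the statement you want is exactly condition (3) of Corollary \ref{corlinandloops}, which is only established later, under additional hypotheses (local finite presentability of $\cat{D}^H$ and the condition that $\hom_H(G,-)$ detects equivalences) that are not part of Proposition \ref{wedgesintoprod}, and via the substantially harder Theorem \ref{linandloops}.

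The paper avoids all of this by splitting the \emph{wedge} rather than the \emph{cube}: writing $\bigvee_Jc=\bigvee_{z\in G\backslash J}\bigvee_zc$, the outer wedge is indexed by the orbit set $G\backslash J$ with trivial $G$-action and is handled by iterating the square case (Remark \ref{squaresinG} applied to the cocartesian square $c\vee c\to c$), and each inner wedge is indexed by a single orbit $G/H$, for which the $G/H_+$-cube $Wc$ is homotopy cocartesian and Proposition \ref{fromGtoany} applies directly. You should replace your inductive extension of excision by this reduction; the rest of your plan then coincides with the paper's proof.
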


\begin{proof}
First assume that $J=1_+$ with trivial $G$-action. The square $Vc$
\[\xymatrix{c\vee c\ar[r]^{p_+}\ar[d]_{p_1}&c\ar[d]\\
c\ar[r]&\ast
}\]
in $\cat{C}^G$ is homotopy cocartesian (cf. \ref{wedgecube}). By \ref{squaresinG} its image $\Phi(Vc)$ is homotopy cartesian, that is, the map 
\[\Phi(c\vee c)\stackrel{\simeq}{\rightarrow}F\Phi(c\vee c)\rightarrow \holim_{\mathcal{P}(1_+)\backslash\emptyset}F\Phi(Vc)\cong F\Phi(c)\times F\Phi(c)\]
is a weak equivalence in $\cat{D}^G$, with diagonal action on the target. By induction, the map of the statement is an equivalence for every $J$ with trivial $G$-action. Given a finite $G$-set $J$, decompose it as disjoint union of transitive $G$-sets $J=\coprod_{z\in G\backslash J}z$. The map of the statement decomposes as
\[\Phi(\bigvee_Jc)=\Phi(\bigvee_{z\in G\backslash J}\bigvee_{z}c)\stackrel{\simeq}{\longrightarrow} \prod_{z\in G\backslash J}F\Phi(\bigvee_z c)\longrightarrow\prod_{z\in G\backslash J}\prod_z F\Phi(c)=\prod_JF\Phi(c)\]
with the first map an equivalence as the action on the quotient $G\backslash J$ is trivial. Therefore it is enough to show that the map is an equivalence for $J=G/H$ a transitive $G$-set.
 
Consider the $G/H_+$-cube $Wc$ with vertices
\[(Wc)_U=\left\{\begin{array}{lll}
\bigvee_{G/H}c& ,U=\emptyset\\
c & ,U=\{j\neq +\}\\
\ast&, \mbox{otherwise}
\end{array}\right.\]
It is homotopy cocartesian by an argument completely similar to \ref{wedgecube}.
By \ref{fromGtoany} the cube $\Phi(Wc)$ is homotopy cartesian, that is, the canonical map
\[\Phi(\bigvee_{G/H}c)\rightarrow \holim_{\mathcal{P}(G/H_+)\backslash\emptyset}F\Phi(Wc)\cong \prod_{G/H} F\Phi(c)\]
is an equivalence in $\cat{D}^G$.
\end{proof}

\begin{remark}
In this equivariant setting $G_+$-cubes (or equivalently $J_+$-cubes for $J$ transitive) play the role that squares play in the classical theory. The equivariant analogue of $n$-cubes should be cubes indexed on $G$-sets with $n$ distinct $G$-orbits and a disjoint basepoint. Following \cite{calcII}, the behavior of $\Phi$ on these cubes should be related to higher order $G$-excision. This will be the subject of a later article.
\end{remark}

%---------------Wirthmuller--------------------------

\subsection{The generalized Wirthm\"{u}ller isomorphism theorem}

Let $\cat{C}$ be a bicomplete category, and $G$ a finite group. We recall from \S\ref{secGmod} that a finite set $K$ with commuting left $H'$-action and right $H$-action induces an adjunction
\[K\otimes_H(-)\colon \cat{C}^H\rightleftarrows  \cat{C}^{H'}\colon \hom_{H'}(K,-)\]
Let $K^\ast$ be the set $K$ with left $H$-action and right $H'$-action defined by $h\cdot k\cdot h'=(h')^{-1}\cdot k\cdot h^{-1}$.
If $\cat{C}$ has a zero-object $\ast$ and if the actions on $K$ are free, a functor $\Phi\colon\cat{C}\rightarrow \cat{D}^G$ induces a natural transformation
\[\eta\colon \Phi(K\otimes_H(-))\longrightarrow\hom_{H}(K^\ast,\Phi(-))\]
of functors $\cat{C}^H\rightarrow\cat{D}^{H'}$. The map $\eta_c$ is the image by the composition
\[\begin{array}{ll}\cat{C}^{H}((K^{\ast}\times_{H'}K)\otimes_Hc,c)\stackrel{\Phi}{\rightarrow}\cat{D}^{H}(\Phi((K^{\ast}\times_{H'}K)\otimes_Hc),\Phi(c))\rightarrow
\\
\cat{D}^{H}(K^{\ast}\otimes_{H'}\Phi(K\otimes_Hc),\Phi(c))
\iso \cat{D}^{H'}(\Phi(K\otimes_Hc),\hom_{H}(K^\ast,\Phi(c)))
\end{array}\]
of the map $\bigvee_{K^{\ast}\times_{H'}K}c\rightarrow c$ defined by $h\colon c\rightarrow c$ on a $(k, k')$-component with $k'h=k$, and by the trivial map $c\rightarrow\ast\rightarrow c$ otherwise. Notice that since the $H$-action is free there is at most one $h$ for which $k'h=k$.

\begin{example}
Suppose that $K=G=H'$ with left $G$-multiplication and right $H$-multiplication. Sending an element to its inverse defines a $H$-$G$-equivariant isomorphism between $G^\ast$ and $G$ with left $H$-multiplication and right $G$-multiplication. We saw in \ref{adjforget} that the forgetful functor $\cat{C}^G\rightarrow \cat{C}^H$ is right adjoint to $G\otimes_H(-)$ and left adjoint to $\hom_H(G^{\ast},-)$. The map $\eta$ for the identity functor is the standard map
\[G\otimes_H(-)\longrightarrow\hom_H(G^{\ast},-)\]
which in the case of spectra is the classical Wirthm\"{u}ller isomorphism map. In \ref{Gcartcocart} we apply \ref{linandwedges} below to recover the Wirthm\"{u}ller isomorphism theorem for $G$-spectra.
\end{example}

\begin{theorem}\label{linandwedges} 
Let $\cat{C}$ and $\cat{D}$ be pointed $G$-model categories, and suppose that $K$ admits an $H'$-$H$-equivariant map to $G$, this happens e.g. if $K=G$. For every $G$-linear homotopy functor $\Phi\colon \cat{C}\rightarrow \cat{D}^G$ and every object  $c$ in $\cat{C}^H$ the composite
\[\Phi(K\otimes_Hc)\stackrel{\eta}{\longrightarrow}\hom_{H}(K^\ast,\Phi(c))\longrightarrow\hom_{H}(K^\ast,F\Phi(c))\]
is an equivalence in $\cat{D}^{H'}$, where $\Phi(c)\stackrel{\simeq}{\rightarrow} F\Phi(c)$ is a fibrant replacement of $\Phi(c)$ in $\cat{D}^{H}$.

In particular, if the right Quillen functor $\hom_{H}(K^\ast,-)$ preserves all weak equivalences, the map $\eta\colon \Phi(K\otimes_Hc)\rightarrow\hom_{H}(K^\ast,\Phi(c))$ is a weak equivalence for any $c\in \cat{C}^H$.
\end{theorem}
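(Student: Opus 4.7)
The strategy is to exhibit, for the given $H$-object $c$, a homotopy cocartesian cube in $\mathscr{C}$ with initial vertex $K \otimes_H c$, to which $G$-linearity can be applied, and then to identify the resulting punctured homotopy limit with $\hom_H(K^*, F\Phi(c))$. The map $K \to G$ is precisely what is needed to promote the purely $H'$-combinatorial object $K \otimes_H c$ to something that sits inside a $G$-equivariant cube.

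First I would construct a cube $Wc$ generalizing the wedge cube of Example \ref{wedgecube}, indexed on $\mathcal{P}(G/H_+)$, using the $H'$-$H$-equivariant map $K \to G$ to equip it with a $G$-structure. Concretely, set $(Wc)_\emptyset = K \otimes_H c$ and, for a non-basepoint singleton $\{gH\}$, let $(Wc)_{\{gH\}}$ be the coproduct of copies of $c$ indexed by the fiber of the induced map $K/H \to G/H$ over $gH$ (which reduces to $c$ itself in the motivating case $K = G$). The map from the initial vertex to each singleton vertex is the wedge projection that collapses the complementary summands, and the vertices above level one are iterated cones on these, hence pointwise contractible. As in the proof of Example \ref{wedgecube}, replacing the iterated cones by a cone resolution makes the punctured cube cofibrant and shows that its homotopy colimit agrees with its colimit, which by inspection is $K \otimes_H c$; hence $Wc$ is homotopy cocartesian.

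Now, by Proposition \ref{fromGtoany} together with the $G$-linearity of $\Phi$, the cube $\Phi_*(Wc)$ is a homotopy cartesian $G/H_+$-cube in $\mathscr{D}^G$, so the canonical map
\[\Phi(K \otimes_H c) \longrightarrow \holim_{\mathcal{P}(G/H_+) \backslash \emptyset} F\Phi_*(Wc)\]
is an equivalence in $\mathscr{D}^G$ and, after restriction along $H' \leq G$, in $\mathscr{D}^{H'}$. Since $\Phi$ is a homotopy functor, the higher vertices of $F\Phi_*(Wc)$ remain contractible, so the punctured homotopy limit reduces to the product over the singleton vertices with the induced $G$-action. Tracking the $G$-action through the coproduct/product identification of the singleton vertices (with the aid of the equivariant map $K \to G$), this product is naturally identified with $\hom_H(K^*, F\Phi(c))$ as an $H'$-object.

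The hard part will be this last identification, and the subsequent verification that the resulting equivalence agrees in the homotopy category of $\mathscr{D}^{H'}$ with the composite $\Phi(K \otimes_H c) \xrightarrow{\eta} \hom_H(K^*, \Phi(c)) \to \hom_H(K^*, F\Phi(c))$. Both maps are determined by their interaction with the singleton-vertex projections $K \otimes_H c \to c$, so this should amount to a diagram chase using the defining formula for $\eta$ from the paragraph preceding the theorem statement, combined with the observation that the $K^* \times_{H'} K$ bookkeeping in the definition of $\eta$ precisely matches the combinatorics of the $G/H_+$-cube $Wc$. The ``in particular'' clause is then immediate: if $\hom_H(K^*,-)$ preserves all equivalences then the second factor in the composite is an equivalence, so by two-out-of-three $\eta$ itself is an equivalence.
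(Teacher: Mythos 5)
Your strategy is essentially the paper's: realize the map $\Phi(K\otimes_H c)\to\hom_H(K^\ast,F\Phi(c))$ as the canonical map from the initial vertex of a wedge-type punctured cube to its homotopy limit and invoke $G$-excision. The paper indexes its cube on $\mathcal{P}(K/H_+)$ rather than $\mathcal{P}(G/H_+)$: it chooses a section $s_G$ of $G\to G/H$ and, via the equivariant map $\phi\colon K\to G$, a compatible section of $K\to K/H$, producing a cocycle $\gamma\colon H'\times K/H\to H$ that twists the $H'$-actions on $\bigvee_{K/H}c$ and $\prod_{K/H}\Phi(c)$ so that they become isomorphic to $K\otimes_H c$ and $\hom_H(K^\ast,\Phi(c))$ compatibly with $\eta$ and with the canonical wedge-to-product map. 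This is exactly the ``hard part'' you defer, and it is where the paper's proof spends its effort; moreover, with your indexing the punctured homotopy limit is $\prod_{G/H}F\Phi(\bigvee_{\mathrm{fib}}c)$, so when the fibers of $K/H\to G/H$ are not singletons you need a further application of the wedges-into-products equivalence (as in \ref{wedgesintoprod}) at each singleton vertex before reaching $\hom_H(K^\ast,F\Phi(c))$; the paper's $K/H_+$-cube avoids this by splitting down to single copies of $c$ from the start. Two local slips to correct: the colimit of the cone-resolved punctured cube is contractible (matching the contractible terminal vertex), not $K\otimes_H c$ --- that contractibility is what homotopy cocartesianness means for a wedge cube as in \ref{wedgecube}; and since $K\otimes_H c$ carries only an $H'$-action, your cube is an $H'$-diagram on $\mathcal{P}(G/H_+)$ restricted to $H'$, so you cannot literally produce an equivalence in $\cat{D}^G$ and then restrict --- the excision statement must be applied in its $H'$-equivariant form (a point the paper's own proof also treats somewhat implicitly).
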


\begin{proof} We express the map of the statement as a canonical map into the homotopy limit of a punctured cube, and we use the $G$-linearity of $\Phi$ to conclude that the map is an equivalence. For this we will compare the source and target of $\eta$ with an indexed coproduct and product, respectively.

Choose a section $s_G\colon G/H\rightarrow G$ and an $H'$-$H$-equivariant map $\phi\colon K\rightarrow G$. These choices give a commutative diagram (of sets)
\[\xymatrix{K\ar[r]^{\phi}\ar[d]^{\pi_K}&G\ar[d]_{\pi_G}\\
K/H\ar@/^1pc/[u]^{s_K}\ar[r]_{\overline{\phi}}&G/H\ar@/_1pc/[u]_{s_G}
}\]
where $s_K(kH):=k\cdot (\phi(k)^{-1}\cdot s_G\pi_G \phi (k))$ is a section for $\pi_K$, satisfying the relation $\phi s_K=s_G\overline{\phi}$. This gives a map $\gamma\colon H'\times K/H\rightarrow H$ defined by
\[\gamma(h',z)=s_G(h'\overline{\phi}(z))^{-1}\cdot h'\cdot s_G\overline{\phi}(z)\]
which we use to define two functors $\bigvee_{K/H}(-)\colon \cat{C}^H\rightarrow \cat{C}^{H'}$ and $\prod_{K/H}(-)\colon \cat{D}^H\rightarrow \cat{D}^{H'}$.
These send objects $c$ and $d$ to the coproduct $\bigvee_{K/H}c$ and product $\prod_{K/H}d$, respectively, with $H'$-actions\footnote{For convenience we only spell these actions out in the case that the objects of $\cat{C}$ have ``elements''.} \[h'\cdot(z,x)=(h'z,\gamma(h',z)\cdot x) \ \ \ \ \ \mbox{and}\ \ \ \ \ (h'\cdot \underline{y})_z=\gamma(h',z)\cdot y_{(h')^{-1}z}, \ \mbox{respectively.}\ \]
There is a commutative diagram of natural transformations
\[\xymatrix{\Phi(\bigvee_{K/H}c)\ar[d]_{\Phi(s_K\otimes\id_c)}^{\cong }\ar[r]&\bigvee_{K/H}\Phi(c)\ar[d]_{s_K\otimes\id_{\Phi(c)}}^{\cong }\ar[r]&\prod_{K/H}\Phi(c)\ar[d]_{\cong}^{(-)\circ s_K}\\
\eta\colon\Phi(K\otimes_Hc)\ar[r]&K\otimes_H\Phi(c)\ar[r]& \hom_H(K^\ast,\Phi(c))
}\]
The top right horizontal map is the canonical map from the coproduct to the product. The first two vertical maps are induced by the composite
\[s_K\otimes\id\colon \bigvee_{K/H}c=K/H\otimes c\rightarrow K\otimes c\twoheadrightarrow K\otimes_H c.
\] 
It is an isomorphism with inverse $(k,x)\mapsto(\pi_Kk,(s_G\pi_G\phi(k))^{-1}\phi(k)\cdot x)$. The right vertical map $(-)\circ s_K$ is defined dually and it is also an isomorphism. We can therefore equivalently study the top composition $\Phi(\bigvee_{K/H}c)\rightarrow\prod_{K/H}\Phi(c)$.

Consider the $K/H_+$-cube $Wc \colon \mathcal{P}(K/H_+)\rightarrow \cat{C}$ defined by
\[(Wc)_{S}=\left\{\begin{array}{lll}\bigvee_{K/H}c &, S=\emptyset\\
c&, |S|=1, S\neq \{+\}\\
\ast& , |S|\geq 2\mbox{ or } S= \{+\}
\end{array}\right.\]
with initial map $\bigvee_{K/H}c\rightarrow c=(Wc)_{\{z\}}$ the pinch map that collapses all the wedge components not indexed by $\{z\}$. The structure maps $c=(Wc)_z\rightarrow (Wc)_{h'z}=c$ are defined by action by $\gamma(h',z)\in H$.

The cube $Wc$ is homotopy cocartesian. Indeed, if $Q_{H}c\stackrel{\simeq}{\rightarrow} c$ is a cofibrant replacement of $c$ in $\cat{C}^H$, the cube $WQ_Hc$ is point-wise cofibrant with homotopy colimit over $\mathcal{P}(K/H_+)\backslash K/H_+$ contractible (see \ref{wedgecube}).
Let $\Phi(Wc)\stackrel{\simeq}{\rightarrow} F\Phi(Wc)$ be a fibrant replacement of $\Phi(Wc)$. By linearity of $\Phi$, the canonical map
\[\Phi(\bigvee_{K/H}c)\stackrel{\simeq}{\longrightarrow}\holim_{\mathcal{P}(K/H_+)\backslash\emptyset}F\Phi(Wc)\cong\prod_{K/H}F\Phi(c)\]
is an equivalence in $\cat{D}^{H'}$. This proves the first part of the theorem.

Moreover, the map above fits into a commutative diagram
\[\xymatrix{
\Phi(\bigvee_{K/H}c)\ar[r]\ar[dr]_{\simeq}&\prod_{K/H}\Phi(c)\ar[d]\\
&\prod_{K/H}F\Phi(c)
}\]
where the right vertical map is an equivalence if $\hom_{H}(K^\ast,-)$ (and therefore $\prod_{K/H}(-)$) preserves weak equivalences. 
\end{proof}

\begin{corollary}
If the trivial action inclusion functor $\cat{C}\rightarrow \cat{C}^G$ is $G$-linear, the left and right adjoints to the evaluation functor $\ev_i\colon \cat{C}^{I}_a\rightarrow \cat{C}^{G_i}$ are naturally equivalent on fibrant objects for every $i\in I$.
\end{corollary}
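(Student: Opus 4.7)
By Lemma~\ref{adjevi} and the dual description of $R_i$ in the proof of Proposition~\ref{ptwisecof}, the two adjoints of $\ev_i$ have $j$-vertex
\[
(F_i c)_j = K_{ji}\otimes_{G_i} c, \qquad (R_i c)_j = \hom_{G_i}(K_{ji}^\ast, c),
\]
where $K_{ji}=\hom_{G\rtimes_aI}(i,j)$ carries commuting free left $G_j$- and right $G_i$-actions. My strategy is to apply Theorem~\ref{linandwedges} vertex by vertex to the trivial action inclusion $\Phi\colon \cat{C}\to\cat{C}^G$, which is $G$-linear by hypothesis, and assemble the resulting vertex-wise equivalences into a weak equivalence $F_i c\to R_i c$ in $\cat{C}^{I}_a$ for fibrant $c$.

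For each $j\in I$ I would apply Theorem~\ref{linandwedges} with $H=G_i$, $H'=G_j$, and $K=K_{ji}$. The projection $(g,\alpha\colon gi\to j)\mapsto g$ is $G_j$-$G_i$-equivariant as a map $K_{ji}\to G$, so the extra hypothesis of the theorem is satisfied. Since $\Phi$ is the trivial inclusion, $\Phi_\ast$ acts as the identity on underlying objects, so when $c$ is fibrant in $\cat{C}^{G_i}$ we can take the fibrant replacement $F\Phi(c)$ in the theorem to be $c$ itself. The theorem then produces a canonical map
\[
\eta_{c,j}\colon K_{ji}\otimes_{G_i} c\longrightarrow \hom_{G_i}(K_{ji}^\ast,c)
\]
which is a weak equivalence in $\cat{C}^{G_j}$.

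Next I would verify that the family $\{\eta_{c,j}\}_{j\in I}$ assembles into a morphism $\eta_c\colon F_i c\to R_i c$ in $\cat{C}^{I}_a$. This requires two naturality checks: compatibility with the functoriality in $j$ (induced by the postcomposition maps $\beta_\ast\colon K_{ji}\to K_{j'i}$ for $\beta\colon j\to j'$) and compatibility with the $G$-structure (induced by the maps $g\colon K_{ji}\to K_{gj\,i}$ described in the construction of $F_i$). Both reduce to naturality of the construction of $\eta$ with respect to equivariant maps of bi-equivariant sets, which is transparent from its definition via the canonical map $\bigvee_{K^\ast\times_{H'}K}c\to c$.

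Once $\eta_c\colon F_i c\to R_i c$ is known to be a morphism in $\cat{C}^{I}_a$ that is a weak equivalence at each vertex, it is a weak equivalence in $\cat{C}^{I}_a$ by the characterization of weak equivalences in Theorem~\ref{modelstruct}, which gives the claim. I expect the main obstacle to be precisely the naturality check in step three: although the theorem supplies the vertex-wise equivalences essentially for free, organizing them into an honest natural transformation of $G$-diagrams means tracking variances across vertices whose stabilizer subgroups may differ, and requires a careful diagram chase in the Grothendieck construction $G\rtimes_a I$.
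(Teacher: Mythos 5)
Your proposal is correct and follows essentially the same route as the paper: the paper's proof likewise invokes the vertex-wise descriptions $(F_ic)_j=K_{ji}\otimes_{G_i}c$ and $(R_ic)_j=\hom_{G_i}(K_{ji}^\ast,c)$, notes that $K_{ji}$ projects $G_j$-$G_i$-equivariantly to $G$, and applies Theorem \ref{linandwedges} to obtain the natural equivalence. The only difference is that you make explicit the naturality/assembly step and the observation that fibrancy of $c$ lets one take $F\Phi(c)=c$, both of which the paper leaves implicit.
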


\begin{proof}
We saw in \ref{adjevi} that the left adjoint $F_i\colon \cat{C}^{G_i}\rightarrow \cat{C}^{I}_a$ has $j$-vertex
\[(F_ic)_j=K_{ji}\otimes_{G_i}c\]
where $K_{ji}=\hom_{G\rtimes_a I}(i,j)$ projects $G_j$-$G_i$-equivariantly to $G$. Similarly the right adjoint has $j$-vertex
\[(R_ic)_j=\hom_{G_i}(K^{\ast}_{ji},c)\]
and \ref{linandwedges} provides a natural equivalence from $F_i$ to $R_i$.
\end{proof}

We give a ``higher version'' of the Wirthm\"{u}ller isomorphism theorem, that compares the left and the right adjoints of the functor on $J$-cubes that restricts the action to a subgroup $H$ of $G$.
Given a $G$-set $J$, let $J|_H$ be the $H$-set obtained by restricting the $G$-action to $H$. The poset category with $H$-action $\mathcal{P}(J|_H)$ is the category $\mathcal{P}(J)$ with the restricted action $a|_H$. There is a forgetful functor $\cat{C}^{\mathcal{P}(J)}_a\rightarrow \cat{C}^{\mathcal{P}(J|_H)}_{a|_H}$ that restricts the $G$-structure to a $H$-structure. It has both a left and a right adjoint, that we denote respectively $L^J$ and $R^J$. This can easily be seen with the description of $G$-diagrams as diagrams on a Grothendieck construction of \ref{lemma:eqdef}, as the restriction functor above corresponds to restriction along the inclusion $\iota\colon H\rtimes_{a|_H} \mathcal{P}(J|_H)\rightarrow G\rtimes_a\mathcal{P}(J)$. The following result specializes to theorem \ref{linandwedges}  for $K=G$ when $J$ is the empty $G$-set.

\begin{theorem}\label{higherwirth}
For every $G$-linear homotopy functor $\Phi\colon \cat{C}\rightarrow \cat{D}^G$ and every $J|_H$-cube $X\in \cat{C}^{\mathcal{P}(J|_H)}_{a|_H}$, there is an equivalence of $J$-cubes
\[\Phi L^J(X)\stackrel{\eta}{\longrightarrow}R^J\Phi(X)\longrightarrow R^JF\Phi(X)\]
where $\Phi(X)\stackrel{\simeq}{\rightarrow} F\Phi(X)$ is a fibrant replacement of $\Phi(X)$ .
\end{theorem}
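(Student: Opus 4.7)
The strategy is to reduce the statement to a vertex-by-vertex application of the argument used in the proof of Theorem \ref{linandwedges}.

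First I would identify the values of $L^J$ and $R^J$ pointwise. Using the presentation of $G$-diagrams as diagrams on a Grothendieck construction (Lemma \ref{lemma:eqdef}) and the formula for left adjoints of restriction functors from Lemma \ref{adjevi}, one analyses the over-category $\iota/U$ for the inclusion $\iota\colon H\rtimes_{a|_H}\mathcal{P}(J|_H)\to G\rtimes_a\mathcal{P}(J)$ at each $U\in\mathcal{P}(J)$. The category $\iota/U$ decomposes into connected components indexed by the cosets $gH\in G/H$, each component having the object $(g^{-1}U,g,\id)$ as terminal object. A dual computation holds for $U/\iota$. After choosing a set-theoretic section $G/H\to G$, this yields natural identifications
\[(L^J X)_U \cong \coprod_{gH\in G/H} X_{g^{-1}U}, \qquad (R^J Y)_U \cong \prod_{gH\in G/H} Y_{g^{-1}U},\]
with $G_U$-structure obtained, as in the proof of \ref{linandwedges}, by twisting the permutation action on $G/H$ with the $H$-action on the summands. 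Under these identifications, the composite of the statement at vertex $U$ reduces to the canonical Wirthm\"{u}ller-type map from $\Phi(\coprod_{gH}X_{g^{-1}U})$ to $\prod_{gH}F\Phi(X_{g^{-1}U})$.

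Second, for each $U$ I would construct a $G/H_+$-cube $W(X,U)$ in $\cat{C}$ in complete analogy with the cube $Wc$ of the proof of \ref{linandwedges}: initial vertex $\coprod_{gH}X_{g^{-1}U}$, each singleton $\{gH\}$ (other than the basepoint) the object $X_{g^{-1}U}$ with the appropriately twisted $H$-action, and all remaining vertices the terminal object. A verbatim adaptation of Example \ref{wedgecube}, using a point-wise $H$-cofibrant replacement of $X$, shows that $W(X,U)$ is homotopy cocartesian. Applying $G$-linearity of $\Phi$ exactly in the form it is applied in the proof of \ref{linandwedges}, the cube $\Phi(W(X,U))$ is homotopy cartesian, giving the pointwise equivalence
\[\Phi(L^J X)_U \xrightarrow{\;\sim\;} \holim_{\mathcal{P}(G/H_+)\setminus\emptyset} F\Phi(W(X,U))\cong (R^J F\Phi X)_U.\]

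The main obstacle will be assembling these vertex-wise equivalences into a morphism of $J$-cubes. Naturality of the identifications above in the structure maps $T\subseteq U$ of $\mathcal{P}(J)$ is built into the coend description of left Kan extensions, and their compatibility with the $G$-action on $\mathcal{P}(J)$ is immediate from the explicit formulas. The natural transformation $\eta$ is functorially defined via the adjunctions $(L^J \dashv \iota^*)$ and $(\iota^*\dashv R^J)$, so automatically respects the cube structure. Finally, choosing a functorial point-wise fibrant replacement in $\cat{D}^{\mathcal{P}(J|_H)}_{a|_H}$ (which exists by \ref{modelstruct}) ensures that $R^J F\Phi(X)$ is a well-defined $J$-cube and the second map of the statement is a morphism of $J$-cubes, completing the proof.
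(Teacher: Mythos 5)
Your proposal follows essentially the same route as the paper: identify $L^J(X)_U$ and $R^J(X)_U$ as a wedge, respectively a product, of the objects $X_{s(z)^{-1}U}$ indexed over $G/H$ with a twisted $G$-structure, and then run the $G/H_+$-cube argument of Theorem \ref{linandwedges} to show that the wedge-to-product map is an equivalence at each vertex. The only cosmetic difference is that you compute the left Kan extension by exhibiting a terminal object in each connected component of the over-category $\iota/U$, where the paper decomposes $\iota/U$ as a disjoint union of Grothendieck constructions and applies the Fubini theorem for colimits; both yield the same identification, and your discussion of assembling the vertex-wise equivalences into a map of $J$-cubes is, if anything, more explicit than the paper's.
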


\begin{proof}
Let us describe the left adjoint $L^J$ explicitly, by calculating the left Kan extension of $X$ along $\iota\colon H\rtimes_{a|_H} \mathcal{P}(J|_H)\rightarrow G\rtimes_a\mathcal{P}(J)$. By definition this has values
\[L^J(X)_U=\colim\big(\iota/_U\rightarrow H\rtimes_{a|_H} \mathcal{P}(J|_H)\stackrel{X}{\longrightarrow}\cat{C}\big).\]
The over category $\iota/_U$ is the poset with objects $(g\in G,A\in\mathcal{P}(g^{-1}U))$, and a unique morphism $(g,A)\rightarrow (g',A')$ whenever $g(g')^{-1}$ belongs to $H$ and $g(g')^{-1}A\subset A'$. This can be written as the disjoint union of categories
\[\iota/_U=\coprod_{z\in G/H}(Ez\wr\Psi_z)\]
where $Ez$ is the translation category of the right $H$-set $z$ (see \ref{adjevi}) and $Ez\wr\Psi_z$ is the Grothendieck construction of the functor $\Psi_z\colon Ez\rightarrow Cat$ that sends $g\in G/H$ to the category $\mathcal{P}(g^{-1}U)$. Hence the left Kan extension $L^J(X)$ is naturally isomorphic to
\[L^J(X)_U \cong \bigvee_{z\in G/H}\colim_{(g,A)\in Ez\wr\Psi_z}X_A\cong \bigvee_{z\in G/H}\colim_{g\in Ez}\colim_{A\in\mathcal{P}(g^{-1}U)}X_A \iso \bigvee_{z\in G/H}\colim_{g\in Ez}X_{g^{-1}U}
\]
Here the first isomorphism is the Fubini theorem for colimits (see e.g. \cite[40.2]{CS}, as it is an isomorphism it is enough to see that it is equivariant). The last map is an isomorphism is because $g^{-1}U$ is a terminal object in $\mathcal{P}(g^{-1}U)$. A choice of section $s\colon G/H\rightarrow G$ gives a further identification
\[L^J(X)_U\cong \bigvee_{z\in G/H}X_{s(z)^{-1}U}\]
Chasing through the isomorphisms one can see that the $G$-structure is given by the maps
\[g\colon X_{s(z)^{-1}U}\stackrel{s(gz)^{-1}gs(z)}{\longrightarrow}X_{s(gz)^{-1}gU}\]

The same choice of section gives a similar identification for the right adjoint
\[R^J(X)_U\cong \prod_{z\in G/H}X_{s(z)^{-1}U}\]
A $G/H_+$-cube argument completely analogous to \ref{linandwedges} shows that the inclusion of wedges into products induces a $G$-equivalence $\Phi L^J(X)\rightarrow R^JF\Phi(X)$
\end{proof}

\begin{corollary}\label{GlinHlin}
Let $\Phi\colon \cat{C}\rightarrow\cat{D}^G$ be a homotopy functor, and suppose that the functor $\hom_H(G,-)\colon \cat{D}^H\rightarrow\cat{D}^G$ detects equivalences of fibrant objects. If $\Phi_\ast\colon \cat{C}^{\mathcal{P}(J)}_{a}\rightarrow\cat{D}^{\mathcal{P}(J)}_{a}$ sends homotopy cocartesian cubes to homotopy cartesian cubes, so does $\Phi_\ast\colon \cat{C}^{\mathcal{P}(J|_H)}_{a|_H}\rightarrow\cat{D}^{\mathcal{P}(J|_H)}_{a|_H}$.

It follows that if $\Phi$ is $G$-linear, it is also $H$-linear for every subgroup $H\leq G$.
\end{corollary}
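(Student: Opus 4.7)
The plan is to transport the cocartesian/cartesian conditions between $G$-cubes on $\mathcal{P}(J)$ and $H$-cubes on $\mathcal{P}(J|_H)$ via the adjunctions $L^J\dashv \iota^\ast\dashv R^J$, using Theorem \ref{higherwirth} as the bridge. The technical core consists of the following ``change-of-group'' identifications, valid for cofibrant $X$ and pointwise fibrant $Y$:
\[ \hocolim_{\mathcal{P}(J)\setminus J}^G L^J X \simeq G\otimes_H \hocolim_{\mathcal{P}(J|_H)\setminus J}^H X, \qquad (L^J X)_J \simeq G\otimes_H X_J,\]
\[ \holim_{\mathcal{P}(J)\setminus \emptyset}^G R^J Y \simeq \hom_H(G,\holim_{\mathcal{P}(J|_H)\setminus \emptyset}^H Y), \qquad (R^J Y)_\emptyset \simeq \hom_H(G, Y_\emptyset).\]
These follow by combining the explicit formulas for $L^J$ and $R^J$ from the proof of \ref{higherwirth} with the transitivity of (homotopy) Kan extensions applied at the level of Grothendieck constructions; the functors $L^J$ and $R^J$ are respectively left and right Quillen since $\iota^\ast$ preserves both fibrations and weak equivalences by Remark \ref{adjforget}.

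To prove the first part of the statement I will start with a homotopy cocartesian $X\in \cat{C}^{\mathcal{P}(J|_H)}_{a|_H}$, cofibrantly replaced via Theorem \ref{thm:qxcofrepl} when necessary. Under the first identification the canonical cocartesian map of $L^J X$ becomes $G\otimes_H(-)$ applied to that of $X$; since $G\otimes_H(-)$ is a left Quillen functor (Definition \ref{defGmodelstr}) it preserves this weak equivalence between cofibrant objects, so $L^J X$ is homotopy cocartesian. By the hypothesis on $\Phi$ the $J$-cube $\Phi L^J X$ is then homotopy cartesian, and Theorem \ref{higherwirth} identifies it with $R^J F\Phi X$, which is therefore also homotopy cartesian. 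Applying the dual identification to $Y = F\Phi X$ expresses the canonical cartesian map of $R^J F\Phi X$ as $\hom_H(G,-)$ applied to the canonical cartesian map of $F\Phi X$; both of its endpoints are fibrant in $\cat{D}^H$ (since $F\Phi X$ is pointwise fibrant and $\holim^H$ is right Quillen), so by the detection hypothesis on $\hom_H(G,-)$ the latter map is a weak equivalence, showing that $F\Phi X$, and hence $\Phi X$, is homotopy cartesian as an $H$-cube.

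For the final assertion, suppose $\Phi$ is $G$-linear. The vanishing $\Phi(\ast)\simeq 0$ in $\cat{D}^H$ follows from the corresponding statement in $\cat{D}^G$ by restriction (Remark \ref{adjforget}). For $H$-excisiveness, for each $K\leq H$ I combine Proposition \ref{fromGtoany} with the first part applied to $J=G/K_+$ to obtain the hcocart-to-hcart property on $\mathcal{P}((G/K_+)|_H)_{a|_H}$-cubes. The $H$-equivariant surjection $(G/K_+)|_H \twoheadrightarrow H/K_+$ that is the identity on the distinguished orbit $H/K\hookrightarrow (G/K)|_H$ indexed by the trivial double coset and collapses everything else onto the basepoint, together with the forward-referenced Propositions \ref{surjmapcubes} and \ref{surjmapcubesco} (whose associated restriction functor preserves hcocart and detects hcart cubes), then descends this property to $\mathcal{P}(H/K_+)$-cubes for every $K\leq H$; by Proposition \ref{fromGtoany} applied with $H$ in place of $G$ this makes $\Phi$ $H$-excisive, and hence $H$-linear.

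The principal obstacle is the rigorous justification of the change-of-group identifications above: they are the equivariant refinement of the standard weak equivalence of Lemma \ref{lemma:trans}, but require treating the inclusion of Grothendieck constructions $H\rtimes_{a|_H}\mathcal{P}(J|_H)\hookrightarrow G\rtimes_a\mathcal{P}(J)$ honestly and tracking fibrancy and cofibrancy across both the left and right halves of the adjoint triple $L^J\dashv\iota^\ast\dashv R^J$.
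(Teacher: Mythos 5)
Your proposal is correct and follows essentially the same route as the paper: the first part via the higher Wirthm\"{u}ller equivalence \ref{higherwirth} together with the facts that $L^J$ commutes with homotopy colimits and $R^J$ with homotopy limits, reducing to the detection hypothesis on $\hom_H(G,-)$; the second part via the collapse surjection $G_+|_H\to H_+$ and \ref{surjmapcubes}, \ref{surjmapcubesco}. The change-of-group identifications you flag as the principal obstacle are in the paper read off directly from the explicit formulas $L^J(X)_U\cong\bigvee_{z\in G/H}X_{s(z)^{-1}U}$ and $R^J(X)_U\cong\prod_{z\in G/H}X_{s(z)^{-1}U}$ established in the proof of \ref{higherwirth}, so no further Kan-extension bookkeeping is needed.
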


\begin{proof}
From the explicit descriptions of $L^J$ and $R^J$ of \ref{higherwirth} one can see that $L^J$ commutes with homotopy colimits and that $R^J$ commutes with homotopy limits. In particular, if $X$ is a homotopy cocartesian $J|_H$-cube, the $J$-cube $L^J(X)$ is also homotopy cocartesian. Hence by our assumption on $\Phi$, the $J$-cube $\Phi_\ast L^J(X)$ is homotopy cartesian. The top horizontal map in the commutative diagram
\[\xymatrix{
\Phi_\ast L^J(X)_\emptyset\ar[r]^-{\simeq}\ar[d]& \displaystyle\holim_{\mathcal{P}(J)\backslash\emptyset} F\Phi_\ast L^J(X)\ar[d]\\
R^JF\Phi_\ast(X)_\emptyset\ar[r]&
\displaystyle\holim_{\mathcal{P}(J)\backslash\emptyset}R^JF\Phi_\ast(X)
}\] 
is therefore an equivalence. The vertical maps are also equivalences by the higher Wirthm\"{u}ller isomorphism theorem \ref{higherwirth}. Thus the bottom horizontal map is also an equivalence, and it factors as
\[R^JF\Phi_\ast(X)_\emptyset\rightarrow\displaystyle R^\emptyset\holim_{\mathcal{P}(J)\backslash\emptyset}F\Phi_\ast(X)\stackrel{\simeq}{\rightarrow}
\displaystyle\holim_{\mathcal{P}(J)\backslash\emptyset}R^JF\Phi_\ast(X)\]
The first map of the factorization  is therefore also an equivalence, and by the explicit description of $R^J$ in the proof of \ref{higherwirth}, it is just the canonical map
\[\hom_H(G,F\Phi_\ast(X)_\emptyset)\longrightarrow \hom_H(G,\holim_{\mathcal{P}(J)\backslash\emptyset}F\Phi_\ast(X)).\]
Since $\hom_H(G,-)$ detects equivalences of fibrant objects, $\Phi_\ast(X)$ is homotopy cartesian.

For the second part of the statement, assume that $\Phi$ is $G$-linear and let $X$ be a be a homotopy cocartesian $H_+$-cube. Consider the $H$-equivariant surjection $p\colon G_+|_H\rightarrow H_+$ which is the identity on $H$ and that collapses the complement of $H$ to the basepoint. It induces a functor $p^\ast\colon\cat{C}^{\mathcal{P}(H_+)}_a\rightarrow\cat{C}^{\mathcal{P}(G_+|_H)}_{a}$ which by \ref{surjmapcubesco} preserves homotopy cocartesian cubes. Hence $p^\ast X$ is a homotopy cocartesian $G_+|_H$-cube. By the first part of the corollary and $G$-linearity, $\Phi_\ast(p^\ast X)=p^\ast\Phi_\ast(X)$ is homotopy cartesian. By \ref{surjmapcubes}, $p^\ast$ detects homotopy cartesian cubes, hence $\Phi_\ast(X)$ is homotopy cartesian.  

\end{proof}

\subsection{\texorpdfstring{$G$}{G}-linearity and adjoint assembly maps}

Let $\cat{C}$ and $\cat{D}$ be pointed $G$-model categories, and $\Phi\colon\cat{C}\rightarrow\cat{D}^G$ a $sSet$-enriched reduced homotopy functor. Its extension $\Phi\colon\cat{C}^G\rightarrow\cat{D}^G$ is then enriched over $G$-$sSet$, and for any simplicial $G$-set $K$ there is an assembly map
\[K\otimes\Phi(c)\longrightarrow \Phi(K\otimes c)\]
in $\cat{D}^G$. It is adjoint to the map of simplicial $G$-sets
\[K\longrightarrow Map_{\cat{C}}(c,K\otimes c)\stackrel{\Phi}{\longrightarrow} Map_{\cat{D}}(\Phi(c),\Phi(K\otimes c))\]
where the first map is adjoint to the identity on $K\otimes c$. When $K=N(\mathcal{P}(J_+)\backslash\emptyset)$ this induces a map
\[\alpha\colon \Phi(c)\longrightarrow \Omega^J\Phi(\Sigma^J c)\]
called the adjoint assembly map (see \ref{defloopsusp} for the definitions of $\Omega^J$ and $\Sigma^J$ in a general simplicial category). The aim of this section is to explore the relationship between $G$-linearity of $\Phi$ and the adjoint assembly map.

\begin{remark}\label{assembly}
Given a cofibrant $G$-object $c$ in $\cat{C}^G$ and a finite $G$-set $J$, recall the cofibrant $J_+$-cube
\[(S^Jc)_U=\left\{\begin{array}{lll}
c & ,U=\emptyset\\
C^Uc & ,U\lneq J_+\\
\Sigma^Jc & ,U= J_+
\end{array}\right.\]
from \ref{suspcube}. This induces a zig-zag \[\Phi(c)\stackrel{\simeq}{\rightarrow}F\Phi(c)\rightarrow \holim_{\mathcal{P}(J_+)\backslash \emptyset}F\Phi(S^Jc)\stackrel{\simeq}{\leftarrow}\Omega^JF\Phi(\Sigma^Jc)\]
where the last equivalence is induced by the equivalence of fibrant $\mathcal{P}(J_+)\backslash \emptyset$-diagrams
\[\omega^J\big(F\Phi(\Sigma^Jc)\big)\stackrel{\simeq}{\longrightarrow} F\Phi(S^Jc)|_{\mathcal{P}(J_+)\backslash \emptyset}\]
for the $G$-diagram $\omega^Jd$ from \ref{defloopsusp} associated to an object $d$ of $\cat{D}^G$, with vertices  $(\omega^Jd)_{J_+}=d$ and $(\omega^Jd)_{U}=\ast$ for $U\neq J_+$. The adjoint assembly map above fits into the commutative diagram
\[\xymatrix{\Phi(c)\ar[dr]_-{\alpha}\ar[r]^{\simeq}&F\Phi(c)\ar[r]&\displaystyle\holim_{\mathcal{P}(J_+)\backslash \emptyset}F\Phi(S^Jc)\\
&\Omega^J\Phi(\Sigma^Jc)\ar[r]&\Omega^JF\Phi(\Sigma^Jc)\ar[u]_{\simeq}.
}\]
Hence the map $\Phi(c)\to\displaystyle\holim_{\mathcal{P}(J_+)\backslash \emptyset}F\Phi(S^Jc)$ can be thought of as a model for the adjoint assembly map which can be defined without using that $\Phi$ is an enriched functor. 
\end{remark}

\begin{proposition}\label{adjasslin} Let $\cat{C}$ and $\cat{D}$ be pointed $G$-model categories, and $\Phi\colon \cat{C}\rightarrow \cat{D}^G$  a $sSet$-enriched $G$-linear homotopy functor. For any finite $G$-set $J$ and any cofibrant $G$-object $c\in\cat{C}^G$ the composite
\[\Phi(c)\stackrel{\alpha}{\longrightarrow} \Omega^J\Phi(\Sigma^Jc)\longrightarrow\Omega^JF\Phi(\Sigma^Jc)\]
is a weak equivalence in $\cat{D}^G$.
\end{proposition}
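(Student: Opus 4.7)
My approach would be based on Remark \ref{assembly}, which identifies the composite in the statement with the canonical map $\Phi(c) \to \holim_{\mathcal{P}(J_+)\setminus\emptyset} F\Phi(S^J c)$ into the homotopy limit of the punctured suspension cube from Example \ref{suspcube}. Since $S^J c$ is a point-wise cofibrant homotopy cocartesian $J_+$-cube and $\Phi$ is a homotopy functor, it suffices to prove that $\Phi(S^J c)$ is a homotopy cartesian $J_+$-cube in $\cat{D}^{\mathcal{P}(J_+)}_a$. This reduces the proposition to the following extension of $G$-linearity: for any finite $G$-set $J$, the functor $\Phi_\ast \colon \cat{C}^{\mathcal{P}(J_+)}_a \to \cat{D}^{\mathcal{P}(J_+)}_a$ sends homotopy cocartesian $J_+$-cubes to homotopy cartesian ones.

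I would prove this extension by induction on the number $n$ of $G$-orbits of $J$. The case $n=0$ (i.e. $J=\emptyset$) is trivial since a $\{+\}$-cube is simply an arrow, and both homotopy (co)cartesianness degenerates to the arrow being a weak equivalence, which any homotopy functor preserves. The case $n=1$ (i.e. $J = G/H$ transitive) is exactly Proposition \ref{fromGtoany}.

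For the inductive step write $J = J' \sqcup G/H$ with $J'$ having $n-1$ orbits. The key combinatorial fact is the $G$-representation isomorphism $\tilde{J_+} \cong \tilde{J'_+} \oplus \widetilde{(G/H)_+}$, both sides being naturally identified with $\R[J] = \R[J'] \oplus \R[G/H]$ via the splitting $\R[J_+] = \R[J] \oplus \R \cdot N_{J_+}$. This yields natural $G$-equivariant identifications $\Sigma^J \cong \Sigma^{J'} \circ \Sigma^{G/H}$ and $\Omega^J \cong \Omega^{J'} \circ \Omega^{G/H}$, and using that $\Phi$ is $sSet$-enriched, the adjoint assembly map $\alpha_J$ factors up to canonical equivalence as
\[
\Phi(c)\;\xrightarrow{\alpha_{J'}}\; \Omega^{J'} F\Phi(\Sigma^{J'} c)\;\xrightarrow{\Omega^{J'} F(\alpha_{G/H})}\; \Omega^{J'} F\bigl(\Omega^{G/H} F\Phi(\Sigma^{G/H}\Sigma^{J'}c)\bigr)\;\simeq\; \Omega^J F\Phi(\Sigma^J c).
\]
By the base case applied to $\Sigma^{J'} c$ (which is still cofibrant in $\cat{C}^G$), the map $\alpha_{G/H}\colon \Phi(\Sigma^{J'}c) \to \Omega^{G/H}F\Phi(\Sigma^J c)$ is a weak equivalence, and applying $\Omega^{J'}F$ preserves this since $\Omega^{J'}$ is a right Quillen functor preserving fibrant objects and weak equivalences between them. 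The inductive hypothesis makes $\alpha_{J'}$ an equivalence. The composite is therefore an equivalence, completing the induction.

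The main obstacle is the careful verification of the factorization of $\alpha_J$ and its compatibility with the fibrant replacements $F$; in particular, one needs to check that the natural iso $\Sigma^J \cong \Sigma^{J'}\Sigma^{G/H}$ coming from the representation decomposition is coherent with the simplicial enrichment used to define the assembly map from the nerve $N(\mathcal{P}(J_+)\setminus\emptyset)$. A more geometric alternative, which avoids invoking the $sSet$-enrichment in the factorization, is to work directly with $S^J c$ and apply the (dual of the) Fubini theorem \ref{Fubini} under the product decomposition $\mathcal{P}(J_+) \cong \mathcal{P}(J'_+)\times_{\mathcal{P}(\{+\})}\mathcal{P}((G/H)_+)$, thereby computing the homotopy limit iteratively and reducing the homotopy-cartesianness of $\Phi(S^J c)$ to two successive applications of $G$-excision in the transitive case.
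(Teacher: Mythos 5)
Your proof is correct and follows essentially the same route as the paper: decompose $J$ into its $G$-orbits, factor the adjoint assembly map as an iterated composite of one-orbit assembly maps using the natural equivalences $\Sigma^{z\amalg w}\simeq\Sigma^{z}\Sigma^{w}$ on cofibrant objects and $\Omega^{z\amalg w}\simeq\Omega^{z}\Omega^{w}$ on fibrant objects, and settle the transitive case by applying Proposition \ref{fromGtoany} to the suspension cube $S^{G/H}c$ via the commutative diagram of Remark \ref{assembly}. One small caveat: your opening reduction to the claim that $\Phi_\ast$ sends \emph{all} homotopy cocartesian $J_+$-cubes to homotopy cartesian ones overshoots --- that is the content of Theorem \ref{linandloops} and Corollary \ref{corlinandloops}, proved later in the paper with considerably more machinery, and your induction in fact only establishes the assembly-map statement (equivalently, cartesianness of the single cube $\Phi(S^Jc)$), which is all the proposition actually requires.
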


\begin{proof}
The decomposition of $J$ as disjoint union of transitive $G$-sets $J_+ \cong (\coprod_{z\in G\backslash J}z)_+$ gives a factorization the map of the statement as an iterated construction
\[\Phi(c)\rightarrow\Omega^{z_1}F\Phi(\Sigma^{z_1}c)\rightarrow\dots\rightarrow \Omega^{z_1}\dots\Omega^{z_n}F\Phi(\Sigma^{z_1}\dots \Sigma^{z_m}c)\]
The functor $\Sigma^{z}(-)$ preserves cocartesian cubes and $\Omega^{z}$ preserves fibrant objects, so using the natural weak equivalences $\Sigma^z \Sigma^w c \stackrel{\simeq}{\to} \Sigma^{z \amalg w}d$ for $d$ cofibrant and $\Omega^{z \amalg w}d \stackrel{\simeq}{\to} \Omega^z \Omega^w d$ for $d$ fibrant, it suffices to show that the map $\Phi(c)\rightarrow \Omega^{G/H}F\Phi(\Sigma^{G/H}c)$ is an equivalence for every transitive $G$-set $G/H$.
 
By \ref{fromGtoany}, $\Phi$ sends the homotopy cocartesian $G/H_+$-cube $S^{G/H}c$ of \ref{assembly} to a homotopy cartesian $G/H_+$-cube. That is, the second map in the zig-zag
\[\Phi(c)\stackrel{\simeq}{\rightarrow}F\Phi(c)\rightarrow\holim_{\mathcal{P}(G/H_+)\backslash\emptyset}F\Phi(S^{G/H}c)\stackrel{\simeq}{\leftarrow}\Omega^{G/H}F\Phi(\Sigma^{G/H}c)\]
is an equivalence in $\cat{D}^G$. The statement now follows from the commutativity of the diagram in \ref{assembly} above.
\end{proof}

We aim at proving a converse to \ref{adjasslin}. We remind the reader that a simplicial category $\cat{C}$ is locally finitely presentable if there is a set $\Theta$ of objects in $\cat{C}$ such that every object of $\cat{C}$ is isomorphic to a filtered colimit of objects in $\Theta$, and for every $\theta\in \Theta$ the functor $Map_\cat{C}(\theta,-)\colon \cat{C}\rightarrow sSet$ preserves filtered colimits (see \cite{Adamekros}, \cite{Kelly}). For example the categories of simplicial sets and of spectra (of simplicial sets) satisfy this condition. %For a finite $G$-set $J$ and a subgroup $H\leq G$ we denote by $J|_H$ the set $J$ with action restricted to $H$. 
We will write $\Omega^\rho$ for $\Omega^{G}$ and $\Sigma^\rho$ for $\Sigma^{G}$.

\begin{theorem}\label{linandloops} Let $\cat{C}$ and $\cat{D}$ be pointed $G$-model categories and suppose that the simplicial categories $\cat{D}^H$ are locally finitely presentable for every $H\leq G$. Let $\Phi\colon \cat{C}\rightarrow \cat{D}^G$ be a $sSet$-enriched reduced homotopy functor and let $J$ be a finite $G$-set. If the canonical map
\[\Phi(c)\longrightarrow \Omega^{J|_H}F\Phi(\Sigma^{J|_H}c)\]
is a weak equivalence in $\cat{D}^H$ for every cofibrant object $c\in \cat{C}^H$ and every subgroup $H\leq G$, then the induced functor  $\Phi_\ast\colon \cat{C}^{\mathcal{P}(J_+)}_a\rightarrow \cat{D}^{\mathcal{P}(J_+)}_a$ sends homotopy cocartesian $J_+$-cubes to homotopy cartesian $J_+$-cubes.

In particular, if $\Phi(c)\stackrel{\simeq}{\rightarrow} \Omega^{\rho|_H}F\Phi(\Sigma^{\rho|_H}c)$ is an equivalence for every subgroup $H\leq G$ and every cofibrant $H$-object $c$, the functor $\Phi$ is $G$-linear.
\end{theorem}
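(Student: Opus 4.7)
The plan is to reduce a general homotopy cocartesian $J_+$-cube $X$ to the standard suspension cube $S^J c$ of Example \ref{suspcube}, for which the hypothesis together with the zig-zag of Remark \ref{assembly} immediately gives that $\Phi_\ast(S^J c)$ is homotopy cartesian. First I would replace $X$ by a point-wise cofibrant cocartesian $J_+$-cube (possible since $\Phi$ is a homotopy functor), set $c = X_\emptyset$, and construct a natural map $\lambda \colon S^J c \to X$ of $J_+$-cubes equal to the identity at the initial vertex: this is possible because each intermediate vertex $(S^J c)_U = C^U c$ is an iterated cone and the $X_U$ are pointed, while at the final vertex $\lambda_{J_+} \colon \Sigma^J c \to X_{J_+}$ is the natural comparison map.

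The homotopy cofiber cube $\overline{X} = \mathrm{hocof}(\lambda)$ is again homotopy cocartesian and its initial vertex is contractible, so the problem of showing $\Phi_\ast(X)$ cartesian reduces, via a pushout-pullback comparison with $S^J c$, to an analogous statement for $\overline{X}$. Iterating this cofiber construction --- repeatedly comparing the current cube with a suspension cube $S^{J'} d$ for a subset $J' \subseteq J_+$ with stabilizer $H \leq G$, and then taking cofibers --- yields a transfinite filtration of $X$ whose successive quotients are suspension cubes $S^{J|_H} d$ for various cofibrant $d \in \cat{C}^H$. Each such quotient is handled by the hypothesis at the subgroup $H$ (together with Proposition \ref{fromGtoany} to pass between indexing $G$-sets), and the cumulative effect shows $\Phi_\ast(X)$ is homotopy cartesian. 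The local finite presentability of each $\cat{D}^H$ ensures that $\Omega^{J|_H}$ and the fibrant replacement functor commute with the filtered colimits appearing in the transfinite iteration, while the simplicial enrichment of $\Phi$ on each $\cat{C}^H$ allows one to compute $\Phi$ on filtered colimits of point-wise cofibrant cubes up to weak equivalence.

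The main obstacle is making the iterative cellular decomposition rigorous: one must exhibit an equivariant cellular decomposition of homotopy cocartesian $J_+$-cubes whose cells are suspension cubes $S^{J|_H} d$ indexed by subgroups of $G$, verify at each stage that cocartesian-ness survives under cofiber of the comparison with a suspension cube, and check that the construction commutes with $\Phi$ up to weak equivalence on the cofibrant cubes involved. This is exactly where the uniformity of the hypothesis over all subgroups $H \leq G$ becomes essential, since the ``cells'' that appear in the decomposition range over all possible stabilizers, and where the local finite presentability of $\cat{D}^H$ is needed for convergence of the transfinite process.
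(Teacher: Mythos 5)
Your approach --- decomposing an arbitrary homotopy cocartesian $J_+$-cube into ``cells'' of the form $S^{J|_H}d$ and inducting over the decomposition --- is genuinely different from the paper's proof, but it has two gaps that I do not see how to repair. The first is at the very first step: the comparison map $\lambda\colon S^Jc\to X$ with $c=X_\emptyset$ does not exist in general. Commutativity of the square from the initial vertex forces the composite $c\to C^{U}c\to X_U$ to equal the structure map $X_\emptyset\to X_U$ of $X$; since $C^Uc$ is contractible, such an extension amounts to a (coherently chosen) nullhomotopy of every structure map out of the initial vertex, and an arbitrary cocartesian cube simply does not admit this. (Maps out of $C^Uc=\hocolim_{\mathcal{P}(U)}(\sigma)$ are homotopy coherent cocones, so you would need nullhomotopy data that is not present.) The second, more structural, problem is that even granting a cofiber sequence of cubes $S^Jc\to X\to\overline{X}$, the step where you ``reduce, via a pushout--pullback comparison, to the analogous statement for $\overline{X}$'' is circular: to transfer homotopy cartesianness of $\Phi_\ast(S^Jc)$ and $\Phi_\ast(\overline{X})$ to $\Phi_\ast(X)$ you need $\Phi$ to carry the cocartesian diagram expressing $X$ as an extension into a cartesian one, which is exactly the excision property you are trying to establish. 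A reduced homotopy functor with no linearity hypothesis has no reason to respect cofiber sequences of cubes, so cellular induction on the source cannot get off the ground. The unproved existence of an equivariant cellular decomposition of arbitrary cocartesian cubes with suspension-cube cells, which you flag yourself, is a third gap on top of these.

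The paper's proof avoids both problems by arguing in the opposite direction: instead of resolving $X$ by suspension cubes, it shows directly that the (model of the) adjoint assembly map $\Phi(X_T)\to\overline{\Omega}{}^JF\Phi(\Sigma^JX_T)$ factors naturally through a cube $Y_T=\holim_{\mathcal{P}(J_+)\backslash\emptyset}F\Phi(K(-,T))$ built from the auxiliary diagram $K(U,T)=\hocolim_{S\in\mathcal{P}(J_+)\backslash J_+}X_{(S\cap U)\cup T}$, and verifies by the degeneration Lemma \ref{cartcocartid} that $Y$ is homotopy cartesian whenever $X$ is homotopy cocartesian. Iterating, each map in the sequence $\Phi(X)\to\overline{\Omega}{}^JF\Phi(\Sigma^JX)\to\overline{\Omega}{}^{2J}F\Phi(\Sigma^{2J}X)\to\cdots$ factors through a cartesian cube; the hypothesis makes all these maps equivalences, and local finite presentability (via \ref{limscolims}) ensures the sequential homotopy colimit of cartesian cubes is cartesian. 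The hypothesis for all subgroups $H\leq G$ enters not through stabilizers of cells but because the assembly map must be an equivalence at every vertex $X_U$, which lives in $\cat{C}^{G_U}$. I would recommend reworking your argument along these lines.
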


The proof of this theorem is technical and it is given at the end of the section.

\begin{remark}\label{presforspectra}
The theorem above holds also in the $G$-model categories of pointed spaces or orthogonal spectra, even though these are not locally finitely presentable. The presentability condition is used to commute a sequential homotopy colimit and a finite equivariant homotopy limit, as explained in \ref{limscolims}. These commute also in $Top_\ast$ and $\Sp^O$, for the following reason.  They commute in $sSet_\ast$ as $sSet^{H} _\ast$ is locally finitely presentable. This property can be transported through the $G$-Quillen equivalence $|-|\colon sSet_\ast\rightleftarrows Top_\ast \colon Sing$, using that realization commutes with finite limits and $Sing$ with sequential colimits along cofibrations. It can be further deduced for $\Sp^O$ as limits and colimits are levelwise.
\end{remark}

\begin{corollary}\label{corlinandloops}
Under the hypotheses of \ref{linandloops}, suppose additionally that the functor $\hom_H(G,-)\colon\cat{D}^H\rightarrow\cat{D}^G $ detects equivalences of fibrant objects for every subgroup $H$ of $G$. Then
the following are equivalent:
\begin{enumerate}
\item $\Phi$ is $G$-linear,
\item For every cofibrant object $c\in \cat{C}^H$ and every $H\leq G$, the canonical map  $\Phi(c)\rightarrow \Omega^{\rho|_H}F\Phi(\Sigma^{\rho|_H}c)$ is an equivalence in $\cat{D}^H$,
\item For every finite $G$-set $J$ the functor $\Phi_\ast\colon \cat{C}^{\mathcal{P}(J_+)}_a\rightarrow \cat{D}^{\mathcal{P}(J_+)}_a$ sends homotopy cocartesian $J_+$-cubes to homotopy cartesian $J_+$-cubes.
\end{enumerate}
\end{corollary}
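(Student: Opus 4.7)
The plan is to close the cycle of implications $(3)\Rightarrow(1)\Rightarrow(2)\Rightarrow(1)\Rightarrow(3)$ by combining the previous three results: Corollary \ref{GlinHlin} (descent from $G$-linearity to $H$-linearity), Proposition \ref{adjasslin} ($G$-linearity implies the adjoint assembly map is an equivalence for every finite $G$-set), and Theorem \ref{linandloops} (which has $(2)\Rightarrow(1)$ built in and is the tool for $(1)\Rightarrow(3)$).

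The implication $(3)\Rightarrow(1)$ is immediate: specializing the $G$-set $J$ to the regular $G$-set $G$ gives that $\Phi_\ast$ sends homotopy cocartesian $G_+$-cubes to homotopy cartesian $G_+$-cubes, which together with the assumption that $\Phi$ is reduced is the definition of $G$-linearity. For the equivalence $(1)\Leftrightarrow(2)$, the implication $(2)\Rightarrow(1)$ is the ``in particular'' clause of Theorem \ref{linandloops}. For the converse $(1)\Rightarrow(2)$, I would first invoke Corollary \ref{GlinHlin}, whose hypothesis that $\hom_H(G,-)$ detects equivalences of fibrant objects is granted, to conclude that $\Phi$ is $H$-linear for every subgroup $H\leq G$. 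Then Proposition \ref{adjasslin}, applied to $\Phi$ viewed as an $H$-linear functor and to the $H$-set $\rho|_H$, shows the composite $\Phi(c)\to \Omega^{\rho|_H}F\Phi(\Sigma^{\rho|_H}c)$ is an equivalence in $\cat{D}^H$ for every cofibrant $c\in\cat{C}^H$.

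For the remaining implication $(1)\Rightarrow(3)$, fix an arbitrary finite $G$-set $J$. Using Corollary \ref{GlinHlin} again, $G$-linearity of $\Phi$ yields $H$-linearity for every subgroup $H\leq G$. Then Proposition \ref{adjasslin}, now applied to the $H$-set $J|_H$, shows that the canonical map $\Phi(c)\to \Omega^{J|_H}F\Phi(\Sigma^{J|_H}c)$ is an equivalence in $\cat{D}^H$ for every cofibrant $c\in\cat{C}^H$ and every $H\leq G$. This is precisely the hypothesis of Theorem \ref{linandloops} for the $G$-set $J$, and its conclusion is exactly statement $(3)$ for this $J$. Since $J$ was arbitrary, this establishes $(3)$.

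There is essentially no obstacle beyond careful bookkeeping: all the genuinely difficult content is already encapsulated in \ref{GlinHlin}, \ref{adjasslin}, and \ref{linandloops}. The only thing to keep track of is the two-step ``restrict then re-assemble'' process, namely descending $G$-linearity to $H$-linearity (which needs the assumption on $\hom_H(G,-)$), and then feeding the restricted $H$-set $J|_H$ into the $H$-version of \ref{adjasslin} in order to meet the hypothesis of \ref{linandloops} for a general $G$-set $J$.
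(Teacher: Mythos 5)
Your proposal is correct and follows essentially the same route as the paper: $(3)\Rightarrow(1)$ by specializing to $J=G$, $(2)\Rightarrow(1)$ via the ``in particular'' clause of Theorem \ref{linandloops}, and both $(1)\Rightarrow(2)$ and the passage to general $J$ for $(3)$ by first descending to $H$-linearity via Corollary \ref{GlinHlin} and then applying Proposition \ref{adjasslin} to the restricted $H$-set to feed the hypothesis of \ref{linandloops}. The only cosmetic difference is that the paper organizes the cycle as $(1)\Rightarrow(2)\Rightarrow(3)\Rightarrow(1)$, deducing $(1)$ from $(2)$ inside the proof of $(2)\Rightarrow(3)$, whereas you argue $(1)\Rightarrow(3)$ directly; the content is identical.
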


\begin{proof}
\noindent $(1)\Rightarrow(2)$ By \ref{GlinHlin} the functor $\Phi$ is $H$-linear for every subgroup $H\leq G$. The implication then follows from \ref{adjasslin} for the $H$-set $G_+|_H$.
\\
\noindent $(2)\Rightarrow(3)$ By \ref{linandloops} it is enough to show that $\Phi(c)\rightarrow \Omega^{J|_H}F\Phi(\Sigma^{J|_H}c)$ is an equivalence for every finite $G$-set $J$. But $\Phi$ is $G$-linear by  \ref{linandloops}, and hence $H$-linear by  \ref{GlinHlin}. The adjoint assembly is then an equivalence by $\ref{adjasslin}$.
\\
\noindent $(3)\Rightarrow(1)$ For $J=G$ the conclusion in (3) is the definition of $G$-linearity.
\end{proof}

\begin{remark}\label{Gderivative}
Define the $G$-derivative (at the zero object) of a reduced enriched homotopy functor $\Phi\colon\cat{C}\rightarrow \cat{D}^G$ to be the functor $D_\ast\Phi\colon\cat{C}\rightarrow \cat{D}^G$ defined by
\[D_\ast\Phi(c)=\hocolim\big(Q\Phi(c)\rightarrow Q\Omega^\rho F\Phi(\Sigma^\rho c)\rightarrow Q\Omega^{2\rho}F\Phi(\Sigma^{2\rho} c)\rightarrow\dots\big)\]
where $\Sigma^{n\rho}=\Sigma^{nG}$ is the suspension by the permutation representation of $n$-disjoint copies of $G$. As a direct consequence of point 2 of \ref{corlinandloops} the functor $D_\ast \Phi$ is $G$-linear, and it is equipped with a universal natural transformation $\Phi\rightarrow D_\ast\Phi$. The argument of \cite[1.8]{calcIII} applies verbatim to our equivariant situation, showing that $\Phi\rightarrow D_\ast\Phi$ is essentially initial among maps from $\Phi$ to a $G$-excisive functor. 
\end{remark}

\begin{proof}[Proof of \ref{linandloops}]
We follow the strategy of the proofs of \cite[1.8,1.9]{calcIII} and \cite{Rezk} of showing that the adjoint assembly map evaluated at a cocartesian cube factors through a cartesian cube. It is convenient to introduce a new model for the loop space. For a cofibrant object $c\in\cat{C}^G$ we define
\[\overline{\Omega}^J F\Phi(\Sigma^Jc):=\holim_{\mathcal{P}(J_+)\backslash\emptyset}F\Phi(S^Jc).\]
This object comes with a natural weak equivalence $\overline{\Omega}^J F\Phi(\Sigma^Jc)
\stackrel{\simeq}{\longleftarrow}\Omega^J F\Phi(\Sigma^Jc)$ (see \ref{assembly}).
Let $X\colon \mathcal{P}(J_+)\rightarrow \cat{C}$ be a cofibrant $J_+$-cube. Define a $G$-diagram $K\colon \mathcal{P}(J_+)\times \mathcal{P}(J_+)\rightarrow \cat{C}$ by
\[K(U,T)=\hocolim_{S\in \mathcal{P}(J_+)\backslash J_+}X_{(S\cap U)\cup T}\]
and define a $J_+$-cube $Y\colon \mathcal{P}(J_+)\rightarrow \cat{D}$ by
\[Y_T=\holim_{\mathcal{P}(J_+)\backslash\emptyset} F\Phi(K(-,T))\]
The key of this proof is to define, for every $T\subset J_+$, a factorization, natural in $T$
\[\xymatrix{\Phi(X_T)\ar[rr]\ar[dr]_\phi&& \overline{\Omega}^JF \Phi(\Sigma^{J}X_T)\\
&Y_T\ar[ur]_\psi}\]
and show that $Y$ is homotopy cartesian when $X$ is homotopy cocartesian.
Writing $\Delta^{\tilde{U}}$ for $N\mathcal{P}(U)\backslash\emptyset$, the first map of the factorization has $U$-component 
\[\phi_U\colon \Phi(X_T)\longrightarrow map(\Delta^{\tilde{U}},F\Phi(K(U,T)))\]
adjoint to the composite
\[\Delta^{\tilde{U}}\otimes\Phi(X_T)\rightarrow \Phi(\Delta^{\tilde{U}}\otimes X_T)\rightarrow \Phi(K(U,T))\rightarrow F\Phi(K(U,T))\]
where the second map is induced by $\Delta^{\tilde{U}}\otimes X_T\rightarrow \Delta^{\tilde{U}}\otimes X_{T\cup U}\rightarrow K(U,T)$. The map $\psi$ is the homotopy limit over $U$ of the map of diagrams $F\Phi(K(U,T))\rightarrow F\Phi((S^JX_T)_U)$ induced by the map $K(U,T)\rightarrow (S^JX_T)_U$ defined as follows. For $U\neq J_+$, it is the composite
\[K(U,T)=\displaystyle\hocolim_{S\in \mathcal{P}(J_+)\backslash J_+}X_{(S\cap U)\cup T}=\hocolim_{S\in \mathcal{P}(J_+)\backslash J_+}(-\cap U)^\ast X_{S\cup T}\to\hocolim_{S\in \mathcal{P}(U)}X_{S\cup T}\to C^UX_T
\]
where the first arrow is the canonical map induced by the functor $(-\cap U)\colon \mathcal{P}(J_+)\backslash J_+\rightarrow \mathcal{P}(U)$ and the second arrow is induced by collapsing all the non-initial verticies. For $U=J_+$, the map is
\[K(J_+,T)=\displaystyle\hocolim_{S\in \mathcal{P}(J_+)\backslash J_+}X_{S\cup T}\rightarrow \hocolim_{\mathcal{P}(J_+)\backslash J_+}\sigma^JX_T=\Sigma^JX_T\]
induced on homotopy colimits by the map of $J_+$-cubes given by the identity on the empty set vertex, and that collapses the other vertices to the point.

Now suppose that $X$ is homotopy cocartesian, and let us see that $Y$ is homotopy cartesian. There is a natural equivalence $K(U,T)\stackrel{\simeq}{\to} X_{U\cup T}$. Indeed, the maps $X_{(S\cap U)\cup T}\rightarrow X_{((S\cup \{t\})\cap U)\cup T}$ are the identity for all $t\in T$, and therefore $K(U,T)\stackrel{\simeq}{\to} X_{U\cup T}$ as long as $T\neq\emptyset$, by the lemma \ref{cartcocartid} below. For $T=\emptyset$ and $U\neq J_+$ there is a weak equivalence
\[K(U,\emptyset)=\displaystyle\hocolim_{S\in \mathcal{P}(J_+)\backslash J_+}X_{S\cap U}\stackrel{\simeq}{\to} X_{U}\]
again by \ref{cartcocartid}, as the maps $X_{S\cap U}\rightarrow X_{(S\cup\{v\})\cap U}$ are the identity for all $v\in J_+\backslash U$. Finally,
\[K(J_+,\emptyset)=\displaystyle\hocolim_{S\in \mathcal{P}(J_+)\backslash J_+}X_{S}\stackrel{\simeq}{\to} X_{J_+}\]
since $X$ is assumed to be homotopy cocartesian. This shows that \[Y_T\stackrel{\simeq}{\to} \holim_{U\in \mathcal{P}(J_+)\backslash\emptyset}F\Phi(X_{U\cup T})\]
For every fixed $U\neq\emptyset$, the cube $T\longmapsto F\Phi(X_{U\cup T})$ is homotopy cartesian by \ref{cartcocartid}, as the maps $F\Phi(X_{U\cup T})\rightarrow F\Phi(X_{U\cup T\cup\{u\}})$ are the identity for all $u\in U$. The cube $Y$ is then a homotopy limit of cartesian cubes, and therefore also cartesian since homotopy limits commute with each other.

Iterating this construction and using that $\Sigma^J$ and $\overline{\Omega}^J$ preserve homotopy cocartesian and -cartesian $J_+$-cubes, respectively, one gets a factorization of each map in the colimit system
\[\Phi(X)\stackrel{\simeq}{\longrightarrow} \overline{\Omega}^{J}F\Phi(\Sigma^{J}X)\stackrel{\simeq}{\longrightarrow} \overline{\Omega}^{2J}F\Phi( \Sigma^{2J}X)\stackrel{\simeq}{\longrightarrow} \dots\]
through a homotopy cartesian $J_+$-cube $Y^{(n)}$. The maps in this system are weak equivalences, since all maps $\Phi(c)\to \Omega^{J|_H}F\Phi(\Sigma^{J|_H}c)$ are assumed to be weak equivalences. By (classical) cofinality for diagrams in $\cat{D}^G$ the homotopy colimit of the sequence above is equivalent to $\hocolim_n Y^{(n)}$.
By \ref{limscolims} in the appendix we know that under our presentability assumptions sequential homotopy colimits preserve homotopy cartesian $J_+$-cubes. Therefore $\Phi(X)\simeq \hocolim_{n}Q\overline{\Omega}^{nJ}F\Phi(\Sigma^{nJ}X)$ is homotopy cartesian.
\end{proof}

\begin{lemma}\label{cartcocartid}
Let $J$ be a finite $G$-set, $X\colon\mathcal{P}(J)\rightarrow \cat{C}$ a $J$-cube and $I\subset J$ a non-empty $G$-invariant subset such that the maps $X_S\rightarrow X_{S\cup i}$ are isomorphisms for all $S\subset J$ and $i\in I$. If $X$ is a fibrant diagram, it is homotopy cartesian. Similarly, if $X$ is point-wise cofibrant, it is homotopy cocartesian.
\end{lemma}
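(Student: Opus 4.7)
The plan is to apply the equivariant cofinality theorem \ref{Gcof} to ``collapse'' the coordinates in $I$, reducing both assertions to the trivial case of a cube indexed on a poset with initial or terminal object. Write $\widetilde{J}=J\backslash I$ and introduce the $G$-equivariant functors
\[
p\colon \mathcal{P}(J)\to \mathcal{P}(\widetilde{J}),\ p(S)=S\backslash I,\qquad s,u\colon \mathcal{P}(\widetilde{J})\to \mathcal{P}(J),\ s(T)=T,\ u(T)=T\cup I,
\]
all three of which are equivariant because $I$ is $G$-invariant. The pointwise inclusions $sp(S)\subset S\subset up(S)$ induce maps $p^{\ast}s^{\ast}X\to X\to p^{\ast}u^{\ast}X$ whose value at $S$ is an iteration of maps of the form $X_{S'}\to X_{S'\cup\{i\}}$ with $i\in I$; by hypothesis both are natural isomorphisms of $G$-diagrams.

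For the cartesian case I would set $Y=s^{\ast}X$. Since the $G$-stabilizer of $T\in\mathcal{P}(\widetilde{J})$ coincides with its $G$-stabilizer in $\mathcal{P}(J)$, the diagram $Y$ inherits fibrancy from $X$. The key technical point is left cofinality of the restriction $p\colon \mathcal{P}(J)\backslash\emptyset\to \mathcal{P}(\widetilde{J})$. Decomposing $S=(S\cap I)\sqcup (S\backslash I)$ identifies $(p/T)^{H}$ with the poset $\bigl(\mathcal{P}(I)^{H}\times \mathcal{P}(T)^{H}\bigr)\backslash\{(\emptyset,\emptyset)\}$ for every $H\leq G_T$; since $I$ is $H$-invariant and non-empty, the pair $(I,T)$ is a maximum element, so the nerve of $(p/T)^{H}$ is contractible. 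Applying Theorem \ref{Gcof} both to $p$ and to the inclusion of the initial object $\emptyset\in\mathcal{P}(\widetilde{J})$ yields the chain of equivalences
\[
X_{\emptyset}=Y_{\emptyset}\xrightarrow{\sim}\holim_{\mathcal{P}(\widetilde{J})}Y\xrightarrow{\sim}\holim_{\mathcal{P}(J)\backslash\emptyset}p^{\ast}Y\cong \holim_{\mathcal{P}(J)\backslash\emptyset}X,
\]
which a naturality check identifies with the canonical map.

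The cocartesian case is dual, with $Y'=u^{\ast}X$ in place of $Y$. The identity $G_T=G_{T\cup I}$ shows that $Y'$ is pointwise cofibrant whenever $X$ is. Right cofinality of the restriction $p\colon \mathcal{P}(J)\backslash J\to \mathcal{P}(\widetilde{J})$ is verified analogously: using $T\subset \widetilde{J}$, one has $(T/p)^{H}=\{S\subsetneq J \mid S=HS,\ T\subset S\}$, which admits $T$ as a minimum (note that $T\neq J$ because $I\neq\emptyset$), so the nerve is contractible. Composing the resulting equivalence with $\hocolim_{\mathcal{P}(\widetilde{J})}Y'\xrightarrow{\sim}Y'_{\widetilde{J}}=X_{J}$ (provided by the terminal vertex $\widetilde{J}$ together with pointwise cofibrancy) produces the desired equivalence $\hocolim_{\mathcal{P}(J)\backslash J}X\xrightarrow{\sim}X_{J}$. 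The only non-routine step in either half of the argument is the cofinality verification, which in each case reduces to exhibiting an explicit extremum in the $H$-fixed subposet; everything else is formal from \ref{Gcof} and the hypothesis that the connecting maps along $I$ are isomorphisms.
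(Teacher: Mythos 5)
Your argument is correct and is essentially the paper's own proof in slightly different clothing: the paper collapses onto the subposet $\mathcal{P}_I(J)$ of subsets containing $I$ via the retraction $U\mapsto U\cup I$ and uses that $I$ is initial there, which under the poset isomorphism $\mathcal{P}_I(J)\cong\mathcal{P}(J\backslash I)$, $U\mapsto U\backslash I$, is exactly your projection $p$ and the initial object $\emptyset$ of $\mathcal{P}(\widetilde{J})$. Both proofs hinge on the same equivariant cofinality check for this functor (yours done directly on $(p/T)^H$, the paper's packaged as a retraction argument) together with the hypothesis that the connecting maps along $I$ are isomorphisms.
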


\begin{proof}
Let $\mathcal{P}_I(J)$ be the subposet of $\mathcal{P}(J)\backslash\emptyset$ consisting of non-empty subsets of $J$ that contain $I$ and write $\iota$ for the inclusion map. The map $U \mapsto U \cup I$ defines a retraction $u_I \colon \mathcal{P}(J)\backslash\emptyset \to \mathcal{P}_I(J)$. The assumption on the maps $X_S\rightarrow X_{S\cup i}$ implies that the natural map $X \to u_I^\ast \iota^\ast X$ is an isomorphism. The composite of the maps
\[\holim_{\mathcal{P}_I(J)} \iota^\ast X \stackrel{\simeq}{\to} \holim_{\mathcal{P}(J)\backslash\emptyset}u_I^\ast \iota^\ast X \to \holim_{\mathcal{P}_I(J)} \iota^\ast u_I^\ast \iota^\ast X\]
is the identity map and the left hand map is a weak equivalence since $u_I$ is right $G$-cofinal. Hence the right-hand map is a weak equivalence. It fits into a commutative diagram
\[\xymatrix{X_{\emptyset}\ar[r]\ar[d]_{\cong}&\displaystyle\holim_{\mathcal{P}(J)\backslash\emptyset}X\ar[d]^{\simeq}\\
X_I\ar[r]_-{\simeq}&\displaystyle\holim_{\mathcal{P}_I(J)}X.
}\]
The left vertical map is a $G$-map, which is an isomorphism by assumption and the bottom horizontal map is a $G$-equivalence since $I$ is initial in $\mathcal{P}_I(J)$. Therefore the top map in the square is a weak equivalence and $X$ is homotopy cartesian.

A completely analogous argument shows that $X$ is homotopy cocartesian.
\end{proof}

%----------------G-SPACES---------------------------

\subsection{\texorpdfstring{$G$}{G}-linear functors on pointed \texorpdfstring{$G$}{G}-spaces}\label{Gtopsec}

In \cite{Blumberg} Blumberg defines a notion of $G$-linearity for endofunctors of the category of pointed $G$-spaces, for a compact Lie group $G$. When $G$ is finite, we show that his definition and ours agree up to a suspension factor.

Before starting, let us remark that when working with spaces we can drop all the point-wise fibrant and cofibrant replacements from the last sections, as homotopy limits and homotopy colimits of $G$-diagrams of spaces are always homotopy invariant. For homotopy limits, it is just because every $G$-space is fibrant. For homotopy colimits, there is a natural homeomorphism
\[(\hocolim_I X)^H\cong \hocolim_{I^H}(\iota_H^{\ast}X)^H\]
for every $G$-diagram $X$ in $(Top_\ast)^{I}_a$ and subgroup $H\leq G$. Here $\iota_H\colon I^H\rightarrow I$ is the inclusion of the subcategory of $I$ of objects and morphisms fixed by the $H$-action. Therefore homotopy invariance of homotopy colimits of $G$-diagrams follows from homotopy invariance of classical homotopy colimits of spaces, proved in \cite{DugIsak}.

\begin{proposition}\label{Glintop}
An enriched reduced homotopy functor $\Phi\colon Top_\ast\rightarrow Top^{G}_\ast$ is $G$-linear if and only if  the following two conditions hold:
\begin{enumerate}[a)]
\item The induced functor $\Phi_{\ast}\colon (Top^{G}_\ast)^{\mathcal{P}(1_+)}\rightarrow (Top^{G}_\ast)^{\mathcal{P}(1_+)}$ sends homotopy cocartesian squares of pointed $G$-spaces to homotopy cartesian ones.
\item For all finite $G$-sets $J$ the natural map
\[\Phi(\bigvee_{J}Z)\rightarrow\prod_J\Phi(Z)\]
is an equivalence of pointed $G$-spaces.
\end{enumerate}
\end{proposition}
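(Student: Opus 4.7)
The forward direction is immediate: condition (a) is a special case of Remark \ref{squaresinG}, and condition (b) is the content of Proposition \ref{wedgesintoprod} (the cofibrancy hypothesis on $Z$ can be dropped since $\Phi$ is a homotopy functor and every pointed $G$-space is weakly equivalent to a cofibrant one).

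For the converse, assume (a) and (b). By Remark \ref{presforspectra}, Theorem \ref{linandloops} and Corollary \ref{corlinandloops} both apply to $Top_\ast$, and a direct computation of fixed points yields
\[
\hom_H(G,Z)^K \cong \prod_{[g]\in H\backslash G/K} Z^{H\cap gKg^{-1}},
\]
so $\hom_H(G,-)\colon Top_\ast^H\rightarrow Top_\ast^G$ detects equivalences for every $H\leq G$. Corollary \ref{corlinandloops} therefore reduces $G$-linearity to checking that the adjoint assembly map $\Phi(c) \rightarrow \Omega^{\rho|_H}\Phi(\Sigma^{\rho|_H}c)$ is an $H$-equivalence for every $H\leq G$ and every cofibrant $c\in Top_\ast^H$. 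Using the natural equivariant equivalences $\Sigma^{J_1\amalg J_2}c\simeq \Sigma^{J_1}\Sigma^{J_2}c$ and $\Omega^{J_1\amalg J_2}d\simeq \Omega^{J_1}\Omega^{J_2}d$ coming from the direct sum decomposition of the underlying permutation representations, together with the splitting of the free $H$-set $\rho|_H=G|_H$ into $[G:H]$ transitive copies of $H$, this reduces by iteration to proving, for every transitive $H$-set $H/L$, that $\Phi(c)\rightarrow \Omega^{H/L}\Phi(\Sigma^{H/L}c)$ is an $H$-equivalence. Via Remark \ref{assembly} and the reducedness of $\Phi$ (so $\Phi(C^Uc)\simeq\ast$ at every intermediate vertex of the cube $S^{H/L}c$), this is equivalent to showing that $\Phi_\ast$ sends the homotopy cocartesian cube $S^{H/L}c$ of Example \ref{suspcube} to a homotopy cartesian $(H/L)_+$-cube in $Top_\ast^H$.

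The plan for this last step is to construct an explicit zig-zag of homotopy cocartesian $(H/L)_+$-cubes interpolating between the suspension cube $S^{H/L}c$ and a cube assembled from the equivariant wedge cube $W^{(H/L)_+}c$ of Example \ref{wedgecube} via iterated homotopy pushouts along equivariant cone inclusions of the form $\bigvee_S c\hookrightarrow \bigvee_S Cc$ for $H$-invariant $S\subseteq H/L$. Under $\Phi_\ast$, condition (a) turns each of the ordinary pushout squares arising in the interpolation into a homotopy cartesian square (noting that an $H$-cocartesian square of pointed $H$-spaces is classically cocartesian on underlying spaces, so $\Phi$ applied to its vertices is classically cartesian by (a) applied to the corresponding trivial-$G$-action square, and the upgrade to $H$-cartesianness on each fixed-point subspace comes from (a) applied with varying subgroups of $G$). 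Meanwhile, condition (b) guarantees that $\Phi_\ast(W^{(H/L)_+}c)$ is homotopy cartesian. Assembling these cartesian pieces, using that iterated homotopy limits commute with one another, forces $\Phi_\ast(S^{H/L}c)$ to be homotopy cartesian as well. The main obstacle is the combinatorial interpolation itself, which mirrors Blumberg's inductive analysis \cite{Blumberg} of the equivariant representation sphere $S^{\widetilde{(H/L)_+}}$ and its cellular attaching maps.
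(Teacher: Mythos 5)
The forward direction is fine and is exactly the paper's argument (Remark \ref{squaresinG} plus Proposition \ref{wedgesintoprod}). The converse, however, has a genuine gap precisely at the step that carries all the content. You correctly reduce, via Theorem \ref{linandloops} and Remark \ref{presforspectra}, to showing that conditions (a) and (b) force the adjoint assembly map $\Phi(c)\rightarrow\Omega^{\rho|_H}F\Phi(\Sigma^{\rho|_H}c)$ to be an equivalence, and further to showing that $\Phi_\ast(S^{H/L}c)$ is homotopy cartesian for transitive $H$-sets. But at that point your text switches from proof to outline: you announce a ``plan'' to build a zig-zag of cocartesian cubes interpolating between $S^{H/L}c$ and a wedge cube, and you close by conceding that ``the main obstacle is the combinatorial interpolation itself, which mirrors Blumberg's inductive analysis.'' That interpolation is not a routine detail --- it is the theorem. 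The paper does not attempt it: its entire converse consists of citing \cite{Blumberg} for the statement that (a) and (b) imply the assembly map $\Phi(Z)\rightarrow\Omega^V\Phi(Z\wedge S^V)$ is a $G$-equivalence for every $G$-representation $V$, and then invoking Theorem \ref{linandloops}. So either you cite Blumberg's theorem as the paper does, or you must actually carry out the cellular/inductive argument; as written, the proof stops exactly where the work begins.

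A secondary problem sits inside the sketch itself: condition (a) only concerns squares of $G$-spaces, yet your interpolation needs $\Phi_\ast$ to send homotopy cocartesian squares of $H$-spaces to $H$-cartesian squares for proper subgroups $H\leq G$. The parenthetical claim that this follows from ``(a) applied with varying subgroups of $G$'' does not parse: a cocartesian square of $H$-spaces is not a $G$-square, and $\Phi$ does not commute with induction, so one cannot simply feed it into hypothesis (a). Making this restriction-to-subgroups step precise is itself nontrivial (compare the machinery the paper develops in \ref{higherwirth} and \ref{GlinHlin} to prove that $G$-linearity restricts to $H$-linearity), and it would need to be supplied before the interpolation argument could even get started. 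The double coset computation showing that $\hom_H(G,-)$ detects equivalences is correct and is a useful observation, but it does not repair either of these gaps.
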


\begin{remark}
The two conditions of \ref{Glintop} are essentially the definition of $G$-linearity in the case of a finite group $G$ in \cite{Blumberg}.
\end{remark}

\begin{proof}
If $\Phi$ is $G$-linear, it sends homotopy cocartesian squares to homotopy cartesian squares by \ref{squaresinG}, and the map $\Phi(\bigvee_{J}Z)\rightarrow\prod_J\Phi(Z)$ is an equivalence by \ref{wedgesintoprod}. 

Conversely, Blumberg proves in \cite{Blumberg} that the two conditions above imply that the adjoint assembly map $\Phi(Z)\rightarrow \Omega^V\Phi(Z\wedge S^V)$ is a $G$-equivalence for every $G$-representation $V$.
By \ref{linandloops} this implies $G$-linearity of $\Phi$.
\end{proof}

\begin{example}\label{Gconf}
Let $M$ be a commutative well-pointed topological monoid with additive $G$-action, and suppose that the fixed point monoids $M^H$ are group-like for every subgroup $H$ of $G$. The equivariant Dold-Thom construction $M(-)\colon Top_\ast\rightarrow Top^{G}_\ast$ sends a pointed space $Z$ to the space $M(Z)$ of reduced configurations of points in $Z$ with labels in $M$, with $G$ acting on the labels. After extending $M(-)$ to $Top^{G}_\ast$ the group acts both on the labels and on the space. If $M$ is discrete the homotopy groups of $M(-)$ are Bredon cohomology of the Mackey functor $H\mapsto M^H$.
For a pointed $G$-simplicial set $K$ the simplicial Dold-Thom construction of \ref{exlin} compares to the topological one by a natural $G$-homeomorphism $|M(K)|\cong M(|K|)$.

We prove that $M(-)\colon Top_\ast\rightarrow Top^{G}_\ast$ is $G$-linear by checking the two conditions from \ref{Glintop}.
Given a pointed $G$-space $Z$, the fixed points of the map $M(Z)\rightarrow \Omega M(Z\wedge S^1)$ compares by natural homeomorphisms to the adjoint assembly map
\[M(Z)^H\longrightarrow  \Omega M(Z\wedge S^1)^H\cong \Omega(M(Z))(S^1)^H\cong \Omega M(Z)^H(S^1)\]
for the topological group-like monoid $M(Z)^H$. This is an equivalence by standard arguments, see \cite[7.6]{May}. This implies, by \ref{linandloops} for the trivial $G$-set $J=\{1\}$, that the functor $M(-)$ sends homotopy cocartesian squares of $G$-spaces to homotopy cartesian ones, proving the first property of \ref{Glintop}. The second property easily follows, as the map $M(\bigvee_{J}Z)\rightarrow\prod_JM(Z)$ is an equivariant homeomorphism.

Notice that by $G$-linearity the map $M(Z)\rightarrow \Omega^JM(Z\wedge S^J)$ is a $G$-equivalence 
for every finite $G$-set $J$. This shows that the associated Eilenberg-MacLane $G$-spectrum $HM_n=M(S^n)$ is fibrant in $(\Sp^O)^G$.
\end{example}

%-------------------------------SPECTRA----------------------------

\subsection{\texorpdfstring{$G$}{G}-linear functors to \texorpdfstring{$G$}{G}-spectra}\label{tospectra}
We show that the identity functor on $G$-spectra is $G$-linear, and deduce from this the classical Wirthm\"{u}ller isomorphism theorem. We further classify all $G$-linear functors from finite pointed simplicial sets to $G$-spectra.

Let us start by clarifying that when working with spectra, as for spaces, we can forget all about the point-wise cofibrant and fibrant replacements from the previous sections, thanks to the following result.

\begin{lemma}
Let $G$ be a finite group and let $a$ be an action of $G$ on a small category $I$. 
\begin{itemize}
\item The homotopy colimit functor $\hocolim\colon (\Sp^O)^{I}_a\rightarrow (\Sp^O)^G$ preserves weak equivalences between any two diagrams (not necessarily of cofibrant objects).

\item If $I$ has finite dimensional nerve, the homotopy limit functor $\holim\colon (\Sp^O)^{I}_a\rightarrow (\Sp^O)^G$ preserves weak equivalences between any two diagrams (not necessarily of fibrant objects).
\end{itemize}
\end{lemma}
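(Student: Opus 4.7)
I would approach both parts by reducing to the known homotopy invariance of $\hocolim$ and $\holim$ of $G$-diagrams in pointed $G$-topological spaces noted at the beginning of section \ref{Gtopsec}, exploiting the fact that limits, colimits, tensors and cotensors with simplicial sets in $\Sp^O$ are all computed level-wise in pointed $G$-spaces. Concretely, for each representation $V$ appearing in the orthogonal structure one has natural $G$-homeomorphisms $(\hocolim_I X)(V)\cong \hocolim_I X(V)$ and $(\holim_I X)(V)\cong \holim_I X(V)$, where on the right-hand side $i\mapsto X_i(V)$ is a $G$-diagram in pointed $G$-spaces.

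For part (i), I would take a functorial point-wise cofibrant replacement $QX\stackrel{\sim}{\to} X$ in $(\Sp^O)^I_a$, so that each $QX_i$ is cofibrant in $(\Sp^O)^{G_i}$, and similarly for $Y$. By Proposition \ref{htpyinvlimcolim} the map $\hocolim QX\to \hocolim QY$ is already a stable $G$-equivalence, so it suffices to show that the canonical comparison $\hocolim QX\to \hocolim X$ is a stable $G$-equivalence for an arbitrary $G$-diagram $X$. To establish this, I would analyze the simplicial replacement: at simplicial degree $n$ one has $(\coprod_\ast X)_n=\bigvee_{\sigma\in N_n I^{op}}X_{\sigma(n)}$, which decomposes according to $G$-orbits of $n$-simplices into a wedge of induced spectra $G_+\wedge_{G_\sigma}X_{\sigma(n)}$. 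In orthogonal $G$-spectra, both wedges and the induction functors $G_+\wedge_{G_\sigma}(-)$ preserve arbitrary stable equivalences without any cofibrancy hypothesis, as one verifies by direct computation of equivariant homotopy groups (for instance via the double-coset formula). Hence $\coprod_\ast QX\to \coprod_\ast X$ is a simplicial-degreewise stable $G$-equivalence; since the bar construction has free degeneracies, its geometric realization admits a skeletal filtration whose successive stages are pushouts along level cofibrations of orthogonal $G$-spectra, and sequential colimits of such pushouts of stable equivalences give stable equivalences on realizations.

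For part (ii), I would carry out the dual argument using a functorial point-wise fibrant replacement $Y\to RY$: by \ref{htpyinvlimcolim} it is enough to prove that $\holim Y\to \holim RY$ is a stable $G$-equivalence. The finite-dimensionality of $N(I/-)$ ensures that the cotensor $\map^a_I(N(I/-),Y)$ is obtained as a finite totalization, so only finite homotopy limits of orthogonal $G$-spectra are involved. Such finite homotopy limits preserve stable equivalences without any fibrancy hypothesis: level-wise in $V$ they reduce to finite homotopy limits of $G$-diagrams of pointed $G$-spaces, and these are homotopy invariant because every pointed $G$-space is fibrant.

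The main obstacle in both parts is establishing preservation of stable equivalences by the bar (resp. cobar) construction without cofibrancy (resp. fibrancy); for $\holim$ the finite-dimensionality of $N(I/-)$ is precisely what prevents an infinite totalization from causing trouble, whereas for $\hocolim$ the analogous difficulty is resolved by the free-degeneracy structure of the bar construction together with the preservation of stable equivalences by wedges and inductions in the stable category.
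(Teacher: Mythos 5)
Both halves of your argument contain genuine gaps, and in each case the missing ingredient is the one the paper's proof actually turns on.

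For the homotopy colimit: after correctly reducing to showing that $\hocolim QX\to\hocolim X$ is a stable equivalence, you try to prove outright that $\hocolim$ preserves stable equivalences via the simplicial replacement. Note first that if that argument worked it would prove the lemma directly and make the reduction through $Q$ redundant. More seriously, its key steps are unjustified for arbitrary (non--level-wise well-pointed) orthogonal $G$-spectra: wedges and the induction functors $G_+\wedge_{G_\sigma}(-)$ do \emph{not} preserve arbitrary stable equivalences without a well-pointedness or cofibrancy hypothesis (the identification of the homotopy groups of a wedge as a direct sum, and the double-coset formula, both require it), and the latching maps of the simplicial replacement are inclusions of wedge summands, which are level cofibrations only when the complementary summands are well-pointed --- so the skeletal-filtration/realization step also fails in general. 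The paper's proof avoids all of this with one observation you are missing: the functorial cofibrant replacement $QE\to E$ of an orthogonal spectrum can be chosen to be a \emph{level} equivalence. Since $\hocolim$ is computed level-wise and homotopy colimits of $G$-diagrams of spaces preserve all weak equivalences, $\hocolim QX\to\hocolim X$ is a level equivalence, and the lemma follows from \ref{htpyinvlimcolim} and two-out-of-three.

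For the homotopy limit the gap is sharper: you assert that finite homotopy limits of spectra preserve stable equivalences because ``level-wise in $V$ they reduce to finite homotopy limits of pointed $G$-spaces.'' That reduction only shows preservation of \emph{level} equivalences, and the map $Y\to RY$ to a fibrant replacement is a stable equivalence that is essentially never a level equivalence (fibrant replacement is $\Omega$-spectrification). So the level-wise appeal to fibrancy of $G$-spaces does not apply to the map you need it for, and the argument is circular where it is not simply false. The paper's proof instead computes the equivariant homotopy groups of $\holim_I X$ from the $G$-space $\hocolim_n\Omega^{n\rho}(\holim_IX)(n\rho)$, commutes $\Omega^{n\rho}$ and the sequential homotopy colimit past the homotopy limit --- and it is exactly here, in Proposition \ref{limscolims}, that the finite-dimensionality of the nerve is used, to interchange a sequential homotopy colimit with a finite homotopy limit --- and only then lands in a homotopy limit of a weak equivalence of $G$-diagrams of \emph{spaces}, where fibrancy of all $G$-spaces finishes the argument. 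Without this spectrification step your proof does not engage with the difference between level and stable equivalences, which is the entire content of the second bullet point.
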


\begin{proof}
For any $H$-spectrum $E$ there is a functorial cofibrant replacement $QE\rightarrow E$ where the map is a level equivalence. By \ref{htpyinvlimcolim} it is enough to show that homotopy colimits preserve level equivalences of maps of $G$-diagrams. Since homotopy colimits of spectra are defined level-wise, this follows from homotopy invariance of homotopy colimits for spaces (see \S\ref{Gtopsec}).

For the statement about homotopy limits, take a $G$-diagram of spectra $X$. The positive equivariant homotopy groups of $\holim_IX$ are the homotopy groups of the $G$-space \[\hocolim_n\Omega^{n\rho}(\holim_IX)(n\rho).\]
Here we use the notation $E(n\rho)=E_n\wedge_{O(n)}L(\mathbb{R}^{n|G|},n\rho)^+$ for a $G$-spectrum $E$, where $L(\mathbb{R}^{n|G|},n\rho)$ is the space of isomorphisms of vector spaces from $\mathbb{R}^{n|G|}$ to $n\rho$. There are natural weak equivalences
\[\begin{array}{ll}\hocolim_n\Omega^{n\rho}(\holim_IX)(n\rho)\cong \hocolim_n\Omega^{n\rho}\holim_I(X(n\rho))\cong\\
\hocolim_n\holim_I\Omega^{n\rho}(X(n\rho))\stackrel{\simeq}{\too}
\holim_I\hocolim_n\Omega^{n\rho}(X(n\rho))
\end{array}\]
where the last map is a weak equivalence by \ref{limscolims} as sequencial homotopy colimits and finite homotopy limits of $G$-diagrams of spaces commute.
Therefore, a weak equivalence of $G$-diagrams of spectra $f\colon X\rightarrow Y$ induces an isomorphism in positive homotopy groups of the homotopy limit precisely when the map $\holim_I\hocolim_n\Omega^{n\rho}f^{(n\rho)}$ is an equivalence of $G$-spaces. Since $f$ is an equivalence of $G$-diagrams of spectra, the map $\hocolim_n\Omega^{n\rho}f^{(n\rho)}_i$ is an equivalence of $G_i$-spaces for all objects $i$ of $I$. It follows  by homotopy invariance \ref{htpyinvhom} that the map of $G$-spaces $\holim_I\hocolim_n\Omega^{n\rho}f^{(n\rho)}$ is a weak equivalence since it is a homotopy limit of a weak equivalence of $G$-diagrams of spaces. A similar argument shows that $\holim_I f$ is an equivalence in negative homotopy groups. 

% The map $\holim_I\hocolim_n\Omega^{n\rho}f^{(n\rho)}$ is therefore a weak equivalence of $G$-spaces by homotopy invariance \ref{htpyinvhom} since every $G_i$-space is fibrant. A similar argument shows that $\holim_I f$ is an equivalence in negative homotopy groups.
\end{proof}

\begin{theorem}\label{Gcartcocart}
Let $J$ be a finite $G$-set and let $a$ be the induced action of $G$ on $\mathcal{P}(J_+)$. Any homotopy cocartesian $J_+$-cube $X$ in $(\Sp^O)^{\mathcal{P}(J_+)}_a$ is homotopy cartesian. That is, the inclusion functor $\Sp^O\rightarrow (\Sp^O)^G$ is $G$-linear.

In particular, this implies the Wirthm\"{u}ller isomorphism theorem, stating that for any subgroup $H\leq G$ and $H$-spectrum $E\in (\Sp^O)^H$ the canonical map
\[\eta\colon G\otimes_HE=G_+\wedge_HE\longrightarrow F_H(G_+,E)=\hom_H(G,E)\]
is a weak equivalence of $G$-spectra.
\end{theorem}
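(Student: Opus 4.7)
The strategy is to prove $G$-linearity of the inclusion $\mathrm{id}\colon \Sp^O \to (\Sp^O)^G$ via the implication $(2)\Rightarrow (1)$ of Corollary~\ref{corlinandloops}. The stable model categories $(\Sp^O)^H$ are not locally finitely presentable, but by Remark~\ref{presforspectra} finite equivariant homotopy limits nonetheless commute with sequential homotopy colimits in $(\Sp^O)^H$, so the proof of Theorem~\ref{linandloops}, and hence of Corollary~\ref{corlinandloops}, still applies. It therefore suffices to verify that for every subgroup $H\leq G$ and every cofibrant $H$-spectrum $c$, the adjoint assembly map $c\to \Omega^{\rho|_H}F\Sigma^{\rho|_H} c$ is a weak equivalence in $(\Sp^O)^H$, where $\rho = \tilde{G_+}$ is the (regular) representation associated to the $G$-set $G_+$ as in Example~\ref{suspcube}.

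That assertion is exactly the equivariant Freudenthal suspension theorem in the genuine stable category of $H$-spectra: for every finite-dimensional $H$-representation $V$ the adjunction $(\Sigma^V,\Omega^V)$ is a Quillen self-equivalence of $(\Sp^O)^H$, so the derived unit $c\to \Omega^V F\Sigma^V c$ is an equivalence for cofibrant $c$ (cf.\ \cite{ManMay}). Applying this to $V=\rho|_H$ delivers the input required by Corollary~\ref{corlinandloops}, and part~(3) of that corollary then asserts precisely that for every finite $G$-set $J$ the induced endofunctor on $J_+$-cubes takes homotopy cocartesian cubes to homotopy cartesian ones. Since the extension of $\mathrm{id}\colon \Sp^O\to (\Sp^O)^G$ to $G$-diagrams is the identity on underlying cubes, this is the first assertion of the theorem.

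For the Wirthm\"uller isomorphism I would apply Theorem~\ref{linandwedges} to $\Phi=\mathrm{id}$, with $H'=G$ and $K=G$ carrying left $G$-multiplication and right $H$-multiplication; the hypothesis that $K$ admits an $H'$-$H$-equivariant map to $G$ is tautological. Under the identifications of Remark~\ref{adjforget}, the functors $G\otimes_H(-)$ and $\hom_H(G^\ast,-)$ become the usual induction $G_+\wedge_H(-)$ and coinduction $F_H(G_+,-)$, and the map $\eta$ for $\Phi=\mathrm{id}$ unwinds to the classical Wirthm\"uller map $G_+\wedge_H E\to F_H(G_+,E)$. A choice of section $G/H\to G$ identifies $F_H(G_+,-)$ with the functor $\prod_{G/H}(-)$ on underlying spectra, and finite products preserve all weak equivalences of $G$-spectra since they preserve equivariant stable homotopy groups. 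Hence the ``in particular'' clause of Theorem~\ref{linandwedges} applies without any fibrant replacement, giving the Wirthm\"uller equivalence.

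The only non-formal ingredient is the Freudenthal-type equivalence invoked in the second paragraph; once it is in hand, every remaining step is a direct application of machinery already established in this paper ($G$-linearity criteria, induction/coinduction adjunctions, and the explicit computation of the map $\eta$). The main obstacle is therefore exactly the input from genuine equivariant stable homotopy theory; conceptually the theorem is saying that \emph{all} higher $G$-excision phenomena for the identity on $G$-spectra, and in particular the Wirthm\"uller isomorphism, are formal consequences of this single Freudenthal-type equivalence together with the formalism of $G$-diagrams.
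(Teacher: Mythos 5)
Your proposal is correct and follows essentially the same route as the paper: the equivariant Freudenthal suspension theorem supplies the input to Theorem \ref{linandloops} (via Remark \ref{presforspectra} to handle the presentability issue), and the Wirthm\"uller isomorphism then falls out of Theorem \ref{linandwedges} for $\Phi=\mathrm{id}$ and $K=G$, using that $\hom_H(G,-)$ preserves all weak equivalences of spectra. Your detour through Corollary \ref{corlinandloops}(2)$\Rightarrow$(1) rather than the final clause of Theorem \ref{linandloops} is an immaterial difference.
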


\begin{proof}
By the equivariant suspension theorem, the map $E\rightarrow \Omega^{\rho|_H}(E\wedge S^{\rho|_H})$ is a weak equivalence for any $H$-spectrum $E$. By \ref{linandloops} (see also \ref{presforspectra}) this is equivalent to $G$-linearity of the functor $\Sp^O\rightarrow (\Sp^O)^G$. The map $\eta\colon G\otimes_HE\longrightarrow \hom_H(G,E)$ is a weak equivalence by \ref{linandwedges}, as $\hom_H(G,-)\colon (\Sp^O)^H\rightarrow (\Sp^O)^G$ preserves weak equivalences.
\end{proof}

We end the section with a complete characterization of $G$-linear functors from the category $sSet^{f}_\ast$ of finite pointed simplicial sets to $G$-spectra.

\begin{proposition}\label{classlintospec}
Let $\Phi\colon sSet^{f}_\ast\rightarrow (\Sp^O)^G$ be a $sSet$-enriched reduced homotopy functor such that the spectrum $\Phi(S^0)$ is level-wise well-pointed. Then the following conditions are equivalent:
\begin{enumerate}
\item The functor $\Phi$ is $G$-linear.
\item The functor $\Phi_\ast\colon ((sSet^{f}_\ast)^{G})^{\mathcal{P}(1_+)}\rightarrow ((\Sp^O)^G)^{\mathcal{P}(1_+)}$ sends homotopy cocartesian squares in $(sSet^{f}_\ast)^G$ to homotopy cartesian squares of $G$-spectra, and $\Phi(\bigvee_{J}K)\rightarrow \prod_J \Phi(K)$ is an equivalence for every finite pointed simplicial $G$-set $K$ and finite $G$-set $J$.
\item
For every $K\in (sSet^{f}_\ast)^G$ the assembly map
\[\Phi(S^0)\wedge |K|\longrightarrow \Phi(K)\]
is an equivalence of $G$-spectra.
\end{enumerate}
\end{proposition}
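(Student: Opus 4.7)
The plan is to prove $(1) \Rightarrow (2)$, $(2) \Rightarrow (1)$, $(3) \Rightarrow (1)$, and $(1) \Rightarrow (3)$, obtaining all three equivalences in the proposition. The implication $(1) \Rightarrow (2)$ is immediate: the cocartesian-to-cartesian clause on squares is a special case of \ref{squaresinG}, and the wedge-into-product clause is \ref{wedgesintoprod} (applied after replacing $K$ by a cofibrant model in $(sSet^f_\ast)^G$, which is harmless since $\Phi$ is a homotopy functor). For $(2) \Rightarrow (1)$, I would adapt the argument of \ref{Glintop} to the spectra-valued setting: Blumberg's argument from \cite{Blumberg}, which relies only on manipulating homotopy cocartesian squares and the wedge axiom, implies that the adjoint assembly map $\Phi(c) \to \Omega^{\tilde{J}} F\Phi(\Sigma^{\tilde{J}} c)$ is a weak equivalence for every finite $G$-set $J$ and every cofibrant $c$. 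Invoking \ref{corlinandloops} together with the spectra-level addendum of \ref{presforspectra} then yields $G$-linearity.

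For $(3) \Rightarrow (1)$, set $E = \Phi(S^0) \in (\Sp^O)^G$. By hypothesis the assembly map is a natural weak equivalence $E \wedge |{-}| \stackrel{\simeq}{\Rightarrow} \Phi$ of functors $(sSet^f_\ast)^G \to (\Sp^O)^G$, where the extension of the source sends a $G$-simplicial set $K$ to $E \wedge |K|$ with diagonal action. It therefore suffices to show that $E \wedge |{-}|$ is $G$-linear. After replacing $E$ by a cofibrant model---justified by the level-wise well-pointedness assumption---smashing with $E$ is a left Quillen functor $Top_\ast \to (\Sp^O)^G$, so composed with realization it preserves all homotopy colimits and in particular sends homotopy cocartesian $G_+$-cubes to homotopy cocartesian $G_+$-cubes in $(\Sp^O)^G$. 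By \ref{Gcartcocart}, every homotopy cocartesian $G_+$-cube of $G$-spectra is automatically homotopy cartesian, so $E \wedge |{-}|$, and hence $\Phi$, is $G$-linear.

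The main effort is $(1) \Rightarrow (3)$. Both $\Phi$ and $E \wedge |{-}|$ are $G$-linear, and the assembly map $\alpha \colon E \wedge |{-}| \to \Phi$ is a natural transformation that is the identity at $K = S^0$. I will show that $\alpha_K$ is a weak equivalence for every $K \in (sSet^f_\ast)^G$ by induction on the skeletal filtration. The crux is the base case $K = G/H_+$, where $\alpha_{G/H_+}$ is identified with the composite
\[E \wedge G/H_+ \;\cong\; G_+ \wedge_H E|_H \;\stackrel{\simeq}{\longrightarrow}\; \hom_H(G_+, E|_H) \;\stackrel{\simeq}{\longleftarrow}\; \Phi(G/H_+),\]
where the first arrow is the standard isomorphism between diagonal-smash and induction, the second is the Wirthm\"uller isomorphism \ref{Gcartcocart} applied to $E|_H$, and the third is the equivalence $\eta$ of \ref{linandwedges} applied to the $G$-linear functor $\Phi$ with the $G$-$H$-biset $G$ and $c = S^0 \in sSet^{f,H}_\ast$.

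For the inductive step, any finite pointed $G$-simplicial set is built by iteratively attaching cells $G/H_+ \wedge \Delta^n_+$ along the inclusion $G/H_+ \wedge \partial \Delta^n_+ \hookrightarrow G/H_+ \wedge \Delta^n_+$. Each such attachment is a homotopy cocartesian square in $(sSet^f_\ast)^G$, which both $\Phi$ and $E \wedge |{-}|$ convert, by $G$-linearity and \ref{squaresinG}, to a homotopy cartesian square in $(\Sp^O)^G$. The vertex $G/H_+ \wedge \Delta^n_+$ is equivariantly contractible to $G/H_+$, so $\alpha$ is an equivalence there by the base case and homotopy invariance; on $G/H_+ \wedge \partial \Delta^n_+$ and on the previous skeleton it is an equivalence by induction. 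The gluing lemma for homotopy cartesian squares then gives $\alpha$ on the new skeleton, completing the induction. The hardest part is the identification in the base case: tracing the assembly map through the enriched adjunction underlying \ref{linandwedges} and matching it with the Wirthm\"uller map of \ref{Gcartcocart} requires careful diagram-chasing through the induction/coinduction adjunctions of \S\ref{secGmod}.
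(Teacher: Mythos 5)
Your overall scheme is sound and two of your four implications match the paper, but you have organized the logic differently and one of your arrows is under-justified. The paper proves the cycle $(1)\Rightarrow(2)\Rightarrow(3)\Rightarrow(1)$: the first arrow exactly as you do, the last arrow essentially as you do (the paper checks that $E\wedge|-|$ preserves homotopy cocartesian $G_+$-cubes by passing to geometric fixed points and using level-wise well-pointedness, rather than replacing $E$ cofibrantly, and then invokes \ref{Gcartcocart}), and the middle arrow $(2)\Rightarrow(3)$ by skeletal induction in which the wedge-into-product condition alone handles the $0$-skeleton and the square condition handles the attaching steps. You instead prove $(1)\Rightarrow(3)$ by a skeletal induction whose base case is routed through the Wirthm\"{u}ller map and the $\eta$ of \ref{linandwedges}, and you close the loop with a separate implication $(2)\Rightarrow(1)$ that imports Blumberg's adjoint-assembly argument from the setting of \ref{Glintop} (endofunctors of $Top_\ast$) to spectrum-valued functors on $sSet^f_\ast$. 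That step is load-bearing in your arrangement --- without it condition (2) would not be known to imply anything --- and it is the weakest link: \ref{Glintop} is only stated and proved for $Top_\ast\rightarrow Top_\ast^G$, and you assert rather than verify that Blumberg's cell-by-cell analysis of $S^V$ transfers to the target $(\Sp^O)^G$. It very likely does, but you can avoid depending on it entirely: your own base case already admits a proof from hypothesis (2) alone, since $\alpha_{(G/H)_+}$ sits in a naturality square over the product decomposition induced by the pinch maps $(G/H)_+\to S^0$, with $\alpha_{S^0}=\id$, the left vertical the finite wedge-into-product map for spectra (an equivalence by \ref{Gcartcocart}) and the right vertical an equivalence by (2). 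Running your induction under (2) instead of (1) then gives $(2)\Rightarrow(3)$ and recovers the paper's cycle with no appeal to Blumberg. One small imprecision in your inductive step: a natural transformation of homotopy \emph{cartesian} squares that is an equivalence on the three non-initial vertices only controls the initial vertex; to propagate $\alpha$ to the new skeleton (the terminal vertex) you should invoke cocartesianness of the image squares, which holds because cartesian and cocartesian coincide for cubes of $G$-spectra by \ref{Gcartcocart}.
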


\begin{proof}
$(1)\Rightarrow (2)$ This is true in general, by \ref{squaresinG} and \ref{wedgesintoprod}.\\
$(2)\Rightarrow (3)$ This can be proven by induction on the skeleton of $K$. The wedges into products condition gives the equivalence for the $0$-skeleton, and the induction step follows from the condition on squares. We refer to \cite{Gcalc} for the details.
\\
$(3)\Rightarrow (1)$ Since $G$-linearity is invariant under equivalences, we show that $E\wedge |-|$ is $G$-linear for any level-wise well-pointed $G$-spectrum $E$. If $X\colon \mathcal{P}(G_+)\rightarrow sSet^{f}_\ast$ is homotopy cocartesian, the cube of spectra $E\wedge |X|$ is also homotopy cocartesian. Indeed, after applying geometric fixed points $F^H$ the map from the homotopy colimit to the value at $G_+$ factors as
\[\begin{array}{ll}\displaystyle F^H(\hocolim_{\mathcal{P}(G_+)\backslash G_+} E\wedge |X|)\cong F^H(E\wedge \hocolim_{\mathcal{P}(G_+)\backslash G_+} |X|)\cong F^H(E)\wedge (\hocolim_{\mathcal{P}(G_+)\backslash G_+} |X|)^{H}\stackrel{\simeq}{\rightarrow}\\
\stackrel{\simeq}{\rightarrow} F^H(E)\wedge |X_{G_+}|^H\cong  F^H(E\wedge |X_{G_+}|),
\end{array}\]
where the third map is a weak equivalence since $X$ is homotopy cocartesian, and since smashing with a level-wise well-pointed spectrum preserves weak equivalences. By \ref{Gcartcocart} the diagram $E\wedge X$ is also homotopy cartesian.
\end{proof}

%------------------------------------appendix-----------------------------------------

\appendix

\section{Appendix}

\subsection{Computing homotopy (co)limits of punctured cubes}

We compare homotopy limits and colimits of punctured cubes of different sizes, specifically how functors between categories of cubes in $\cat{C}$ induced by maps $p\colon K\rightarrow J$ of finite $G$-sets behave on homotopy cartesian and cocartesian cubes.

\begin{proposition}\label{surjmapcubes}
Let $p\colon K\rightarrow J$ be a surjective equivariant map of finite $G$-sets. Taking the image by $p$ induces an equivariant functor $p_\emptyset\colon \mathcal{P}(K)\backslash\emptyset\rightarrow \mathcal{P}(J)\backslash\emptyset$, which is left $G$-cofinal. In particular, the induced functor $p^{\ast}\colon \cat{C}^{\mathcal{P}(J)}_a\rightarrow \cat{C}^{\mathcal{P}(K)}_a$ preserves and detects homotopy cartesian cubes.
\end{proposition}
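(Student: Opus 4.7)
My plan is to reduce the second assertion of the proposition to the first via the homotopy cofinality theorem \ref{Gcof}, so the real work is identifying the over-categories $p_\emptyset/U$ and showing that their $H$-fixed points are contractible for every subgroup $H\leq G_U$.

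First I would observe that for a non-empty $U\subseteq J$, the over-category $p_\emptyset/U$ has objects $(V,\, p(V)\subseteq U)$ with $V\in\mathcal{P}(K)\backslash\emptyset$ and a unique morphism $(V,\cdot)\to(V',\cdot)$ whenever $V\subseteq V'$. Forgetting the structure map gives a canonical $G_U$-equivariant isomorphism of posets
\[p_\emptyset/U \;\cong\; \mathcal{P}(p^{-1}(U))\backslash\emptyset,\]
where $G_U$ acts on the right-hand side through its action on $p^{-1}(U)$ (which is preserved by $G_U$ because $p$ is equivariant and $U$ is $G_U$-invariant).

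Next, for any subgroup $H\leq G_U$, the $H$-fixed subposet $(\mathcal{P}(p^{-1}(U))\backslash\emptyset)^H$ is exactly the poset of non-empty $H$-invariant subsets of $p^{-1}(U)$. Since $p$ is surjective and $U$ is non-empty, $p^{-1}(U)$ is a non-empty $H$-invariant subset of itself, and it is visibly the maximum element of this poset. A poset with a terminal object has contractible nerve, so $N(p_\emptyset/U)^H = N((p_\emptyset/U)^H)$ is contractible for every $H\leq G_U$. By Definition \ref{def:gcof} this proves that $p_\emptyset$ is left $G$-cofinal.

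For the ``in particular'' claim, note that $p^\ast$ preserves fibrations and weak equivalences: both are detected object-wise, and at $V\in\mathcal{P}(K)$ the stabilizer $G_V$ is contained in $G_{p(V)}$, so the forgetful functor $\cat{C}^{G_{p(V)}}\to\cat{C}^{G_V}$ (which preserves fibrations and equivalences by Remark \ref{adjforget}) takes $X_{p(V)}$ to $(p^\ast X)_V$. Consequently, for any $J$-cube $X$ with fibrant replacement $X\stackrel{\simeq}{\to}FX$, the $K$-cube $p^\ast FX$ is a fibrant replacement of $p^\ast X$. Since $p_\emptyset$ is left $G$-cofinal and the restriction of a fibrant diagram to the punctured cube is fibrant, Theorem \ref{Gcof} yields an equivalence in $\cat{C}^G$
\[\holim_{\mathcal{P}(J)\backslash\emptyset} FX|_{\mathcal{P}(J)\backslash\emptyset} \;\stackrel{\simeq}{\longrightarrow}\; \holim_{\mathcal{P}(K)\backslash\emptyset} p^\ast FX|_{\mathcal{P}(K)\backslash\emptyset},\]
fitting into a commutative triangle with the canonical maps from $(FX)_\emptyset = (p^\ast FX)_\emptyset$. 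By the two-out-of-three property the canonical map for $X$ is an equivalence if and only if the one for $p^\ast X$ is, so $p^\ast$ both preserves and detects homotopy cartesian cubes. The main subtlety will be to keep track of stabilizer subgroups when checking fibrancy and equivariance of the comparison map; everything else is a direct application of the cofinality theorem.
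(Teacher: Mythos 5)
Your argument is correct and follows essentially the same route as the paper: the paper likewise exhibits $p^{-1}(U)$ as the final object of $(p_\emptyset/U)^H$ (you just phrase this via the isomorphism $p_\emptyset/U\cong\mathcal{P}(p^{-1}(U))\backslash\emptyset$), and the ``in particular'' part is the same application of Theorem \ref{Gcof} together with the observation that $G_V\leq G_{p(V)}$ makes $p^\ast FX$ a fibrant replacement of $p^\ast X$.
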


\begin{proof}
We show that for any subgroup $H\leq G$ and any non-empty object $U\in \mathcal{P}(J)^H$ the set $p^{-1}(U)\subset K$ is the final object of $(p_\emptyset/U)^H$. It is non-empty since $p$ is assumed to be surjective, and clearly satisfies $pp^{-1}(U)=U\subset U$. It is final since objects $V\in (p_\emptyset/U)^H$ satisfy $p(V)\subset U$, and therefore
\[V\subset p^{-1}p(V)\subset p^{-1}(U)\]
This shows that $p_\emptyset$ is left $G$-cofinal.
Now let $X\colon \mathcal{P}(J)\rightarrow \cat{C}$ be a $J$-cube, and $X\stackrel{\simeq}{\rightarrow} FX$ a fibrant replacement. 
There is a commutative diagram
\[\xymatrix{\displaystyle\holim_{\mathcal{P}(J)\backslash\emptyset}\iota_{\emptyset}^\ast F X\ar[d]_{p_{\emptyset}^\ast}^{\simeq} & FX_{\emptyset}=(p^\ast F X)_{\emptyset}\ar[l]\ar[d]\\
\displaystyle\holim_{\mathcal{P}(K)\backslash \emptyset}p_{\emptyset}^\ast (\iota_{\emptyset}^\ast F X)\ar@{=}[r]&
\displaystyle\holim_{\mathcal{P}(K)\backslash \emptyset} \iota_{\emptyset}^\ast p^\ast FX
}\]
where the left vertical map is an equivalence by $G$-cofinality \ref{Gcof} and where $\iota_\emptyset\colon \mathcal{P}(J)\backslash\emptyset\rightarrow \mathcal{P}(J)$ is the canonical inclusion. Notice moreover that $p^{\ast}X\stackrel{\simeq}{\rightarrow} p^{\ast}FX$ is a fibrant replacement for $p^{\ast}X$, as for every subset $S\subset K$ there is an inclusion of the stabilizer groups $G_{S}\leq G_{p(S)}$, and the forgetful functor $\cat{C}^{G_{p(S)}}\rightarrow \cat{C}^{G_{S}}$ preserves fibrant objects and equivalences by assumption. From the diagram above we see that $X$ is homotopy cartesian if and only if $p^{\ast}X$ is.
\end{proof}

Looking for a similar statement for the behavior of $p^\ast$ on cocartesian cubes we run into the problem that $p$ does not restrict to a functor $\mathcal{P}(K)\backslash K\rightarrow \mathcal{P}(J)\backslash J$. There is a formally dual version of the proof of \ref{surjmapcubes} that uses the complement dualities on $\mathcal{P}(K)$ and $\mathcal{P}(J)$, but it involves a functor $\cat{C}^{\mathcal{P}(J)}_a\rightarrow \cat{C}^{\mathcal{P}(K)}_a$ different from $p^\ast$. This is discussed in \ref{pastdual} below. In order to understand the interaction between $p^\ast$ and cocartesian cubes we need to introduce a new functor. Let $p^{-1}(j)\subset K$ denote the fiber of an element $j\in J$, and consider the equivariant functor
\[\lambda\colon \Big(\prod_{j\in J}\mathcal{P}\big(p^{-1}(j)\big)\backslash p^{-1}(j)\Big)\times\mathcal{P}(J)\backslash J\rightarrow \mathcal{P}(K)\backslash K\]
that sends a pair $(\{U_j\}_{j\in J}, V)$ to $(\amalg_{j\in J}U_j)\cup p^{-1}(V)$. %The $G$-action on the product over $J$ permutes the fibers. Formally it is an equivariant limit on a $G$-diagram of categories as in \S\ref{sec:def}.
The product $\prod_{j\in J}\mathcal{P}\big(p^{-1}(j)\big)\backslash p^{-1}(j)$ is the limit of the $G$-diagram of categories $j \mapsto \mathcal{P}(p^{-1}(j)) \setminus p^{-1}(j)$ with the $G$-structure induced by the $G$-action on $J$. 

The functor $\lambda$ is a categorical analogue of a homeomorphism 
\[\big(\prod_{j\in J}\Delta^{|p^{-1}(j)|-1}\big)\times\Delta^{|J|-1}\cong \Delta^{|K|-1}\]
\begin{example}
\begin{itemize}
\item
If $p\colon K_+\rightarrow 1_+$ is the pointed map that sends all the elements of $K$ to $1$, the product of the fibers is simply $\mathcal{P}(K)\backslash K$ and the functor
\[\lambda\colon \mathcal{P}(K)\backslash K\times\mathcal{P}(1_+)\backslash 1_+\rightarrow \mathcal{P}(K_+)\backslash K_+\]
is analogous to a homeomorphism $\Delta^{\overline{K}}\times\Delta^1\cong \Delta^K$ that splits off a copy of the trivial representation from the permutation representation of $K$. This is written on a more familiar form as $\overline{\mathbb{R}}[K]\times \mathbb{R}\cong \mathbb{R}[K]$. One could think of the product of the categories $\mathcal{P}\big(p^{-1}(j)\big)\backslash p^{-1}(j)$ as an orthogonal complement for the image of the embedding $p^{-1}(-)\colon \mathcal{P}(J)\backslash J\rightarrow\mathcal{P}(K)\backslash K$.
\item Let $I$ and $J$ be finite $G$-sets, and consider the pointed projection $p\colon (I\amalg J)_+\rightarrow J_+$ that sends $J$ to $J$ by the identity, and $I$ to the basepoint $+$. The preimages over the elements of $J$ consist of a single point, and the preimage over the basepoint is $p^{-1}(+)=I_+$. The functor $\lambda$ above is the functor
\[\lambda\colon \mathcal{P}(I_+)\backslash I_+\times\mathcal{P}(J_+)\backslash J_+\longrightarrow\mathcal{P}((I\amalg J)_+)\backslash (I\amalg J)_+\]
that sends $(U,V)$ to $U\cup V$. It is analogous to the standard homeomorphism of permutation representations $\mathbb{R}[I]\times \mathbb{R}[J]\cong \mathbb{R}[I\amalg J]$.
\end{itemize}
\end{example}

\begin{proposition}\label{surjmapcubesco}
For a surjective equivariant map $p\colon K\rightarrow J$, the functor $\lambda$ above is right $G$-cofinal. Moreover, the functor $p^{\ast}\colon \cat{C}^{\mathcal{P}(J)}_a\rightarrow \cat{C}^{\mathcal{P}(K)}_a$ preserves homotopy cocartesian cubes.
\end{proposition}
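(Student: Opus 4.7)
The plan is to prove the two assertions in turn.

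\textbf{Right $G$-cofinality of $\lambda$.} Fix $W \in \mathcal{P}(K) \setminus K$ and a subgroup $H \leq G_W$. Set $W_j := W \cap p^{-1}(j)$ and $J_{\mathrm{full}} := \{j \in J : W_j = p^{-1}(j)\}$; since $W \subsetneq K$, we have $J_{\mathrm{full}} \subsetneq J$. Unpacking $\lambda$, an object of the under-category $W/\lambda$ is a pair $(U_\bullet, V)$ with $V \subsetneq J$, each $U_j \subsetneq p^{-1}(j)$, and $W_j \subseteq U_j$ whenever $j \notin V$; the last two conditions force $V \supseteq J_{\mathrm{full}}$. Projection onto $V$ displays $W/\lambda$ as the Grothendieck construction $\int_B F$, with base $B := \{V : J_{\mathrm{full}} \subseteq V \subsetneq J\}$ and fibre $F(V)$ the product of $\{U_j : W_j \subseteq U_j \subsetneq p^{-1}(j)\}$ for $j \notin V$ with $\{U_j : \emptyset \subseteq U_j \subsetneq p^{-1}(j)\}$ for $j \in V$. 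This decomposition passes to $H$-fixed points, so Thomason's theorem (the special case of \ref{Fubini} with trivial action) gives $|N(W/\lambda)^H| \simeq \hocolim_{V \in B^H} |NF(V)^H|$. Each $F(V)^H$ is a product over $H$-orbits of $J$ of posets having a minimum element (either $W_j$ or $\emptyset$), hence contractible; and $B^H$ is the non-empty punctured cube on the $H$-orbits of $J \setminus J_{\mathrm{full}}$, contractible via its initial element $\emptyset$. Thus $|N(W/\lambda)^H|$ is contractible.

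\textbf{Preservation of homotopy cocartesian cubes.} Since $G_S \leq G_{p(S)}$ for each $S \subseteq K$, and restriction along a subgroup inclusion is left Quillen by \ref{adjforget}, the functor $p^\ast$ preserves pointwise cofibrancy. Replacing $X$ by its cofibrant replacement, we may therefore assume $X$ is pointwise cofibrant and $\hocolim_{\mathcal{P}(J) \setminus J} X \to X_J$ is an equivalence. Combining Part 1 with Theorem \ref{Gcof}, and then applying the equivariant Fubini theorem \ref{Fubini} to the product $A \times B$ (viewed as the Grothendieck construction $A \wr \Delta_A B$), where $A := \prod_j (\mathcal{P}(p^{-1}(j)) \setminus p^{-1}(j))$ and $B := \mathcal{P}(J) \setminus J$, yields
\[\hocolim_{\mathcal{P}(K) \setminus K} p^\ast X \;\simeq\; \hocolim_{A \times B} \lambda^\ast p^\ast X \;\simeq\; \hocolim_{U_\bullet \in A}\hocolim_{V \in B} X_{\phi(U_\bullet) \cup V},\]
where $\phi(U_\bullet) := \{j : U_j \neq \emptyset\}$.

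The key step is to show that for each $U_\bullet \in A$, with $S := \phi(U_\bullet)$, the canonical map $\hocolim_{V \in B} X_{S \cup V} \to X_J$ is an equivalence, naturally in $U_\bullet$. When $S = \emptyset$, this is precisely the cocartesianness hypothesis. When $S \neq \emptyset$, the functor $V \mapsto S \cup V$ from $B$ to $\{T \in \mathcal{P}(J) : T \supseteq S\}$ is right $G_S$-cofinal: for each $T \supseteq S$ the under-category $\{V : T \setminus S \subseteq V \subsetneq J\}$ has initial element $T \setminus S$, which lies in $B$ because $S \neq \emptyset$ forces $T \setminus S \neq J$, and the same argument passes to $H$-fixed points for $H \leq G_S$. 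Then \ref{Gcof} and the terminality of $J$ give $\hocolim_V X_{S \cup V} \simeq \hocolim_{T \supseteq S} X_T \simeq X_J$. Finally, $A$ is equivariantly contractible---each $A^H$ is a product of punctured cubes with initial element $\emptyset$---so $\hocolim_{U_\bullet \in A}$ of the constant diagram at $X_J$ is $X_J$. Tracing the identifications shows that the composite equivalence is the canonical map $\hocolim_{\mathcal{P}(K) \setminus K} p^\ast X \to X_J = (p^\ast X)_K$, proving $p^\ast X$ is homotopy cocartesian. The main obstacle is the equivariant bookkeeping required to apply Fubini at the right level of generality and to verify naturality in $U_\bullet$ across the case split $S = \emptyset$ versus $S \neq \emptyset$.
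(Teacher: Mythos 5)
Your proof is correct, and its second half is essentially the paper's argument: replace $X$ point-wise cofibrantly, use right cofinality of $\lambda$ together with the Fubini theorem \ref{Fubini} to rewrite $\hocolim_{\mathcal{P}(K)\setminus K}p^\ast X$ as $\hocolim_{\underline{U}\in A}\hocolim_{V\in B}X_{\phi(\underline{U})\cup V}$, show the inner homotopy colimit maps to $X_J$ by an equivalence via the case split $\phi(\underline{U})=\emptyset$ (the cocartesianness hypothesis) versus $\phi(\underline{U})\neq\emptyset$, and conclude using that $A$ has a $G$-invariant initial object. The differences are in technique. For the case $S=\phi(\underline{U})\neq\emptyset$ you give an inline cofinality argument for $V\mapsto S\cup V$, whereas the paper invokes Lemma \ref{cartcocartid}; since the cocartesian half of that lemma is itself proved by exactly such a retraction/cofinality argument, the two are interchangeable. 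For the right cofinality of $\lambda$ the paper contracts each fixed category $(W/\lambda)^H$ directly by a zig-zag of natural transformations $\mathrm{id}\to\tau\leftarrow(\underline{\emptyset},p(W))$ with $\tau(\underline{U},V)=(\underline{U},p(\amalg\underline{U})\cup V)$, which is shorter and needs no machinery; your decomposition of $W/\lambda$ as a Grothendieck construction over $B=\{V:J_{\mathrm{full}}\subseteq V\subsetneq J\}$ with fibers having $H$-invariant minima, followed by Thomason's theorem, costs more but makes the source of the contractibility explicit, and both are valid. One small omission: the paper's proof begins by verifying that $\lambda$ is well defined, i.e.\ that $(\amalg_{j}U_j)\cup p^{-1}(V)$ is never all of $K$ when each $U_j\subsetneq p^{-1}(j)$ and $V\subsetneq J$; you should include this one-line check, since without it $\lambda$ is not a functor into $\mathcal{P}(K)\setminus K$ and the cofinality claim does not yet typecheck.
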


\begin{proof}
Let us first prove that $\lambda$ is well defined, that is, it does not take the value $K$. Write for simplicity $\underline{U}=\{U_j\}_{j\in J}$ and $\amalg \underline{U}=\amalg_{j\in J}U_j$. Suppose that $\lambda(\underline{U},V)=(\amalg \underline{U})\cup p^{-1}(V)=K$. Take $j$ in the complement of $V$ in $J$. The fiber $p^{-1}(j)\subset K$ is disjoint from $p^{-1}(V)$, but it is covered by the collection $\underline{U}$. As each $U_i$ is contained in $p^{-1}(i)$ we must have $U_j=p^{-1}(j)$, but this is absurd since $U_j$ is a proper subset of $p^{-1}(j)$.

Now let $W$ be an $H$-invariant proper subset of $K$. We show that the right fiber category $W/\lambda$ is $H$-contractible by defining a zig-zag of natural transformations between the identity functor and the projection onto the $H$-invariant object $(\underline{\emptyset}=\{\emptyset\}_{j\in J},p(W))$ of  $W/\lambda$. This is a well defined object as $\lambda(\{\emptyset\}_{j\in J},p(W))=p^{-1}p(W)$ which contains $W$. The intermediate functor of the zig-zag is the equivariant functor $\tau\colon W/\lambda\rightarrow W/\lambda$ defined by 
\[\tau(\underline{U},V)=(\underline{U},p(\amalg \underline{U})\cup V)\]
The values of $\tau$ are indeed objects of $W/\lambda$ since $\lambda(\tau(\underline{U},V))$ clearly contains $(\amalg \underline{U})\cup p^{-1}(V)$ which in turn contains $W$ as $(\underline{U},V)$ belongs to $W/\lambda$.
There is a zig-zag of natural transformations
\[\id\longrightarrow\tau\longleftarrow (\underline{\emptyset},p(W))\]
Both maps are obvious on the first component. The second component of the rightward pointing map is the inclusion $V \subset p(\amalg \underline{U})\cup V$. The second component of the left pointing map is induced by the inclusion $W\subset \lambda(\underline{U},V)$, that when projected down to $J$ gives $p(W)\subset p(\amalg \underline{U})\cup pp^{-1}(V)=p(\amalg \underline{U})\cup V$. The zig-zag above realizes to a contracting $H$-invariant homotopy of the category $W/\lambda$ showing that $\lambda$ is right $G$-cofinal.

Now let $X\in \cat{C}^{\mathcal{P}(J)}_a$ be a cocartesian $J$-cube and $QX\stackrel{\simeq}{\rightarrow} X$ a point-wise cofibrant replacement. As in the proof of \ref{surjmapcubes}, notice that $p^\ast QX\stackrel{\simeq}{\rightarrow} p^\ast X$ is a point-wise cofibrant replacement of $p^\ast X$. Let us compute the homotopy colimit of $p^{\ast}QX$ over $\mathcal{P}(K)\backslash K$. By $G$-cofinality and \ref{Fubini} there are $G$-equivalences
\[\displaystyle\hocolim_{\mathcal{P}(K)\backslash K}p^{\ast}QX\stackrel{\simeq}{\longleftarrow}\hocolim_{\Big(\prod\limits_{j\in J}\mathcal{P}\big(p^{-1}(j)\big)\backslash p^{-1}(j)\Big)\times\mathcal{P}(J)\backslash J}\lambda^\ast p^{\ast}QX\stackrel{\simeq}{\longleftarrow}\hocolim_{\Big(\prod\limits_{j\in J}\mathcal{P}\big(p^{-1}(j)\big)\backslash p^{-1}(j)\Big)}\hocolim_{\mathcal{P}(J)\backslash J}\lambda^\ast p^{\ast}QX.\]
We claim that for every fixed collection $\underline{U}$ of subsets of the fibers, the canonical map \[\phi_{\underline{U}}\colon\displaystyle\hocolim_{\mathcal{P}(J)\backslash J}(\lambda^\ast p^{\ast}QX)_{(\underline{U},-)}\rightarrow X_J\]
is a $G_{\underline{U}}$-equivalence. From this claim it follows by homotopy invariance of the homotopy colimit that
$\displaystyle\hocolim_{\mathcal{P}(K)\backslash K}p^{\ast}QX$ is equivalent to the homotopy colimit over $\prod\limits_{j\in J}\mathcal{P}\big(p^{-1}(j)\big)\backslash p^{-1}(j)$ of the constant $G$-diagram with value $X_J$. Since the indexing category is $G$-contractible (it has a $G$-invariant initial object) this is $G$-equivalent to $X_J=(p^{\ast}X)_K$, proving that $p^{\ast}QX$ is homotopy cocartesian.

Let us show that $\phi_{\underline{U}}$ is a weak equivalence. Write $Z^{\underline{U}}=(\lambda^\ast p^{\ast}QX)_{(\underline{U},-)}=QX_{p(\amalg \underline{U})\cup (-)}$. This is a $J$-cube with the $G$-action on $J$ restricted to the stabilizer group $G_{\underline{U}}$. Then $\phi_{\underline{U}}$ is an equivalence precisely when $Z^{\underline{U}}$ is homotopy cocartesian. 
If any of the sets $U_j$ is non-empty, the maps $(Z^{\underline{U}})_V\rightarrow (Z^{\underline{U}})_{V\cup j}$ are identities for every subset $V\subset J$. We proved in \ref{cartcocartid} that in this case $Z^{\underline{U}}$ is homotopy cocartesian. For the family of empty sets $\underline{U}=\underline{\emptyset}$, the $J$-cube $Z^{\underline{\emptyset}}$ is the cube $X$, which is assumed to be homotopy cocartesian.
\end{proof}

\begin{remark}
In general $p^{\ast}\colon \cat{C}^{\mathcal{P}(J)}_a\rightarrow \cat{C}^{\mathcal{P}(K)}_a$ does not detect homotopy cocartesian cubes. In the proof of \ref{surjmapcubesco} we constructed an equivalence over $X_J$
\[\hocolim_{\mathcal{P}(K)\backslash K}p^{\ast}QX\simeq \hocolim_{\prod\limits_{j\in J}\mathcal{P}\big(p^{-1}(j)\big)\backslash p^{-1}(j)} Y\]
where $Y$ is the diagram that sends $\underline{\emptyset}=(\emptyset,\dots,\emptyset)$ to $\displaystyle\hocolim_{\mathcal{P}(J)\backslash J}QX$ and all the other verticies to $X_J$. If $p^{\ast}X$ is homotopy cocartesian the left hand side is also equivalent to $X_J$, but this is in general not enough to conclude that $Y_{\underline{\emptyset}}$ is equivalent to $X_J$. However, this is the case if $\cat{C}$ is the category of spectra, as homotopy cocartesian $J$-cubes are the same as homotopy cartesian $J$-cubes (cf. \ref{Gcartcocart}). Hence the functor $p^{\ast}\colon (\Sp^O)^{\mathcal{P}(J)}_a\rightarrow (\Sp^O)^{\mathcal{P}(K)}_a$ preserves and detects homotopy cocartesian cubes.
\end{remark}

We end this section by discussing the duals of \ref{surjmapcubes} and \ref{surjmapcubesco}.
For an equivariant surjective map of finite $G$-sets $p\colon K\rightarrow J$, let $\overline{p}\colon \mathcal{P}(K)\rightarrow \mathcal{P}(J)$ be the composite functor
\[\overline{p}\colon \mathcal{P}(K)\longrightarrow \mathcal{P}(K)^{op}\stackrel{p^{op}}{\longrightarrow} \mathcal{P}(J)^{op}\longrightarrow\mathcal{P}(J)\]
that sends a subset $U$ of $K$ to $J\backslash p(K\backslash U)$. The dual of the functor $\lambda$ is defined by a similar composition, and an easy calculation shows that it is the functor
\[\overline{\lambda}\colon \Big(\prod_{j\in J}\mathcal{P}\big(p^{-1}(j)\big)\backslash \emptyset\Big)\times\mathcal{P}(J)\backslash \emptyset\rightarrow \mathcal{P}(K)\backslash \emptyset\]
that sends $(\underline{U},V)$ to $(\amalg \underline{U})\cap p^{-1}(V)$. The dual proofs of \ref{surjmapcubes} and \ref{surjmapcubesco} give the following.

\begin{proposition}\label{pastdual}
The restriction $\overline{p}\colon \mathcal{P}(K)\backslash K\rightarrow \mathcal{P}(J)\backslash J$ is right $G$-cofinal, and the functor $\overline{\lambda}$ is left $G$-cofinal. It follows that $\overline{p}^{\ast}\colon \cat{C}^{\mathcal{P}(J)}_a\rightarrow \cat{C}^{\mathcal{P}(K)}_a$ preserves and detects homotopy cocartesian cubes, and preserves homotopy cartesian cubes.
\end{proposition}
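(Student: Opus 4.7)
The proof proceeds by transferring \ref{surjmapcubes} and \ref{surjmapcubesco} across the equivariant complement equivalences
\[c_K\colon \mathcal{P}(K)\setminus K \iso (\mathcal{P}(K)\setminus\emptyset)^{op},\quad U\mapsto K\setminus U,\]
and their analogues $c_J$ on $\mathcal{P}(J)$ and $c_{p^{-1}(j)}$ on each $\mathcal{P}(p^{-1}(j))$. A direct computation, using that $K\setminus\big((\amalg\underline{U})\cap p^{-1}(V)\big) = \big(\amalg_j(p^{-1}(j)\setminus U_j)\big)\cup p^{-1}(J\setminus V)$, shows that under these equivalences $\overline{p}|_{\mathcal{P}(K)\setminus K}$ corresponds to $(p_\emptyset)^{op}$ and $\overline{\lambda}$ corresponds to $\lambda^{op}$. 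Since a functor is left $G$-cofinal if and only if its opposite is right $G$-cofinal, and equivalences of categories are both left and right $G$-cofinal, the right $G$-cofinality of $\overline{p}|_{\mathcal{P}(K)\setminus K}$ follows from the left $G$-cofinality of $p_\emptyset$ proved in \ref{surjmapcubes}, and the left $G$-cofinality of $\overline{\lambda}$ follows from the right $G$-cofinality of $\lambda$ proved in \ref{surjmapcubesco}.

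For the behavior of $\overline{p}^{\ast}$ on cocartesian cubes I would mirror the argument of \ref{surjmapcubes}. Given $X\in\cat{C}^{\mathcal{P}(J)}_a$ with point-wise cofibrant replacement $QX\to X$, the diagram $\overline{p}^{\ast}QX\to\overline{p}^{\ast}X$ is again a point-wise cofibrant replacement, since $G_U\leq G_{\overline{p}(U)}$ and the restriction functor $\cat{C}^{G_{\overline{p}(U)}}\to\cat{C}^{G_U}$ preserves cofibrant objects. Right $G$-cofinality of $\overline{p}|_{\mathcal{P}(K)\setminus K}$, together with \ref{Gcof}, then yields a commutative square
\[\xymatrix{\displaystyle\hocolim_{\mathcal{P}(K)\setminus K}\overline{p}^{\ast}QX \ar[r]^-{\simeq}\ar[d] & \displaystyle\hocolim_{\mathcal{P}(J)\setminus J}QX\ar[d]\\
\overline{p}^{\ast}QX_K = QX_J \ar@{=}[r] & QX_J}\]
in which the vertical maps are the canonical cocartesian structure maps for $\overline{p}^{\ast}X$ and $X$ respectively (using $\overline{p}(K)=J$). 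One is thus a weak equivalence exactly when the other is, proving that $\overline{p}^{\ast}$ preserves and detects homotopy cocartesian cubes.

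For preservation of cartesian cubes I would mirror the argument of \ref{surjmapcubesco}. Let $X$ be homotopy cartesian with fibrant replacement $FX$; since the restriction functors preserve fibrant objects, $\overline{p}^{\ast}FX$ is a fibrant replacement of $\overline{p}^{\ast}X$. Left $G$-cofinality of $\overline{\lambda}$ together with the formal identification of a double homotopy limit with a homotopy limit over the product of index categories yields an equivalence
\[\holim_{\mathcal{P}(K)\setminus\emptyset}\overline{p}^{\ast}FX \;\xrightarrow{\simeq}\; \holim_{\underline{U}}\holim_{V\in\mathcal{P}(J)\setminus\emptyset} FX_{\overline{p}((\amalg\underline{U})\cap p^{-1}(V))},\]
and a direct computation gives $\overline{p}((\amalg\underline{U})\cap p^{-1}(V)) = A(\underline{U})\cap V$ where $A(\underline{U}) := \{j\in J : U_j = p^{-1}(j)\}$. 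For each $\underline{U}$ other than the $G$-invariant terminal object $\underline{p^{-1}(-)}$, the subset $J\setminus A(\underline{U})$ is a non-empty $G_{\underline{U}}$-invariant set along whose directions the inner cube $V\mapsto FX_{A(\underline{U})\cap V}$ is constant, so \ref{cartcocartid} shows that the inner cube is homotopy cartesian and its limit agrees with $FX_\emptyset$; for the terminal $\underline{U}$ itself the inner limit is $\holim_V FX_V \simeq FX_\emptyset$ by the cartesianness of $X$. The outer limit therefore reduces to a homotopy limit over the $G$-contractible category $\prod_j\mathcal{P}(p^{-1}(j))\setminus\emptyset$ (contractible via its terminal object) of the constant diagram $FX_\emptyset$, hence is $\simeq FX_\emptyset$, and the canonical map from $X_\emptyset=(\overline{p}^{\ast}X)_\emptyset$ identifies with the identity under these reductions. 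The main technical subtlety is verifying that the Fubini-type identification is compatible with the $G$-action and with the canonical structure map, which is a routine but somewhat tedious naturality check.
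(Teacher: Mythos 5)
Your proposal is correct and follows essentially the same route the paper intends: the paper's entire proof is the remark that ``the dual proofs of \ref{surjmapcubes} and \ref{surjmapcubesco} give the following,'' and your transfer of those results across the complement isomorphisms $U\mapsto K\setminus U$ (identifying $\overline{p}$ with $p_\emptyset^{op}$ and $\overline{\lambda}$ with $\lambda^{op}$) is precisely the formalization of that dualization. The subsequent mirrored arguments for the behavior of $\overline{p}^{\ast}$ on cocartesian and cartesian cubes, including the computation $\overline{p}\bigl((\amalg\underline{U})\cap p^{-1}(V)\bigr)=A(\underline{U})\cap V$ and the appeal to \ref{cartcocartid}, are the correct duals of the steps in \ref{surjmapcubes} and \ref{surjmapcubesco}.
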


We end by noticing that this picture does not have an analogue for injective equivariant maps $\iota\colon J\rightarrow K$. It is easy to see that restricting along $\iota$ does not preserve any cartesian nor cocartesian properties of cubes. The right thing to study seems to be the preimage functor $\iota^{-1}\colon \mathcal{P}(K)\longrightarrow \mathcal{P}(J)$, but this does not restrict to either $\mathcal{P}(K)\backslash\emptyset\longrightarrow \mathcal{P}(J)\backslash\emptyset$ nor $\mathcal{P}(K)\backslash K\longrightarrow \mathcal{P}(J)\backslash J$. However, if $J$ and $K$ are pointed and $\iota$ preserves the basepoint, there is a retraction $p\colon K\rightarrow J$ that collapses the complement of the image of $\iota$ onto the basepoint. In this case we can simply contemplate $p^\ast$.

%--------------------REEDY---------------------------

\subsection{Finite categories and cofibrant \texorpdfstring{$G$}{G}-diagrams}

We give a criterion for determining if a $G$-diagram is cofibrant in the model structure of \ref{modelstruct}, when the over-categories of the indexing category $I$ have finite dimensional nerve. Such categories are sometimes called directed Reedy categories.
The criterion is in terms of latching maps, and it is completely analogous to the classical theory (see e.g. \cite[\S 15]{hirsch}). 

Let $\cat{C}$ be a cocomplete category. We denote by $(I/i)'$ the over-category $I/i$ with the object $i=i$ removed. The latching diagram of a diagram $X\colon I\rightarrow \cat{C}$ is the diagram $L(X)\colon I\rightarrow \cat{C}$ given on objects by
 \[L(X)_i=\colim({(I/i)'}\longrightarrow I\stackrel{X}{\longrightarrow}\cat{C})\]
and on morphisms $f \colon i \to j$ by the map induced on colimits by $f_\ast \colon (I/i)'\rightarrow (I/j)'$. The inclusions $(I/i)' \inj I/i$ induce a maps $L(X)_i \to \colim_{I/i}u_i^\ast X \cong X_i$ which give a natural transformation $L(X)\rightarrow X$.

For a $G$-diagram $X\in\cat{C}^{I}_a$, the latching diagram $L(X)$ inherits a $G$-structure. The structure maps are the composite maps
\[L(X)_i \stackrel{L(g_X)}{\too} \colim\left(\left(I/i\right)' \stackrel{g}{\too} (I/gi)' \too I \stackrel{X}{\too} \cat{C}\right) \too L(X)_{gi}\]
induced by taking colimits of the compositions in the diagram
\[\xymatrix{(I/i)' \ar[r] \ar[d]^g & I \ar[d]^g  \ar@/^1pc/[dr]_{}="x"^X &  \\
(I/gi)' \ar[r] & I \ar@{-} = "i" \ar[r]_X & \cat{C} \ar@{=>}^{g_X} "x";"i"}\]

%  by the natural maps
% \[\xymatrix{L(X)_i\ar@{-->}[r]\ar[d]&L(X)_{gi}\\
% \colim\limits_{(I/gi)'}((I/gi)'\stackrel{g^{-1}}{\longrightarrow}{(I/i)'}\longrightarrow I\stackrel{X}{\longrightarrow}\cat{C})\ar[ur]_{g}
% }\]
and the map canonical map $L(X)\rightarrow X$ is a map of $G$-diagrams.

\begin{proposition}\label{reedy}
Let $\cat{C}$ be a $G$-model category (see \ref{defGmodelstr}), and $I$ a category with $G$-action such that the simplicial set $NI/i$ is finite dimensional for every object $i$ in $I$.  Let $X$ be an object of $\cat{C}^{I}_a$ such that for every object $i$ in $I$ the map $L(X)_i\rightarrow X_i$ is a cofibration in $\cat{C}^{G_i}$. Then $X$ is cofibrant in the model structure on $\cat{C}^{I}_a$ of \ref{modelstruct}.
\end{proposition}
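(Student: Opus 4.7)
The plan is a degree-filtration argument modeled on the classical Reedy cofibrancy proof, carried out equivariantly. Set $d(i) = \dim N(I/i)$; since appending a non-identity morphism $\alpha \colon j \to i$ to any non-degenerate simplex of $N(I/j)$ yields a non-degenerate simplex of $N(I/i)$ of one higher dimension, every non-identity morphism of $I$ strictly raises $d$, and in particular $I$ admits no non-identity endomorphisms. The function $d$ is $G$-invariant because $a(g)$ restricts to an isomorphism $I/i \cong I/gi$, so the full subcategory $I_n \subseteq I$ on objects with $d(i) \leq n$ is $G$-stable. Write $J_n$ for the set of objects of degree exactly $n$, $\iota_n \colon I_{n-1} \hookrightarrow I_n$ and $j_n \colon I_n \hookrightarrow I$ for the inclusions, and $F_i^{I_n} \colon \cat{C}^{G_i} \to \cat{C}^{I_n}_a$ for the left adjoint of $\ev_i$ on $\cat{C}^{I_n}_a$. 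I will prove inductively that $X|_{I_n}$ is cofibrant in $\cat{C}^{I_n}_a$, then pass to the colimit over $n$.

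For $n = 0$, the category $I_0$ has only identity morphisms, so $X|_{I_0}$ is point-wise cofibrant on a discrete category and is cofibrant by Lemma \ref{lemma:eqcof}. For the inductive step, the key observation is that when $d(i) = n$ one has the equality $(I/i)' = I_{n-1}/i$, giving $L(X)_i = \ev_i\bigl((\iota_n)_\ast X|_{I_{n-1}}\bigr)$; the natural map $(\iota_n)_\ast X|_{I_{n-1}} \to X|_{I_n}$ is therefore the identity on $I_{n-1}$ and the latching map at each $i \in J_n$. I claim it fits into a pushout square in $\cat{C}^{I_n}_a$
\[
\xymatrix{
\displaystyle\coprod_{[i] \in J_n/G} F_i^{I_n}(L(X)_i) \ar[r]\ar[d] & (\iota_n)_\ast X|_{I_{n-1}} \ar[d] \\
\displaystyle\coprod_{[i] \in J_n/G} F_i^{I_n}(X_i) \ar[r] & X|_{I_n}
}
\]
with coproducts indexed over $G$-orbit representatives in $J_n$ and horizontal arrows adjoint to the identities on $L(X)_i$ and $X_i$. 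This is checked vertex-wise using the formula $\ev_j F_i^{I_n}(-) = K_{ji}^{I_n} \otimes_{G_i}(-)$ of Lemma \ref{adjevi}: at $j \in I_{n-1}$ the sets $K_{ji}^{I_n}$ are empty because no morphism can decrease degree, so the left-hand column vanishes; at $j \in J_n$ only the summand $[i] = [j]$ contributes, and the square at $j$ reduces to the obvious pushout of $L(X)_j \to X_j$ along its identity map.

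Since $\ev_i$ preserves fibrations and trivial fibrations by definition of the model structure of Theorem \ref{modelstruct}, its left adjoint $F_i^{I_n}$ is left Quillen, so the left vertical map in the square is a coproduct of $F_i^{I_n}$-images of the hypothesized cofibrations $L(X)_i \to X_i$, hence a cofibration in $\cat{C}^{I_n}_a$; the pushout shows the right vertical is also a cofibration and $X|_{I_n}$ is cofibrant. To finish, set $Y^{(n)} = (j_n)_\ast(X|_{I_n})$; each $Y^{(n)}$ is cofibrant in $\cat{C}^I_a$ because $(j_n)_\ast$ is left Quillen by Corollary \ref{cor:adj}, each transition map $Y^{(n-1)} \to Y^{(n)}$ is the $(j_n)_\ast$-image of the cofibration built at stage $n$ (using $j_{n-1} = j_n \circ \iota_n$), and $\colim_n Y^{(n)} = X$ in $\cat{C}^I_a$ since vertex-wise the sequence satisfies $Y^{(n)}_i = X_i$ for all $n \geq d(i)$. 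Thus $X$ is a transfinite composition of cofibrations out of the initial object, and is cofibrant. The principal subtlety is the pushout identification: the coproducts over orbit representatives must assemble into honest $G$-diagrams, which is precisely what $F_i^{I_n}$ accomplishes via the free $G_j$-$G_i$-biset structure on $K_{ji}^{I_n}$.
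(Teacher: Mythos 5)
Your proof is correct, but it takes a genuinely different route from the paper's. Both arguments hinge on the same degree function $\deg(i)=\dim N(I/i)$, its equivariance, and the fact that non-identity morphisms strictly raise it (your one-line justification is slightly glib --- if the maximal non-degenerate chain witnessing $\deg(j)$ ends in the identity augmentation $i_k=j$ you append $\alpha$ to the chain itself rather than to the augmentation --- but the conclusion holds and the paper asserts it without proof anyway). From there the paper verifies cofibrancy directly: it solves the lifting problem against an arbitrary acyclic fibration by induction over $I_{\leq n}$, choosing a $G_{s(\gamma)}$-equivariant lift on one representative of each orbit of degree-$n$ objects using the hypothesis that $L(X)_i\to X_i$ is a cofibration, and propagating to the rest of the orbit by conjugation. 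You instead exhibit $X$ as a cell object: the pushout square identifying $X|_{I_n}$ as obtained from $(\iota_n)_\ast X|_{I_{n-1}}$ by attaching $\coprod_{[i]\in J_n/G}F_i^{I_n}(L(X)_i\to X_i)$ checks out vertex-wise exactly as you say (the sets $K^{I_n}_{ji}$ of Lemma \ref{adjevi} are empty when $\deg(j)<n$ and are free transitive $G_i$-orbits when $[j]=[i]$ in $J_n$), the functors $F_i^{I_n}$ are left Quillen because $\ev_i$ preserves the vertex-wise fibrations and acyclic fibrations of Theorem \ref{modelstruct}, and the passage to $\colim_n(j_n)_\ast(X|_{I_n})=X$ is a transfinite composition of cofibrations. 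Your version buys more --- an explicit latching-cell filtration of $X$, which is the standard modern form of a Reedy argument and could be reused elsewhere --- at the cost of the adjunction bookkeeping around $F_i^{I_n}$ and $(j_n)_\ast$; the paper's version is more elementary and self-contained, trading the pushout identification for an explicit equivariant choice of lifts on orbit representatives. Both are complete proofs of Proposition \ref{reedy}.
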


\begin{proof}
In order to show that $X$ is cofibrant we need to define a lift for every diagram in $\cat{C}^{I}_a$
\[\xymatrix{ &Y\ar@{->>}[d]^{\sim}\\
X\ar[r]\ar@{-->}[ur]^l&Z}
\]
where the vertical map is an acyclic fibration. We build this lift by induction on a filtration of $I$ defined by the degree function $\deg\colon Ob I\rightarrow \mathbb{N}$
\[\deg(i)=\dim NI/i\]
It is easy to see that the degree function is equivariant (where $\mathbb{N}$ has trivial action), and that if $\alpha\colon i\rightarrow j$ is a non-identity morphism then $\deg(i)<\deg(j)$.
Let $I_{\leq n}$ be the full subcategory of $I$ with objects of degree less than or equal to $n$. Since the degree function is equivariant, the $G$-action of $I$ restricts to $I_{\leq n}$, and the $G$-structure on $X$ restricts to a $G$-structure on the restricted diagram $X_{\leq n}\colon I_{\leq n}\to I\stackrel{X}{\longrightarrow} \cat{C}$. 
 We build the lift inductively on the diagrams $X_{\leq n}$.

For the base step, choose a section $s\colon ObI_{\leq 0}/G\rightarrow ObI_{\leq 0}$. For every orbit $\gamma\in ObI_{\leq 0}/G$ one can choose a $G_{s(\gamma)}$-equivariant lift
\[\xymatrix{ &Y_{s(\gamma)}\ar@{->>}[d]^{\sim}\\
X_{s(\gamma)}\ar[r]\ar@{-->}[ur]^{l_{s(\gamma)}}&Z_{s(\gamma)}}
\]
since the map $\emptyset=L(X)_{s(\gamma)}\rightarrow X_{s(\gamma)}$ is a cofibration in $\cat{C}^{G_{s(\gamma)}}$ by assumption (the map $Y_{s(\gamma)}\rightarrow Z_{s(\gamma)}$ is an acyclic fibration of $\cat{C}^{G_{s(\gamma)}}$ as equivalences and fibrations in $\cat{C}^{I}_a$ are point-wise). Given any object $i\in I_{\leq 0}$ outside the image of $s$, define $l_{i}\colon X_i\rightarrow Y_i$ as the composite
\[X_i\stackrel{g^{-1}}{\longrightarrow}X_{s([i])}\stackrel{l_{s[i]}}{\longrightarrow} Y_{s([i])}\stackrel{g}{\longrightarrow}Y_i\]
for a choice of $g\in G$ with $gs[i]=i$. 
Since the category $I_{\leq 0}$ is discrete (a $G$-set) by the properties of the degree function, these lifts define a map of diagrams $l^0\colon X_{\leq 0}\rightarrow Y_{\leq 0}$ lifting $X_{\leq 0}\rightarrow Z_{\leq 0}$. Moreover $l$ respects the $G$-structure since the lifts $l_{s(\gamma)}$ are $G_{s(\gamma)}$-equivariant.

Now suppose we defined a lift $l^{n-1}\colon X_{\leq n-1}\rightarrow Y_{\leq n-1}$. Let $I_n$ be the full subcategory of $I$ with objects of degree $n$. Choose a section $s^n\colon Ob I_{n}/G\rightarrow Ob I_n$, and for every $\gamma\in ObI_{n}/G$ a lift in $\cat{C}^{G_{s^n(\gamma)}}$
\[\xymatrix{ L(X)_{s^n(\gamma)}\ar[r]\ar@{>->}[d]&Y_{s^n(\gamma)}\ar@{->>}[d]^{\sim}\\
X_{s^n(\gamma)}\ar[r]\ar@{-->}[ur]_{l_{s^n(\gamma)}}&Z_{s^n(\gamma)}}
\]
The top horizontal map is the canonical map given by the universal property of the colimits defining $L(X)$. Again, the lifts exist because $L(X)_{s^n(\gamma)}\rightarrow X_{s^n(\gamma)}$ is a cofibration. For a general object $i$ of $I_n$ define 
\[l_i\colon X_i\stackrel{g^{-1}}{\longrightarrow}X_{s([i])}\stackrel{l_{s[i]}}{\longrightarrow} Y_{s([i])}\stackrel{g}{\longrightarrow}Y_i\]
Commutativity of the diagram above insures that the resulting map $l^n\colon X_{\leq n}\rightarrow Y_{\leq n}$commutes with the structure maps of $X_{\leq n}$ and $Y_{\leq n}$. Moreover $l^n$ respects the $G$-structure by $G_{s^n(\gamma)}$-equivariance of $l_{s(\gamma)}$.
\end{proof}

%---------Commuting lims and colims------------------

\subsection{Sequential homotopy colimits and finite \texorpdfstring{$G$}{G}-homotopy limits}

\begin{definition}[\cite{Kelly}]
A simplicial category $\cat{C}$ is locally finitely presentable if there is a set of objects $\Theta$ satisfying
\begin{enumerate}
\item For every $c\in \Theta$ the mapping space functor 
\[Map_\cat{C}(c,-)\colon \cat{C}\longrightarrow sSet\] preserves filtered colimits,
\item every object of $\cat{C}$ is isomorphic to a filtered colimit of objects in $\Theta$.
\end{enumerate}
\end{definition}

When $\cat{C}$ is locally finitely presented the functor $\map_{\cat{C}}(K,-)$ commutes with filtered colimits if $K$ is a finite simplicial set. This follows from the conditions above and an adjunction argument. 

We consider the poset category $\mathbb{N}$ of natural numbers as a category with trivial $G$-action.

\begin{proposition}\label{limscolims}
Let $\cat{C}$ be a $G$-model category, and suppose that the underlying simplicial categories $\cat{C}^{H}$ are locally finitely presentable for all $H\leq G$. Let $J$ be a finite $G$-set and $X\colon \mathbb{N}\times\mathcal{P}(J_+)\rightarrow \cat{C}$ a $G$-diagram with the property that for every $n\in \mathbb{N}$ the $J_+$-cube $X_n$ is homotopy cartesian. Then the $J_+$-cube $\hocolim_{\mathbb{N}}QX_n$ is also homotopy cartesian. 
\end{proposition}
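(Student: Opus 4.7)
The plan is to factor the canonical cartesian-comparison map through a zig-zag and then commute the sequential homotopy colimit past the finite homotopy limit over $\mathcal{P}(J_+)\backslash\emptyset$, using local finite presentability. Since $\mathbb{N}$ carries trivial $G$-action, $\hocolim_\mathbb{N}$ is computed vertex-wise on $\mathcal{P}(J_+)$, and filtered colimits preserve point-wise cofibrancy, so $\hocolim_\mathbb{N}QX_n$ is a point-wise cofibrant $J_+$-cube with $(\hocolim_\mathbb{N}QX_n)_U=\hocolim_\mathbb{N}QX_{n,U}$ in $\cat{C}^{G_U}$.

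First I would factor the canonical map as
\[
\hocolim_\mathbb{N}QX_{n,\emptyset}\xrightarrow{\;\alpha\;}\hocolim_\mathbb{N}\holim_{\mathcal{P}(J_+)\backslash\emptyset}FQX_n\xrightarrow{\;\beta\;}\holim_{\mathcal{P}(J_+)\backslash\emptyset}\hocolim_\mathbb{N}FQX_n\xrightarrow{\;\gamma\;}\holim_{\mathcal{P}(J_+)\backslash\emptyset}F(\hocolim_\mathbb{N}QX_{n,-}).
\]
The map $\alpha$ is the sequential homotopy colimit of the canonical maps $QX_{n,\emptyset}\to\holim_{\mathcal{P}(J_+)\backslash\emptyset}FQX_n$. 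Each of these is a weak equivalence in $\cat{C}^G$: $QX_n\to X_n$ is a point-wise equivalence of point-wise cofibrant cubes, so by Corollary \ref{htpyinvlimcolim} the associated fibrant cubes $FQX_n$ and $FX_n$ are point-wise equivalent, and the canonical map for $X_n$ is a weak equivalence by hypothesis. After replacing the target sequence by a point-wise cofibrant resolution if needed, Corollary \ref{htpyinvlimcolim} implies $\alpha$ is a weak equivalence. The map $\gamma$ is induced by $\holim_{\mathcal{P}(J_+)\backslash\emptyset}$ applied to the fibrant replacement $\hocolim_\mathbb{N}FQX_n\to F(\hocolim_\mathbb{N}QX_{n,-})$, which is a point-wise weak equivalence of fibrant $J_+$-cubes, and $\gamma$ is an equivalence by homotopy invariance of $\holim$.

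The heart of the argument is the comparison $\beta$. Since $\mathcal{P}(J_+)$ is a finite poset, each $N(\mathcal{P}(J_+)\backslash\emptyset/U)$ is a finite simplicial set, and $\holim_{\mathcal{P}(J_+)\backslash\emptyset}$ is by definition an equivariant end over the finite category $\mathcal{P}(J_+)\backslash\emptyset$ of the cotensors $map_\cat{C}(N(\mathcal{P}(J_+)\backslash\emptyset/U),-)$. Local finite presentability of each $\cat{C}^H$ ($H\leq G$) means $map_\cat{C}(K,-)$ preserves filtered colimits whenever $K$ is a finite simplicial set, and finite limits commute with filtered colimits in any locally finitely presentable category; since all constructions are $G$-natural, the comparison map $\beta$ is a strict isomorphism in $\cat{C}^G$.

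The main obstacle I expect is making $\beta$ fully precise in the enriched equivariant setting: one must unfold $\holim$ as an equivariant end, invoke local finite presentability of $\cat{C}^{G_U}$ fiberwise at each stabilizer subgroup, and carefully verify $G$-equivariance of the comparison isomorphism. Once $\beta$ is isolated, the remaining steps are bookkeeping that relies on the homotopy invariance results of Section \ref{htpyinvsec}.
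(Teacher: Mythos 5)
Your overall architecture is the same as the paper's (factor the cartesian-comparison map and commute the sequential colimit past the finite homotopy limit using local finite presentability), but two of your three steps have genuine gaps. The central one is $\beta$: you assert that $\hocolim_{\mathbb{N}}\holim_{\mathcal{P}(J_+)\backslash\emptyset}\to\holim_{\mathcal{P}(J_+)\backslash\emptyset}\hocolim_{\mathbb{N}}$ is a \emph{strict isomorphism} because finite limits commute with filtered colimits. That fact applies to the strict colimit $\colim_{\mathbb{N}}$, not to the Bousfield--Kan $\hocolim_{\mathbb{N}}$: the latter is a coend $N(-/\mathbb{N})^{op}\otimes_{\mathbb{N}}(-)$ involving tensors with the infinite simplicial sets $N(n/\mathbb{N})^{op}$, and neither this coend nor those tensors commute with finite limits or with $\map_{\cat{C}}(K_S,-)$. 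The paper avoids this by first trading $\hocolim_{\mathbb{N}}$ for $\colim_{\mathbb{N}}$ up to weak equivalence (homotopy invariance of filtered colimits in a locally finitely presentable model category, \cite[7.3]{DugComb}), and only then performing the strict interchange with the end defining $\holim$. Your argument needs this intermediate reduction; as written the key isomorphism is false.

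The second gap is in $\gamma$: you call $\hocolim_{\mathbb{N}}FQX_n\to F(\hocolim_{\mathbb{N}}QX_{n,-})$ a ``point-wise weak equivalence of fibrant $J_+$-cubes,'' but a sequential (homotopy) colimit of fibrant objects is not fibrant in general, and homotopy invariance of $\holim$ (Proposition \ref{htpyinvtarget}) requires fibrancy of both source and target. The paper closes exactly this hole by replacing the sequence $FX_{n,S}$ with a level-equivalent sequence $\overline{FX}_{n,S}$ of cofibrations between fibrant objects, so that $\colim_{\mathbb{N}}\overline{FX}_{n,S}$ is fibrant by the small object argument in the cofibrantly generated model category; only then does applying $\holim$ to the comparison with $F\colim_{\mathbb{N}}$ yield a weak equivalence. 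Your $\alpha$ step is essentially fine. So the route is right, but both the interchange and the fibrancy bookkeeping need the strict-colimit and cofibration-sequence devices that the paper supplies.
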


\begin{proof}
We must show that the top horizontal map in the commutative diagram
\[\xymatrix{\displaystyle\hocolim_{\mathbb{N}}QX_{n,\emptyset}\ar[r]\ar[d]_{\simeq}&\displaystyle \holim_{S\in\mathcal{P}(J_+)\backslash\emptyset}F\hocolim_{\mathbb{N}}QX_{n,S}\ar[d]^{\simeq}\\
\displaystyle\colim_{\mathbb{N}}X_{n,\emptyset}\ar[r]&\displaystyle \holim_{S\in \mathcal{P}(J_+)\backslash\emptyset}F\colim_{\mathbb{N}}X_{n,S}
}
\]
is a weak equivalence in $\cat{C}^G$. The left hand vertical map is an equivalence since in the locally finitely presentable category $\cat{C}^G$ filtered colimits are homotopy invariant (see e.g. \cite[7.3]{DugComb}, or \cite{BK} for simplicial sets). Similarly, the right hand vertical map is the homotopy limit of an equivalence of pointwisefibrant $G$-diagrams, as each $\cat{C}^{G_S}$ is locally finitely presentable. The bottom map can be factored as
\[\xymatrix{\displaystyle\colim_{\mathbb{N}}X_{n,\emptyset}\ar[r]\ar[dr]_{\simeq}&\displaystyle \holim_{ S\in \mathcal{P}(J_+)\backslash\emptyset}F\colim_{\mathbb{N}}X_{n,S}\\
&\displaystyle\colim_{\mathbb{N}}\holim_{S\in \mathcal{P}(J_+)\backslash\emptyset}FX_{n,S}\ar[u]
}\]
with the diagonal map an equivalence in $\cat{C}^G$ since $X_n$ is homotopy cartesian and filtered colimits in $\cat{C}^G$ preserve equivalences.
To show that the vertical map is an equivalence, we compute from the definition of homotopy limits. Denoting $K_S=N\mathcal{P}(S)\backslash\emptyset$ we have isomorphisms in $\cat{C}^G$
\[\begin{array}{ll}\displaystyle
\colim_{\mathbb{N}}\holim_{S\in \mathcal{P}(J_+)\backslash\emptyset}FX_{n,S}=\colim_{\mathbb{N}}\lim\left(\prod_{S} \map_\cat{C}(K_S,FX_{n,S})\rightrightarrows \prod_{S\rightarrow T}\map_\cat{C}(K_s,FX_{n,T}) \right)\cong\\
\cong\displaystyle\lim\left(\prod_{S} \map_\cat{C}(K_S , \colim_{\mathbb{N}} FX_{n,S}) \rightrightarrows \prod_{S\rightarrow T}\map_\cat{C}(K_s, \colim_{\mathbb{N}} FX_{n,T})\right) = \holim_{S\in \mathcal{P}(J_+)\backslash\emptyset}\colim_{\mathbb{N}} FX_{n,S} 
\end{array}
\]
where the middle map is an isomorphism because sequential colimits commute with finite limits and with the functors $\map_\cat{C}(K_s,-)$, since each $K_S$ is finite. Now let $\overline{FX} \stackrel{\sim}{\surj} FX$ be a replacement of $FX$ by a sequence of diagrams such that for each $S \subset J_+$ the sequence $\overline{FX}_S$ is a sequence of $G_S$-cofibrations. There is a commutative diagram,
 \[\xymatrix{\displaystyle \colim_{\mathbb{N}}\holim_{S\in \mathcal{P}(J_+)\backslash\emptyset}FX_{n,S} \ar[d] \ar[r]^\cong& \displaystyle \holim_{S\in \mathcal{P}(J_+)\backslash\emptyset}\colim_{\mathbb{N}} FX_{n,S} \ar[d] & \displaystyle\holim_{S\in \mathcal{P}(J_+)\backslash\emptyset}\colim_{\mathbb{N}} \overline{FX}_{n,S} \ar[l]_\sim \ar[d]^\sim\\
\displaystyle \holim_{ S\in \mathcal{P}(J_+)\backslash\emptyset}F\colim_{\mathbb{N}}X_{n,S} \ar[r]^\sim & \displaystyle\holim_{ S\in \mathcal{P}(J_+)\backslash\emptyset}F\colim_{\mathbb{N}}FX_{n,S} & \displaystyle\holim_{ S\in \mathcal{P}(J_+)\backslash\emptyset}F\colim_{\mathbb{N}}\overline{FX}_{n,S} \ar[l]_\sim }\]
where the right hand vertical is a weak equivalence because $\colim_{\mathbb{N}}\overline{FX}_{n,S}$ is fibrant by an application of the small object argument in the cofibrantly generated model category $\cat{C}^G$ (see e.g. \cite[1.3.2]{specmod}). It follows that the left hand vertical map is a weak equivalence as desired.

% Here we used that finite limits and sequential colimits commute in $\cat{C}$ (see \cite[1.59]{Adamekros}), and that the natural isomorphism in question is $G$-equivariant. Replacing the maps $X_{T,n}\rightarrow X_{T,n+1}$ and using that sequential colimits along cofibrations preserve fibrant objects (see \cite[1.3.2]{Schwede}), we are left with showing that 
% \[\colim_{\mathbb{N}} (map_\cat{C}(K,Y_{n})) \cong map_\cat{C} (K,\colim_{\mathbb{N}} Y_{n})\]
% for every finite simplicial set $K$ and functor $Y\colon \mathbb{N}\rightarrow \cat{C}$. By the Yoneda lemma and local presentability of $\cat{C}$, this follows from the sequence of natural bijections for an object $c$ of $\cat{C}$.
% \begin{align*}\cat{C}(c,map_\cat{C}(K,\colim_{\mathbb{N}}Y_{n})) \cong \cat{C}( K\otimes c,\colim_{\mathbb{N}}Y_{n})\cong sSet(K, Map_\cat{C}(c,\colim_{\mathbb{N}}Y_{n}))\cong\\
%  sSet(K,\colim_{\mathbb{N}} Map_\cat{C}(c,Y_{n}))\cong \colim_{\mathbb{N}} sSet(K,Map_\cat{C}(c,Y_{n})) \cong \\ \colim_{\mathbb{N}} \cat{C}(c, map_\cat{C}(K,Y_{n}))\cong \cat{C}(c,\colim_{\mathbb{N}} map_\cat{C}(K,Y_{n})).
%\end{align*}

\end{proof}

\bibliographystyle{amsalpha}
\bibliography{gdiag}

\end{document}